\documentclass[11pt]{amsart}

\usepackage[svgnames,x11names]{xcolor}
\usepackage{amssymb,amsfonts}
\usepackage[pagebackref,colorlinks,urlcolor={DarkOliveGreen4},
linkcolor={DarkSlateBlue},
citecolor={Chocolate4}]{hyperref}
\usepackage[capitalize]{cleveref}
\usepackage{amssymb,textcomp}
\usepackage{graphicx}
\numberwithin{equation}{section}
\usepackage{enumerate}

\usepackage[utf8]{inputenc}
\usepackage[T1]{fontenc}

\usepackage[mathscr]{eucal}

%%\usepackage[svgnames,x11names]{xcolor}

%%\hypersetup{ -
	%%	colorlinks, linkcolor={DarkSlateBlue},%{MidnightBlue},
	%%	%RoyalBlue4},
%%citecolor={Chocolate4}, urlcolor={DarkOliveGreen4}
%%}
%%%%%%%%%%%%%%%%%%%%%%%%%%%%%%%%%%%%%%%%%%%%%

% A4 Paper
\usepackage[a4paper, twoside=false, vmargin={2cm,3cm}, includehead]{geometry}

% Theorems
\newtheorem{lemma}{Lemma}[section]
\newtheorem{theorem}[lemma]{Theorem}
\newtheorem*{theorem*}{Theorem}
\newtheorem{corollary}[lemma]{Corollary}

\newtheorem*{question*}{Question}
\newtheorem{proposition}[lemma]{Proposition}
\newtheorem*{proposition*}{Proposition}

\newtheorem{conjecture}{Conjecture}
\newtheorem{problem}{Problem}
\newtheorem*{problem*}{Problem}

\theoremstyle{definition}
\newtheorem{definition}{Definition}[section]
\newtheorem*{claim*}{Claim}

\newtheorem*{remark}{Remark}
\newtheorem*{remarks}{Remarks}

%NumberTheory

% Enumerations

% Blackboard letters

\newcommand{\C}{{\mathbb C}}
\newcommand{\E}{{\mathbb E}}
\newcommand{\D}{{\mathbb D}}

\newcommand{\N}{{\mathbb N}}
\renewcommand{\P}{{\mathbb P}}
\newcommand{\Q}{{\mathbb Q}}
\newcommand{\R}{{\mathbb R}}
\renewcommand{\S}{\mathbb{S}}
\newcommand{\T}{{\mathbb T}}
\newcommand{\Z}{{\mathbb Z}}
\newcommand{\U}{{\mathbb U}}

% Calligraphic letters

\newcommand{\CI}{{\mathcal I}}

\newcommand{\CM}{{\mathcal M}}

\newcommand{\CX}{{\mathcal X}}

\newcommand{\CZ}{{\mathcal Z}}

% Bold letters

% Various abbreviations

% for the function exp

\newcommand{\norm}[1]{\left\Vert #1\right\Vert}
\newcommand{\nnorm}[1]{\lvert\!|\!| #1|\!|\!\rvert}

\begin{document}
	
	\title[Decomposition results for   multiplicative actions and applications]{Decomposition results for multiplicative actions and applications}

	\author{Nikos Frantzikinakis}
	\address[Nikos Frantzikinakis]{University of Crete, Department of mathematics and applied mathematics, Voutes University Campus, Heraklion 71003, Greece} \email{frantzikinakis@gmail.com}

	\begin{abstract}
	Motivated by partition regularity problems of homogeneous quadratic equations, 	we prove multiple recurrence and convergence results for multiplicative measure preserving actions with iterates given by rational sequences involving polynomials that factor into products of linear forms in two variables. We focus mainly on actions that are finitely generated, and the key tool in our analysis is a  decomposition result for  any bounded measurable function into a sum of two components, one that mimics concentration properties  of  pretentious multiplicative functions and another that mimics vanishing properties of aperiodic multiplicative functions.
	 Crucial to part of our arguments are some new seminorms that are defined by a mixture of
	addition and multiplication of the iterates of the action,
	and we prove an inverse theorem that  explicitly characterizes the factor of the system on which these seminorms vanish.
	\end{abstract}
	
	\thanks{The author was supported  by the Hellenic Foundation for Research and Innovation	ELIDEK HFRI-NextGenerationEU-15689. }

\subjclass[2020]{Primary: 37A44; Secondary:05D10, 11B30, 11N37,  37A15.}

\keywords{Multiplicative actions, partition regularity, density regularity,  multiple recurrence,  mean convergence, multiplicative functions, concentration inequalities,  Gowers uniformity.}

%\date{\today}
	\maketitle

\setcounter{tocdepth}{1}	

\tableofcontents
	
	\section{Introduction}
A {\em multiplicative  measure preserving action},  is a quadruple $(X,\CX,\mu,T_n)$, where $(X,\CX,\mu)$ is a Lebesgue probability space, and $T_n\colon X\to X$, $n\in\N$, are
 invertible measure preserving transformations that satisfy $T_1=\text{id}$ and  $T_{mn}=T_m\circ T_n$ for all $m,n\in \N$. In a few cases we may also consider non-invertible actions.  We  extend the action to the positive rationals by  $T_{m/n}:=T_m\circ T^{-1}_n$ for all $m,n\in \N$.
 Following~\cite{BR22}, we say that the action is {\em finitely generated}  if the set of commuting transformations $\{T_p\colon p\in \P\}$ is finite.

Additive measure preserving actions  have been widely used to study problems in additive combinatorics concerning translation invariant patterns that occur within any
set of integers with positive upper density.
  The prototypical examples
   are Furstenberg's proof  of Szemer\'edi's theorem on arithmetic progressions~\cite{Fu77} and its polynomial extension by Bergelson and Leibman \cite{BL96}, which
   motivated  other  powerful multiple recurrence and mean convergence results in ergodic theory; see  \cite{HK18}
   for
   related recent trends.

 In complete analogy, multiplicative measure preserving actions can be used to study dilation invariant patterns that occur within any ``multiplicatively large'' set of integers, i.e., a set of integers with positive multiplicative density.
 One of our main motivations is to study the partition and density regularity (see \cref{D:prdr}) of equations of the form
 $$
 P(x,y,z)=0,
$$
where $P$ is a homogeneous quadratic polynomial; the equations $x^2+y^2=z^2$ and $x^2+y^2=2z^2$ are the best known examples. Although several such equations are expected to be partition regular,
progress has been rather scarce, with some partial results appearing in \cite{FH17,FKM23,FKM24}.\footnote{For homogeneous equations with more than three variables, or non-homogeneous quadratic equations, there are more results,  see for example  \cite{BLM21, Be87, BP17, Ch22, CLP21, Mo17}.}
 A serious limitation of the methodology developed in these works is the dependence on a representation result of Bochner-Herglotz, which  only allows to deal with length two patterns, i.e., to guarantee that two out of the three variables
$x,y,z$ belong to the desired set.  For example, it was shown in \cite{FKM23}  that every set  of integers with positive multiplicative density contains pairs $x,y$ and $x,z$ such that $x^2+y^2=z^2$.

Our main goal here is to start developing  a methodology that will allow us to deal with patterns of length greater than two.
 Although our techniques are currently not directly applicable to   problems related with the  Pythagorean  or other diagonal equations, we will make some progress towards other homogeneous equations.
To give an explicit example (see Section~\ref{SS:PR}  for a wider range of examples),  suppose  we want to show that the equation 
$$
x^2-y^2=xz 
$$
is  density regular according to  \cref{D:prdr}  (and hence partition regular).
Note that
$$
x=k\, m^2, \quad y=k\, mn, \quad z=k\, (m^2-n^2)
$$
satisfy the equation for all $k,m,n\in \Z$.  Using a variant of Furstenberg's correspondence principle  (see \cite{Be05}), it suffices to show that for any multiplicative action $(X,\CX,\mu,T_n)$ and set $A\in \CX$ with $\mu(A)>0$, we have
\begin{equation}\label{E:m2mn}
\mu(T^{-1}_{m^2}A\cap T^{-1}_{mn}A\cap T^{-1}_{m^2-n^2}A)>0
\end{equation}
for some $m, n\in \N$ with $m>n$. Although we are currently unable to prove this multiple recurrence property for general multiplicative actions, we will prove it  for all finitely generated ones.
 In fact, in this case we show that  the closely related multiple ergodic averages (our averaging notation is explained at the end of this section)
\begin{equation}\label{E:m2mn'}
\E_{m,n\in[N], m>n} \,  T_{m/n}F_1\cdot T_{(m^2-n^2)/(mn)}F_2
\end{equation}
 converge in $L^2(\mu)$ as $N\to\infty$ for all $F_1,F_2\in L^\infty(\mu)$,  and by analyzing this limit we are also able to obtain optimal lower bounds for the multiple intersections in \eqref{E:m2mn}, roughly of the form $(\mu(A))^3$. For a much more general multiple recurrence  and convergence result,
 which  also covers  not necessarily commuting multiplicative actions, see \cref{T:MainB}.

The  methodology we develop also allows  to verify  multiple recurrence and convergence results when all the iterates are given by several pairwise independent linear forms. For example, we show that if $(X,\CX,\mu,T_n)$ is a finitely generated multiplicative action and   $\mu(A)>0$, then for every $\varepsilon>0$, for a set of $m,n\in \N$ with positive lower density\footnote{A subset $E$ of $\N^2$ has {\em positive lower density} if
	$
	\liminf_{N\to\infty}\frac{|E \cap ([N]\times [N])|}{N^2}>0.
	$} we have
$$
\mu(A\cap T^{-1}_mA\cap T^{-1}_nA\cap T^{-1}_{m+n}A\cap\cdots\cap  T^{-1}_{m+\ell n}A)\geq (\mu(A))^{\ell+3}-\varepsilon.
$$
We also show that  the  closely related multiple ergodic averages
\begin{equation}\label{E:linit}
\E_{m,n\in[N]}\,  T_{m}F_0\cdot  T_{m+n}F_1\cdots T_{m+\ell n}F_{\ell}
\end{equation}
converge in $L^2(\mu)$ for all $F_0,F_1,\ldots, F_{\ell}\in L^\infty(\mu)$. Both results fail for some infinitely generated actions.
For a more general statement involving several pairwise independent linear forms and  different multiplicative actions, see \cref{T:MainA}. As a consequence, in \cref{C:Omega} we get that if $T_0,\ldots, T_\ell$ are ergodic, not necessarily commuting, measure preserving transformations acting on a probability space $(X,\CX,\mu)$, then
$$
\lim_{N\to\infty} \E_{m,n\in [N]} \prod_{j=0}^\ell T_j^{\Omega(m+jn)}F_j =\prod_{j=0}^\ell \int F_j\, d\mu
$$
in $L^2(\mu)$ for all $F_0,\ldots, F_\ell \in L^\infty(\mu)$, where $\Omega(n)$ denotes the number of prime divisors of $n$ counting multiplicity. See also \cref{C:additive}, which covers    recurrence and convergence  results for more general completely  additive sequences on the integers.

The proof of these results depends crucially on  decomposition results for multiplicative actions that we believe to be  of independent interest, see Theorems~\ref{T:Decomposition} and \ref{T:DecompositionGeneral}. Roughly, they state that a function $F\in L^\infty(\mu)$ can be decomposed as
\begin{equation}\label{E:Fpu}
F=F_p+F_a,
\end{equation}
where the spectral measures of $F_p$ and $F_a$ are supported on pretentious and aperiodic multiplicative functions, respectively.
This allows us to get our hands on  a variety of  results in analytic number theory and relate them directly
to properties of the
components  $F_p$ and $F_a$.
For example, for  finitely generated multiplicative actions, we show that
$$
\lim_{k\to\infty} \limsup_{N\to\infty} \E_{n\in [N]} \norm{T_{k!n+b}F_p-T_bF_p}_{L^2(\mu)}=0
$$
holds for every $b\in \N$,
and for general multiplicative actions we show that
$$
\lim_{N\to\infty} \norm{\E_{m,n\in [N]} \, T_{P(m,n)}F_a}_{L^2(\mu)} =0
$$
for any polynomial $P$ that is a product of  pairwise independent linear forms.

Finally, we note that  in the case of several linear iterates, as in  \eqref{E:linit},
our averages are controlled by some seminorms $\nnorm{\cdot}_{U^s}$ defined  on $L^\infty(\mu)$ using a mixture of addition and multiplication of the iterates $T_n$ (see \cref{D:mixedsem}). For finitely generated actions, in \cref{T:Inverse}, we prove a clean inverse theorem for these seminorms, which states that
$$
\nnorm{F}_{U^s}=0 \, \,  \forall s\in \N \Leftrightarrow  \nnorm{F}_{U^2}=0 \Leftrightarrow \lim_{N\to\infty} \norm{\E_{n\in[N]}\, T_{an+b}F}_{L^2(\mu)}=0 \quad  \forall a,b\in \N,
$$
and we deduce that
$
\nnorm{F_a}_{U^s}=0$ for every $s\in \N$.
  It is an interesting open problem to find an analogous  inverse theorem for general multiplicative actions (see \cref{Pr:Inverse} in \cref{S:Problems}).
  We suspect that such   results will play a  role in the analysis of even more delicate multiple recurrence and convergence results of general multiplicative actions;  for example,  \cref{Pr:Pythagorean} in \cref{S:Problems} is related to
  Pythagorean triples.

\subsection*{Notation} \label{SS:notation}
We let  $\N:=\{1,2,\ldots\}$,  $\Z_+:=\{0,1,2,\ldots \}$, $\R_+:=[0,+\infty)$,  $\Q_+:=\Q\cap \R_+$,  $\Q^*_+:=\Q\cap (0,+\infty)$, $\S^1$ be the unit circle, and $\U$ be  the closed complex unit disk.
For $t\in \R$, $z\in \C$, we let $e(t):=e^{2\pi i t}$, $\exp(z):=e^z$, $\Re(z)$
be the real
part of $z$.
%respectively.

With $\P$ we
denote the set of primes and t we use the letter $p$ to denote primes.
We write $a\mid b$ if the integer $a$ divides the integer $b$ and we use a similar notation for polynomials.

For  $N\in\N$, we let $[N]:=\{1,\dots,N\}$.
 A {\em 2-dimensional grid} is a subset $\Lambda$ of $\Z^2$ of the  form $
\Lambda=\{(a_1m+b_1,a_2n+b_2)\colon m,n\in\Z\}
$, where  $a_1,a_2\in \N$ and $b_1,b_2\in \Z$.

%We often denote sequences $a\colon \N\to \U$
%by  $(a(n))$, instead of $(a(n))_{n\in\N}$.

If $A$ is a finite non-empty set and $a\colon A\to \C$, we let
$$
\E_{n\in A}\, a(n):=\frac{1}{|A|}\sum_{n\in A}\, a(n).
$$

%A subset $E$ of $\N^d$ has {\em positive lower density} if
%$
%\liminf_{N\to\infty}\frac{|E \cap [N]^d|}{N^d}>0.
%$

We write $a(n)\ll b(n)$ if for some $C>0$ we have $a(n)\leq C\, b(n)$ for every $n\in \N$.

Throughout this article, the letter $f$ is typically  used for multiplicative functions,  the letter $\chi$ for Dirichlet characters, and the letter $F$ for measurable functions.

\subsection*{Acknowledgement}
	 I would like to thank B.~Host,  B.~Kuca, J.~Moreira,  A.~Mountakis, and F.~Richter,  for a number of useful comments on a preliminary version of this article.

	\section{Main results and applications}
	
	 \subsection{Multiple recurrence and convergence  for multiplicative actions}
 We describe here our two main results about multiple recurrence and convergence of multiplicative actions. 
 In \cref{SS:conj}  we state two conjectures  that guide our main results
 and our first main result verifies these conjectures
 when the rational polynomials are given by pairwise independent linear forms.
Henceforth, we say that $(X,\CX,\mu,T_{1,n}, \ldots, T_{\ell,n})$ is a {\em (finitely generated) multiplicative action} if
 	$(X,\CX,\mu,T_{j,n})$, $j\in[\ell]$, is a (finitely generated)
 	multiplicative action. Commutativity is not assumed unless explicitly stated.

 \begin{theorem}[Several linear forms]\label{T:MainA}
	Let $(X,\CX,\mu,T_{1,n},\ldots, T_{\ell,n})$ be a  finitely generated multiplicative action and $L_1,\ldots, L_\ell\in\Z[m,n]$ be pairwise independent\footnote{Two linear forms are {\em independent} if they are non-zero and their quotient is not constant.} linear forms with non-negative coefficients. Then the following properties hold:
	\begin{enumerate}
		\item  \label{I:MainA1}  For all $F_1,\ldots, F_\ell\in L^\infty(\mu)$  and $2$-dimensional grid  $\Lambda$, the averages
		\begin{equation}\label{E:MainA0}
			\E_{m,n\in [N]} \, {\bf 1}_\Lambda(m,n)\cdot T_{1,L_1(m,n)} F_1\cdots T_{\ell,L_\ell(m,n)}F_\ell
		\end{equation}
		converge in $L^2(\mu)$ as $N\to\infty$. Furthermore,  if all actions are aperiodic (see \cref{SSS:aperiodic}) and $\Lambda=\Z^2$, then the limit is equal to $\int F_1\, d\mu\cdots \int F_\ell\, d\mu$.
		
		\item 	 \label{I:MainA2}  Suppose that there exist $m_0,n_0\in \Z$ such that $L_1(m_0,n_0)$ is either $0$ or $1$  and $L_j(m_0,n_0)=1$ for $j=2,\ldots, \ell$.  Then for every $A\in \CX$ and $\varepsilon>0$, the set
		\begin{equation}\label{E:MainA2}
	\{(m,n)\in \N^2\colon 		\mu(A\cap T^{-1}_{1,L_1(m,n)}A\cap \cdots \cap T^{-1}_{\ell,L_\ell(m,n)}A) \geq (\mu(A))^{\ell+1}
			-\varepsilon\}
		\end{equation}
		has positive lower density.
	\end{enumerate}
\end{theorem}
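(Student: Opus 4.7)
The plan is to apply the decomposition theorem for finitely generated multiplicative actions (\cref{T:Decomposition}) to each $F_j = F_{j,p} + F_{j,a}$ and handle the two contributions separately. First, the grid $\Lambda$ is absorbed by the substitution $(m,n)\mapsto(a_1m+b_1,a_2n+b_2)$, which replaces each $L_j$ by a new pairwise independent linear form and turns $\one_\Lambda$ into the constant $1$ on the rescaled square. Expanding the $\ell$-fold product in \eqref{E:MainA0} into $2^\ell$ summands, I would show that any summand containing at least one factor $F_{j,a}$ contributes $o(1)$ in $L^2(\mu)$. The core step is a seminorm control estimate of the form
\[
  \Bigl\|\E_{m,n\in[N]}T_{1,L_1(m,n)}G_1\cdots T_{\ell,L_\ell(m,n)}G_\ell\Bigr\|_{L^2(\mu)}
  \;\ll_{\ell}\;\nnorm{G_j}_{U^{s}}
\]
for each $j\in[\ell]$ and some $s=s(\ell)\in\N$. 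I would prove this by a multiplicative variant of PET induction: iterated Cauchy--Schwarz / van der Corput in $m$ and $n$ replaces each iterate $T_{j,L_j(m,n)}$ by a rational iterate of the form $T_{j,L_j(m+h,n)/L_j(m,n)}$, and pairwise independence of the $L_j$ guarantees the hierarchy terminates after a bounded number of steps. The inverse theorem \cref{T:Inverse} then yields $\nnorm{F_{j,a}}_{U^s}=0$ for all $s$, eliminating every mixed summand.

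It remains to analyse the fully pretentious main term. Here I would exploit the approximate multiplicative periodicity of $F_{j,p}$ recalled in the introduction,
\[
  \lim_{k\to\infty}\limsup_{N\to\infty}\E_{n\in[N]}\bigl\|T_{j,k!n+b}F_{j,p}-T_{j,b}F_{j,p}\bigr\|_{L^2(\mu)}=0,
\]
to approximate each $F_{j,p}$, in an $L^2$-average sense, by a finite linear combination of joint eigenfunctions for the action. Since only finitely many primes act nontrivially, these eigenfunctions are characters of a finite abelian group, so the analysis reduces to convergence of finitely many averages of the form
\[
  \E_{m,n\in[N]}\chi_1(L_1(m,n))\cdots\chi_\ell(L_\ell(m,n))
\]
with $\chi_j$ Dirichlet-like characters, which follows from classical equidistribution of pairwise independent linear forms modulo any $q$. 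Under the aperiodicity hypothesis and $\Lambda=\Z^2$, only the trivial character survives for each $j$, and the limit collapses to $\prod_{j=1}^\ell\int F_j\,d\mu$, as claimed in part~(\ref{I:MainA1}).

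For part~(\ref{I:MainA2}), I take $F_j={\bf 1}_A$ and apply the convergence result. The hypothesis on $(m_0,n_0)$ makes every iterate $T_{j,L_j(m_0,n_0)}$ trivial at that point, so the \emph{model} intersection has $\mu$-measure exactly $\mu(A)$. Using the approximate multiplicative periodicity of the pretentious components, I expect that, for $k=k(\varepsilon)$ sufficiently large, the limit function $(m,n)\mapsto\mu\bigl(A\cap\bigcap_{j=1}^\ell T_{j,L_j(m,n)}^{-1}A\bigr)$ is close in average, along the positive-density subgrid $(m_0,n_0)+(k!\Z)^2$, to its value at $(m_0,n_0)$. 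Combined with a Cauchy--Schwarz/Hölder argument on the finite-dimensional pretentious factor, which forces the limit to factor as an iterated integral bounded below by $(\int {\bf 1}_A\,d\mu)^{\ell+1}$, this yields the desired bound $\mu(A)^{\ell+1}-\varepsilon$ on a set of pairs of positive lower density.

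The principal difficulty I anticipate is the multiplicative PET induction powering the seminorm-control step: the natural differencing in a multiplicative action is a quotient rather than a difference, so the iterates change form at every stage, and one must verify both termination of the induction and control by the specific mixed seminorms $\nnorm{\cdot}_{U^s}$ of this paper (rather than ordinary Gowers norms). A secondary technical issue is ensuring the approximation of $F_{j,p}$ by eigenfunctions is uniform in $N$, which I would address using the concentration inequalities built into the construction of the pretentious factor.
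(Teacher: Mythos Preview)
Your overall architecture (decompose $F_j=F_{j,p}+F_{j,a}$, eliminate the aperiodic contributions via seminorm control, then handle the pretentious main term) matches the paper, but two of the three main steps are carried out incorrectly.

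\textbf{Seminorm control.} Your proposed ``multiplicative PET induction'' with quotients $L_j(m+h,n)/L_j(m,n)$ is both unnecessary and unlikely to terminate in the form you describe. The paper's route (Proposition~7.1) is far simpler: for fixed $x$, the sequences $a_{j,x}(n):=F_j(T_{j,n}x)$ are arbitrary bounded sequences, and the additive generalized von Neumann inequality for pairwise independent linear forms (already established in \cite{FH16}) gives a pointwise bound
\[
\Bigl|\E_{m,n\in[N]}\one_\Lambda(m,n)\prod_j a_{j,x}(L_j(m,n))\Bigr|\ll_{\Lambda,\ell}\|a_{1,x}\|_{U^s(\Z_{\tilde N})}^c+o_N(1).
\]
Integrating over $\mu$ yields control by $\nnorm{F_1}_{U^s}$ directly, since this seminorm is by definition $\bigl(\limsup_N\int\|a_{1,x}\|_{U^s(\Z_N)}^{2^s}d\mu\bigr)^{1/2^s}$. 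No multiplicative differencing enters.

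\textbf{Pretentious main term.} Your claim that ``only finitely many primes act nontrivially'' misreads the definition: finitely generated means $\{T_p:p\in\P\}$ is a finite \emph{set}, so infinitely many primes act, just redundantly. Consequently the eigenfunction picture you sketch is not available. The paper instead splits the average into sub-grids $(Q_Km+a,Q_Kn+b)$ and uses the concentration estimate \eqref{E:fconcfingen''} of \cref{P:Concentrationfg} to show each factor $T_{j,L_j(Q_Km+a,Q_Kn+b)}F_{j,p}$ is $L^2$-close, on average, to a limit $A_{j,Q_K,L_j(a,b)}$ that does not depend on $N$; this proves the sequence of averages is Cauchy.

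\textbf{Part~\eqref{I:MainA2}, case $L_1(m_0,n_0)=0$.} Your sketch does not distinguish this case, but here $T_{1,L_1(Qm+m_0,Qn+n_0)}=T_{1,Q\,L_1(m,n)}$, which does \emph{not} concentrate around the identity as $Q$ becomes highly divisible; your ``model intersection'' argument fails. The paper handles this with the $Q$-trick: average additionally over $Q\in\Phi_K$ (a multiplicative F\o lner sequence) and apply the mean ergodic theorem, which replaces the rogue factor by $\E(F\mid\CI_{T_1})$. All factors are then conditional expectations of $F$, and \cref{T:Chu} gives the lower bound $(\mu(A))^{\ell+1}$.
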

%\begin{remarks}
%$\bullet$ Our argument shows that the result still %holds if we use affine l%inear forms $L'_j:=L_j+c_j$, %where $c_j\in \Z$, instead of $L_j$, as long as we %insist that $L_1,\ldots, L_\ell$ are pairwise %independent, and %in part~\eqref{I:MainA2} the  %assumptions are imposed on $L'_j$.
%$\bullet$
Both parts fail for general multiplicative actions.
For   part~\eqref{I:MainB1},  take  $\ell=1$ and  consider a multiplicative rotation (defined in \cref{SS:examples}) by $n^i$, or some $1$-pretentious oscillatory multiplicative function as in example~\eqref{I:Ex3} of \cref{SS:pretentious}.   Also, the action by dilations $T_nx:=nx\pmod{1}$, $n\in \N$, acting on $\T$ with the Haar measure,  is aperiodic, 
but for $F_1(x)=F_2(x):=e(x), F_3(x):=e(-x)$, we have $T_mF_1\cdot T_nF_2\cdot T_{m+n}F_3=1$ for every $m,n\in \N$. 
  For   part~\eqref{I:MainB2}, consider again the previous action by dilations
and let $A:=[1/3,2/3)$. Then  $\mu(A)>0$ 	and  $\mu(T^{-1}_m A\cap T^{-1}_n A\cap T^{-1}_{m+n}A)=0$ for every $m,n\in \N$.
Using the same action we get that
for every $\ell\in \N$ we can define   a set $B=B_\ell$ as in the proof of \cite[Theorem~2.1]{BHK05}, such that $\mu(B)>0$ and  $\mu(T^{-1}_mB\cap T^{-1}_{m+n}B\cap T^{-1}_{m+2n}B)\leq (\mu(B))^\ell/2$ for all $m,n\in \N$.

By combining an elementary intersectivity lemma  (see \cite[Theorem~1.1]{Be85}) and Szemer\'edi's theorem on arithmetic progressions~\cite{Sz75},
Bergelson  showed in  \cite[Theorem~3.2]{Be05}  that for arbitrary multiplicative actions, for every $\ell\in \N$ there exist $m,n\in \N$ such that $\mu(T^{-1}_mA\cap T^{-1}_{m+n}A\cap\cdots\cap  T^{-1}_{m+\ell n}A)>0$.
  However,  it is not clear if a similar recurrence property holds for  $\ell+1$ multiplicative actions; see part~\eqref{I:P2b} of \cref{Pr:linear} in \cref{S:Problems}.

Our second result  is more conveniently stated for rational rather than integer polynomials, a class of sequences that we define next.
\begin{definition}
	We say that $R(m,n)$ is a {\em rational polynomial that factors linearly} if it can represented in the form
	$
	R(m,n)=c\, \prod_{j=1}^sL^{k_j}_j(m,n),
	$ where $c\in \Q_+$, $k_j\in \Z$,  for $j\in [s]$, and  $L_j(m,n)=\alpha_jm+\beta_jn$, $j\in [s]$, are pairwise independent linear forms with  $\alpha_j,\beta_j\in \Z_+$.\footnote{Our assumption that $\alpha_j,\beta_j\geq 0$ is made for technical convenience. It  can be easily removed in the recurrence results of this article by making appropriate substitutions, and saves us unnecessary technicalities  in the convergence results. }
	The {\em degree of $R$} is $\deg(R):=\sum_{j=1}^s k_j$.
	We say that    $(a,b)$ is a {\em simple zero} of $R$
	if  $L_j(a,b)=0$  for some $j\in [s]$ with $k_j=1$.
\end{definition}
We are going to  deal with two rational polynomials that factor linearly.
We are particularly interested in these cases because  the obtained  multiple recurrence results are related to  partition and density regularity of homogeneous quadratic equations in three variables, see the discussion in \cref{SS:PR}.
Again,  in our setting we do not need to  impose  any commutativity assumptions on the  actions.

\begin{theorem}[Two rational polynomials]\label{T:MainB}
	Let $(X,\CX,\mu,T_{1,n}, T_{2,n})$ be a  finitely generated multiplicative action, and  $R_1,R_2$ be rational polynomials that factor linearly. Suppose that   $R_2(m,n)=c\,  L_1(m,n)^{k}\cdot L_2(m,n)^{l}$ for some  independent linear forms $L_1,L_2$, and $c\in \Q_+,k,l\in \Z$, and suppose that  $R_1$ is not of the form $c'\, L_1^{k'}L_2^{l'}R^r$ for any  $c'\in \Q_+,k',l'\in \Z$, rational polynomial $R$, and $r\geq 2$.   Then the following properties hold:
	\begin{enumerate}
	\item  \label{I:MainB1}  For all $F_1, F_2\in L^\infty(\mu)$ and $2$-dimensional grid $\Lambda$,   the averages
		\begin{equation}\label{E:R1R2Converge}
		\E_{m,n\in [N]} \, {\bf 1}_\Lambda(m,n)\cdot T_{1,R_1(m,n)} F_1\cdot T_{2,R_2(m,n)}F_2
	\end{equation}
	converge in $L^2(\mu)$ as $N\to \infty$.

	\item 	 \label{I:MainB2}
If there exist $m_0,n_0\in \Z$ such that
$R_2(m_0,n_0)=1$ and either $R_1(m_0,n_0)=1$ or  $(m_0,n_0)$ is a simple zero of $R_1$,  then for every  $A\in \CX$ and  $\varepsilon>0$, the set%% of $m,n\in \N$ for which
\begin{equation}\label{E:MainB2'}
\{(m,n)\in \N^2\colon 	\mu(A\cap T^{-1}_{1,R_1(m,n)}A\cap  T^{-1}_{2,R_2(m,n)}A) \geq (\mu(A))^3
	-\varepsilon\}
\end{equation}
has positive lower density.

\item  \label{I:MainB3}
If  $\deg(R_1)=0$, and there exist $m_0,n_0\in \Z$ such that
$R_j(m_0,n_0)=1$ for $j=1,2$,  then the multiple  recurrence   property  \eqref{I:MainB2} holds even if the
action 	$(X,\CX,\mu,T_{1,n})$ is infinitely generated.
\end{enumerate}
\end{theorem}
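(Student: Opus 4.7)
The plan is to apply the decomposition $F=F_p+F_a$ from \cref{T:Decomposition} (or \cref{T:DecompositionGeneral}) and handle the pretentious and aperiodic components separately.

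For Part~\eqref{I:MainB1}, I would decompose $F_j=F_{j,p}+F_{j,a}$ for $j=1,2$ and expand the bilinear product into four pieces. For each of the three pieces containing an aperiodic component, the plan is to bound its $L^2(\mu)$-norm by a $U^s$-seminorm of that component through a Cauchy--Schwarz plus van der Corput (PET-style) induction; the pairwise independence of the linear factors of $R_1,R_2$ and the non-perfect-power hypothesis on $R_1$ keep the reduced iterates parametrized by pairwise independent linear forms, so the bound ultimately reduces to $\nnorm{F_{j,a}}_{U^s}=0$ (\cref{T:Inverse}) or to an average of the form $\norm{\E_{m,n\in[N]}\,T_{P(m,n)}F_{j,a}}_{L^2(\mu)}$ with $P$ a product of pairwise independent linear forms, which vanishes by the aperiodic property recalled in the introduction. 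For the surviving pretentious$\times$pretentious piece, partition $\Lambda\cap[N]^2$ into residue classes modulo $k!$ in each variable: on the class $(m,n)\equiv(a,b)\pmod{k!}$ one has $R_j(m,n)\equiv R_j(a,b)\pmod{k!}$, so the concentration estimate
\[
\lim_{k\to\infty}\limsup_{N\to\infty}\E_{n\in[N]}\norm{T_{k!n+c}F_p-T_cF_p}_{L^2(\mu)}=0
\]
forces each $T_{j,R_j(m,n)}F_{j,p}$ to be essentially constant on each class, and summing the finitely many contributions yields $L^2$-convergence. When all actions are aperiodic and $\Lambda=\Z^2$, a finer analysis of the pretentious piece shows that only the trivial character survives, collapsing the limit to $\int F_1\,d\mu\cdot\int F_2\,d\mu$.

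For Part~\eqref{I:MainB2}, apply Part~\eqref{I:MainB1} with $F_1=F_2=\one_A$ and take inner product with $\one_A$. Decomposing the outer $\one_A$ as well, the aperiodic-containing terms vanish in mean, while the pretentious contribution restricted to the congruence class $(m,n)\equiv(m_0,n_0)\pmod{k!M}$ (with $M$ chosen to absorb the coefficients of the linear factors of $R_1,R_2$) converges to a fixed quantity of the form $\int \one_{A,p}\cdot T_{1,1}\one_{A,p}\cdot T_{2,1}\one_{A,p}\,d\mu=\int \one_{A,p}^3\,d\mu$. Since $\int \one_{A,p}\,d\mu=\mu(A)$ and the decomposition preserves non-negativity on the pretentious component (as a conditional-expectation-type projection), Jensen's inequality for $x\mapsto x^3$ gives $\int \one_{A,p}^3\,d\mu\geq(\mu(A))^3$. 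Because the chosen congruence class has positive density in $\N^2$, a Fatou-type argument produces a positive lower-density set of $(m,n)$ on which the triple intersection exceeds $(\mu(A))^3-\varepsilon$. In the simple-zero case, the vanishing factor $L_{j_0}$ (of multiplicity one in $R_1$) can be peeled off by restricting $(m,n)$ to a congruence class on which $L_{j_0}(m,n)\equiv 1\pmod{k!}$; the same argument then applies with $R_1$ replaced by $R_1/L_{j_0}$, which evaluates to $1$ at $(m_0,n_0)$.

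For Part~\eqref{I:MainB3}, the assumption $\deg(R_1)=0$ makes $T_{1,R_1(m,n)}$ a balanced iterate whose contribution can be analyzed via a Bochner--Herglotz spectral representation of $F\mapsto\int F\cdot T_{1,R_1(m,n)}F\,d\mu$ valid for arbitrary multiplicative actions, replacing the use of \cref{T:Decomposition} in the $T_1$-variable. The $T_2$-iterate remains under the finitely generated decomposition, and the rest of the argument proceeds as in Part~\eqref{I:MainB2}. The main obstacle I anticipate is the PET/van der Corput induction in Part~\eqref{I:MainB1}: the factorizations of $R_1$ and $R_2$ may share linear factors, so one must carefully track which linear forms remain pairwise independent after each differencing step, and verify that the non-perfect-power hypothesis on $R_1$ produces a genuine reduction to the aperiodic vanishing regime rather than a trivial loop in the induction.
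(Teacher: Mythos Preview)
Your overall plan---decompose into pretentious and aperiodic parts, show the aperiodic terms vanish, then use concentration on the pretentious part---matches the paper's strategy. But there is a genuine gap in how you propose to eliminate the aperiodic contributions in Part~\eqref{I:MainB1}, and this gap propagates to the other parts.

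You propose a PET/van der Corput induction to bound the cross terms by $\nnorm{F_{j,a}}_{U^s}$. This does not work here: the iterates are $T_{R_j(m,n)}$ with $R_j$ a \emph{product} of linear forms, and the action is multiplicative. Differencing in $n$ replaces $R_1(m,n)$ by the ratio $R_1(m,n+h)/R_1(m,n)$, which is again a product of (now more) linear forms---there is no reduction in complexity, so the induction does not terminate. The mixed seminorms $\nnorm{\cdot}_{U^s}$ only control averages whose iterates are \emph{single} linear forms (that is the content of \cref{P:Characteristic1}); they say nothing about products like $L_1^k L_2^l$. The paper instead reaches the characteristic-factor statement (\cref{P:Characteristic2}) via the Daboussi--K\'atai orthogonality criterion: after a linear change of variables one may take $R_2(m,n)=m^k n^l$, and then finite generation gives a set $P_0$ of primes on which $T_{2,p}$ is constant, forcing $T_{2,p^kq^l m^k n^l}F_2\cdot T_{2,(p')^k(q')^l m^k n^l}\overline{F_2}=1$. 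This cancellation reduces the problem to a \emph{single}-function average $\E_{m,n}\,T_{1,R_1(pm,qn)/R_1(p'm,q'n)}F_1$, and only then does the aperiodic vanishing property (\cref{P:aperiodicP0}) apply. Your ``or'' alternative of landing on $\norm{\E_{m,n}\,T_{P(m,n)}F_{j,a}}_{L^2(\mu)}$ is exactly what is needed, but you have not explained how to get from the bilinear expression to a single-function one; the K\'atai step is the missing idea.

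Two smaller points. In Part~\eqref{I:MainB2}, your treatment of the simple-zero case is incorrect: if $L_{j_0}(m_0,n_0)=0$ then on the class $(m,n)\equiv(m_0,n_0)\pmod{Q}$ one has $L_{j_0}(m,n)\equiv 0$, not $1$, and shifting to a class where $L_{j_0}\equiv 1$ destroys $R_2(m_0,n_0)=1$. The paper handles this by writing $R_1=L_{j_0}\cdot\tilde R_1$, noting $L_{j_0}(Qm+m_0,Qn+n_0)=Q\,L_{j_0}(m,n)$, and then averaging $Q$ over a multiplicative F{\o}lner sequence to invoke the mean ergodic theorem (the ``$Q$-trick''), landing on $\int F\cdot \E(F|\CI_{T_1})\cdot F_{2,p}\,d\mu$ before applying \cref{T:Chu}. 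In Part~\eqref{I:MainB3}, Bochner--Herglotz is a two-function tool and does not handle the triple correlation here; the paper instead uses the general decomposition (\cref{T:DecompositionGeneral}) together with the degree-$0$ concentration estimate of \cref{P:ConcRgeneral}, restricting the averaging to sets $S_{\delta,R_1}=\{(m,n):|(R_1(m,n))^i-1|\le\delta\}$ to tame the Archimedean characters that obstruct concentration for infinitely generated actions.
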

In particular, this vastly extends  the recurrence and convergence results in \eqref{E:m2mn} and \eqref{E:m2mn'} in the introduction.
Regarding part \eqref{I:MainB3}, the multiple recurrence property \eqref{E:MainB2'} fails in several ways if we allow both actions to be infinitely generated, even when $T_{1,n}=T_{2,n}$ for every $n\in \N$.
Indeed,    the remarks following \cref{T:MainA} show that the
lower bound in \eqref{E:MainB2'}   fails
% even if $R_1(m_0,n_0)=R_2(m_0,n_0)=1$
when  $R_1(m,n):=(m+n)/m$,  $R_2(m,n):=(m+2n)/m,$
although $R_1(1,0)=R_2(1,0)=1$. Moreover,   we may even have non-recurrence when   $R_1(m,n):=n/(m+n)$, $R_2(m,n):=m/(m+n),$ although  $R_1(1,0)=0, R_2(1,0)=1$.

See  also \cite[Theorem~1.5]{CMT24} for  recurrence results of expressions  $\mu(T^{-1}_{an+b}A\cap T^{-1}_{cn+d }A)$.
\subsection{Decomposition results for multiplicative actions}
Crucial to the 	 proof of Theorems~\ref{T:MainA} and \ref{T:MainB}
are some decomposition results for  multiplicative actions that are of independent interest.
Here is the  statement for finitely generated actions
(the mixed seminorms $\nnorm{\cdot}_{U^s}$ are defined in Section~\ref{S:mixedseminorms}, $X_p$ is  defined in \cref{D:pa}, and  \cref{C:factor} establishes that it is a factor):

\begin{theorem}\label{T:Decomposition}
	Let  $(X,\CX,\mu,T_n)$ be  a finitely generated multiplicative action and
	$F\in L^\infty(\mu)$. Then  we have the decomposition
	$$
	F=F_p+F_a, \, \quad \text{where  } \,  F_p=\E(F|\CX_p) \, \text{ and } \,  F_a\bot X_p,
	$$
	and $F_p, F_a\in L^\infty(\mu)$ satisfy the following properties:
	\begin{enumerate}
		\item \label{I:Decomposition1} For every $b\in \N$ we have
		$$ \lim_{K\to\infty}\limsup_{N\to\infty} \max_{Q\in \Phi_K}\E_{n\in[N]}\norm{T_{Qn+b}F_p-T_bF_p}_{L^2(\mu)}=0,
		$$
		where $\Phi_K$ is as in \eqref{E:PhiK};
		
		\item \label{I:Decomposition2}  $\nnorm{F_a}_{U^s}=0$ for every $s\in \N$;
		
		\item \label{I:Decomposition3}  $\lim_{N\to\infty}\norm{\E_{m,n\in [N]} T_{R(m,n)}F_a}_{L^2(\mu)}=0$ whenever $R(m,n)$ is a rational polynomial that factors linearly, and is  not of the form $ c\, R'^r$ for any  $c\in \Q_+$, rational polynomial $R'$, and $r\geq 2$.
	\end{enumerate}
\end{theorem}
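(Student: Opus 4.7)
The plan is to set $F_p := \E(F \mid \CX_p)$, which is well defined by \cref{C:factor}, and $F_a := F - F_p$, so that $F_a \perp X_p$ is automatic. All three conclusions will be derived from the joint spectral representation of $L^2(\mu)$ attached to the finitely generated action: since the action is generated by a finite set of primes $S$, it factors through a unitary representation of $\Z^{|S|}$, and standard spectral theory identifies $L^2(\mu)$ with a direct integral over the character group of that abelian group. By multiplicativity this character space is naturally identified with $S$-supported multiplicative functions, so each $F \in L^2(\mu)$ carries a spectral measure $\sigma_F$ on multiplicative functions, and $\CX_p$ corresponds (essentially by design) to the spectral support on pretentious multiplicative functions, so that $\sigma_{F_a}$ is supported on aperiodic ones.

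For part~(i), I would first note that each pretentious $f$ in the support of $\sigma_{F_p}$ satisfies
\[
\lim_{K\to\infty} \limsup_{N\to\infty} \max_{Q \in \Phi_K} \E_{n \in [N]} |f(Qn+b) - f(b)| = 0,
\]
which in the finitely generated setting is comparatively soft: a pretentious multiplicative function restricted to $S$-integers is, up to a Dirichlet character and a small perturbation, a character that becomes trivial modulo any sufficiently factorial $Q \in \Phi_K$. Pushing this pointwise estimate through the spectral integral via dominated convergence yields (i). For part~(ii), the inverse theorem \cref{T:Inverse} from the introduction reduces $\nnorm{F_a}_{U^s}=0$ for all $s$ to $\lim_N \norm{\E_{n \in [N]} T_{an+b} F_a}_{L^2(\mu)} = 0$ for every $a,b \in \N$, and spectrally this reads $\lim_N \E_{n \in [N]} f(an+b) = 0$ for $\sigma_{F_a}$-a.e.\ $f$, which is exactly the defining aperiodicity property.

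For part~(iii), the spectral theorem gives
\[
\norm{\E_{m,n \in [N]} T_{R(m,n)} F_a}_{L^2(\mu)}^2 = \int \bigl| \E_{m,n \in [N]} f(R(m,n)) \bigr|^2\, d\sigma_{F_a}(f),
\]
so it suffices to show $\lim_N \E_{m,n \in [N]} f(R(m,n)) = 0$ for every aperiodic multiplicative function $f$. Writing $R = c \prod_{j=1}^s L_j^{k_j}$ with pairwise independent $L_j$, and extending $f$ multiplicatively to $\Q^*_+$ to absorb the negative $k_j$, this becomes an estimate for $\E_{m,n \in [N]} \prod_j f(L_j(m,n))^{k_j}$, and the hypothesis that $R$ is not of the form $c'R'^r$ is invoked precisely to guarantee that the effective multiplicative function arising in this product is not itself a nontrivial power, so aperiodicity propagates. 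The main obstacle I expect lies in this last step: extracting uniform cancellation in $\E_{m,n} \prod_j f(L_j(m,n))^{k_j}$ demands a two-dimensional Halász-type decay estimate for aperiodic multiplicative functions evaluated on pairwise independent linear forms, together with a decoupling argument separating the $L_j$; the ``not a perfect power'' hypothesis is the precise algebraic condition under which this analytic decay survives.
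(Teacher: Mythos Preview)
Your proposal is correct and follows essentially the same route as the paper: the decomposition via the spectral measure and \cref{C:factor}, property~(i) via the spectral identity together with the concentration estimate for finitely generated pretentious multiplicative functions (the paper's \cref{C:concentrationfg}, itself drawn from \cite{KMPT21}), property~(ii) via \cref{T:Inverse} reduced to aperiodic vanishing of linear averages, and property~(iii) via the spectral identity reduced to a two-variable average of an aperiodic multiplicative function over a product of linear forms. The ``main obstacle'' you flag for~(iii) is not in fact an obstacle: the required cancellation $\lim_N \E_{m,n\in[N]}\prod_j f^{k_j}(L_j(m,n))=0$ for pairwise independent $L_j$ with at least one $f^{k_j}$ aperiodic is exactly \cite[Theorem~2.5]{FH17} (together with \cite[Lemma~9.6]{FH17}), and the ``not a perfect power'' hypothesis is used precisely as you suspect---if all $f^{k_j}$ were pretentious then $f^{\gcd(k_j)}$ would be pretentious, forcing $\gcd(k_j)=1$ and hence $f$ pretentious, a contradiction (see the paper's \cref{P:aperiodicP0}).
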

For finitely generated multiplicative actions
Charamaras in \cite[Theorem~1.28]{Cha24} proved a decomposition result of a similar spirit but with different information about the component functions, for more details
see  the remark after \cref{D:pa} below.
We also stress that the mixed seminorms $\nnorm{\cdot}_{U^s}$ are not the analogous of the  Host-Kra seminorms \cite{HK05} for the multiplicative action $T_n$ (see the discussion in \cref{SS:mixeddefinition}). The definition of $\nnorm{\cdot}_{U^s}$ uses   a mixture of addition and multiplication;  this combination is better suited for our purposes, since the ergodic averages we aim to study also involve such a mixture.

Next, we give a decomposition result that works for general multiplicative actions. An important difference in this case is that we have to restrict our averaging on the  concentration estimates to sets of the form
\begin{equation}\label{E:Sd}
	S_\delta:=\{n\in\N\colon |n^{i}-1|\leq \delta\},
\end{equation} 		
where $\delta>0$ and then take $\delta\to 0^+$ (all these sets have positive  density). This is necessary because multiplicative rotations by $n^{it}$ (in which case $X_p=L^2(\mu)$) do not exhibit any concentration unless we restrict our averaging. Another difference is that in this case we cannot claim concentration at $T_bF_p$, since the
averages $\E_{n\in[N]}\, T_{Qn+b}F_p$ in general may not even converge in $L^2(\mu)$.
Here is the exact statement:
\begin{theorem}\label{T:DecompositionGeneral}
	Let  $(X,\CX,\mu,T_n)$ be  a  multiplicative action and
	$F\in L^\infty(\mu)$. Then  we have the decomposition
	$$
	F=F_p+F_a, \, \quad \text{where  } \,  F_p=\E(F|\CX_p) \, \text{ and } \,  F_a\bot X_p,
	$$
	and $F_p, F_a\in L^\infty(\mu)$ satisfy the following properties:
	\begin{enumerate}
		\item\label{I:DecompositionGeneral1} For every $b\in \N$ we have
		$$
		\lim_{\delta\to 0^+}	\limsup_{K\to\infty}\limsup_{N\to\infty} \max_{Q\in \Phi_K}\E_{n\in S_\delta\cap [N]}\norm{T_{Qn+b}F_p-A_{Q,N,b}}_{L^2(\mu)}=0,
		$$
		where $\Phi_K$ is as in \eqref{E:PhiK}, $S_\delta$ are as in \eqref{E:Sd},
		and 	$A_{Q,N,b}:=\E_{n\in[N]}\, T_{Qn+b}F_p$;

		\item\label{I:DecompositionGeneral2} $\lim_{N\to\infty}\norm{\E_{m,n\in [N]} T_{R(m,n)}F_a}_{L^2(\mu)}=0$ whenever $R(m,n)$ is a rational polynomial that factors linearly, and is  not of the form $c\, R'^r$ for any   $c\in \Q_+$, rational polynomial $R'$, and $r\geq 2$.
	\end{enumerate}
\end{theorem}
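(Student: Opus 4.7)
The decomposition itself is tautological: set $F_p := \E(F|\CX_p)$ and $F_a := F - F_p$, so that $F_a \perp L^2(\CX_p,\mu)$ by the defining property of conditional expectation. What requires proof are the two quantitative clauses, and my plan is to mirror the strategy used for the finitely generated \cref{T:Decomposition}, modifying it to accommodate the continuous Archimedean characters $n^{it}$ that appear in $\CX_p$ in this generality but are absent in the finitely generated case.

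The core reduction is spectral. Because $\{T_n\}$ is a commutative unitary representation of $(\N,\cdot)$ with $T_1=\id$, every $F\in L^2(\mu)$ carries a spectral measure supported on completely multiplicative functions $f\colon\N\to\U$, and by the construction of $\CX_p$ in \cref{D:pa} the spectral measure of $F_p$ is supported on pretentious $f$ and that of $F_a$ on aperiodic $f$. Integrating scalar bounds for a single multiplicative function against these spectral measures reduces both \eqref{I:DecompositionGeneral1} and \eqref{I:DecompositionGeneral2} to their scalar analogs, where $T_n$ is replaced by multiplication by $f(n)$.

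For \eqref{I:DecompositionGeneral1}, it suffices to prove that for every pretentious $f\colon\N\to\U$ and every $b\in\N$,
$$
\lim_{\delta\to 0^+}\limsup_{K\to\infty}\limsup_{N\to\infty}\max_{Q\in\Phi_K}\E_{n\in S_\delta\cap[N]}\bigl|f(Qn+b)-a_{Q,N,b}\bigr|=0,
$$
where $a_{Q,N,b}:=\E_{n\in[N]}f(Qn+b)$. By the Granville--Soundararajan structure theorem, a pretentious $f$ approximately factors as $\chi(n)\,n^{it}\,h(n)$ for a Dirichlet character $\chi$, a real $t$, and $h$ pretending to be $1$. Choosing $\Phi_K$ so that every $Q\in\Phi_K$ is divisible by the conductor of $\chi$ reduces $\chi(Qn+b)$ to the constant $\chi(b)$; the localization $n\in S_\delta$ together with $\delta\to 0^+$ tames the Archimedean factor by forcing $\log n$ into narrow windows around multiples of $2\pi$; and the pretending factor $h$ supplies concentration in arithmetic progressions via Hal\'asz--Matom\"aki--Radziwi\l\l\ type bounds, while centering at the running average $a_{Q,N,b}$ absorbs whatever residual phase the Archimedean piece contributes.

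For \eqref{I:DecompositionGeneral2}, the scalar statement is that for every aperiodic $f\colon\N\to\U$ and every admissible $R(m,n)=c\prod_j L_j(m,n)^{k_j}$,
$$
\lim_{N\to\infty}\E_{m,n\in[N]}f(R(m,n))=0.
$$
After clearing denominators and excising the zeros of the $L_j$, multiplicativity rewrites the sum as an average of $\prod_j f(L_j(m,n))^{k_j}$; the non-perfect-power hypothesis on $R$ ensures this product is not of the form $g(m,n)^r$ for any completely multiplicative $g$ and $r\geq 2$, so the resulting integrand is itself genuinely aperiodic as a function on the grid. The required cancellation then follows from a two-variable extension of the Matom\"aki--Radziwi\l\l--Tao theorem along pairwise independent linear forms, applied to aperiodic multiplicative functions. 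The principal obstacle throughout is the Archimedean stratum: unlike the finitely generated setting, where one can target concentration at the fixed function $T_bF_p$, rotations by $n^{it}$ preclude convergence of $T_{Qn+b}F_p$ to any fixed element of $L^2(\mu)$, forcing the localization to $S_\delta$, the centering at the running average $A_{Q,N,b}$, and the specific order $N\to\infty$, $K\to\infty$, $\delta\to 0^+$, all while keeping every estimate uniform over $Q\in\Phi_K$.
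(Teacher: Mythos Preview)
Your approach is essentially the paper's: both reduce via the spectral identity \eqref{E:spectralid} to scalar estimates for a single multiplicative function and then invoke concentration for pretentious $f$ (the paper quotes \cite{KMPT21} through \cref{T:concestlinear} and \cref{C:concentrationgeneral}) and vanishing of bilinear averages for aperiodic $f$ (the paper quotes \cite{FH17}, whose input is Gowers uniformity via the inverse theorem rather than Matom\"aki--Radziwi\l\l--Tao). One clarification on~\eqref{I:DecompositionGeneral2}: the precise role of the non-power hypothesis is that it forces $\gcd(k_1,\ldots,k_s)=1$, so not all of $f^{k_1},\ldots,f^{k_s}$ can be pretentious (else $f$ itself would be), and it is a single aperiodic factor $f^{k_{j_0}}$ along one linear form that drives the cancellation via \cite[Theorem~2.5]{FH17}, rather than any notion of the full product being ``aperiodic on the grid''.
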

Note that in contrast to the finitely generated case, we  cannot claim that $\nnorm{F_a}_{U^s}=0$, even for $s=2$. To see this, consider the multiplicative action defined by $T_nx:=nx\pmod{1}$, $n\in \N$,  on $\T$ with the Haar measure $m_\T$. In this case  it is easy to show that $X_p$ is trivial and so for    $F(x):=e(x)$ we have  $F=F_a$ but $\nnorm{F}_{U^2}=1$.

In both decomposition results, depending on the application we have in mind, we plan to also   use more refined properties about
the components $F_p$ and $F_a$  that are given  in Propositions~\ref{P:Concentrationfg},  \ref{P:aperiodicP0}, \ref{P:ConcRfg},  \ref{P:ConcRgeneral}.

\subsection{Connections with partition and density regularity}\label{SS:PR}
 A \emph{multiplicative F\o lner sequence in $\N$} is a sequence $\Phi=(\Phi_K)_{K=1}^\infty$ of finite subsets of $\N$ that is asymptotically invariant under dilation, in the sense that
$$ \lim_{K\to\infty}\frac{\big|\Phi_K \cap (x\cdot \Phi_K)\big|}{|\Phi_K|}=1 \quad \text{for every } x\in \N.$$
An example of a multiplicative F\o lner sequence is
$$
\Phi_K	:=\Big\{\prod_{p\leq K}p^{a_p}\colon a_K\leq  a_p\leq b_K\Big\}, \quad K\in \N,
$$
where $a_K,b_K\in \N$ are such that $b_K-a_K\to \infty$ as $K\to \infty$.
We say that $E\subset\N$ has {\em positive multiplicative density}
if
$$\limsup_{K\to\infty}\frac{\big|E\cap \Phi_K\big|}{|\Phi_K|}>0
$$
 for some multiplicative F\o lner sequence $(\Phi_K)_{K=1}^\infty$.
\begin{definition}\label{D:prdr}
	If  $P\in \Z[x,y,z]$, we say that the equation
	$P(x,y,z)=0$ is
	\begin{enumerate}
		\item {\em partition regular} if for every finite partition of $\N$ there exist distinct  $x,y,z$ on the same cell  that satisfy the equation;
		
		\item {\em density regular} if for every subset $E$ of $\N$  with positive multiplicative density there exist  $x,y,z\in E$ that satisfy the equation.
	\end{enumerate}	
\end{definition}
%We require that $x,y,z$  be distinct, otherwise %equations like %$x^2+y^2=2z^2$ would be trivially %partition and density  %regular.
Note that  a finite partition of $\N$ always contains a monochromatic cell with positive multiplicative density, hence density regularity implies partition regularity. However, the converse is not true; for $a,b,c\in \N$,  it is known that the equation
$$
ax+by=cz
$$
is partition regular if and only $(a,b,c)$ is a {\em Rado triple}, i.e., if either $a=b$, or $b=c$, or $a+b=c$ (see \cite{R33}), but it is density regular only if $a+b=c$ (the sufficiency follows from \cite[Theorem~3.2]{Be05} and the necessity by an example of Bergelson, see remarks after \cite[Theorem~1.2]{FKM23}).
A difficult and well known problem
is to find  a similar characterization for the partition regularity of the equation $P(x,y,z)=0$ when $P$ is a homogeneous quadratic polynomial, and
the diagonal equations
$$
ax^2+by^2=cz^2
$$ have received  the most attention.
Currently, we  have only partial results that    allow us to decide in some cases when two of the three variables belong to the same partition cell or set of positive multiplicative density (see \cite{FH17,FKM23,FKM24}).  Non-diagonal quadratic equations present similar challenges, but as we will show next, in some cases we can make progress that we cannot currently reproduce in the diagonal setting.

For example, suppose we want to prove partition or density regularity of   the equation
\begin{equation}\label{E:xyz}
	ax^2+by^2= dxy+exz+fyz \quad \text{when } a+b=d.\footnote{More generally, with a bit more effort our approach works if $d^2-4ab$ is a square.}
	%\footnote{%The important feature is that %$(0,0,1)$ %and $(1,1,0)$ are solutions, this %allows us to find solutions with special features %of the form  described in %\cref{C:MainRecurrence}.}
\end{equation}
As a typical example, the reader can keep in mind   the equation
\begin{equation}\label{E:xyxz}
x^2-y^2=xz
\end{equation}
with solutions
\begin{equation}\label{E:xyxzpar}
x:=k\, m^2,\quad y:=k\, mn,\quad z:=k\, (m^2-n^2), \quad k,m,n\in\Z.
\end{equation}

%To deal with \eqref{E:xyz} we start with some %reductions.
%We assume that $a,b$ are not both $0$ and $e,f$ are not %both $0$.\footnote{If $a=b=0$, then \eqref{E:xyz} reduces to
%	 $f/x+e/y+d/z=0$, which is known to be partition regular if %and only if $fx+ey+dz=0$ is partition regular \cite{L91}, and %this equation is covered by Rado's result~\cite{R33}. Similarly %if $e=f=0$, since we are led  to $(x-y)(ax-by)=0$. }
%% $P(x,y,z)=0$ is PR if and only if %%$P(x^{-1},y^{-1},z^{-1})=0$ is PR. Because PR in $\Z$ is %equivalent with PR %in $\Q$ and in $\Q$ the map $x\mapsto %x^{-1}$ is a bijection. }	
%Without loss of generality we can assume that $a\in \N$.
%Also, since $P(x,y,z)=0$ is partition regular  on $\Z$ if and %only if $P(-x,-y,-z)=0$ is partition regular on $\Z$, we can also %assume that either $e$ or $f$ is positive;  let us assume that %$e\in \N$, the other case is similar. Finally, we can assume that %$e+f\neq 0$, since  if $e+f=0$ we are led to the equation %$(x-y)(ax-by-ez)=0$, in which case it is easy to decide %whether it is partition or density regular using Rado's %result~\cite{R33}.

More generally, under a few  technical assumptions on the coefficients (as  in \cref{C:MainRecurrence}),
it seems likely that  the equations in \eqref{E:xyz}  are density regular.
%\footnote{To give some  credence to this claim, we note that using the methodology of \cite{FKM23}, it is possible to show that these equations are density regular for all three pairs $(x,y)$, $(x,z)$, $(y,z)$.}
Using a variant of  Furstenberg's correspondence principle \cite{Be05},  it  suffices to prove a
 multiple recurrence property for arbitrary multiplicative actions.
It is   a consequence of  part~\eqref{I:MainB2} of \cref{T:MainB}, that  this multiple recurrence property holds for finitely generated  actions:
\begin{corollary}\label{C:MainRecurrence}
Let $(X,\CX, \mu,T_n)$ be a finitely generated multiplicative action and $A\in \CX$ with $\mu(A)>0$. Let also  $a,e\in \N$, $b,d,f\in \Z$,  so that  $a+b=d$, $e+f\neq 0$, $a\neq b$. Then  there exist
 $x,y,z\in \N$, not all of them equal,  that  satisfy \eqref{E:xyz} and
\begin{equation}\label{E:Txyz}
\mu(T^{-1}_xA\cap T^{-1}_yA\cap T^{-1}_zA)>0.
\end{equation}
Furthermore,  $x,y,z$ can be chosen to be different, unless
$a=d=e=-f,b=0$ (in which case \eqref{E:xyz} reduces to $(x-y)(x-z)=0$).
\end{corollary}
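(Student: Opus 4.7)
My plan is to deduce \cref{C:MainRecurrence} from part~\eqref{I:MainB2} of \cref{T:MainB} applied to the two actions $T_{1,n}=T_{2,n}=T_{n}$. Using $a+b=d$ I would first factor \eqref{E:xyz} as $(ax-by)(x-y)=z(ex+fy)$ and exhibit the two-parameter family of integer solutions
\[
x=k(ep+fq)p,\qquad y=k(ep+fq)q,\qquad z=k(ap-bq)(p-q),\qquad k\in\N,\ p,q\in\Z,
\]
which is verified by direct substitution. I would then substitute $p=u+v$, $q=v$ — after, if needed, either swapping $x\leftrightarrow y$ (which exchanges $(a,b)$ with $(b,a)$ and $(e,f)$ with $(f,e)$) or replacing the substitution by $p=M(u+v)$, $q=v$ with $M\in\N$ large enough to absorb negative coefficients when $a<b$ or $e+f<0$ — and form the ratios
\[
R_{2}(u,v):=\frac{y}{x}=\frac{v}{u+v},\qquad R_{1}(u,v):=\frac{z}{x}=\frac{u\,(au+(a-b)v)}{(eu+(e+f)v)(u+v)}.
\]
These are rational polynomials that factor linearly with non-negative integer coefficients, and $R_{2}=L_{1}L_{2}^{-1}$ with $L_{1}=v$, $L_{2}=u+v$ independent, matching the shape required of the second rational polynomial in \cref{T:MainB}.

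I would then verify the remaining hypotheses of \cref{T:MainB}\eqref{I:MainB2}. The linear factors of $R_{1}$ other than $L_{1}$ and $L_{2}$ — namely $u$, $au+(a-b)v$, and $eu+(e+f)v$ — are pairwise independent thanks to $a\neq b$ and $e+f\neq 0$, and each occurs in $R_{1}$ with exponent $\pm 1$, so $R_{1}$ cannot be written as $c'L_{1}^{k'}L_{2}^{l'}R^{r}$ for any $r\geq 2$; the degenerate sub-cases $b=0$, $f=0$, and $af=-eb$, in which two of these forms become proportional, are handled by inspection of the reduced factorization (and give the same conclusion because a stray linear form of exponent $\pm 1$ always survives). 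For the base point I would take $(m_{0},n_{0}):=(0,1)\in\Z^{2}$, at which $R_{2}(0,1)=1$ and $R_{1}(0,1)=0$, with the unique vanishing linear factor being $u$, which occurs with multiplicity one in $R_{1}$ (the remaining factors $au+(a-b)v$, $eu+(e+f)v$, $u+v$ take the non-zero values $a-b$, $e+f$, $1$). Part~\eqref{I:MainB2} of \cref{T:MainB} then yields, for every $\varepsilon>0$, a positive lower density set of $(u,v)\in\N^{2}$ on which
\[
\mu(A\cap T^{-1}_{R_{1}(u,v)}A\cap T^{-1}_{R_{2}(u,v)}A)\geq (\mu(A))^{3}-\varepsilon.
\]

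Taking $\varepsilon:=(\mu(A))^{3}/2$ and using the extension of $T$ to a commutative action on $\Q^{*}_{+}$, the inequality rewrites as $\mu(T^{-1}_{x}A\cap T^{-1}_{y}A\cap T^{-1}_{z}A)\geq (\mu(A))^{3}/2>0$ for the positive integers $x,y,z$ produced by the parametrization, which establishes \eqref{E:Txyz}. For distinctness, the coincidences $x=y$ (which forces $u=0$), $x=z$, and $y=z$ each carve out a proper algebraic subvariety of $\N^{2}$ and so cannot exhaust a positive density set; selecting $(u,v)$ in the recurrence set outside these subvarieties produces three distinct positive integers $x,y,z$, and the exceptional case $a=d=e=-f, b=0$ in the statement is vacuous under the running hypothesis $e+f\neq 0$. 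The main technical obstacle I anticipate is the sign bookkeeping in the reduction: forcing $a-b\geq 0$ and $e+f\geq 0$ so that the coefficient-positivity clause of \cref{T:MainB} applies is not automatic from the hypotheses, and the dilation trick $p=M(u+v)$, $q=v$ shifts the base point (for instance to $(u_{0},v_{0})=(-(M-1),M)\in\Z^{2}$, where $R_{2}(u_{0},v_{0})=1$ and the factor $Mu+(M-1)v$ of $R_{1}$ vanishes simply), so one must re-verify the simple-zero condition and the non-power condition on $R_{1}$ in every sign regime.
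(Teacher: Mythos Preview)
Your approach is correct and essentially the same as the paper's: both parametrize the solutions of \eqref{E:xyz} as products of two linear forms, perform a linear substitution (you use $p=M(u+v),\,q=v$, the paper uses $m\mapsto m+ln$) to force non-negative coefficients, and then apply part~\eqref{I:MainB2} of \cref{T:MainB} with $R_2$ a ratio of two independent linear forms and the simple-zero condition on $R_1$. The only cosmetic differences are that the paper factors out $y$ rather than $x$ and packages the reduction as the intermediate \cref{P:MainRecurrence}, whereas you invoke \cref{T:MainB} directly.
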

Let us first see why  \cref{C:MainRecurrence} is a consequence of part~\eqref{I:MainB2} of \cref{T:MainB}
in our working example of the equation \eqref{E:xyxz}. We start with the solutions \eqref{E:xyxzpar} and  perform the substitution $m\mapsto m+n,  n\mapsto n$ to get
  the solutions
$(m+n)^2, \, (m+n)n, \, m(m+2n)$
 with  non-negative coefficients. Using these solutions in the place of $x,y,z$,   the recurrence property in \eqref{E:Txyz} can be rewritten as (after factoring out $T^{-1}_{(m+n)^2}$)
$$\mu(A\cap T^{-1}_{m(m+2n) (m+n)^{-2}}A\cap T^{-1}_{n(m+n)^{-1}}A )>0.
$$
Note that  $R_1(m,n):=m(m+2n) (m+n)^{-2}$ and $R_2(m,n):=n(m+n)^{-1}$ are rational polynomials that factor linearly, $R_2$ is a quotient of two independent linear forms,
$R_1$ is not of the form $c\, n^k(m+n)^lR^r$ for any $c\in \Q_+$,  rational polynomial $R$,  $r\geq 2$, and
 $(0,1)$ is a simple zero of $R_1$ and $R_2(0,1)=1$.
Thus, part~\eqref{I:MainB2} of \cref{T:MainB} applies for $T_{1,n}=T_{2,n}:=T_n$ and gives the required multiple recurrence property.

To deal with   \eqref{E:xyz},  we will use the
following solutions:
$$
x:=m( em+f n), \quad y:=n(e m+fn), \quad z:=(m-n)(am-bn).
$$
Suppose that $l \in \N$ is such that  $le+f>0$ and $la-b\geq 0$, then by  substituting $m\mapsto m+ln, n\mapsto n,$ we get the solutions (with non-negative coefficients)
\begin{equation}\label{E:mln}
x= (m+ln)(e m+(le+f) n), \quad  y=n(e m+(le+f) n), \quad  z=(m+ (l-1)n)(am+(la-b)n).
\end{equation}
Therefore, to prove \cref{C:MainRecurrence} it  suffices to establish the following result:
\begin{proposition}\label{P:MainRecurrence}
	Let $(X,\mathcal{X},\mu, T_n)$ be a finitely generated  multiplicative action and $A\in \CX$ with
	$\mu(A)>0$. Suppose that the linear forms
	$L_1,L_2,L_3, L_4, L_3-L_4$ have non-negative coefficients and
each of the three pairs $(L_3,L_4)$, $(L_1,L_3-L_4)$, $(L_2, L_3-L_4)$ consists of  independent linear forms.
	%% Let also  $ \gamma$ be a positive rational.
	Then
		$$
	\liminf_{N\to \infty}\E_{m,n\in [N]}\,\mu(T^{-1}_{
	%\gamma
	 L_1(m,n)\cdot (L_3-L_4)(m,n)}A \cap T^{-1}_{L_2(m,n)\cdot L_3(m,n)} A\cap T^{-1}_{L_2(m,n)\cdot L_4(m,n)}A)>0.
	$$
\end{proposition}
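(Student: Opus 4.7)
The plan is to deduce the proposition directly from part~\eqref{I:MainB2} of Theorem~\ref{T:MainB} after a multiplicative change of variables. First, observe that on $\N^2$ each of the iterates $L_1\cdot (L_3-L_4)$, $L_2\cdot L_3$, $L_2\cdot L_4$ is a positive integer: each of the five linear forms $L_1,L_2,L_3,L_4,L_3-L_4$ has non-negative integer coefficients and is not identically zero (the independence of $L_3,L_4$ rules out $L_3\equiv L_4$), so each is strictly positive on $\N^2$. Using the $T$-invariance of $\mu$ and the extension of the action to $\Q^*_+$, I apply $T_{L_2(m,n) L_4(m,n)}$ to each set in the intersection to rewrite
$$\mu\bigl(T^{-1}_{L_1(L_3-L_4)}A\cap T^{-1}_{L_2L_3}A\cap T^{-1}_{L_2L_4}A\bigr)=\mu\bigl(A\cap T^{-1}_{R_1(m,n)}A\cap T^{-1}_{R_2(m,n)}A\bigr),$$
where
$$R_1:=\frac{L_1(L_3-L_4)}{L_2L_4}\quad\text{and}\quad R_2:=\frac{L_3}{L_4}$$
are rational polynomials that factor linearly.

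Next I would verify the hypotheses of Theorem~\ref{T:MainB}\eqref{I:MainB2} with $T_{1,n}=T_{2,n}:=T_n$. The form $R_2=L_3L_4^{-1}$ is a product of powers of the independent pair $(L_3,L_4)$. The key observation for the condition on $R_1$ is that $L_3-L_4$ appears in the unique linear factorization of $R_1$ with exponent exactly $+1$: by the three pairwise independence hypotheses, $L_3-L_4$ is non-proportional to each of $L_1,L_2,L_4$, and it is non-proportional to $L_3$ because $(L_3,L_4)$ is independent. Hence no cancellation with the other factors occurs, and $R_1$ cannot be written as $c'\,L_3^{k'}L_4^{l'}R^r$ with $r\geq 2$, since the multiplicity of $L_3-L_4$ on the right-hand side would have to be a non-zero multiple of $r$. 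For the simple-zero condition, I pick any $(m_0,n_0)\in\Z^2\setminus\{(0,0)\}$ on $\ker(L_3-L_4)$; the same independence statements force $L_1(m_0,n_0),L_2(m_0,n_0),L_4(m_0,n_0)\neq 0$, so $R_2(m_0,n_0)=1$ and $R_1(m_0,n_0)=0$ with the exponent-$1$ factor $L_3-L_4$ vanishing at $(m_0,n_0)$.

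Granted the hypotheses, Theorem~\ref{T:MainB}\eqref{I:MainB2} produces, for each $\varepsilon\in(0,(\mu(A))^3)$, a set $E_\varepsilon\subseteq\N^2$ of positive lower density on which $\mu(A\cap T^{-1}_{R_1(m,n)}A\cap T^{-1}_{R_2(m,n)}A)\geq(\mu(A))^3-\varepsilon$. Averaging and using the change of variables,
$$\liminf_{N\to\infty}\E_{m,n\in[N]}\,\mu\bigl(T^{-1}_{L_1(L_3-L_4)}A\cap T^{-1}_{L_2L_3}A\cap T^{-1}_{L_2L_4}A\bigr)\geq\bigl((\mu(A))^3-\varepsilon\bigr)\cdot\liminf_{N\to\infty}\frac{|E_\varepsilon\cap[N]^2|}{N^2}>0,$$
which is the desired conclusion. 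The main technical point, and essentially the only delicate one, is the factorization analysis for $R_1$: the various coincidences that could occur between $L_1,L_2$ and $L_3,L_4$ (none of which are excluded by the hypotheses) must not destroy the $+1$-exponent of $L_3-L_4$, and it is precisely the three independence hypotheses $(L_3,L_4)$, $(L_1,L_3-L_4)$, $(L_2,L_3-L_4)$ that guarantee this.
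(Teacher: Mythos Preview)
Your proof is correct and follows essentially the same route as the paper's: you rewrite the triple intersection using $T_{L_2L_4}$ to land on $R_1=L_1(L_3-L_4)L_2^{-1}L_4^{-1}$ and $R_2=L_3L_4^{-1}$, verify exactly the same factorization and simple-zero hypotheses (via the independence of $L_3-L_4$ from $L_1,L_2,L_3,L_4$), and invoke Theorem~\ref{T:MainB}\eqref{I:MainB2}. The only cosmetic difference is that you spell out the final positivity bound via the density of $E_\varepsilon$, which the paper leaves implicit.
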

\begin{remark}
 Using the parametrization \eqref{E:mln}, we get that   	\cref{C:MainRecurrence} follows by taking
$L_1(m,n):=am+(la-b)n$, $L_2(m,n):= e m+(le+f) n$, $L_3(m,n):=m+ln$, $L_4(m,n):=n$,
and verifying that if   $a,e\in \N$, $e+f\neq 0$,  $a\neq b$, and $l\in \N$ is sufficiently large so   that $le+f\geq 0$, $la-b\geq 0$, then the assumptions of \cref{P:MainRecurrence} are satisfied.
Finally, note that  $L_3(m,n)\neq L_4(m,n)$ for all $m,n\in \N$, and a simple calculation shows that the set
 $$
\{m,n\in\N\colon  L_1(m,n)\cdot (L_3-L_4)(m,n)= L_2(m,n)\cdot L_j(m,n)\}
 $$
 has  density $0$ when $j=4$, and when $j=3$ it has  density $0$
  unless  $a=d=e=-f,b=0$.

%$\bullet$ The conclusion fails if $\gamma =0$ for some choices of  %$L_2,L_3,L_4$.

%$\bullet$  The second linear form on the first iterate of $T$ %cannot be   arbitrary. To see this,
% note that  $x=(m+n)n$, $y=(m+n)m$, $z=3mn$ satisfy the %equation $yz+xz=3xy$, which has  no solution on the set of %integers that have  first non-zero digit equal to $1$ in the base %$7$ expansion. This is  a level set of a modified Dirichlet %character $f$, thus, for the finitely generated  action defined %by a multiplicative rotation by $f$, we get non-recurrence.
\end{remark}
\begin{proof}
To see how this follows from part~\eqref{I:MainB2} of \cref{T:MainB}, note that the asserted recurrence property can be rewritten as
$$
\liminf_{N\to \infty}\E_{m,n\in [N]}\,\mu(A\cap T^{-1}_{R_1(m,n)}A \cap T^{-1}_{R_2(m,n)} A)>0,
$$
where $
R_1(m,n):= L_1(m,n)\cdot (L_3-L_4)(m,n)\cdot L_2^{-1}(m,n)\cdot L_4^{-1}(m,n)$, and  $R_2(m,n):=L_3(m,n)\cdot L_4^{-1}(m,n)$.
Since  $L_3-L_4$ is not a rational multiple of $L_j$ for $j=1,2,3,4$, we  get    that $R_1$ is not of the form $c\,  L_3^k\cdot L_4^l\cdot R^r$ for any   $c\in \Q_+$,  $k,l\in \Z$,  rational polynomial $R$, and $r\geq 2$.  Also, since $L_3,L_4$ are  independent and have non-negative coefficients, there exist $m_0,n_0\in \Z$ such that $L_3(m_0,n_0)=L_4(m_0,n_0)$, hence $R_2(m_0,n_0)=1$.
%{\bf We cannot assume that they are positive though,  take for example  %$m+2n,2m+n$ equal when $m=-n$ and the pair $n,-n$ is not positive }.
We also have  $(L_3-L_4)(m_0,n_0)=0$ and our three independence assumptions give that no other linear form appearing in the factorization of $R_1$ vanishes at $(m_0,n_0)$.
 Hence, the assumptions of part~\eqref{I:MainB2} of \cref{T:MainB}
 are satisfied,  giving the claimed recurrence property.
\end{proof}
\subsection{Multiple recurrence and convergence  for additive actions}\label{SS:additive}
We say that the sequence  $a\colon \N\to \Z$ is {\em completely additive} if $a(mn)=a(m)+a(n)$ for every $m,n\in \N$, and {\em finitely generated} if the set $\{a(p)\colon p\in \P\}$ is finite.
We  extend the sequence to $\Q^*_+$ by letting $a(m/n):=a(m)-a(n)$ for all $m,n\in\N$.

We make the following observation, the proof of which is rather straightforward, so we omit it (see also \cite[Corollary~1.19]{BR22} for a related observation).
\begin{lemma}\label{L:fgchar}
	Let $(X,\CX,\mu,T_n)$ be a multiplicative action. Then  the action is finitely generated if and only if there exist $\ell\in \N$, commuting  invertible measure preserving transformations $S_1,\ldots, S_\ell\colon X\to X$, and finitely generated  completely additive sequences $a_1,\ldots, a_\ell\colon \N\to \Z$, such that
	\begin{equation}\label{E:Tnproduct}
			T_n=S_1^{a_1(n)}\cdots S_\ell^{a_\ell(n)}, \quad n\in \N.
	\end{equation}
\end{lemma}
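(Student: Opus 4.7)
The plan is to prove the two implications by reducing to the prime factorization of integers and exploiting that multiplicative actions are determined by their values on primes. The argument is essentially bookkeeping, so I would split it into a short forward direction and a short reverse direction, with the only non-trivial observation being that the $T_p$'s automatically commute among themselves.

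For the ``only if'' direction, assume the action $(X,\CX,\mu,T_n)$ is finitely generated and enumerate the finite set $\{T_p\colon p\in\P\}$ as $\{S_1,\ldots,S_\ell\}$. Each $S_j$ is automatically an invertible measure preserving transformation, and the collection $\{S_1,\ldots,S_\ell\}$ is commuting because for any primes $p,q$ we have $T_pT_q=T_{pq}=T_{qp}=T_qT_p$. For each prime $p$ pick $j(p)\in[\ell]$ with $T_p=S_{j(p)}$ and define
\begin{equation*}
a_j(n):=\sum_{p\in\P\colon j(p)=j} v_p(n),\qquad n\in\N,
\end{equation*}
where $v_p(n)$ is the $p$-adic valuation of $n$. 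Since $v_p$ is completely additive so is $a_j$, and since $a_j(p)\in\{0,1\}$ for every prime $p$ the sequence $a_j$ is finitely generated. Using the multiplicative property $T_{mn}=T_m\circ T_n$, the identity $T_1=\text{id}$, and the prime factorization $n=\prod_p p^{v_p(n)}$, one gets $T_n=\prod_p T_p^{v_p(n)}=\prod_{j=1}^\ell S_j^{a_j(n)}$, and the order of the product is irrelevant because the $S_j$'s commute.

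For the ``if'' direction, suppose we have a representation as in \eqref{E:Tnproduct} with finitely generated completely additive $a_j$'s and commuting invertible measure preserving $S_j$'s. First check that the formula actually defines a multiplicative action: from $a_j(1)=0$ we get $T_1=\text{id}$, and from $a_j(mn)=a_j(m)+a_j(n)$ together with the commutativity of the $S_j$'s we get $T_{mn}=T_m\circ T_n$. To see that the action is finitely generated, note that by assumption the set $\{(a_1(p),\ldots,a_\ell(p))\colon p\in\P\}\subset\Z^\ell$ is finite, and $T_p$ depends only on this tuple via the formula $T_p=S_1^{a_1(p)}\cdots S_\ell^{a_\ell(p)}$, so $\{T_p\colon p\in\P\}$ is finite.

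There is really no main obstacle here; the entire content of the lemma is packaging the prime factorization into the multiplicative-action language. The only mildly delicate points to flag in the write-up are (i) that commutativity of the $S_j$'s comes for free from the definition of a multiplicative action, so one does not have to impose it artificially, and (ii) that the extension $T_{m/n}=T_m\circ T_n^{-1}$ to $\Q^*_+$ then automatically agrees with $\prod_j S_j^{a_j(m/n)}$ under the convention $a_j(m/n)=a_j(m)-a_j(n)$ stated at the beginning of \cref{SS:additive}.
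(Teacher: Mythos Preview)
Your proof is correct and complete. The paper itself omits the proof entirely, calling it ``rather straightforward'' and pointing to \cite[Corollary~1.19]{BR22} for a related observation, so your write-up is precisely the kind of bookkeeping argument the author had in mind; in particular, your observation that the $S_j$'s commute automatically because $T_pT_q=T_{pq}=T_qT_p$ is the one point worth making explicit, and you did.
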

A special case of interest is when  $T_n=T^{\Omega(n)}$, where $\Omega(n)$ is the number of prime factors of $n$ counting multiplicity.
%,  and, more generally,
%when $T_n=T^{a(n)}$, where
%$a\colon \N\to \Z$ is an arbitrary finitely %generated completely additive sequence.

 %\begin{proof}
 %The converse direction is immediate, so we prove the forward direction.
   	
 %Suppose  that the action $(X,\mu,T_n)$ is finitely generated.  Then  %there exist invertible measure preserving transformations $S_1,\ldots, %S_\ell\colon X\to X$ such that
 %	$
 %	\{T_p\colon p\in \P\}=\{S_1,\ldots, S_\ell\}.
 %	$
 %	Since $T_pT_q=T_qT_p$ for every $p,q\in \P$, we get that the %transformations $S_1,\ldots, S_\ell$ commute.  	
%If  	$o_p(n)$ denotes the highest power of $p$ that divides $n$, we %let
 %		$$
 %	a_j(n):=\sum_{p\in \P\colon T_p=S_j}o_p(n),  \quad j\in [\ell].
 %	$$
%Using that $T_mT_n=T_nT_m=T_{mn}$ for all $m,n\in \N$,   it is easy %to verify
%that the sequences $a_1,\ldots, a_\ell\colon \N\to \Z_+$ are  completely %additive sequences and
%\eqref{E:Tnproduct} holds. Finally, since $a_j(p)\in \{0,1\}$ for $j\in %[\ell]$  and $p\in \P$, they are also  finitely generated.
% \end{proof}
%Conjecture~\ref{Con1} seems equally challenging in the following two special %cases:
%$(X,\mu,T_1,\ldots, T_\ell)$ is   an (additive) system, $a_1,\ldots, %a_\ell\colon \N\to \Z_+$ are (finitely generated) completely additive %sequences, and
%$T_{j,n}=T_j^{a_j(n)}$, $j=1,\ldots, \ell$.  In this case we
%we deal with averages of the form
%$$
%\E_{m,n\in [N]} \, T_1^{a_1(P_1(m,n))} F_1\cdots %T_\ell^{a_\ell(P_\ell(m,n))}F_\ell
%$$
	Using \cref{L:fgchar} and
	Theorems~\ref{T:MainA} and \ref{T:MainB},  we  derive    the following multiple recurrence and mean convergence result for (additive) measure preserving systems:
	\begin{corollary}\label{C:additive}
		Let $(X,\CX,\mu)$ be a probability space and $T_1,\ldots, T_\ell\colon X\to X$ be invertible measure preserving transformations (not necessarily commuting). Let also $a_1,\ldots, a_\ell\colon \N\to \Z$ be
		finitely generated completely additive sequences.
		\begin{enumerate}
			\item 	\label{I:additive1}  If $R_1,\ldots, R_\ell$  satisfy the assumptions of the convergence results in  \cref{T:MainA} or \cref{T:MainB}, then for every %$2$-dimensional grid $\Lambda$  and
			$F_1,\ldots, F_\ell\in L^\infty(\mu)$ the averages
			$$
			\E_{m,n \in [N]}\, %{\bf 1}_\Lambda(m,n)\cdot
			 T_1^{a_1(R_1(m,n))}\, F_1\cdot \ldots\cdot T_\ell^{a_\ell(R_\ell(m,n))}F_\ell
			$$
			converge in $L^2(\mu)$. Furthermore,   under the assumptions of \cref{T:MainA},
			 if all actions $T_j^{a_j(n)}$, $j\in [\ell]$, are aperiodic, then  the  limit is equal to $\int F_1\, d\mu\cdots \int F_\ell\, d\mu$.
			
			\item  	\label{I:additive2}  If $R_1,\ldots, R_\ell$  satisfy the assumptions of the recurrence results in  \cref{T:MainA} or \cref{T:MainB},  then
			for every $A\in \CX$  and   $\varepsilon>0$, the set 		
				$$
		\{(m,n)\in \N^2\colon 	 \mu(A\cap T_1^{-a_1(R_1(m,n))}A\cap \cdots\cap T_\ell^{-a_\ell(R_\ell(m,n)) }A)\geq (\mu(A))^{\ell+1}-\varepsilon\}
			$$
		has positive lower density.
	%	\footnote{Using Furstenberg's Correspondence Principle~\cite{Fu77},
		%	we get  applications related to configurations that can be found within subsets of the integers (or $\Z^d$) with positive upper density.}
			% these consequences are  rather standard and  %straightforward to derive, so we omit them.}
		\end{enumerate}
	\end{corollary}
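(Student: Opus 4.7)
The plan is to deduce \cref{C:additive} directly from Theorems~\ref{T:MainA} and~\ref{T:MainB} by associating, to each completely additive sequence $a_j$ and transformation $T_j$, the multiplicative action
$$
T_{j,n} := T_j^{a_j(n)}, \quad n \in \N.
$$
Since $a_j(1) = 0$ and $a_j(mn) = a_j(m) + a_j(n)$, we have $T_{j,1} = \id$ and $T_{j,mn} = T_{j,m} \circ T_{j,n}$, so $(X,\CX,\mu,T_{j,n})$ is indeed a multiplicative action. The finite generation hypothesis on $a_j$ means $\{a_j(p) : p \in \P\}$ is finite, hence $\{T_{j,p} : p \in \P\}$ is a finite subset of the cyclic group generated by $T_j$, and the multiplicative action is finitely generated. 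Crucially, neither \cref{T:MainA} nor \cref{T:MainB} assumes commutativity across the $\ell$ factors, so the non-commutativity of $T_1, \ldots, T_\ell$ poses no obstacle; the only commutativity used internally is the trivial identity $T_{j,m} \circ T_{j,n} = T_{j,n} \circ T_{j,m}$, which holds because both sides are powers of $T_j$.

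Next I would verify that the extension to $\Q^*_+$ aligns correctly. By definition, $T_{j, m/n} = T_{j,m} \circ T_{j,n}^{-1} = T_j^{a_j(m) - a_j(n)} = T_j^{a_j(m/n)}$, so whenever a rational polynomial $R$ takes a value in $\Q^*_+$ we have the operator identity $T_{j, R(m,n)} = T_j^{a_j(R(m,n))}$. Consequently, the ergodic averages
$$
\E_{m,n \in [N]} \, T_{1, R_1(m,n)} F_1 \cdots T_{\ell, R_\ell(m,n)} F_\ell
$$
considered in Theorems~\ref{T:MainA} and~\ref{T:MainB} agree pointwise (hence in $L^2(\mu)$) with the averages in part~\eqref{I:additive1}, and similarly the sets of $(m,n)$ appearing in the recurrence statement~\eqref{I:additive2} coincide with those in \cref{T:MainA}\eqref{I:MainA2} and \cref{T:MainB}\eqref{I:MainB2}.

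With these identifications, part~\eqref{I:additive1} is immediate from the $L^2$-convergence halves of \cref{T:MainA}\eqref{I:MainA1} and \cref{T:MainB}\eqref{I:MainB1}, and part~\eqref{I:additive2} from the lower bounds in \cref{T:MainA}\eqref{I:MainA2} and \cref{T:MainB}\eqref{I:MainB2}. For the \emph{furthermore} clause, aperiodicity of the multiplicative action $T_{j,n} = T_j^{a_j(n)}$ is precisely the condition assumed, so the product-of-integrals conclusion of \cref{T:MainA}\eqref{I:MainA1} transports verbatim. There is no genuine difficulty beyond bookkeeping: the corollary is a translation exercise between the multiplicative and additive languages, with all of the dynamical content carried by the two main theorems and by \cref{L:fgchar}, which confirms that the class of finitely generated multiplicative actions is exactly the class of products $T_j^{a_j(n)}$ arising from finitely generated completely additive sequences.
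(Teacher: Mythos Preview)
Your proposal is correct and takes essentially the same approach as the paper, which simply states that the corollary follows from \cref{L:fgchar} together with Theorems~\ref{T:MainA} and~\ref{T:MainB} without spelling out the details. You have faithfully filled in the bookkeeping: defining $T_{j,n}=T_j^{a_j(n)}$, verifying finite generation and the extension to $\Q^*_+$, and noting that non-commutativity across the $\ell$ actions is permitted.
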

	Using Furstenberg's Correspondence Principle~\cite{Fu77},
	part~\eqref{I:additive2} gives applications related to configurations that can be found within subsets of $\Z^\ell$ with positive upper density.
	For instance, we get that if $\Lambda$ is a subset of $\Z$ with positive upper density and  $a_1,a_2, a_3, a_4\colon \N\to \Z$ are
	finitely generated completely additive sequences, then there exist $x,m,n\in \N$ for which 
	$$
	x,\, x+a_1(m),\, x+a_2(n),\,  x+a_3(m+n),\,  x+a_4(m+2n)\in \Lambda. 
	$$
	
Combining part~\eqref{I:additive1} and our remarks in \cref{SSS:aperiodic} below, we get the following:
\begin{corollary}\label{C:Omega}
	Let $T_1,\ldots, T_\ell$ be ergodic measure preserving transformations acting on a probability space $(X,\CX,\mu)$ and $L_1,\ldots,L_\ell\in \Z[m,n]$ be pairwise independent linear forms with non-negative coefficients. Then
	\begin{equation}\label{E:TjOm}
		\lim_{N\to\infty} \E_{m,n\in [N]} \prod_{j=1}^\ell T_j^{\Omega(L_j(m,n))}F_j =\prod_{j=1}^\ell \int F_j\, d\mu
	\end{equation}
	in $L^2(\mu)$ for all $F_1,\ldots, F_\ell \in L^\infty(\mu)$.\footnote{Our method also gives that for general  systems we have convergence to $\prod_{j=1}^\ell \E(F_j|\CI_{T_j})$ in  \eqref{E:TjOm}.}
\end{corollary}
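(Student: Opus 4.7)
The plan is to deduce \cref{C:Omega} directly from part \eqref{I:additive1} of \cref{C:additive} applied to the completely additive sequence $\Omega$. First I would observe that $\Omega$ is completely additive by definition, and since $\Omega(p)=1$ for every prime $p$, the set $\{\Omega(p):p\in\P\}=\{1\}$ is finite, so $\Omega$ is finitely generated in the sense of \cref{SS:additive}. The linear forms $L_1,\ldots,L_\ell$ are pairwise independent with non-negative coefficients, so they satisfy the hypotheses of the convergence part of \cref{T:MainA}. Consequently, part \eqref{I:additive1} of \cref{C:additive} with $a_1=\cdots=a_\ell=\Omega$ and $R_j=L_j$ yields the $L^2(\mu)$-convergence of the averages in \eqref{E:TjOm}.

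To identify the limit as $\prod_{j=1}^\ell \int F_j\, d\mu$, I would invoke the last clause of \cref{C:additive}\eqref{I:additive1}, which demands that each multiplicative action $n\mapsto T_j^{\Omega(n)}$ be aperiodic in the sense of \cref{SSS:aperiodic}. By multilinearity and approximation it suffices to treat the case where each $F_j$ is orthogonal to the constants, and to show then that the contribution vanishes. Applying the spectral theorem to the ergodic transformation $T_j$, the spectral measure of such $F_j$ is supported on $\S^1\setminus\{1\}$, which reduces the verification to checking that, for every $\lambda\in\S^1\setminus\{1\}$, the scalar multiplicative function $n\mapsto \lambda^{\Omega(n)}$ is aperiodic.

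This last reduction is where analytic number theory enters. Writing $\lambda=e(t)$ with $t\in\R\setminus\Z$, the multiplicative function $f(n):=e(t\,\Omega(n))$ satisfies $f(p)=e(t)$ for every prime $p$. Hence for every Dirichlet character $\chi$ and every $s\in\R$ the pretentious distance
$$\sum_{p}\frac{1-\Re(e(t)\bar\chi(p)\, p^{-is})}{p}$$
diverges, so $f$ is aperiodic in the standard number-theoretic sense; by Hal\'asz's theorem its averages along every arithmetic progression tend to $0$, which is precisely the aperiodicity condition needed for $n\mapsto T_j^{\Omega(n)}$.

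The main obstacle is not analytic but bookkeeping: one must confirm that the definition of aperiodic for a multiplicative action given in \cref{SSS:aperiodic} coincides, after spectral decomposition, with the non-pretentiousness of the associated scalar multiplicative functions, and that the spectral reduction commutes with the uniformity in the arithmetic-progression parameters appearing in that definition. Given the paper's central dichotomy between pretentious and aperiodic multiplicative functions, this identification should be immediate from the way aperiodicity is formulated in \cref{SSS:aperiodic}.
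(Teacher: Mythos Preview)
Your approach is essentially the paper's: deduce from \cref{C:additive}\eqref{I:additive1} and verify that each action $n\mapsto T_j^{\Omega(n)}$ is aperiodic. The paper simply cites this aperiodicity from \cite[Corollary~1.18]{Cha24} (recorded in \cref{SSS:aperiodic}), whereas you sketch a direct proof via the spectral theorem for $T_j$ and the aperiodicity of the scalar function $\lambda^{\Omega(n)}$ for $\lambda\neq 1$. Your direct argument is correct; the divergence of $\D(f,\chi\cdot n^{is})$ for $f(p)\equiv\lambda$ follows from the standard fact that $\sum_p \chi(p)p^{-is}/p$ is bounded whenever $(\chi,s)\neq(\chi_0,0)$.

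Two small remarks. First, the ``multilinearity'' you invoke is really just linearity in a single $F$: aperiodicity of the action $T_j^{\Omega(n)}$ is a statement about one function at a time, so you only need $F=\int F + (F-\int F)$ and treat the mean-zero part. Second, the ``bookkeeping obstacle'' you flag is not an obstacle: the aperiodicity condition in \cref{SSS:aperiodic} is stated for each fixed pair $(a,b)$, with no uniformity, so the spectral identity
\[
\Bigl\|\E_{n\in[N]}\,T_j^{\Omega(an+b)}F\Bigr\|_{L^2(\mu)}^2=\int_{\S^1}\Bigl|\E_{n\in[N]}\,\lambda^{\Omega(an+b)}\Bigr|^2\,d\sigma_F(\lambda)
\]
together with bounded convergence handles each $(a,b)$ directly.
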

 If $\ell=1$, the system is uniquely ergodic, and $F_1\in C(X)$,  then ~\cite{BR22} gives that \eqref{E:TjOm}  holds pointwise.
It seems likely that  a similar property holds for all $\ell\in \N$.

Identity \eqref{E:TjOm} is highly non-trivial even if all the $T_j$'s are rotations on $\{-1,1\}$; it recovers some known uniformity properties of  the Liouville function~\cite[Proposition~9.1]{GT09}.

 We get another interesting application of \cref{C:additive}
by
 letting $T_jx:=b_jx\pmod{1}$, $j\in [\ell]$, on $\T$ with $m_\T$, where $b_1,\ldots, b_\ell\in \N$ are not necessarily distinct.  As noted in \cref{SSS:aperiodic}, all the actions
 $T_j^{\Omega(n)}$, $j\in [\ell]$, are
  aperiodic, so part~\eqref{I:additive1} of \cref{C:additive} easily gives that every sequence $N_k\to \infty$ has a subsequence $N_k'\to \infty$ such that for almost every $x\in \T$ the following holds:
If  $L_1,\ldots, L_\ell$ are pairwise independent linear forms with non-negative coefficients, then for all $F_1,\ldots, F_\ell\in C(\T)$ we have
$$
\lim_{k\to \infty} \E_{m,n\in [N'_k]} \, F_1(b_1^{\Omega(L_1(m,n))}x)\cdots F_\ell(b_\ell^{\Omega(L_\ell(m,n))}x)=\int F_1\, dm_\T\cdots \int F_\ell\, dm_\T.
$$
We deduce that  if  $\text{dig}_b(x;n)$ denotes the $n$-th digit (after the decimal point) of $x\in[0,1)$ in base $b$, and $d_{\bf N'}(E)$ denotes the density of
a subset $E$ of $\N^2$  along the squares $[N_k']\times [N_k']$, then
for almost every $x\in [0,1)$ and for all  $c_j\in \{0,\ldots, b_j-1\}$, $j\in [\ell]$,
we have
\begin{equation}\label{E:digits}
	d_{\bf N'}\big((m,n)\in \N^2\colon \text{dig}_{b_j}\big(x;\Omega(L_j(m,n))\big)=c_j, j\in [\ell]\big)=(b_1\cdots b_\ell)^{-1}.
\end{equation}

Finally, using part~\eqref{I:additive2} of \cref{C:additive}, we can use general finitely generated completely additive sequences $(a_j(n))$ instead of $(\Omega(n))$. For example, we get that if $b:=\max(b_1,\ldots, b_\ell)$, then for almost every $x\in [0, b^{-1})$ (then $\text{dig}_{b_j}\big(x;1)=0$, $j\in[\ell]$)
 if the pairwise independent linear forms $L_1,\ldots, L_\ell$ satisfy the assumptions of part~\eqref{I:MainA2} in  \cref{T:MainA}, then
the  following set has positive upper density (with respect to $[N]\times [N]$)
$$
\big\{(m,n)\in\N^2\colon  \text{dig}_{b_1}\big(x;a_1(L_1(m,n))\big)=\cdots =\text{dig}_{b_\ell}\big(x;a_\ell(L_\ell(m,n))\big)=0\big\}.
 $$
% has positive upper density (with respect to the squares %$[N]\times [N]$).
	
	\section{Proof strategy and main ideas}
	Here we briefly sketch the main ideas of the proof of our main results.
	We deal with some special cases in order to avoid unnecessary technicalities.
		\subsection{Proof sketch of Theorems~\ref{T:Decomposition} and \ref{T:DecompositionGeneral}}\label{SS:proofsketchDecomposition}
		We start with  \cref{T:Decomposition} and  explain the main differences in the proof  of \cref{T:DecompositionGeneral}  at the end of this subsection.

		Let $(X,\CX, \mu,T_n)$ be a finitely generated multiplicative action and $F\in L^2(\mu)$. The  spectral measure $\sigma_F$ of $F$ is supported on the compact space of completely multiplicative functions with values on the unit circle and can be decomposed into two components $\sigma_p$ and $\sigma_a$,
		which are supported on the complementary Borel sets  of pretentious and aperiodic multiplicative functions.  The spectral theory of unitary operators gives that there exist  functions  $F_p$ and $F_a$ with spectral measures $\sigma_p$ and $\sigma_a$ respectively and $F=F_p+F_a$.
		Pretentious multiplicative functions are known to satisfy various concentration estimates  (as in \cref{T:concestlinear}), which we  show  are inherited  by  the iterates $T_nF_p$ (as in \cref{P:Concentrationfg}).
		We  also  show using \cref{P:Concentrationgeneral} that the subspace $X_p$ of all functions in $L^\infty(\mu)$ with spectral  measures supported on pretentious multiplicative functions is  a conjugation closed algebra, hence it defines a factor, and
		$F_p=\E(F|\CX_p)$.
		
		Finally, we need to verify the two  vanishing  properties for $F_a$.
		 Since the spectral measure of $F_a$ is supported on aperiodic multiplicative functions, the vanishing property of part~\eqref{I:Decomposition3} follows from the corresponding property of aperiodic multiplicative functions  \cite[Theorem~2.5]{FH17}. This property  holds  for general  multiplicative actions as well.

		The fact that $\nnorm{F_a}_{U^s}=0$ for every $s\in \N$ requires  more work and uses in an essential way that the action is finitely generated.  		
		The key step  is to establish the following inverse theorem: If   $F\in L^\infty(\mu)$ satisfies
		$\lim_{N\to\infty}\norm{\E_{n\in[N]}\,  T_{qn+r}F}_{L^2(\mu)}=0$ for every    $q,r\in \N$, then
				 $\nnorm{F}_{U^s}=0$ for every $s\in \N$.  The assumption is easily shown to be satisfied by $F_a$, since its spectral measure is supported on aperiodic multiplicative functions.
			To prove the inverse theorem, we roughly argue as follows: We can assume that $|F|=1$. If the conclusion fails, then for  every sequence $N_k\to\infty$  there is a further
		subsequence $N'_k\to\infty$  and  a set $E\in \CX$ with $\mu(E)>0$, such that
		\begin{equation}\label{E:FTnU}
		  \limsup_{k\to\infty} \norm{F(T_nx)}_{U^s[N'_k]}>0 \quad \text{for all } x\in E.
		\end{equation}
		Now the key point is that since the action $T_n$ is finitely generated, there exists a set of primes $P_0$ with positive upper relative density in $\P$ such that $T_p=T_p'$ for all $p,p'\in P_0$, and since $|F|=1$ we deduce that for every $x\in X$ the sequence $a_x(n):=F(T_nx)$ satisfies
		\begin{equation}\label{E:axp}
		a_x(pn)\cdot \overline{a_x(p'n)}=1 \quad \text{for all } p,p'\in P_0,\,  n\in \N,
		\end{equation}
which can be thought as a form of multiplicative structure. 		At this point the argument used to prove  Theorem~\cite[Theorem~2.5]{FH17}  applies with only minor changes. It enables us to deduce from \eqref{E:FTnU} and \eqref{E:axp}, that if  $N_k\to \infty$ there exist a    subsequence $N_k'\to \infty$ and  $q_0, r_0\in \N, \alpha_0\in \Q\cap (0,1]$,  such that
		$$
		\limsup_{k\to\infty} |\E_{n\in[\alpha_0 N'_k]}\,
		F(T_{q_0n+r_0}x)|>0
		$$
		 for all $x$ on a positive measure subset $E_0$ of $E$.
		This contradicts  that for every $q,r\in \N$
		we have assumed that
	$\lim_{N\to\infty}\norm{\E_{n\in[N]}\,  T_{qn+r}F}_{L^2(\mu)}=0,$ and completes the proof.

 The decomposition result of \cref{T:DecompositionGeneral}  covers  general multiplicative actions and can be treated in a similar way. There are a few differences though.  The first is that
we  restrict our averaging to the sets $\{n\in \N\colon |n^i-1|\leq \delta\}$ and then take $\delta\to 0^+$, the reason being that multiplicative rotations by $n^{it}$, $t\in \R$,  are ``pretentious actions'' that do not  satisfy any useful concentration results. The second is that  even for multiplicative rotations by $1$-pretentious multiplicative functions (as the one in part~\eqref{I:Ex3} of \cref{SS:examples}),   we  have concentration around the  average $\E_{n\in[N]}\,  T_{Qn+r}F_p$, which may not be convergent in $L^2(\mu)$.  Finally, unlike the finitely generated case, the mixed seminorms of $F_a$ do not always vanish, see the comment after \cref{T:DecompositionGeneral}.

The details of these arguments appear in \cref{S:mixedseminorms}.
	
	\subsection{Proof sketch of \cref{T:MainA}}\label{SS:proofsketchA}
	Given a  finitely generated multiplicative action $(X,\CX, \mu,T_n)$ and $F_1,F_2,F_3, F_4\in L^\infty(\mu)$,
	suppose we want to show that the averages
	\begin{equation}\label{E:F123}
	\E_{m,n\in[N]} \, T_{m}F_1\cdot T_{n}F_2\cdot T_{m+n}F_3 \cdot T_{m+2n}F_4
	%%\cdot T_{m+2n}F_4
	\end{equation}
	converge in $L^2(\mu)$. Using a pointwise
	estimate, we get in \cref{P:Characteristic1} that  the $L^2(\mu)$ norm of these averages is  controlled by the mixed seminorms $\nnorm{F_j}_{U^3}$ for $j=1,2,3,4$, in the sense  that if one of these seminorms is zero, then the averages \eqref{E:F123} converge to $0$ in $L^2(\mu)$.
Using this fact and  the decomposition result of  \cref{T:Decomposition}
(in particular the vanishing  property of part~\eqref{I:Decomposition2}) we get that it suffices to show convergence of the averages \eqref{E:F123} when each of the functions $F_j$ is replaced by $F_{j,p}:=\E(F_j|\mathcal{X}_p)$.
In this case, for highly divisible values of $Q\in \N$,  if we ignore a negligible error,  we can split the average  over $m,n\in [N]$ into subprogressions  $Qm+a,Qn+b$, where $(a,b)$ belong to a suitable subset of $[Q]\times [Q]$,  along which
the term $T_{m+2n}F_{4,p}$ gets concentrated around the function
$F_{a,b}:=\lim_{N\to\infty} \E_{n\in [N]}\,  T_{Qn+a+2b}F_{4,p}$, where crucially  the last limit can be shown to exist. The concentration estimates we use require some uniformity on the $a,b\in [Q]\times [Q]$ and are given in part~\eqref{I:clii} of \cref{T:concestlinear}.
 Likewise, we get similar concentration results  for the other terms $T_{m}F_{1,p}$,  $T_{n}F_{2,p}$,  $T_{m+n}F_{3,p}$. We easily deduce from the above that  the averages \eqref{E:F123}	converge in $L^2(\mu)$.

To prove  part~\eqref{I:MainA2} of  \cref{T:MainA} for the linear forms $m,n,m+n,m+2n$, arguing as above, it suffices to show that  for every $A\in \CX$ and  $\varepsilon>0$ there exists $Q\in \N$ such that
\begin{equation}\label{E:wantedQ}
\liminf_{N\to\infty} 	\E_{m,n\in[N]} \int F\cdot  T_{Qm+1}F_p\cdot T_{Qn}F_p\cdot T_{Q(m+n)+1}F_p \cdot T_{Q(m+2n)+1}F_p\, d\mu\geq (\mu(A))^5-\varepsilon,
\end{equation}
where $F:={\bf 1}_A$ and  $F_p:=\E({\bf 1}_A|\CX_p)$. For highly divisible values of $Q$, using the concentration estimates  in part~\eqref{I:Decomposition1} of \cref{T:Decomposition}, which  give concentration around the function $F_p$, we get that the previous limit  is approximately equal to
$$
\int  F\cdot F_p \cdot T_Q\tilde{F}_p \cdot F_p \cdot F_p\, d\mu,
$$
where $\tilde{F}_p:=\lim_{N\to\infty} \E_{n\in [N]}\,  T_nF_p$ and this last limit can be shown to exist.  The last expression has no obvious positiveness property because of the term
$T_Q\tilde{F}_p$. To deal with  this problem, we adapt the ``$Q$-trick'' from   \cite{FKM23} to our ergodic setting. We average over $Q$ along a multiplicative F\o lner  sequence, and using the mean ergodic theorem for multiplicative actions, we arrive at the limit ($\CI$ is as in \eqref{E:I})
$$
\int  F\cdot F_p \cdot  \E(F|\CI) \cdot F_p \cdot F_p\, d\mu \geq (\mu(A))^5,
$$
where the last bound follows from \cref{T:Chu} since  $F_p$ is also given by a conditional expectation. Combining the above, we get that there exists $Q\in \N$ such that \eqref{E:wantedQ} holds.

 The details of these arguments can be found in \cref{S:ProofThmA}.

		\subsection{Proof sketch of \cref{T:MainB}}\label{SS:proofsketchB}
		Given a  finitely generated multiplicative action $(X,\CX, \mu,T_n)$ and $F,G\in L^\infty(\mu)$,
	suppose we want to show that the averages
	\begin{equation}\label{E:F12}
		\E_{m,n\in[N]} \,  T_{(m+n)(m+2n)}F \cdot T_{mn}G
		%%\cdot T_{m+2n}F_4
	\end{equation}
	converge in $L^2(\mu)$ (one of the iterates could have more than two linear terms but not both).
	 We first reduce to the case where
	both functions are measurable with respect to the pretentious factor $X_p$. To do this
	we   use  a  two-dimensional  variant of the orthogonality criterion of
	 Daboussi-K\'atai (see \cref{L:Katai}). To  take advantage  of the fact that the
 action is finitely generated we use that if $|G|=1$, then for a set of primes $P_0$ with positive upper  relative density in $\P$ we have that $T_p$ is constant  for $p\in P_0$,  hence
	\begin{equation}\label{E:fgcancelF}
	T_{pqmn}G\cdot T_{p'q'mn}\overline{G}=1 \quad \text{for all } p,q,p',q'\in P_0, \, m,n\in\N.
	\end{equation}
	We deduce from \cref{L:Katai} that the averages \eqref{E:F12} converge to $0$ in $L^2(\mu)$, provided that
	\begin{equation}\label{E:fgcancel}
\lim_{N\to\infty} 	\E_{m,n\in[N]} \,  \int T_{(pm+qn)(pm+2qn)}F \cdot  T_{(p'm+q'n)(p'm+2q'n)}\overline{F}\, d\mu=0
	\end{equation}
for all  $p,q,p',q'\in P_0$ such that $p/q\neq p'/q'$ (if we did not have \eqref{E:fgcancelF}, then the last integral would have four instead of two terms). We use the decomposition result of  \cref{T:Decomposition} to write  $F=F_p+F_a$ where $F_p\in X_p$ and $F_a\in X_a$. Property \eqref{I:Decomposition3} of this result implies that the  needed vanishing property holds when  $F$ is replaced by $F_a$. Hence, the limiting behavior of the averages  in \eqref{E:F12} does not change if we replace $F$ by $F_p$.
Next,
	for highly divisible values of $Q\in \N$,  if we ignore a negligible error,  we can split the average  over $m,n\in [N]$ into subprogressions  $Qm+a,Qn+b$, where $(a,b)$ belong to a suitable subset of $[Q]\times [Q]$, and use   concentration estimates to replace the iterates of $ T_{(m+n)(m+2n)}F_p$ in \eqref{E:F12} by a constant function. This simplifies our setting substantially, since we only have  to deal with the iterates of $G$, in which case we can use
the decomposition result 	of \cref{T:Decomposition} and the  vanishing property of $G_a$ given in  part~\eqref{I:Decomposition3} of that result, to deduce that the limiting behavior of the averages  in \eqref{E:F12} does not change when  we replace $G$ by $G_p$.
Now that we are able to replace both $F$ and $G$ in \eqref{E:F12} by $F_p$ and $G_p$, the rest of the argument follows the proof of  the convergence part of \cref{T:MainA}.

If  none of the iterates is $mn$, as is the case when   $R_1(m,n):=(m+n)(m+2n)$ and $R_2(m,n):=(m+3n)(m+4n)$, a direct use of  \cref{L:Katai}  gives no simplification. In this case we  make an appropriate substitution that allows us to replace $R_1$ and $R_2$ by  $\alpha \, L_1(m,n)\cdot L_2(m,n)$ and $\beta \, mn$ respectively, where $\alpha,\beta$ are positive rational numbers and $L_1,L_2$ are linear forms with positive coefficients. Then we can argue as before.

To deal with recurrence, we argue as in the proof of \cref{T:MainA}, but the technical aspects are more delicate.  We use the  concentration estimates   of  part~\eqref{I:concseclinfinB} in   \cref{P:ConcRfg},   for the function $F_p$ where $F={\bf 1}_A$.  Depending on the situation, we  choose our averaging grid to be
  $\{(Qm+m_0, Qn+n_0)\colon m,n\in \N\}$ or    $\{(Q^2m-Q+m_0,Q^2n+n_0)\colon m,n\in \N\}$. The second option is used  in order to avoid  concentration estimates along  progressions $Qn+r$  with $r<0$, since they are not conveniently  expressible in the ergodic  setting.
In the case where $(m_0,n_0)$ is a simple zero of $R_1$
 we also have to use the $Q$-trick in a similar   way as  described before.

Finally, as an example for our proof strategy for  part~\eqref{I:MainB3} of \cref{T:MainB},
 suppose we want to prove positivity  for expressions of the form
$$
\mu(A\cap T_{1,m/(m+n)}A\cap T_{2,(m+2n)/(m+3n)}A)
$$
when only   the second action is finitely generated, in which case \cref{P:Characteristic2} still applies.
  In our analysis we   use the decomposition result of  \cref{P:ConcRgeneral}  and the concentration result    of \cref{T:DecompositionGeneral},
both of which deal with    general multiplicative actions, and  two additional difficulties arise.
 The first is that  simple ``pretentious actions'', such as  multiplicative rotations by $n^{it}$, do not obey any useful concentration. As  noted earlier,  we solve this problem by restricting our averaging, this time  to sets of the form $\{(m,n)\in \N^2\colon |(m/(m+n))^i-1|\leq \delta\}$ for small $\delta$.
% and then take $\delta\to 0^+$.
A more substantial problem is  that iterates of ``pretentious'' functions  concentrate around functions that depend on $N$ and may not converge in $L^2(\mu)$ as $N\to \infty$. This non-convergence causes serious technical issues that were not present in the finitely generated case. Fortunately, in the case of
 rational polynomials of degree $0$, we show in \cref{P:ConcRgeneral} that for suitable concentration results, these oscillatory factors conveniently cancel, thus alleviating the
 problem.

 The details of these arguments can be found in \cref{S:ProofThmB1}.

\section{Background and preliminary results}
In this section  we gather some necessary background notions, examples, results, and consequences from number theory and ergodic theory, which will be used later on.
\subsection{Pretentious and aperiodic multiplicative functions}
A function $f\colon \N\to \U$, where $\U$ is the complex unit disk,  is called {\em multiplicative} if
$$
f(mn)=f(m)\cdot f(n)  \quad \text {  whenever  }  (m,n)=1,
$$
and  {\em completely multiplicative} if the previous equation holds for all $m,n\in\N$.
We let
$$
\CM:=\{f\colon \N\to \S^1 \text{ is a completely multiplicative function}\}.
$$
We endow $\CM$ with the topology of pointwise convergence, which makes it a compact metric space.

\begin{definition}\label{D:preap}
A multiplicative function $f\colon \N\to\U$ is called {\em aperiodic} if
$$
\lim_{N\to\infty}\E_{n\in[N]}\, f(an+b)=0 \quad \text{for all } \, a\in \N, \, b\in\Z_+.
$$
It is called {\em pretentious} if it is not aperiodic.
It is called {\em finitely generated} if the set $\{f(p)\colon p\in \P\}$ is finite.

We denote  with $\CM_p$ and
 $\CM_{a}$ the complementary sets of  pretentious and aperiodic completely multiplicative functions $f\colon \N\to \S^1$.
 \end{definition}
 It is easy to verify that $\CM_p$ and $\CM_a$ are  Borel subsets of $\CM$   \cite[Lemma~3.6]{FKM23}.
%%We also denote by
 %% $\CM^{fg}$  the set of finitely generated completely %%multiplicative functions.

 \subsection{Pretentious multiplicative functions} \label{SS:pretentious}
 We give some examples of pretentious completely multiplicative functions, the first three are the most basic  and the last  is more sophisticated
 but useful to  keep in mind.
  \begin{enumerate}
 	\item \label{I:Ex1} (Dirichlet characters and their modifications)
 	A {\em Dirichlet character} is  a periodic completely multiplicative function. Then  $\chi(Qn+1)=1$ whenever $Q$ is a multiple of its smallest period $q$ and $\chi(p)=0$ whenever $p\mid q$. We let $\tilde{\chi}\colon \N\to \S^1$ be the completely multiplicative function  defined by $\tilde{\chi}(p):=\chi(p)$ if $p\nmid q$ and
 	 $\tilde{\chi}(p):=1$ when  $p\mid q$, and call it a {\em modified Dirichlet character}.
 	
 	\smallskip
 	
 		\item \label{I:Ex1'} (Finitely supported) If  $f(p)=1$ for all but finitely many primes we say that $f$ has {\em finite support}.

 		\smallskip
 	\item\label{I:Ex2}  (Archimedean characters) An {\em Archimedean character} is a completely multiplicative function of the form  $n\mapsto n^{it}$, $n\in \N$, where $t\in \R$. Then
 	$\E_{n\in [N]} \, n^{it}$ is asymptotically     equal to $N^{it}/(1+it)$.
 	
 	\smallskip
 	
 	\item\label{I:Ex3}  ($1$-pretentious oscillatory)  Take  $f(p):=e((\log\log{p})^{-1})$, $p\in \P$. Then $f$ is $1$-pretentious  in a sense that we will explain below,  but  it turns out that $f$ does not have a mean value, in fact, $\E_{n\in [N]}\, f(n)$ is asymptotically equal to $e(w(N))$, where
 	$w(N)=c\log\log\log N$ for some $c>0$.
 \end{enumerate}
Note that the first two examples  are finitely generated, and the last two  are not.

% It is a surprising and very useful  fact that   these are the only obstructions to aperiodicity.
A completely multiplicative function that is not aperiodic is, in a sense that will be made precise in Theorem~\ref{T:DH} below, close to
 a product of a Dirichlet character and an Archimedean character. To make sense of this principle we use the following notions
 introduced by Granville and Soundararajan~\cite{GS07,GS08}:
 \begin{definition}\label{D:Pretentious}
 	If  $f,g\colon \N\to \U$ are multiplicative functions, we define their distance  as
 	\begin{equation}\label{E:D}
 		\D^2(f,g):=\sum_{p\in \P} \frac{1}{p} \big(1-\Re(f(p)\cdot \overline{g(p)})\big).
 		%%	\D^2(f,g):=\sum_{p\in \P} \frac{1-\Re(f(p)\cdot \overline{g}(p))}{p}.
 	\end{equation}
 We say that  {\em $f$ pretends to be $g$} and write $f\sim g$  if  $\D(f,g)<\infty$.
% 	(Note that for $f,g\colon \N\to  \S^1$ we have
 %	$
 %	\D^2(f,g)=\frac{1}{2}\sum_{p\in \P} \frac{1}{p} |f(p)- g(p)|^2.
 %	$)
 \end{definition}	
 It can be shown (see e.g.  \cite{GS08}) %or \cite[Section~2.1.1]{GS23})
 that $\D$ satisfies the triangle inequality
 \begin{equation}\label{E:triangle}
 \D(f, g) \leq \D(f, h) + \D(h, g)
 \end{equation}
 for all  $f,g,h\colon \N\to \U$.
 %Also, for all  $f_1, f_2, g_1, g_2\colon \N\to \U$,  we have (see
 %\cite[Lemma~3.1]{GS07})
 %\begin{equation}\label{E:Df1f2}
% 	\D(f_1f_2, g_1g_2) \leq \D(f_1, g_1) + \D(f_2, g_2).
% \end{equation}

 %It follows from \eqref{E:Df1f2} that if $f_1\sim g_1$ and $f_2\sim %g_2$, then $f_1f_2\sim g_1g_2$.

 The next result is a simple  consequence of   the
 Wirsing-Hal\'asz mean value theorem \cite{Hal68}, which  characterizes completely multiplicative functions that have mean value  $0$.
 \begin{proposition}\label{T:DH}
 	A  completely multiplicative function  $f\colon \N\to \U$ is  pretentious if and only if   there exist
 	$t\in \R$ and Dirichlet character $\chi$ such that  $f\sim \chi\cdot n^{it}$.
 \end{proposition}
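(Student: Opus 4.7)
The plan is to deduce both directions from the Wirsing--Hal\'asz mean value theorem, which states that for a multiplicative function $g\colon\N\to\U$, the mean $\E_{n\in[N]}g(n)$ tends to $0$ if and only if $\D(g,n^{it})=\infty$ for every $t\in\R$; moreover, when $\D(g,n^{it})<\infty$ the mean admits a non-vanishing asymptotic behavior. Beyond this deep input, the argument reduces to finite Fourier analysis on $(\Z/a\Z)^{*}$.

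For the forward direction, suppose $f$ is pretentious, and fix $a\in\N$, $b\in\Z_+$ with $\E_{n\in[N]}f(an+b)\not\to 0$. Writing $d=\gcd(a,b)$ and using complete multiplicativity, one has $f(an+b)=f(d)\cdot f((a/d)n+(b/d))$, and since $|f(d)|=1$ we may assume $\gcd(a,b)=1$. Expressing the average as a sum over $m\equiv b\pmod{a}$ and applying the orthogonality of Dirichlet characters modulo $a$ yields
\[
\E_{n\in[N]}f(an+b)=\frac{a}{\phi(a)}\sum_{\chi\bmod a}\overline{\chi}(b)\,\E_{m\in[aN]}f(m)\chi(m)+o(1).
\]
Since the left-hand side does not tend to $0$ along some subsequence $N_k\to\infty$ and the right-hand side is a finite sum of characters, there exists a character $\chi_0\bmod a$ for which $\E_{m\in[aN_k]}f(m)\chi_0(m)\not\to 0$. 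Hal\'asz's theorem applied to the multiplicative function $f\chi_0$ produces $t\in\R$ with $\D(f\chi_0,n^{it})<\infty$. Direct inspection of the defining sums shows $\D(f\chi_0,n^{it})=\D(f,\bar\chi_0\cdot n^{it})$ term by term (both contribute $1/p$ at primes dividing $a$, where $\chi_0$ vanishes). Hence $f\sim\bar\chi_0\cdot n^{it}$.

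For the backward direction, suppose $f\sim\chi\cdot n^{it}$ for some Dirichlet character $\chi$ modulo $q$ and some $t\in\R$, so that $\D(f\bar\chi,n^{it})<\infty$ and Hal\'asz's theorem gives $\E_{m\in[N]}f(m)\bar\chi(m)\not\to 0$. Assume, towards a contradiction, that $f$ is aperiodic; then $\E_{n\in[N]}f(qn+b)\to 0$ for every $b$ coprime to $q$. Inserting this into
\[
\E_{n\in[N]}f(qn+b)=\frac{q}{\phi(q)}\sum_{\chi'\bmod q}\overline{\chi'}(b)\,\E_{m\in[qN]}f(m)\chi'(m)+o(1),
\]
multiplying both sides by $\chi''(b)$, summing over $b\in(\Z/q\Z)^{*}$, and using the orthogonality relation $\sum_b\chi''(b)\overline{\chi'}(b)=\phi(q)\mathbf{1}_{\chi''=\chi'}$, one deduces that $\E_{m\in[qN]}f(m)\chi''(m)\to 0$ for every character $\chi''$ modulo $q$. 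Taking $\chi''=\bar\chi$ contradicts the conclusion of Hal\'asz; therefore $f$ cannot be aperiodic.

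The sole mathematical input is Hal\'asz's theorem in both directions, so the main obstacle is more bookkeeping than substance: one must reduce to $\gcd(a,b)=1$ in the forward direction before invoking orthogonality, witness the non-vanishing of the Ces\`aro mean along a common subsequence, and verify (as the term-by-term comparison above does) that the distance $\D$ is insensitive to zero values of $\chi$ at the finitely many primes dividing its modulus.
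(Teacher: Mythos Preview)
Your proof is correct and follows precisely the approach the paper alludes to (the paper merely states that the result is a simple consequence of the Wirsing--Hal\'asz mean value theorem, without spelling out the details). One small slip: since $f\colon\N\to\U$ rather than $\N\to\S^1$, you cannot assert $|f(d)|=1$ in the reduction to $\gcd(a,b)=1$; the argument still goes through, however, because $f(d)\neq 0$ (otherwise $f(an+b)\equiv 0$, contradicting the non-vanishing mean), and factoring out a nonzero constant preserves non-vanishing of the average.
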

Using this characterization it is possible to show that pretentious multiplicative functions satisfy strong concentration properties that we describe next.

\subsubsection{Concentration estimates for pretentious multiplicative functions}
If $f\sim \chi\cdot n^{it}$ for some Dirichlet character $\chi$ and $t\in \R$ and
$K\in \N$, $N\in [K,\infty)$, we let
\begin{equation}\label{E:FNQ}
	F_N(f,K):=\sum_{K< p\leq N} \frac{1}{p}\,\big(f(p)\cdot \overline{\chi(p)}\cdot p^{-it} -1\big),
\end{equation}
\begin{equation}\label{E:PhiK}
	\Phi_K	:=\Big\{\prod_{p\leq K}p^{a_p}\colon K< a_p\leq 2K\Big\},
\end{equation}
\begin{equation}\label{E:defQKSK}
 Q_K:=\prod_{p\leq K}p^{2K}, \quad
 S_K:=\Big\{  a\in [Q_K]\colon    p^K\nmid a \text{ for all } p\leq K\Big\}.
\end{equation}
%Since $\D(f,\chi\cdot n^{it})<\infty$ we have
%\begin{equation}
%\lim_{K\to\infty}\sup_{N\geq K}|\Re(F_N(f,K))|=0, \quad %\lim_{N\to\infty} |F_{CN}(f,K)-F_N(f,K)|=0 .
%\end{equation}
Note   that for $K\to \infty$, the set    $S_K$   contains ``almost all''  values in $[Q_K]$.  We prove a more general fact that will be needed later.
\begin{lemma}\label{L:SKL}
Let $L_1,\ldots, L_\ell\in \Z[m,n]$ non-trivial linear forms, $Q_K,S_K$ as in \eqref{E:defQKSK}, and
 	 \begin{equation}\label{E:defSKL}
 	 	S_{K;L_1,\ldots, L_\ell}:=\{  (a,b)\in [Q_K]^2\colon L_j(a,b)\in  S_K \text{ for } j=1,\ldots, \ell\}.
 	 \end{equation}
 	 Then
\begin{equation}\label{E:SKQKL}
	\lim_{K\to\infty} |S_{K,L_1,\ldots, L_\ell}|/Q^2_K=1.
\end{equation}
In particular, taking $\ell=1$ and $L_1(m,n):=m$,  we get
\begin{equation}\label{E:SKQK}
	\lim_{K\to\infty} |S_K|/Q_K=1.
\end{equation}
\end{lemma}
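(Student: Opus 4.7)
The plan is to estimate the complement $[Q_K]^2 \setminus S_{K;L_1,\ldots,L_\ell}$ and show its relative size tends to $0$. Since $p^K \mid Q_K$ for every prime $p \leq K$, the condition $L_j(a,b) \in S_K$ (interpreted modulo $Q_K$) is equivalent to the divisibility condition $p^K \nmid L_j(a,b)$ for all $p \leq K$. The complement is therefore contained in
$$
\bigcup_{j=1}^\ell \bigcup_{p \leq K} B_{j,p}, \qquad B_{j,p} := \{(a,b) \in [Q_K]^2 \colon p^K \mid L_j(a,b)\},
$$
and the task reduces to bounding $|B_{j,p}|$ by a quantity that is summable over $j$ and $p$.

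Fix $j$ and $p$. Write $L_j(m,n) = \alpha_j m + \beta_j n$ and set $d_{j,p} := v_p(\gcd(\alpha_j, \beta_j))$; this is finite because $L_j$ is non-trivial, and vanishes for all but the finitely many primes dividing $\gcd(\alpha_j, \beta_j)$. Hence $D := \max_{j,p} d_{j,p}$ is a finite constant depending only on the forms. Factor $\alpha_j = p^{d_{j,p}} \alpha_j'$, $\beta_j = p^{d_{j,p}} \beta_j'$ with $\gcd(\alpha_j', \beta_j', p) = 1$. For $K > D$ the divisibility $p^K \mid L_j(a,b)$ becomes $\alpha_j' a + \beta_j' b \equiv 0 \pmod{p^{K-d_{j,p}}}$. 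After possibly swapping the roles of $a$ and $b$ we may assume $p \nmid \alpha_j'$, so $\alpha_j'$ is invertible modulo $p^{K-d_{j,p}}$ and for each $b \in [Q_K]$ the value of $a$ is pinned to a unique residue class mod $p^{K-d_{j,p}}$. Since $p \leq K$ implies $p^{K-d_{j,p}} \mid Q_K = \prod_{q \leq K} q^{2K}$, this residue class meets $[Q_K]$ in exactly $Q_K / p^{K-d_{j,p}}$ points, whence $|B_{j,p}| \leq Q_K^2 / p^{K-D}$.

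Summing over $j \in [\ell]$ and $p \leq K$ gives, for all $K > D$,
$$
\big|[Q_K]^2 \setminus S_{K;L_1,\ldots,L_\ell}\big| \leq \ell\, Q_K^2 \sum_{p \leq K} p^{-(K-D)} \leq \ell\, Q_K^2 \sum_{n \geq 2} n^{-(K-D)},
$$
and the last sum tends to $0$ as $K \to \infty$, being dominated by a geometric tail starting at $2^{-(K-D)}$. Dividing by $Q_K^2$ proves \eqref{E:SKQKL}. The particular case \eqref{E:SKQK} then follows by taking $L_1(m,n) = m$, since $S_{K;L_1} = S_K \times [Q_K]$. The only technical point is the uniform finiteness of $D$ across all primes $p$, which ensures that the exponent $K - D$ tends to infinity uniformly in $p$ and lets the sum over primes converge geometrically; the finitely many exceptional indices with $K \leq d_{j,p}$ occur only for $K \leq D$ and are thus irrelevant in the limit.
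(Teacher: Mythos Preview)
Your proof is correct and follows essentially the same strategy as the paper: bound the complement via the union of the sets $B_{j,p}$ and show the total relative measure tends to zero. The paper simply absorbs your $p^{D}$ factor into a constant $C=C(L_1,\ldots,L_\ell)$ (justified by a one-line footnote on counting solutions of $l\mid L(m,n)$ in a box), and then bounds $\sum_{p\le K}p^{-K}\le K\cdot 2^{-K}$; your version makes the dependence on the coefficients explicit through $d_{j,p}$ and $D$, which is slightly more careful but not materially different.
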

\begin{proof}
Note that if
$(a,b)\in [Q_K]^2\setminus S_{K,L_1,\ldots, L_\ell}$,
then  $p^K\mid L_j(a,b)$ for some prime $p\leq K$ and  $j\in [\ell]$.
For every fixed prime $p\leq K$ we have\footnote{We use that  if $L(m,n)=\alpha m+\beta n$, with $\alpha,\beta \in \Z$ not both $0$, and $l\in \N$, then the number of $m,n\in [N]$ such that $l\mid L(m,n)$ is at most $N^2/(l\max(|\alpha|,|\beta|)))$.}
$$
|\{(a,b)\in [Q_K]^2\colon p^K\mid L_j(a,b)\text{ for some } j\in [\ell]\}| \leq C Q_K^2/p^K
$$
for some $C:=C(L_1,\ldots, L_\ell)$.
 It follows that
$$
\frac{|[Q_K]^2\setminus S_{K; L_1,\ldots, L_\ell}|}{Q^2_K}\leq C \sum_{p\leq K}\frac{1}{p^{K}}\leq
C\sum_{p\leq K}\frac{1}{2^{K}}\leq \frac{CK}{2^{K}}\to 0
$$
as $K\to \infty$, which proves \eqref{E:SKQKL}.
\end{proof}
Next, we give some concentration estimates for pretentious multiplicative functions that will be crucial for our arguments. We will use  different concentration estimates depending
on whether we are  dealing with
 recurrence results, in which case an appropriately  chosen congruence class $a \! \! \mod{Q}$ suffices (as in \eqref{E:fconc}),  or
convergence results, in which case we want to cover almost all congruence classes $a \! \! \mod{Q}$ (as in \eqref{E:fconca}). The next result appears in \cite[Lemma 2.5]{KMPT21}:
\begin{proposition}[\cite{KMPT21}]\label{T:concestlinear}
	Let $f\colon \N\to \U$ be a pretentious multiplicative function such that $f\sim \chi\cdot n^{it}$ for some Dirichlet character $\chi$ with period $q$ and $t\in \R$. Then
	\begin{enumerate}
		\item \label{I:cli}
	 For every $b\in \Z^*$ we have
	\begin{equation}\label{E:fconc}
		\lim_{K\to\infty} 	\limsup_{N\to\infty} \max_{Q\in \Phi_K} \E_{n\in[N]}\big|f(Qn+b)-  \epsilon_{a,f}\cdot f(|b|)\cdot (Q|b|^{-1}n)^{it}\cdot \exp\big(F_N(f,K)\big)\big|=0,
	\end{equation}	where
 $F_N(f,K)$ and 	 $\Phi_K$  are as in \eqref{E:FNQ} and \eqref{E:PhiK}    respectively, and
	 $\epsilon_{b,f}:=1$,  unless $b<0$ and $\chi(q-1)=-1$, in which case $\epsilon_{b,f}=-1$.

\item  \label{I:clii}
If $t=0$  (i.e. $f\sim \chi$) and $Q_K,S_K$ are as in \eqref{E:defQKSK}, then
\begin{equation}\label{E:fconca}
	\lim_{K\to\infty} 	\limsup_{N\to\infty} \max_{b\in S_K} \E_{n\in[N]}\big|f(Q_Kn+b)-  f((Q_K,b))\cdot \chi(b/(Q_K,b))\cdot \exp\big(F_{N}(f,K)\big)\big|=0
\end{equation}
and
\begin{equation}\label{E:fconca'}
	\lim_{K\to\infty} 	\limsup_{N\to\infty} \max_{b\in S_K} \E_{n\in[N]}\big|f(Q_Kn+b)-  \E_{n\in[N]}\, f(Q_Kn+b)\big|=0.
\end{equation}
\end{enumerate}
\end{proposition}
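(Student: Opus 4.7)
The plan is to proceed via a pretentious-to-smooth reduction followed by a Tur\'an-Kubilius type concentration estimate on integers free of small prime factors. Since $f\sim \chi\cdot n^{it}$, first introduce the auxiliary completely multiplicative function $g$ defined by $g(p):=f(p)\,\overline{\chi(p)}\,p^{-it}$, so that $\sum_p \tfrac{1}{p}(1-\Re g(p))<\infty$ by hypothesis and $f(m)=\chi(m)\cdot m^{it}\cdot g(m)$ for $m$ coprime to the period $q$ of $\chi$. Take $K$ large enough that $K>q$, $|b|<K$, and $v_p(b)<K$ for every prime $p$ dividing $b$. Then for every $Q\in \Phi_K$ one has $q|b|\mid Q$, and writing $\varepsilon=\mathrm{sgn}(b)$ yields the factorization
$$Qn+b=|b|\cdot\Bigl(\tfrac{Q}{|b|}n+\varepsilon\Bigr),$$
where the second factor is coprime to every prime $\leq K$. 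In particular $\chi\bigl(\tfrac{Q}{|b|}n+\varepsilon\bigr)=\chi(\varepsilon)\in\{1,-1\}$, and $\chi(\varepsilon)$ together with the factor $f(|b|)$ combine to produce the sign $\epsilon_{b,f}\cdot f(|b|)$ in the target expression.

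For the Archimedean factor, $\bigl(\tfrac{Q}{|b|}n+\varepsilon\bigr)^{it}=\bigl(\tfrac{Q}{|b|}n\bigr)^{it}\cdot(1+O(1/n))^{it}$, which differs from $\bigl(Q|b|^{-1}n\bigr)^{it}$ by $o(1)$ in $L^1$-average on $[N]$. The heart of the argument is the $g$-part. Using complete multiplicativity and that $\tfrac{Q}{|b|}n+\varepsilon$ has no prime factors $\leq K$, one writes $g\bigl(\tfrac{Q}{|b|}n+\varepsilon\bigr)=\exp\bigl(\sum_{K<p\leq N,\,p\mid \tfrac{Q}{|b|}n+\varepsilon} v_p\cdot(g(p)-1)\bigr)$ up to a negligible correction (since $|g(p)-1|$ is controlled and contributions from $p>N$ and from $v_p\geq 2$ are small on average). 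The plan is then to replace the random sum by its expectation $F_N(f,K)=\sum_{K<p\leq N}\tfrac{1}{p}(g(p)-1)$ via a second-moment estimate: using that distinct primes $p,p'>K$ give essentially independent divisibility events on the progression (by the Chinese remainder theorem), one bounds the variance
$$\E_{n\in[N]}\Bigl|\sum_{K<p\leq N,\,p\mid \tfrac{Q}{|b|}n+\varepsilon}(g(p)-1)-F_N(f,K)\Bigr|^2\ll \sum_{K<p\leq N}\tfrac{1}{p}\bigl(1-|g(p)|^2\bigr)+O(K^{-1}),$$
which is $o_{K\to\infty}(1)$ uniformly in $N$ since $1-|g(p)|^2\leq 2(1-\Re g(p))$ is summable weighted by $1/p$. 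Exponentiating and combining with the previous step delivers \eqref{E:fconc}.

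For part (ii), the case $t=0$ removes the Archimedean nuisance. The constraint $b\in S_K$ (i.e.\ $p^K\nmid b$ for every $p\leq K$) guarantees $v_p(b)<K<a_p$ for each $p\leq K$, so $v_p(Q_Kn+b)=v_p(b)$ uniformly in $n$, yielding the factorization
$$Q_Kn+b=(Q_K,b)\cdot\Bigl(\tfrac{Q_K}{(Q_K,b)}n+\tfrac{b}{(Q_K,b)}\Bigr),$$
with the second factor coprime to all primes $\leq K$ and satisfying $\chi\bigl(\tfrac{Q_K}{(Q_K,b)}n+\tfrac{b}{(Q_K,b)}\bigr)=\chi(b/(Q_K,b))$. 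The Tur\'an-Kubilius estimate then delivers \eqref{E:fconca} uniformly in $b\in S_K$, and \eqref{E:fconca'} follows immediately from \eqref{E:fconca} by the triangle inequality applied to an additional averaging in $n$. The main obstacle is ensuring that the Tur\'an-Kubilius variance bound holds with constants uniform in $Q\in \Phi_K$ (respectively, in $b\in S_K$); this uniformity is exactly what the pretentiousness hypothesis $\D(f,\chi\cdot n^{it})<\infty$ provides, and is cleanly captured by the drift term $F_N(f,K)$ coming from primes exceeding $K$.
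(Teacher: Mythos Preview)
Your approach—factoring out $|b|$ and applying a Tur\'an--Kubilius concentration estimate to the $K$-rough cofactor via the auxiliary function $g(p)=f(p)\overline{\chi(p)}p^{-it}$—is essentially the method behind \cite[Lemma~2.5]{KMPT21}, and it is correct in outline. A couple of details are glossed over: the approximation $g(m)\approx\exp\bigl(\sum_{p\mid m}(g(p)-1)\bigr)$ requires first discarding the sparse set of primes $p$ with $g(p)$ far from $1$ (and handling the fact that $f$ is only assumed multiplicative, not completely multiplicative, on prime powers—though this contributes negligibly); and the variance should read $\sum_{K<p\leq N}\tfrac{1}{p}\,|g(p)-1|^2\leq 2\sum_{K<p\leq N}\tfrac{1}{p}(1-\Re g(p))$ rather than the expression with $1-|g(p)|^2$, though both tails vanish as $K\to\infty$ by the pretentiousness hypothesis.

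The paper, however, does not reprove this result. It simply cites \cite{KMPT21} for part~\eqref{I:cli} in the case $b=1$ (and for part~\eqref{I:clii}), and then reduces general $b\in\Z^*$ to that base case by an elementary substitution. For $b<0$ one replaces $n$ by $n-b=n+|b|$, factors $Q(n+|b|)-|b|=|b|\cdot\bigl(Q|b|^{-1}n+Q-1\bigr)$, and applies the cited lemma with the positive shift $Q-1$ (coprime to $Q|b|^{-1}$); the sign $\epsilon_{b,f}=\chi(q-1)$ emerges because $\chi(Q-1)=\chi(q-1)$ once $q\mid Q$. Your argument is self-contained but sketchier and longer; the paper's is a three-line reduction to a black box.
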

The result 	in part~\eqref{I:cli}  is proved in  \cite{KMPT21} for  $b=1$; we explain next how to cover more general $b\in \Z^*$. We  treat only the case $b<0$, the case $b>0$ is similar (in fact, slightly easier).  Clearly, it suffices to verify that  \eqref{E:fconc} holds with $n-b$ instead of  $n$. Note that $\lim_{n\to\infty}((n-b)^{it}-n^{it})=0$,
	$f(Q(n-b)+b)= f(Q(n+|b|)-|b|)$, and
	\begin{multline*}
	|f(Q(n+|b|)-|b|)-f(|b|)\cdot \chi(q-1)\cdot (Q|b|^{-1}n)^{it}\cdot \exp\big(F_N(f,K)\big)|=\\
	|f(Q\, |b|^{-1}\,n+Q-1)-\chi(Q-1)\cdot (Q|b|^{-1}n)^{it}\cdot \exp\big(F_N(f,K)\big)|,
	\end{multline*}
	where we used that $|f(|b|)|=1$, and  since $\chi$ has period $q$ we have  $\chi(q-1)=\chi(Q-1)$ for all $Q\in \Phi_K$ and $K$ large enough so that $q\mid Q$ and $|b|\mid Q$.
	Since $Q-1$ is positive and relatively prime to $Q/|b|$, we deduce from  \cite[Lemma 2.5]{KMPT21}  that
$$		\lim_{K\to\infty} 	\limsup_{N\to\infty} \max_{Q\in \Phi_K} \E_{n\in[N]}\big|f(Q\, |b|^{-1}\, n+Q-1)-  \chi(Q-1)\cdot  (Q|b|^{-1}n)^{it}\cdot \exp\big(F_N(f,K)\big)\big|=0.
$$	
	Combining the previous facts, we get that
	\eqref{E:fconc} holds for all negative $b$.

 %The result 	in part~\eqref{I:clii}
%is proved in \cite{KMPT21} with $c_K=d_K=1$. To get  the more general %statement, we use
 %that  $ \lim_{N\to\infty} |F_{cN}(f,K)-F_{N}(f,K)|=0$ holds  for all $c\in %(0,\infty)$, $K\in\N$, which follows from
 %the Cauchy-Schwarz inequality,  our assumption   $\D(f,\chi)<\infty$,
%and the bound $\sup_{N\in \N} \sum_{N\leq p\leq N^2}\frac{1}{p}<\infty$.

	 The  argument used  in \cite[Lemma 2.5]{KMPT21}   to prove \eqref{E:fconc} for $b=1$ also gives the following identities that will be used later:
	 \begin{equation}\label{E:fconc'}
	 	\lim_{k\to\infty} 	\limsup_{N\to\infty} \E_{n\in[N]}\big|f(k!n+1)-  (k!n)^{it}\cdot \exp\big(F_N(f,k)\big)\big|=0,
	 \end{equation}	
 and for every $k\in
 \Z, b\in \Z^*$ we have
 \begin{equation}\label{E:fconcQ2}
 	\lim_{K\to\infty} 	\limsup_{N\to\infty} \max_{Q\in \Phi_K} \E_{n\in[N]}\big|f(Q^2n+kQ+b)-  \epsilon_{a,f}\cdot f(|b|)\cdot (Q^2|b|^{-1}n)^{it}\cdot \exp\big(F_N(f,K)\big)\big|=0.
 \end{equation}
For the last identity, we use the case $b=1$ and argue as before to cover
 the case $b\in \Z^*$.
\begin{corollary}\label{C:concentrationfg}
	If $f\colon \N\to \S^1$ is  a finitely generated pretentious multiplicative function, then
	\begin{equation}\label{E:fconcfingen}
	\lim_{K\to\infty} 	\limsup_{N\to\infty} \max_{Q\in \Phi_K} \E_{n\in[N]}\big|f(Qn+1)- 1\big|=0.
\end{equation}	
More generally, if $b\in \Z^*$ and $f\sim \chi$ for some Dirichlet character $\chi$ with period $q$, then
	\begin{equation}\label{E:fconcfingenr}
	\lim_{K\to\infty} 	\limsup_{N\to\infty} \max_{Q\in \Phi_K} \E_{n\in[N]}\big|f(Qn+b)- \epsilon_{b,f}
	\cdot f(|b|)\big|=0,
\end{equation}	
where $\epsilon_{b,f}:=1$, unless $b<0$ and $\chi(q-1)=-1$, in which case $\epsilon_{b,f}:=-1$. Moreover, \eqref{E:fconcfingenr}  still holds if  we replace $Qn+b$ by  $Q^2n+kQ+b$  for any $k\in \Z$, $b\in \Z^*$.
\end{corollary}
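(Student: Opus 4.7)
The plan is to reduce the concentration statement to Proposition \ref{T:concestlinear}\eqref{I:cli} specialized to $t=0$, and then verify that the residual Euler-type phase $\exp(F_N(f,K))$ tends to $1$ uniformly as $K\to\infty$.

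First I would show that for a finitely generated pretentious $f\in\CM$ the Archimedean exponent in the representation $f\sim\chi\cdot n^{it}$ provided by Proposition \ref{T:DH} must vanish, i.e., $t=0$. Put $h:=f\cdot\overline{\tilde\chi}$, where $\tilde\chi$ is the modified character from example~\eqref{I:Ex1}; then $h$ is again a finitely generated element of $\CM$ and $h\sim n^{it}$. If $t\neq 0$, then since $\{h(p):p\in\P\}$ is a finite subset of $\S^1$, I can pick $\varepsilon>0$ and an arc $E\subset\S^1$ of positive Haar measure every point of which is at distance greater than $\varepsilon$ from the finite set $\{h(p):p\in\P\}$. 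By the classical equidistribution of $\{t\log p\bmod 2\pi\}_{p\in\P}$ (a consequence of the non-vanishing of $\zeta$ on $\Re(s)=1$, via Weyl's criterion and $\sum_{p\leq N}p^{it}/p=O_t(1)$), one has $\sum_{p:\,p^{it}\in E}\frac{1}{p}=\infty$. Using the identity $1-\Re(h(p)p^{-it})=\tfrac12|h(p)-p^{it}|^2$ and the choice of $E$, this forces $\D(h,n^{it})^2\geq\tfrac{\varepsilon^2}{2}\sum_{p:\,p^{it}\in E}\frac{1}{p}=\infty$, contradicting $h\sim n^{it}$.

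Now that $f\sim\chi$, let $\{\beta_1=1,\beta_2,\ldots,\beta_r\}$ be the finite set of values taken by $f(p)\overline{\chi(p)}$ on primes $p\nmid q$, and partition these primes into the classes $A_j=\{p:f(p)\overline{\chi(p)}=\beta_j\}$. For each $j\geq 2$ one has $1-\Re\beta_j>0$, so the identity $\D(f,\chi)^2=\sum_{j\geq 2}(1-\Re\beta_j)\sum_{p\in A_j}\frac{1}{p}$ forces $\sum_{p\in A_j}\frac{1}{p}<\infty$ for every $j\geq 2$. This yields the stronger bound $\sum_p\frac{|f(p)\overline{\chi(p)}-1|}{p}<\infty$, which immediately gives $\lim_{K\to\infty}\sup_{N\geq K}|F_N(f,K)|=0$ and hence $\exp(F_N(f,K))\to 1$ uniformly in $N$.

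The conclusion now falls out of Proposition \ref{T:concestlinear}\eqref{I:cli} applied with $t=0$. For $b=1$ the target $\epsilon_{1,f}\cdot f(1)\cdot(Qn)^{0}\cdot\exp(F_N(f,K))$ collapses to $\exp(F_N(f,K))$, proving \eqref{E:fconcfingen}; for general $b\in\Z^*$ the target becomes $\epsilon_{b,f}\cdot f(|b|)\cdot\exp(F_N(f,K))$, proving \eqref{E:fconcfingenr}; and the $Q^2n+kQ+b$ variant follows identically from \eqref{E:fconcQ2}. The main obstacle is the vanishing of the Archimedean exponent in the first step: intuitively, a discrete finite set of prime values of $f$ cannot track the continuous oscillation of $p^{it}$, but the rigorous justification hinges on the equidistribution of $t\log p\bmod 2\pi$, a non-trivial output of the prime number theorem on the line $\Re(s)=1$.
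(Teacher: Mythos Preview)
Your proof is correct and follows essentially the same route as the paper: reduce to Proposition~\ref{T:concestlinear}\eqref{I:cli} with $t=0$ and show that $\exp(F_N(f,K))\to 1$ uniformly in $N$ as $K\to\infty$. The only difference is that the paper outsources the two key inputs---that a finitely generated pretentious $f$ satisfies $f\sim\chi$ (no Archimedean twist), and that $\sup_{N>K}|F_N(f,K)|\to 0$---to \cite[Lemmas~B.3 and~B.4]{Cha24}, whereas you supply self-contained arguments for both. Your argument for $t=0$ via equidistribution of $t\log p\bmod 2\pi$ (equivalently, boundedness of $\sum_{p\le N}p^{it}/p$ for $t\neq 0$) is standard and correct, and your proof that $F_N(f,K)\to 0$ via the finite partition of prime values is exactly the mechanism behind \cite[Lemma~B.4]{Cha24}. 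One cosmetic point: you assert $\beta_1=1$ is among the values of $f(p)\overline{\chi(p)}$; this is forced a posteriori (otherwise $\sum_{p\nmid q}1/p$ would converge), but your bound on $|F_N(f,K)|$ does not actually need it.
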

 \begin{proof}
 	  We use Theorem~\ref{T:concestlinear} and the fact that if $f$ is pretentious and finitely generated, then $f\sim \chi$ for some Dirichlet character $\chi$ (see for example \cite[Lemma~B.3]{Cha24}), and $\lim_{K\to\infty}\sup_{N>K}|F_N(f,K)|=0$, which follows from \cite[Lemma~B.4]{Cha24}.
 	   Hence, $\lim_{K\to\infty} \sup_{N>K}\big|\exp\big(F_{N}(f,K)\big)-1\big|=0$.
 \end{proof}

\begin{corollary}\label{C:concentrationgeneral}
		Let $f\colon \N\to \S^1$ be a  pretentious multiplicative function and suppose that
		$f\sim \chi\cdot n^{it}$  for some Dirichlet character $\chi $ and $t\in \R$.
Let also 	$S_\delta$,  $F_N(f,K)$,  	$\Phi_K$  be as in \eqref{E:Sd}, \eqref{E:FNQ},   \eqref{E:PhiK},  respectively.
\begin{enumerate}

\item  \label{I:cconc4a}   For every  $b\in \Z^*$ we have
\begin{equation}\label{E:fconcgendelta}
\lim_{\delta\to 0^+}	\limsup_{K\to\infty} 	\limsup_{N\to\infty} \max_{Q\in \Phi_K} \E_{n\in S_\delta\cap [N]}\big|f(Qn+b)-  \epsilon_{b,f}\cdot f(|b|)\cdot (Q|b|^{-1})^{it}\cdot \exp\big(F_N(f,K)\big)\big|=0,
\end{equation}
	where
$\epsilon_{b,f}:=1$,  unless $b<0$ and $\chi(q-1)=-1$, in which case $\epsilon_{b,f}=-1$, and
\begin{equation}\label{E:fconcgendeltaAverage}
	\lim_{\delta\to 0^+}	\lim_{K\to\infty} 	\limsup_{N\to\infty} \max_{Q\in \Phi_K} \E_{n\in S_\delta\cap [N]}\big|f(Qn+b)-  \E_{n\in S_\delta\cap [N]} \, f(Qn+b)\big|=0.
\end{equation}

\item \label{I:cconc2}
We have
\begin{equation}\label{E:fconcabsol}
	\lim_{\delta\to 0^+}	\liminf_{k\to\infty} 	\liminf_{N\to\infty}|\E_{n\in S_\delta \cap [N]}\,  f(k!n+1)|=1
\end{equation}		
and
\begin{equation}\label{E:fconcabsol'}
	\liminf_{k\to\infty} 	\liminf_{N\to\infty}|\E_{n\in [N]}\,  f(k!n+1)|>0.
\end{equation}
\end{enumerate}
\end{corollary}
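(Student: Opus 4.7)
The plan is to deduce all four estimates from Proposition~\ref{T:concestlinear}, by restricting the averaging from $[N]$ to $S_\delta\cap[N]$ and then exploiting the defining constraint $|n^i-1|\leq\delta$ to effectively eliminate the $n^{it}$ factor that appears in the approximations \eqref{E:fconc} and \eqref{E:fconc'}. The basic observation is that, for each fixed $\delta>0$, the set $S_\delta\cap[N]$ has positive density $c(\delta)>0$ in $[N]$ as $N\to\infty$, so any $L^1([N])$-vanishing statement transfers to an $L^1(S_\delta\cap[N])$-vanishing statement at the cost of a multiplicative factor $1/c(\delta)$; the iterated limit, in which $\delta\to 0^+$ is taken last, accommodates this loss.

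First I would establish \eqref{E:fconcgendelta} by applying \eqref{E:fconc} and the above transfer, which for each fixed $\delta>0$ gives
\[
\limsup_{K\to\infty}\limsup_{N\to\infty}\max_{Q\in\Phi_K}\E_{n\in S_\delta\cap[N]}\bigl|f(Qn+b)-\epsilon_{b,f}f(|b|)(Q|b|^{-1})^{it}n^{it}\exp(F_N(f,K))\bigr|=0.
\]
Since the remaining factors have modulus at most $1$, the difference between this target and the one in \eqref{E:fconcgendelta} is bounded pointwise by $|n^{it}-1|$, and I would verify that $\E_{n\in S_\delta\cap[N]}|n^{it}-1|$ vanishes in the iterated limit $\limsup_N$ followed by $\delta\to 0^+$, using the logarithmic scale structure of $S_\delta$. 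Then \eqref{E:fconcgendeltaAverage} follows from \eqref{E:fconcgendelta} by a triangle inequality: the target $C_{K,N}:=\epsilon_{b,f}f(|b|)(Q|b|^{-1})^{it}\exp(F_N(f,K))$ is $n$-independent, so $|C_{K,N}-\E_{n\in S_\delta\cap[N]}f(Qn+b)|$ is controlled by the same $L^1$ quantity, and
\[
\E_{n\in S_\delta\cap[N]}|f(Qn+b)-\E_{n\in S_\delta\cap[N]}f(Qn+b)|\leq 2\,\E_{n\in S_\delta\cap[N]}|f(Qn+b)-C_{K,N}|
\]
vanishes in the same iterated limit.

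For \eqref{E:fconcabsol}, combining \eqref{E:fconc'} with the density transfer and the replacement of $n^{it}$ by $1$ on $S_\delta$ yields the approximation $f(k!n+1)\approx(k!)^{it}\exp(F_N(f,k))$ in $L^1(S_\delta\cap[N])$, an $n$-independent constant of modulus $|\exp(F_N(f,k))|$. Now $\Re F_N(f,k)=-\sum_{k<p\leq N}(1/p)(1-\Re(f(p)\overline{\chi(p)}p^{-it}))\to 0$ as $k\to\infty$ uniformly in $N>k$, by convergence of the tail of the series defining $\D^2(f,\chi\cdot n^{it})<\infty$, so $|\exp(F_N(f,k))|=\exp(\Re F_N(f,k))\to 1$, establishing \eqref{E:fconcabsol}. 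For \eqref{E:fconcabsol'}, no restriction to $S_\delta$ is needed: \eqref{E:fconc'} directly gives $\E_{n\in[N]}f(k!n+1)\approx(k!)^{it}\exp(F_N(f,k))\cdot\E_{n\in[N]}n^{it}$, and the last factor has asymptotic modulus $1/|1+it|>0$ by example~\eqref{I:Ex2} of \cref{SS:pretentious}, so the product is bounded below by a positive constant in the iterated limit. The hard part throughout is the control of $\E_{n\in S_\delta\cap[N]}|n^{it}-1|$ in the first step, where the precise geometry of $S_\delta$ (dyadic scales $n\asymp e^{2\pi k}$ on which $n^{it}\approx e^{2\pi ikt}$) is essential.
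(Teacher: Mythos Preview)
Your proposal is correct and follows essentially the same route as the paper: transfer the concentration estimates \eqref{E:fconc} and \eqref{E:fconc'} from $[N]$ to $S_\delta\cap[N]$ via positive density, eliminate the $n^{it}$ factor using $\E_{n\in S_\delta\cap[N]}|n^{it}-1|\to 0$ (the paper records this as \eqref{E:Sdtd}), handle \eqref{E:fconcgendeltaAverage} by the triangle inequality against the $n$-independent target, and for part~\eqref{I:cconc2} use that $|\exp(F_N(f,k))|\to 1$ from the tail convergence of $\D^2(f,\chi\cdot n^{it})$ together with the asymptotic $|\E_{n\in[N]}n^{it}|\to(1+t^2)^{-1/2}$. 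The only difference is cosmetic: the paper states \eqref{E:Sdtd} as a one-line fact, whereas you flag it as the key step and sketch the logarithmic-scale structure behind it.
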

\begin{proof}	
	We prove  \eqref{I:cconc4a}. The first  identity follows by  combining \eqref{E:fconc} of \cref{T:concestlinear}  with
	\begin{equation}\label{E:Sdtd}
	\lim_{\delta\to 0^+} \limsup_{N\to\infty} \E_{n\in S_\delta \cap [N]}\,  |n^{it}-1|=0,
\end{equation}
and \eqref{E:fconcgendeltaAverage} follows easily from \eqref{E:fconcgendelta}, once the norm in \eqref{E:fconcgendelta}  is placed outside the average.

 We prove \eqref{I:cconc2}. For the first identity  we use \eqref{E:fconc'} in Theorem~\ref{T:concestlinear}
 and make two  observations.
 First note  that for every $k,N\in\N$ we have
 $ |\exp(F_N(f,k))|=\exp(\Re(F_N(f,k))),$
 and since $f\sim \chi\cdot n^{it}$  we have $\lim_{k\to\infty}\sup_{N\geq k}|\Re(F_N(f,k))|=0$. Hence,
 \begin{equation}\label{E:=1}
 	\lim_{k\to\infty}\sup_{N\geq k}\big||\exp(F_N(f,k))|-1\big|=0.
 \end{equation}
 So for any fixed $\delta>0$, since  the set $I_\delta$ has positive  density,  we deduce  from \eqref{E:fconc'}   and \eqref{E:=1}  that
 $$
 \limsup_{k\to\infty} 	\limsup_{N\to\infty} \big||\E_{n\in S_\delta \cap [N]}\,  f(k!n+1)| - |\E_{n\in S_\delta \cap [N]}\,  (k!n)^{it}|\big|=0.
 $$
 Next, note that for every $k,N\in \N$ and $t\in \R$,  we have
 $$
 |\E_{n\in S_\delta \cap [N]}\,  (k!n)^{it}|=
 |\E_{n\in S_\delta \cap [N]}\,  n^{it}|,
 $$
 and by \eqref{E:Sdtd} we have
 $$
 	\lim_{\delta\to 0^+} \limsup_{N\to\infty}\big||\E_{n\in S_\delta \cap [N]}\,  n^{it}|-1\big|=0.
 $$
 Combining the last three identities, we get \eqref{E:fconcabsol}.

Finally, we prove  \eqref{E:fconcabsol'}.
 It suffices to show that
 %\begin{equation}\label{E:fconcabsol''}
 	$$
 	\lim_{k\to\infty} 	\limsup_{N\to\infty}\big||\E_{n\in [N]}\,  f(k!n+1)| -(1+t^2)^{-\frac{1}{2}}\big|=0.
 $$
% \end{equation}
 Using \eqref{E:fconc'} of \cref{T:concestlinear} and    \eqref{E:=1}, it suffices to show that
 $$
 \lim_{N\to\infty}|\E_{n\in [N]}\, n^{it}|= (1+t^2)^{-\frac{1}{2}}.
 $$
 This follows immediately from the identity
 %% \begin{equation}\label{E:nit}
 	%%	\E_{n\in[N]} \, n^{i t}=N^{ it}/(1+i t) +t\cdot  o_{N\to \infty}(1).
 	%%\end{equation}
 	\begin{equation}\label{E:nit}
 			\lim_{N\to\infty}\big(	\E_{n\in[N]} \, n^{i t}-N^{ it}/(1+i t)\big)=0,
 \end{equation}
 	completing the proof.
\end{proof}

 \subsection{Examples of multiplicative actions}\label{SS:examples}
  We give some examples of multiplicative actions
   that appear frequently in this article.

 \subsubsection{Multiplicative rotations}
 Let $f\colon \N\to \S^1$ be a completely multiplicative function, $X$ be the closure of the range of
 $f$, which  is either $\S^1$ or a finite subgroup of $\S^1$, and $m_X$ be the Haar measure on $X$. For $n\in \N$  we consider the action defined by the maps  $T_n\colon X\to X$ as $T_nx:=f(n)x$ for $x\in X$, and   call it a {\em multiplicative rotation by $f$}.
 The action is finitely generated if and only if $f$ is finitely generated.
 For example, if $\lambda$ is the Liouville function (i.e., $\lambda(n)=(-1)^{\Omega(n)}$), then $X=\{-1,1\}$ and $m_X=(\delta_{-1}+\delta_1)/2$. If $f$ is as in examples~\eqref{I:Ex2} (for $t\neq 0$) or \eqref{I:Ex3} of \cref{SS:pretentious}, then $X=\S^1$ and $m_X=m_{\S^1}$.

 \subsubsection{Multiplicative dilations} Let $k\in \N$. On  $\T$ with the Borel $\sigma$-algebra and the Haar measure $m_\T$, we define  for $n\in\N$ the maps $T_n\colon \T\to \T$ by  $T_nx:=n^kx\pmod{1}$. We call the resulting multiplicative action {\em a dilation by $k$-th powers on  $\T$}. This action is infinitely generated. More generally, given a completely multiplicative function $\phi\colon \N\to \N$, we can define a multiplicative {\em action by dilations}  as follows
 $T_nx:=\phi(n)x\pmod{1}$, $n\in \N$, on $\T$ with $m_\T$. The resulting multiplicative action is finitely generated if and only if the set $\{\phi(p)\colon p\in \P\}$ is finite. One such example is given by  $T_nx=2^{\Omega(n)}x\pmod{1}$, $n\in\N$.
 %More examples of finitely  generated multiplicative actions were given in \cref{L:fgchar}

 \subsubsection{Aperiodic multiplicative actions}\label{SSS:aperiodic}
 We say that a multiplicative action $(X,\CX,\mu,T_n)$ is {\em aperiodic} if for all $a\in \N, b\in \Z_+$, and $F\in L^2(\mu)$ we have $\lim_{N\to\infty}\E_{n\in [N]}
 \, T_{an+b}F=\int F\, d\mu$ in $L^2(\mu)$.
   See~\cite[Corollary~1.17]{Cha24} for a characterization of aperiodicity in the case of finitely generated  multiplicative actions that uses the notion of pretentious rational eigenfunctions; we will not  use it  in this article though.

 It is shown in \cite[Corollary~1.18]{Cha24} that a multiplicative rotation by a finitely generated multiplicative function $f$ is aperiodic if and only if $f^k$ is aperiodic for all $k\in \N$ with $ k<|X|$, where we  $|X|$ may be infinite. In particular, the multiplicative rotation by the Liouville function $\lambda$ is aperiodic.
  It is also  shown in \cite[Corollary~1.18]{Cha24} that if  $(X,\CX,\mu,T)$ is an ergodic measure preserving system, then   the action defined by  $T_nx:=T^{\Omega(n)}x$, $x\in X$, $n\in\N$,  is aperiodic. Taking $Tx:=2x\pmod{1}$ on $\T$ with $m_\T$, we get that the action by dilations  $T_nx=2^{\Omega(n)}x\pmod{1}$, $n\in\N$, is aperiodic. %Taking $T\colon \{-1,1\} \to \{-1,1\}$ be the map that exchanges $-1$ %and $1$, we get that the multiplicative rotation by $\lambda$ is %aperiodic.
Finally,  using  \eqref{E:spectralid}, Weyl's equidistribution result for polynomials,  and the bounded convergence theorem,
 	it is easy to show that for every $k\in \N$ actions by dilations by $k$-th powers are aperiodic.

\subsection{Spectral theory of multiplicative actions}
Recall that
  $\CM$  denotes the set of completely multiplicative functions with values on the unit circle.  Since completely multiplicative functions on the positive rationals are uniquely determined by their values on the positive integers,
we can identify $\CM$ with the Pontryagin dual of the (discrete) group of positive rational numbers under multiplication.

Let $(X,\CX,\mu,T_n)$ be a multiplicative action and $F\in L^2(\mu)$. Note that the map
$r/s\mapsto \int T_rF\cdot T_s\overline{F}\, d\mu$, $r,s,\in \N$, from $(\Q^*_+,\times)$ to $\C$ is well defined and positive definite. Using a theorem of Bochner-Herglotz, we get that  there exists a finite Borel measure $\sigma_F$ on $\CM$ such that
$$
\int T_rF\cdot T_s\overline{F}\, d\mu=\int_\CM f(r)\cdot \overline{f(s)}\, d\sigma_F(f) \quad \text{for all } r,s\in\Q^*_+.
$$
We  easily  deduce from this the following identity that we will use frequently
\begin{equation}\label{E:spectralid}
\norm{\sum_{k=1}^l c_k\, T_{r_k}F}_{L^2(\mu)}=\norm{\sum_{k=1}^l c_kf(r_k)}_{L^2(\sigma_F(f))} \quad \text{for all } l\in \N, \, c_1,\ldots, c_l\in \C, \, r_1,\ldots, r_l\in \Q^*_+.
\end{equation}
Recall that  if $r=m/n$, $m, n\in \N$,  we have  $T_r=T_m\circ T^{-1}_n$
and $f(r)=f(m)/f(n)=f(m)\cdot \overline{f(n)}$, and also $T_1=\text{id}$, $f(1)=1$.
In particular, we have
%\begin{equation}\label{E:spectralid1}
%\norm{T_nF-T_mF}_{L^2(\mu)}=\norm{f(n)-f(m)}_{L^2(\sigma_F)}, \quad %m,n\in \N,
%\end{equation}
%and
\begin{equation}\label{E:spectralid2}
\norm{T_nF-F}_{L^2(\mu)}=\norm{f(n)-1}_{L^2(\sigma_F(f))}, \quad n\in \N.
\end{equation}

We will use the following basic facts about spectral measures.
If $\mu,\nu$ are two finite Borel measures  on $\CM$, we write
$\mu\bot \nu$ if they are mutually singular  and $\mu\ll \nu$ if they are absolutely continuous. If two functions $F,G\in L^2(\mu)$ are orthogonal, we write $F\bot G$.  Lastly, we say that the measure $\sigma$ on $\CM$ is supported on a Borel subset $A$ of $\CM$ if $\sigma (\CM\setminus A)=0$.  

\begin{lemma}\label{L:spectral}
	If $(X,\CX,\mu,T_n)$ is a multiplicative action, the following statements hold:
	\begin{enumerate}
		\item \label{I:spectral-1} $\sigma_{T_rF}=\sigma_F$, $\sigma_{cF}=|c|^2\sigma_F$, for all $r\in \Q^*_+, c\in \C$.
		
		\item \label{I:spectral0} If for every $n\in\N$ the spectral measures of $F_n\in L^2(\mu)$ are supported on a Borel set $A\subset \CM$, and $\lim_{n\to\infty}\norm{F_n-F}_{L^2(\mu)}=0$ for some $F\in L^2(\mu)$, then the spectral measure of $F$ is also supported on $A$.
		
		\item \label{I:spectral0.5}If $F,G\in L^2(\mu)$, then $\sigma_{F+G}\ll \sigma_F+\sigma_G$.
		
		\item \label{I:spectral1} If $F,G\in L^2(\mu)$ and  $\sigma_F\bot \sigma_G$, then $F\bot G$.
		
		\item  \label{I:spectral2} If $F\in L^2(\mu)$  and  $\sigma_F=\sigma_1+\sigma_2$ where $\sigma_1,\sigma_2$ are Borel measures such that  $\sigma_1\bot \sigma_2$, then there exist $F_1,F_2\in L^2(\mu)$ such that $F=F_1+F_2$ and  $\sigma_1=\sigma_{F_1}$, $\sigma_2=\sigma_{F_2}$.
	\end{enumerate}
\end{lemma}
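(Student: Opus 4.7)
The plan is to handle the five parts in the order (i), (iii), (ii), (iv), (v), since (iii) feeds into (ii), and a single approximation idea delivers both (iv) and (v).

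For (i), I would compute directly: for $s,t\in\Q^*_+$,
$$\int T_s(T_rF)\cdot T_t\overline{T_rF}\, d\mu=\int T_{rs}F\cdot T_{rt}\overline{F}\, d\mu=\int f(rs)\overline{f(rt)}\, d\sigma_F(f)=\int f(s)\overline{f(t)}\, d\sigma_F(f),$$
so uniqueness of the representing measure in Bochner--Herglotz gives $\sigma_{T_rF}=\sigma_F$, and $\sigma_{cF}=|c|^2\sigma_F$ is immediate.

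For (iii), I would begin with the pointwise inequality $\|p(T)(F+G)\|_{L^2(\mu)}^2\le 2\|p(T)F\|_{L^2(\mu)}^2+2\|p(T)G\|_{L^2(\mu)}^2$ for trigonometric polynomials $p(f)=\sum_k c_k f(r_k)$, and translate it via the spectral identity \eqref{E:spectralid} into
$$\int|p|^2\, d\sigma_{F+G}\le 2\int|p|^2\, d(\sigma_F+\sigma_G).$$
Because the characters $\{f\mapsto f(r):r\in\Q^*_+\}$ separate points of $\CM$ and are closed under conjugation (since $\overline{f(r)}=f(1/r)$), Stone--Weierstrass makes $\{|p|^2:p\text{ trig.\ polynomial}\}$ uniformly dense in nonnegative continuous functions, so the inequality extends to all continuous $h\ge 0$, and by regularity to all Borel sets, giving $\sigma_{F+G}\ll \sigma_F+\sigma_G$. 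Part (ii) then follows in one line: writing $F=F_n+(F-F_n)$ and applying (iii) gives $\sigma_F(\CM\setminus A)\le 2\sigma_{F-F_n}(\CM\setminus A)\le 2\|F-F_n\|_{L^2(\mu)}^2\to 0$ as $n\to\infty$.

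The main work is in (iv) and (v), which share one mechanism. Given $A\subset\CM$ Borel with $\sigma_F(A^c)=\sigma_G(A)=0$, I would approximate $\one_A$ in $L^2(\sigma_F+\sigma_G)$ by trigonometric polynomials $p_n(f)=\sum_k c_{n,k}f(r_{n,k})$; this is legitimate because Stone--Weierstrass gives density of such polynomials in $C(\CM)$, which is dense in $L^2$ of any finite Borel measure on the compact metric space $\CM$. By the spectral identity \eqref{E:spectralid}, the operators $p_n(T):=\sum_k c_{n,k}T_{r_{n,k}}$ satisfy $p_n(T)F\to F$ and $p_n(T)^*G=\overline{p_n}(T)G\to 0$ in $L^2(\mu)$ (since $p_n\to 1$ in $L^2(\sigma_F)$ and $p_n\to 0$ in $L^2(\sigma_G)$), whence
$$\langle F,G\rangle=\lim_n\langle p_n(T)F,G\rangle=\lim_n\langle F,p_n(T)^*G\rangle=0,$$
proving (iv). For (v), set $F_1:=\lim_n p_n(T)F$ (the limit exists because $(p_n)$ is Cauchy in $L^2(\sigma_F)$, which transfers to $L^2(\mu)$ via \eqref{E:spectralid}) and $F_2:=F-F_1$; computing
$$\int T_rF_1\cdot T_s\overline{F_1}\, d\mu=\lim_n\int f(r)\overline{f(s)}|p_n|^2\, d\sigma_F=\int_A f(r)\overline{f(s)}\, d\sigma_F=\int f(r)\overline{f(s)}\, d\sigma_1(f)$$
identifies $\sigma_{F_1}=\sigma_1$, and the analogous computation with $1-p_n\to\one_{A^c}$ gives $\sigma_{F_2}=\sigma_2$. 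I expect the only mildly delicate point to be the $L^2$-approximation of $\one_A$ by trigonometric polynomials in the character algebra, but this is routine from Stone--Weierstrass together with density of $C(\CM)$ in $L^2$ of a finite Borel measure on a compact metric space.
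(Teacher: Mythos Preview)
Your proof is correct and self-contained. The paper does not actually prove this lemma: it simply notes that part~(i) is trivial and that parts~(ii)--(v) follow from the corresponding results in Queff\'elec's book \cite{Qu10} (stated there for $(\Z,+)$-actions, with the remark that the same arguments transfer to $(\Q^*_+,\times)$-actions). Your argument---using Stone--Weierstrass on the character algebra of $\CM$ to get the inequality in~(iii), the $L^2$-approximation of $\one_A$ by trigonometric polynomials to build the spectral projection in~(iv) and~(v), and deducing~(ii) from~(iii)---is precisely the standard spectral-theoretic route one finds in such references, written out explicitly for the group $(\Q^*_+,\times)$. The only point worth double-checking is the passage $|p_n|^2\to\one_A$ in $L^1(\sigma_F)$ in part~(v); this follows from $p_n\to\one_A$ in $L^2(\sigma_F)$ via the identity $|p_n|^2-\one_A=(p_n-\one_A)\overline{p_n}+\one_A(\overline{p_n}-\one_A)$ and Cauchy--Schwarz, so the computation of $\sigma_{F_1}$ goes through.
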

Part~\eqref{I:spectral-1} is trivial.
Parts~\eqref{I:spectral0}-\eqref{I:spectral2} follow from
 Corollary~2.2, Corollary 2.1, Proposition~2.4, and Corollary~2.6 of \cite{Qu10}, respectively.
 All the arguments in \cite{Qu10} are given
in the case of $(\Z,+)$-actions, but the same arguments apply to $(\Q^*_+,\times)$-actions.

  Recall that  a multiplicative function $f\colon \N\to \S^1$ is  finitely generated if the set $\{f(p)\colon p\in \P\}$ is finite.
We will also   use the following fact from \cite[Lemma~2.6]{Cha24}:
\begin{proposition}[\cite{Cha24}]\label{L:finspectral}
	Let $(X,\CX,\mu,T_n)$ be a finitely generated multiplicative action. Then for every $F\in L^2(\mu)$ the spectral measure of $F$ is supported on the set  of finitely generated completely multiplicative functions with values on the unit circle.\footnote{See \cite[Lemma~7.1]{CMT24} for a proof  that this set is a Borel subset of $\CM$.}
	\end{proposition}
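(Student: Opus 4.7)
The plan is to exploit the spectral identity \eqref{E:spectralid2} together with the observation that finite generation forces many coincidences $T_p = T_{p'}$ among distinct primes. Since $\{T_p : p\in\P\}$ is finite, I can enumerate its elements as $S_1,\ldots,S_k$ and correspondingly partition $\P = P_1 \sqcup \cdots \sqcup P_k$ where $P_i := \{p\in\P : T_p = S_i\}$. There are only finitely many classes, although each may be countably infinite.

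The key step is as follows. Fix $i\in\{1,\ldots,k\}$ and primes $p,p' \in P_i$. Then $T_{p/p'} = T_p \circ T_{p'}^{-1} = \text{id}$, so $T_{p/p'}F = F$ for every $F \in L^2(\mu)$, and \eqref{E:spectralid2} applied with $n = p/p'$ gives
$$0 \;=\; \|T_{p/p'}F - F\|_{L^2(\mu)} \;=\; \|f(p/p') - 1\|_{L^2(\sigma_F)}.$$
Since $|f(p')| = 1$ for every $f \in \CM$, we have $|f(p/p') - 1| = |f(p) - f(p')|$, so $f(p) = f(p')$ for $\sigma_F$-almost every $f$. Let $A_{p,p'} := \{f \in \CM : f(p) = f(p')\}$; this is closed in $\CM$ (with the topology of pointwise convergence) and satisfies $\sigma_F(\CM \setminus A_{p,p'}) = 0$.

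Finally, set
$$A \;:=\; \bigcap_{i=1}^{k}\ \bigcap_{p,p' \in P_i} A_{p,p'}.$$
Since $\P$ is countable, this is a countable intersection of Borel sets each of full $\sigma_F$-measure, hence $A$ itself has full $\sigma_F$-measure and $\sigma_F$ is supported on $A$. Every $f \in A$ is constant on each $P_i$, so $\{f(p) : p \in \P\}$ has cardinality at most $k$; in particular, every such $f$ is finitely generated. This yields the desired conclusion.

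I expect no substantial obstacle here: once the spectral identity is used to convert the equality $T_p = T_{p'}$ into the pointwise equality $f(p) = f(p')$ $\sigma_F$-almost everywhere, the remainder is a routine countable-intersection argument. The only minor technical point is that each $A_{p,p'}$ is Borel, which is immediate from the pointwise-convergence topology on $\CM$, and similarly the ambient set of finitely generated multiplicative functions is Borel (as recorded in the footnote to the statement).
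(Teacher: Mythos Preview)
Your proof is correct. The paper itself does not prove this proposition but cites \cite[Lemma~2.6]{Cha24}, so there is no in-paper argument to compare against; your approach via the spectral identity and a countable intersection is the natural one. One cosmetic remark: identity \eqref{E:spectralid2} is stated only for $n\in\N$, whereas you apply it with the rational $n=p/p'$. You should instead invoke the more general \eqref{E:spectralid} directly (with $r_1=p$, $r_2=p'$, $c_1=1$, $c_2=-1$), which immediately gives $\|T_pF-T_{p'}F\|_{L^2(\mu)}=\|f(p)-f(p')\|_{L^2(\sigma_F)}=0$ and avoids the detour through $T_{p/p'}$ altogether.
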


\subsection{Two convergence results}
We let
 \begin{equation}\label{E:I}
\CI:=\{F\in L^2(\mu)\colon T_nF=F \text{ for every } n\in\N\}.
\end{equation}
The mean ergodic theorem for multiplicative actions gives the following identity:
\begin{proposition}\label{L:multaverid}
	Let $(X,\CX,\mu,T_n)$ be a multiplicative action and $F\in L^2(\mu)$. Then for every multiplicative F\o lner  sequence $(\Phi_N)_{N\in\N}$ on $\N$ we have
	$$
	\lim_{N\to\infty}\E_{n\in\Phi_N} T_nF=\E(F|\CI)
	$$
	 in $L^2(\mu)$.
\end{proposition}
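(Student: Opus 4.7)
The plan is to adapt von Neumann's classical mean ergodic argument to the abelian semigroup $(\N,\times)$, using the Følner property of $(\Phi_N)$ in place of the interval Følner condition for $(\Z,+)$. First I would decompose $L^2(\mu)=\CI\oplus\CI^\perp$, where $\CI$ is the closed subspace of $T_n$-invariant functions and $\E(F\mid\CI)$ is the orthogonal projection of $F$ onto $\CI$. It then suffices to show that $A_NF:=\E_{n\in\Phi_N}T_nF$ tends to $F$ in $L^2(\mu)$ when $F\in\CI$ and to $0$ when $F\in\CI^\perp$, since by linearity this will give $A_NF\to\E(F\mid\CI)$ for arbitrary $F$.

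The first case is trivial: if $T_nF=F$ for all $n$ then $A_NF=F$ identically. For the second, I would first identify $\CI^\perp$ as the closed linear span
$$
V:=\overline{\Span}\{T_mG-G:m\in\N,\,G\in L^2(\mu)\}.
$$
Indeed, since each $T_n$ is unitary on $L^2(\mu)$ (as an invertible measure preserving transformation), the orthogonality $F\perp T_mG-G$ for every $m$ and $G$ is equivalent to $\langle T_m^{-1}F-F,G\rangle=0$ for all $m,G$, i.e.\ $T_m^{-1}F=F$ for every $m\in\N$, i.e.\ $F\in\CI$.

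For a single coboundary $F=T_mG-G$, the substitution $n'=mn$ gives
$$
A_NF=\frac{1}{|\Phi_N|}\Bigl(\sum_{n'\in m\Phi_N}T_{n'}G-\sum_{n\in\Phi_N}T_nG\Bigr),
$$
whose $L^2(\mu)$-norm is at most $\norm{G}_{L^2(\mu)}\cdot|m\Phi_N\triangle\Phi_N|/|\Phi_N|$ by the triangle inequality and the fact that each $T_{n'}$ is an $L^2$-isometry. The Følner hypothesis gives $|\Phi_N\cap m\Phi_N|/|\Phi_N|\to 1$, and since $|m\Phi_N|=|\Phi_N|$ this forces $|m\Phi_N\triangle\Phi_N|/|\Phi_N|\to 0$, hence $A_NF\to 0$ in $L^2(\mu)$. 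By linearity, the same holds on the dense subspace $\Span\{T_mG-G\}$ of $V$.

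Finally, the operator norm $\norm{A_N}_{L^2\to L^2}\le 1$ since each $T_n$ is an isometry and $A_N$ is a convex combination of isometries. A standard three-$\varepsilon$ approximation then extends the conclusion from the dense subspace to all of $V=\CI^\perp$: for $F\in\CI^\perp$ and $F_k\in\Span\{T_mG-G\}$ with $F_k\to F$, one has $\limsup_N\norm{A_NF}_{L^2(\mu)}\le\norm{F-F_k}_{L^2(\mu)}+\limsup_N\norm{A_NF_k}_{L^2(\mu)}=\norm{F-F_k}_{L^2(\mu)}$, which vanishes as $k\to\infty$. There is no genuine obstacle in this argument; the only delicate point is to ensure the Følner condition is invoked with an integer $m$ (which it is, in the telescoping step), and that the Følner hypothesis—formulated as invariance under multiplication by any fixed $x\in\N$—is exactly what is needed there.
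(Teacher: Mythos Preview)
Your proof is correct; it is exactly the standard von Neumann mean ergodic argument carried over to the amenable group $(\Q^*_+,\times)$ (restricted to the semigroup $\N$) with the F\o lner condition replacing asymptotic invariance of intervals. The paper itself does not supply a proof of this proposition: it simply records it as a direct consequence of ``the mean ergodic theorem for multiplicative actions,'' so your argument is precisely the routine verification the paper leaves implicit.
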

To prove the convergence properties in Theorems~\ref{T:MainA} and \ref{T:MainB}  we will use the following result, which  can be extracted from \cite[Theorem~1.14]{Cha24}:
\begin{proposition}[\cite{Cha24}] \label{P:convergelinear}
If  $(X,\CX,\mu,T_n)$ is a finitely generated action, then  for every   $F\in L^2(\mu)$ and $a\in \N$, $b\in \Z_+,$ the averages
$$
\E_{n\in[N]}\, T_{an+b}F
$$
converge in $L^2(\mu)$ as $N\to\infty$.
\end{proposition}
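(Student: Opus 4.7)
The plan is to use the spectral theory of multiplicative actions to reduce the $L^2$-convergence of the operator averages to the convergence of scalar averages of completely multiplicative functions along arithmetic progressions. Setting $\mathcal{A}_N := \E_{n\in[N]} T_{an+b}F$ and noting that $an+b\in\N$ for every $n\in\N$, I would apply the spectral identity \eqref{E:spectralid} to obtain, for all $N_1,N_2\in\N$,
$$\norm{\mathcal{A}_{N_1} - \mathcal{A}_{N_2}}_{L^2(\mu)}^2 = \int_{\CM} \bigl|\E_{n\in[N_1]} f(an+b) - \E_{n\in[N_2]} f(an+b)\bigr|^2 \, d\sigma_F(f).$$
Since the integrand is uniformly bounded by $4$, the bounded convergence theorem reduces the problem to showing that for $\sigma_F$-almost every $f\in \CM$ the scalar sequence $\bigl(\E_{n\in[N]} f(an+b)\bigr)_{N\in\N}$ is Cauchy in $\C$.

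Since the action is finitely generated, Proposition~\ref{L:finspectral} ensures that $\sigma_F$ is supported on the set of finitely generated completely multiplicative functions $f\colon \N \to \S^1$. For each such $f$ I would split into two cases. If $f$ is \emph{aperiodic}, then $\E_{n\in[N]} f(an+b)\to 0$ is immediate from \cref{D:preap}. If $f$ is \emph{pretentious}, then being finitely generated forces $f \sim \chi$ for some Dirichlet character $\chi$, exactly as used in the proof of \cref{C:concentrationfg} via \cite[Lemma~B.3]{Cha24}.

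The main obstacle is the pretentious case, and for this I would invoke the concentration estimate \eqref{E:fconcfingenr} of \cref{C:concentrationfg}. Given $\varepsilon > 0$, choose $K$ large and $Q \in \Phi_K$ with $a \mid Q$ (possible once $K$ exceeds all prime factors of $a$); splitting $n \in [N]$ according to its residue $r$ modulo $Q/a$ yields $an+b = Q m + (ar+b)$, so applying \eqref{E:fconcfingenr} along each such progression gives, as $N\to\infty$,
$$\E_{n\in[N]} f(an+b) \;=\; \E_{r\in[Q/a]} \epsilon_{ar+b,f}\cdot f(|ar+b|) \,+\, O(\varepsilon),$$
an explicit constant depending only on $Q,a,b,f$. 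This yields a Cauchy estimate as $\varepsilon\to 0^+$, proving scalar convergence; bounded convergence then produces the $L^2$-limit of $\mathcal{A}_N$. The degenerate case $b=0$ reduces to convergence of $\E_{n\in[N]} T_n F$ via $T_{an}=T_a\circ T_n$, and is handled in the same spirit using \eqref{E:fconcfingen}.
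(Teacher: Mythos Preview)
The paper does not give its own proof of this proposition; it simply extracts the statement from \cite[Theorem~1.14]{Cha24}. Your overall strategy --- reduce via the spectral identity \eqref{E:spectralid} to scalar convergence of $\E_{n\in[N]} f(an+b)$ for $\sigma_F$-a.e.\ $f$, use \cref{L:finspectral} to restrict to finitely generated $f$, then split into the aperiodic case (trivial) and the pretentious case (concentration estimates) --- is exactly the natural route and matches how such results are proved.

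There is, however, a genuine gap in your treatment of the pretentious case. The estimate \eqref{E:fconcfingenr} in \cref{C:concentrationfg} is stated for a \emph{fixed} shift $b\in\Z^*$, with the limit $K\to\infty$ taken afterwards. You apply it with shifts $b'=ar+b$ as $r$ ranges over $[Q/a]$; since $Q\in\Phi_K$, these shifts grow to order $Q$, hence blow up with $K$. The threshold $K_0(b',\varepsilon)$ needed to make the error $<\varepsilon$ in \eqref{E:fconcfingenr} depends on $b'$, and there is no reason it should be bounded uniformly over $b'\in[Q+b]$; so you cannot choose a single $K$ that works for all residues simultaneously. In short, the argument is circular: the $b'$ depend on $Q$, which depends on $K$, which must be chosen large depending on the $b'$.

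The fix is to replace \eqref{E:fconcfingenr} by the estimate \eqref{E:fconca} of \cref{T:concestlinear}\eqref{I:clii}, which is \emph{uniform} over $b\in S_K$ (at the cost of using the specific modulus $Q_K$ rather than an arbitrary $Q\in\Phi_K$). Since $f$ is finitely generated and pretentious, $f\sim\chi$ with $t=0$, so part~\eqref{I:clii} applies, and moreover $\exp(F_N(f,K))\to 1$ uniformly in $N>K$ (as noted in the proof of \cref{C:concentrationfg}). Splitting $n$ modulo $Q_K/a$, the residues $r$ with $ar+b\notin S_K$ form a set of density $o_K(1)$ (the argument of \cref{L:SKL} applies), so their contribution is negligible; for the remaining residues, \eqref{E:fconca} gives concentration around $f((Q_K,ar+b))\cdot\chi((ar+b)/(Q_K,ar+b))$, uniformly in $r$. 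This yields the Cauchy estimate you claim.
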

The  result  fails  for general multiplicative actions, such as multiplicative rotations by the pretentious multiplicative functions given in  examples \eqref{I:Ex2} and \eqref{I:Ex3}  in \cref{SS:pretentious}.

\subsection{Two elementary estimates}
In the proofs of Theorems~\ref{T:MainA} and \ref{T:MainB}
we will use the following estimate from \cite[Lemma~1.6]{Chu11}:
\begin{proposition}[\cite{Chu11}]\label{T:Chu}
	Let $(X,\CX,\mu)$ be probability space, $\CX_1,\ldots, \CX_\ell$ be sub-$\sigma$-algebras of $\CX$, and $F\in L^\infty(\mu)$ be non-negative.
	Then
	$$
	\int F\cdot \E(F|\CX_1)\cdots \E(F|\CX_\ell)\, d\mu \geq \Big(\int F\, d\mu\Big)^{\ell+1}.
	$$
\end{proposition}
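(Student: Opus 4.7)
The plan is to prove the inequality by induction on $\ell$, writing $G_i := \E(F|\CX_i)$ and $I := \int F\, d\mu$ throughout.

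For the base case $\ell = 1$: since $G_1$ is $\CX_1$-measurable, the defining property of the conditional expectation gives $\int F \cdot G_1\, d\mu = \int G_1^2\, d\mu$, and Jensen's inequality (equivalently, Cauchy--Schwarz against the constant function $1$) gives $\int G_1^2\, d\mu \geq \big(\int G_1\, d\mu\big)^2 = I^2$, as required.

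For the inductive step from $\ell - 1$ to $\ell$, I would assume the bound $\int F G_1 \cdots G_{\ell-1}\, d\mu \geq I^\ell$ for every non-negative $F\in L^\infty(\mu)$ and every choice of sub-$\sigma$-algebras, and aim to insert the additional factor $G_\ell$ while losing only one more power of $I$. The natural starting point is the identity
\[
\int F G_1 \cdots G_\ell\, d\mu \;=\; \int G_\ell \cdot \E(F G_1 \cdots G_{\ell-1} \mid \CX_\ell)\, d\mu,
\]
which uses only that $G_\ell$ is $\CX_\ell$-measurable together with the self-adjointness of conditional expectation. One then seeks to combine the inductive lower bound $\int \E(F G_1 \cdots G_{\ell-1}|\CX_\ell)\, d\mu = \int F G_1 \cdots G_{\ell-1}\, d\mu \geq I^\ell$ with the identity $\int G_\ell\, d\mu = I$ so as to produce a lower bound of $I^{\ell+1}$ on the product integral.

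The main obstacle is that naive applications of Cauchy--Schwarz or H\"older produce upper rather than lower bounds on $\int ab\, d\mu$ in terms of integrals of $a$ and $b$ separately. One option is the asymmetric Cauchy--Schwarz inequality $\int a^2 h\, d\mu \geq \big(\int ab\, d\mu\big)^2 \big/ \int (b^2/h)\, d\mu$, with $a^2 h = F G_1 \cdots G_\ell$ for carefully chosen $b$ and $h$; arranging the auxiliary integrals so that they can be bounded by the inductive hypothesis without sacrificing the sharp exponent is the delicate point, and typically requires a preliminary approximation $F \mapsto F + \varepsilon$ so that each $G_i$ is bounded away from zero and expressions involving $G_\ell^{-1}$ make sense. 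A potentially cleaner route is to reduce first to the case $F = \mathbf{1}_A$, after which the statement reads $\int_A \prod_{i=1}^\ell \E(\mathbf{1}_A|\CX_i)\, d\mu \geq \mu(A)^{\ell+1}$; this trades generality in $F$ for a more structured left-hand side on which the induction can be carried out cleanly, and the general case then follows by approximation with simple functions together with the scaling $F \mapsto cF$ and convexity.
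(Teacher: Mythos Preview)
The paper does not prove this proposition; it is quoted from \cite[Lemma~1.6]{Chu11}. So there is no ``paper's own proof'' to compare against, and your attempt must be judged on its own merits.

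Your base case is correct, but your inductive step is not a proof: you write down the identity $\int F G_1\cdots G_\ell\,d\mu=\int G_\ell\cdot \E(F G_1\cdots G_{\ell-1}\mid\CX_\ell)\,d\mu$, correctly observe that a naive Cauchy--Schwarz goes the wrong way, and then list two possible workarounds without carrying either one out. In particular, the reduction to indicator functions does not work as stated --- the inequality is homogeneous of degree $\ell+1$ in $F$, so knowing it for each $\mathbf{1}_{A_j}$ does not give it for $\sum c_j\mathbf{1}_{A_j}$ by linearity or convexity. The ``asymmetric Cauchy--Schwarz'' idea is closer to the mark, but you have not shown how to choose the auxiliary functions so that the resulting integrals feed back into the induction with the correct exponent.

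The clean fix is to strengthen the inductive hypothesis: prove that for all $\alpha_1,\ldots,\alpha_k>0$ one has $\int F\,\prod_{i=1}^k G_i^{\alpha_i}\,d\mu\ge I^{1+\alpha_1+\cdots+\alpha_k}$. The case $k=1$ is $\int G_1^{1+\alpha_1}\,d\mu\ge I^{1+\alpha_1}$, which is Jensen. For the step $k-1\to k$, condition on $\CX_k$ and use the conditional H\"older inequality in the form
\[
\E\bigl(F\,H^\theta\,\big|\,\CX_k\bigr)\le \E(F\mid\CX_k)^{1-\theta}\,\E(F H\mid\CX_k)^{\theta},\qquad H:=\prod_{i<k}G_i^{\alpha_i},
\]
with $\theta=1/(1+\alpha_k)$; this makes the prefactor $G_k^{\alpha_k}$ cancel exactly and yields
$\int F\prod_{i\le k}G_i^{\alpha_i}\,d\mu\ge\bigl[\int F\prod_{i<k}G_i^{\alpha_i/(1+\alpha_k)}\,d\mu\bigr]^{1+\alpha_k}$,
to which the inductive hypothesis applies and gives $I^{(1+\alpha_k)(1+\sum_{i<k}\alpha_i/(1+\alpha_k))}=I^{1+\sum_{i\le k}\alpha_i}$. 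Setting all $\alpha_i=1$ recovers the proposition. (On the set $\{G_k=0\}$ one has $F=0$ a.e.\ since $F\ge 0$, so the division is harmless.)
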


We shall use the following easy to prove variant of \cite[Lemma~3.1]{FKM23}:
\begin{lemma}\label{L:lN}
	Let $V$ be a normed space, $v\colon \N\to V$ be a $1$-bounded sequence  and $l_1,l_2\in \Z_+$, not both of them $0$. Suppose that for some $\varepsilon>0$ and for some $1$-bounded sequence $(v_N)_ {N\in \N}$ of elements in $V$  we have
	$$
	\limsup_{N\to\infty} \E_{n\in [N]} \norm{v(n) -v_N}\leq \varepsilon.
	$$
	Then
	$$
	\limsup_{N\to\infty} \E_{m,n\in [N]} \norm{v(l_1m+l_2n)-v_{N}}\leq 4(l_1+l_2)\varepsilon.
	$$
%	where $l:=l_1+l_2$.
\end{lemma}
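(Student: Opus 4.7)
Write $L:=l_1+l_2$ (so $L\geq 1$ by assumption) and $N':=LN$. The plan is to compare both the target average and the reference element $v_N$ to the same intermediate object, namely $v_{N'}$, using that $v(l_1m+l_2n)$ takes values in $[1,N']$.

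\medskip

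\textbf{Step 1 (weight comparison).} For any non-negative function $f$ on $\N$, the number of pairs $(m,n)\in [N]^2$ with $l_1m+l_2n=k$ is at most $N$ (given $m$, there is at most one $n$ that works, and vice versa; if one of the $l_i$ vanishes, only one variable is constrained so the count is bounded by $N$ trivially). Hence
\begin{equation}\label{E:weightcomp}
\E_{m,n\in [N]} f(l_1m+l_2n)=\frac{1}{N^2}\sum_{k=1}^{N'} c_N(k)\,f(k)\leq \frac{1}{N}\sum_{k=1}^{N'} f(k)=L\,\E_{k\in [N']} f(k).
\end{equation}

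\textbf{Step 2 (introduce $v_{N'}$ and split).} By the triangle inequality,
$$\E_{m,n\in [N]}\norm{v(l_1m+l_2n)-v_N}\leq \E_{m,n\in [N]}\norm{v(l_1m+l_2n)-v_{N'}}+\norm{v_{N'}-v_N}.$$
Applying \eqref{E:weightcomp} with $f(k):=\norm{v(k)-v_{N'}}$ bounds the first term by $L\,\E_{k\in [N']}\norm{v(k)-v_{N'}}$.

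\medskip

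\textbf{Step 3 (control $\norm{v_{N'}-v_N}$).} Averaging the triangle inequality $\norm{v_{N'}-v_N}\leq \norm{v_{N'}-v(k)}+\norm{v(k)-v_N}$ over $k\in [N]$ gives
$$\norm{v_{N'}-v_N}\leq \E_{k\in[N]}\norm{v(k)-v_{N'}}+\E_{k\in[N]}\norm{v(k)-v_N}.$$
Since $[N]\subset [N']$, the first term on the right is at most $\tfrac{N'}{N}\E_{k\in [N']}\norm{v(k)-v_{N'}}=L\,\E_{k\in [N']}\norm{v(k)-v_{N'}}$.

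\medskip

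\textbf{Step 4 (take limsup).} As $N\to\infty$ we have $N'\to\infty$, so the hypothesis yields $\limsup_N \E_{k\in [N']}\norm{v(k)-v_{N'}}\leq \varepsilon$ and $\limsup_N \E_{k\in [N]}\norm{v(k)-v_N}\leq \varepsilon$. Combining Steps 2 and 3 gives
$$\limsup_{N\to\infty}\E_{m,n\in [N]}\norm{v(l_1m+l_2n)-v_N}\leq L\varepsilon+\bigl(L\varepsilon+\varepsilon\bigr)=(2L+1)\varepsilon\leq 4L\varepsilon,$$
where the final inequality uses $L\geq 1$. This is the claimed bound $4(l_1+l_2)\varepsilon$.

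\medskip

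No part of this is a real obstacle; the only point requiring a little care is Step 1, where one has to verify that the multiplicity $c_N(k)$ is bounded by $N$ in all cases (including when one of $l_1,l_2$ is zero, which is permitted by the hypothesis). The rest is elementary triangle-inequality bookkeeping, which is consistent with the remark in the text that the statement is an easy variant of \cite[Lemma 3.1]{FKM23}.
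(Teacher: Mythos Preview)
Your proof is correct and follows essentially the same approach as the paper: the paper cites \cite[Lemma~3.1]{FKM23} for the bound $\limsup_N \E_{m,n\in[N]}\norm{v(l_1m+l_2n)-v_{lN}}\leq 2l\varepsilon$ (which is precisely your Step~1 combined with the first half of Step~2), and then bounds $\limsup_N\norm{v_{lN}-v_N}\leq (l+1)\varepsilon$ exactly as in your Step~3. You have simply unpacked the cited lemma, and in fact your bookkeeping gives the slightly sharper constant $(2L+1)$ rather than the paper's $(3l+1)$; both are at most $4L$.
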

The estimate is given in \cite{FKM23} with $v_{lN}$ instead of $v_N$  and $2l\varepsilon $ instead of $4l\varepsilon$ in the conclusion where $l:=l_1+l_2$. But using our  assumption with $lN$
instead of $N$, we deduce that
$\limsup_{N\to\infty} \norm{v_{lN}-v_N}\leq  (l+1) \varepsilon$, which implies the asserted bound.
% (with $3l+1$ instead of $4l$).

\section{Decomposition results for multiplicative actions}
Our goal in this section is to prove the decomposition results of  Theorems~\ref{T:Decomposition} and \ref{T:DecompositionGeneral}. We do this with the exception of  the proof of property \eqref{I:MainA2} in Theorem~\ref{T:Decomposition}, which  will be completed in \cref{S:mixedseminorms}.  For a proof sketch see \cref{SS:proofsketchDecomposition}.

{\em In what follows, when we say that a measure is supported on a certain set, we mean that the measure of its complement is $0$.}

\subsection{Pretentious and aperiodic functions-Decomposition result}
Given a multiplicative action   we define two subspaces that will be used in  our study.
\begin{definition}\label{D:pa}
	Let $(X,\CX,\mu,T_n)$ be a multiplicative action. We let
	$$
	X_p:=\{F\in L^2(\mu)\colon \sigma_F \text{ is supported in } \CM_{p} \}
	$$
	and call any element of $X_p$ a {\em pretentious} function.
	We also let
		$$
	X_a:=\{F\in L^2(\mu)\colon \sigma_F \text{ is supported in } \CM_{a} \}
	$$
	and call any element of $X_{a}$ an {\em aperiodic } function.
\end{definition}
\begin{remark}
	In the case where the multiplicative action $(X,\CX,\mu,T_n)$ is finitely generated, Charamaras defined in \cite{Cha24}
	a subspace $X'_p$
	spanned by the ``pretentious rational eigenfunctions'' and a subspace $X_a'$ spanned by those functions  satisfying  \eqref{E:abzero}. It  turns out that   for finitely generated actions  we have
	$X_p=X'_p$ and $X_a=X'_a$. Indeed,  \cref{P:aperiodic0}
		implies that $X_a=X'_a$, and then \cref{P:Decomposition}
		 and  \cite[Theorem~1.8]{Cha24} imply that $X_p=X_p'$. However, this is no longer true for general multiplicative functions, see the remark after \cref{P:aperiodic0}.
	Moreover,
	 for our purposes, even for finitely generated actions,  it is more convenient   to use the defining properties of $X_p$ and $ X_a$  given above.
\end{remark}

From parts~\eqref{I:spectral-1}-\eqref{I:spectral0.5} of \cref{L:spectral}  we immediately deduce the following:
\begin{lemma}\label{L:Xpa}
	The spaces $X_p$ and $X_a$ are closed  $T_n$-invariant subspaces of $L^2(\mu)$.
\end{lemma}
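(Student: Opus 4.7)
The plan is to verify the three assertions (subspace, closed, $T_n$-invariant) separately, each as a one-line application of the spectral properties listed in parts~\eqref{I:spectral-1}--\eqref{I:spectral0.5} of \cref{L:spectral}. The argument is identical for $X_p$ and $X_a$, since $\CM_p$ and $\CM_a$ are both Borel subsets of $\CM$ (noted just before \cref{SS:pretentious}); so I would phrase the proof abstractly for any Borel set $B\subset \CM$ and then instantiate $B=\CM_p$ and $B=\CM_a$.

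First, $T_n$-invariance: if $F\in X_p$, then by part~\eqref{I:spectral-1} we have $\sigma_{T_nF}=\sigma_F$, so $\sigma_{T_nF}$ is again supported in $\CM_p$, giving $T_nF\in X_p$. Next, the subspace property: given $F,G\in X_p$ and $c,d\in \C$, applying parts~\eqref{I:spectral-1} and \eqref{I:spectral0.5} yields
\[
\sigma_{cF+dG}\ll \sigma_{cF}+\sigma_{dG}=|c|^2\sigma_F+|d|^2\sigma_G,
\]
and the right-hand side vanishes on $\CM\setminus \CM_p$ by assumption, hence so does $\sigma_{cF+dG}$ by absolute continuity. Finally, closedness: if $F_n\in X_p$ and $F_n\to F$ in $L^2(\mu)$, part~\eqref{I:spectral0} applied with $A=\CM_p$ gives that $\sigma_F$ is supported in $\CM_p$, so $F\in X_p$. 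The same three steps, with $\CM_a$ in place of $\CM_p$, handle $X_a$.

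There is no genuine obstacle here: the entire content is bookkeeping on the support of spectral measures, and everything needed has already been packaged into \cref{L:spectral}. The only point requiring a brief comment is that one must invoke that $\CM_p,\CM_a$ are Borel in order to apply part~\eqref{I:spectral0}, but this is explicitly recorded in the paragraph following \cref{D:preap}.
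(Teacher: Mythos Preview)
Your proof is correct and is precisely the approach the paper takes: it simply states that the lemma follows immediately from parts~\eqref{I:spectral-1}--\eqref{I:spectral0.5} of \cref{L:spectral}, and you have merely spelled out the three one-line deductions that justify this.
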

It is less obvious that  $X_p\cap L^\infty(\mu)$ is an algebra. We show this later  in Corollary~\ref{C:factor}.

\begin{definition}
A multiplicative action  $(X,\CX,\mu,T_n)$ is {\em pretentious} if $X_p=L^2(\mu)$ and
{\em aperiodic} if $X_a=L^2_0(\mu)$,  where $L^2_0(\mu)$ denotes the zero-mean functions in $L^2(\mu)$.
\end{definition}
 \begin{remark}
 Part~\eqref{I:aper1}	of \cref{P:aperiodic0} below shows that this definition of aperiodicity  and the one in  \cref{SSS:aperiodic} are consistent.
\end{remark}

Looking at the examples in \cref{SS:examples},   a multiplicative rotation by a pretentious multiplicative function is a pretentious action, while a multiplicative rotation by the Liouville function  is an aperiodic action.
As explained in \cref{SSS:aperiodic}, other examples of aperiodic actions include   multiplicative rotations by finitely generated multiplicative functions $f$ such that $f^k$ is aperiodic for all  $k\in \N$ strictly smaller than the cardinality of the range of $f$,  and    actions by dilations by $k$-th powers.
On the other hand,  if we   consider the completely multiplicative function defined by $f(p)=-1$ for all primes $p\neq 2$ and $f(2)=i$,  then $f$ is aperiodic but $f^2$ is non-trivial and pretentious. A multiplicative rotation by such an $f$  acts on the space $X:=\{\pm 1,\pm i\}$, and this action exhibits  mixed behavior, in the sense that it is neither pretentious nor aperiodic.
% that $X_p\neq L^2(\mu)$ and  $X_a\neq L^2_0(\mu)$.
Indeed, if  $F(x):=x$, $x\in X$, then $F$  and $F^2$ have zero mean, and it can be shown that  $F\in X_a$ but $F^2\in X_p$. Therefore,   the zero-mean function  $G:=F+F^2$  is neither in $X_p$ nor in $X_a$, but it can be decomposed as the sum of two functions, one in $X_p$ and the other in $X_a$.

For general multiplicative actions we have the following decomposition result:
\begin{proposition}\label{P:Decomposition}
	Let  $(X,\CX,\mu,T_n)$ be  a multiplicative action and
	 $F\in L^2(\mu)$. Then  there  exist $F_p,F_a\in L^2(\mu)$ such that
	\begin{equation}\label{E:apdecomposition}
	F=F_p+F_a \quad \text{ and } \quad F_p\in X_p,\, \,  F_a\in X_a, \,\,    F_p\bot F_a.
	\end{equation}
Hence, $X_{a}=X_{p}^{\bot}$.
	\end{proposition}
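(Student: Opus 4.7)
The plan is to apply the spectral decomposition machinery of \cref{L:spectral} to the splitting $\CM = \CM_p \sqcup \CM_a$. Recall that by \cref{D:preap}, every completely multiplicative function with values on $\S^1$ is either pretentious or aperiodic, so $\CM_p$ and $\CM_a$ partition $\CM$, and both are Borel subsets of $\CM$.

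First, given $F \in L^2(\mu)$, I would write the spectral measure $\sigma_F$ on $\CM$ as
$$
\sigma_F = \sigma_1 + \sigma_2, \quad \text{where } \sigma_1 := \sigma_F \!\mid_{\CM_p}, \, \sigma_2 := \sigma_F \!\mid_{\CM_a}.
$$
Since $\sigma_1$ and $\sigma_2$ are supported on disjoint Borel sets, they are mutually singular. Applying part~\eqref{I:spectral2} of \cref{L:spectral}, I obtain $F_p, F_a \in L^2(\mu)$ with $F = F_p + F_a$ and $\sigma_{F_p} = \sigma_1$, $\sigma_{F_a} = \sigma_2$. By construction $F_p \in X_p$ and $F_a \in X_a$. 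Because $\sigma_{F_p} \bot \sigma_{F_a}$, part~\eqref{I:spectral1} of \cref{L:spectral} gives $F_p \bot F_a$, which yields \eqref{E:apdecomposition}.

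For the identity $X_a = X_p^\bot$, the inclusion $X_a \subseteq X_p^\bot$ is immediate from part~\eqref{I:spectral1} of \cref{L:spectral}: if $F \in X_a$ and $G \in X_p$, then $\sigma_F$ and $\sigma_G$ are supported on the disjoint sets $\CM_a$ and $\CM_p$, hence mutually singular, hence $F \bot G$. For the reverse inclusion, suppose $F \in X_p^\bot$ and apply the decomposition just proved to write $F = F_p + F_a$ with $F_p \in X_p$ and $F_a \in X_a$. Then $\langle F, F_p\rangle = 0$ by assumption, and $\langle F_a, F_p\rangle = 0$ by the orthogonality already established, so $\|F_p\|_{L^2(\mu)}^2 = \langle F_p + F_a, F_p\rangle = 0$. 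Thus $F = F_a \in X_a$, completing the proof.

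There is no real obstacle here: the whole argument is a routine application of the spectral lemma, the only conceptual point being the observation that pretentiousness and aperiodicity are complementary Borel properties on $\CM$, so $\sigma_F$ splits canonically with mutually singular parts. The uniqueness of $F_p$ and $F_a$ (which is not claimed in the statement but follows transparently from the orthogonality $F_p \bot F_a$ and $X_p \cap X_a = \{0\}$) is implicit in the argument.
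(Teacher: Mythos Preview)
Your proof is correct and follows essentially the same approach as the paper: split $\sigma_F$ into its restrictions to the complementary Borel sets $\CM_p$ and $\CM_a$, then apply parts~\eqref{I:spectral2} and~\eqref{I:spectral1} of \cref{L:spectral}. You spell out the deduction of $X_a = X_p^\bot$ in more detail than the paper does, but the underlying argument is identical.
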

\begin{remarks}
$\bullet$ For finitely generated actions, a similar result was   proved by Charamaras in~\cite[Theorem~1.28]{Cha24} for the spaces $X'_p$, $X'_a$ described in the remark after \cref{D:pa}.

$\bullet$ It  follows from \cref{C:factor} below that $X_p$ defines a factor $\CX_p$ and  $F_p=\E(F|\CX_p)$.
\end{remarks}
\begin{proof}
Let  $F\in L^2(\mu)$ and $\sigma_F$ be the spectral measure of $F$. Then
$$
\sigma_F=\sigma_{p}+\sigma_{a},
$$ where $\sigma_{p}$ and $\sigma_{a}$ are the restrictions of $\sigma_F$ on  the Borel subsets $\CM_{p}$ and $\CM_{a}$, respectively. Since the measures $\sigma_p$ and $\sigma_a$ are mutually singular, we get by  part~\eqref{I:spectral2} of \cref{L:spectral}  that there exist $F_p,F_a\in L^2(\mu)$ such that
$$
F=F_p+F_a \quad \text{and} \quad \sigma_{F_p}=\sigma_p, \,  \sigma_{F_a}=\sigma_a.
$$
 Then $F_p\in X_p$ and $F_a\in X_a$.
Furthermore, since $\sigma_{F_p}$ and $\sigma_{F_a}$ are mutually singular, we get by part~\eqref{I:spectral1} of \cref{L:spectral} that $F_{p}\bot F_{a}$. This completes the proof.
	\end{proof}

\subsection{Characterization of pretentious functions-Concentration property}
Next, we establish some approximate periodicity properties and related consequences for iterates of pretentious functions   that will be crucial in the proofs of our main results.
\begin{proposition}\label{P:Concentrationfg}
	Let   $(X,\CX,\mu,T_n)$  be a  finitely generated multiplicative action and $F\in L^2(\mu)$. 	
%\begin{enumerate}
	%\item \label{I:pre1}
	%If the action is finitely generated,
	 Then  $F\in X_{p}$
	if and only if
	\begin{equation}\label{E:concentration}
		\lim_{K\to\infty}\limsup_{N\to\infty} \max_{Q\in \Phi_K}\E_{n\in[N]}\norm{T_{Qn+1}F-F}_{L^2(\mu)}=0.
	\end{equation}
Moreover,  if $Q_K,S_K$ are as in \eqref{E:defQKSK}, then
%%for all $c_K>0$, $K\in \N$,  we have
\begin{equation}\label{E:fconcfingen''}
	\lim_{K\to\infty} 	\limsup_{N\to\infty} \max_{b\in S_K} \E_{n\in[N]}\norm{T_{Q_Kn+b}F-A_{Q_K,b}(F)}_{L^2(\mu)}=0,
\end{equation}
where  for $Q\in \N$ and $b\in \N$ we let $A_{Q,b}(F):=\lim_{N\to\infty}\E_{n\in [N]}\,  T_{Qn+b}F$ and the limit exists in $L^2(\mu)$ by \cref{P:convergelinear}.
\end{proposition}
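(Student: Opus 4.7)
The plan is to translate both statements into pointwise concentration estimates on the spectral measure $\sigma_F$ via the spectral identities \eqref{E:spectralid} and \eqref{E:spectralid2}, and then pass from pointwise to integrated statements by a reverse Fatou / dominated convergence argument. Since the action is finitely generated, \cref{L:finspectral} tells us that $\sigma_F$ is supported on finitely generated $f\in\CM$, and the assumption $F\in X_p$ upgrades this to being supported on finitely generated \emph{pretentious} multiplicative functions; as recalled in the proof of \cref{C:concentrationfg}, every such $f$ satisfies $f\sim\chi$ for some Dirichlet character $\chi$. Throughout I will use $\norm{T_rF}_{L^2(\mu)}=\norm{F}_{L^2(\mu)}$ together with Cauchy--Schwarz to freely pass between $L^1$ and $L^2$ averages in $n$ of the quantities $\norm{T_{\cdot}F-\cdot}_{L^2(\mu)}$.

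For the forward implication of the first claim, \eqref{E:spectralid2} and Fubini give
\[
\Big(\E_{n\in[N]}\norm{T_{Qn+1}F-F}_{L^2(\mu)}\Big)^{2}\le \int_{\CM}\E_{n\in[N]}|f(Qn+1)-1|^{2}\, d\sigma_F(f).
\]
The integrand is bounded by $4$, and for every finitely generated pretentious $f$ we have the pointwise estimate $\lim_{K\to\infty}\limsup_{N\to\infty}\max_{Q\in\Phi_K}\E_{n\in[N]}|f(Qn+1)-1|^{2}=0$, which follows from \eqref{E:fconcfingen} of \cref{C:concentrationfg} (squaring is harmless by the uniform bound). A two-step passage --- pushing $\limsup_N$ past the integral for each fixed $K$ by reverse Fatou, then $\lim_K$ by dominated convergence --- yields \eqref{E:concentration}.

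For the converse, suppose $F\notin X_p$. By \cref{P:Decomposition} we write $F=F_p+F_a$ with $F_a\ne 0$; since $X_p$ and $X_a$ are orthogonal and $T_n$-invariant (\cref{L:Xpa}),
\[
\norm{T_{Qn+1}F-F}_{L^2(\mu)}^{2}\ge \norm{T_{Qn+1}F_a-F_a}_{L^2(\mu)}^{2}=2\norm{F_a}_{L^2(\mu)}^{2}-2\Re\!\int_{\CM}f(Qn+1)\, d\sigma_{F_a}(f).
\]
Since $\sigma_{F_a}$ is supported on $\CM_a$, $\E_{n\in[N]}f(Qn+1)\to 0$ pointwise for $\sigma_{F_a}$-a.e.\ $f$ by definition of aperiodicity, and dominated convergence gives $\limsup_N\E_{n\in[N]}\norm{T_{Qn+1}F-F}_{L^2(\mu)}^{2}\ge 2\norm{F_a}_{L^2(\mu)}^{2}>0$ for any fixed $Q\in\Phi_K$. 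Together with $\norm{T_{Qn+1}F-F}_{L^2(\mu)}^{2}\le 2\norm{F}_{L^2(\mu)}\cdot\norm{T_{Qn+1}F-F}_{L^2(\mu)}$, this contradicts \eqref{E:concentration}.

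The ``moreover'' part proceeds by the same spectral strategy. \cref{P:convergelinear} ensures $A_{Q_K,b}(F)\in L^2(\mu)$ is well defined; under the spectral isomorphism from the cyclic space of $F$ onto $L^2(\sigma_F)$ that sends $T_rF\mapsto(f\mapsto f(r))$, it corresponds to the $L^2(\sigma_F)$-limit $\tilde c(Q_K,b,f):=\lim_{N\to\infty}\E_{n\in[N]}f(Q_Kn+b)$, which for $\sigma_F$-a.e.\ $f$ is also the pointwise limit since each such $f$ satisfies $f\sim\chi$. Writing
\[
|f(Q_Kn+b)-\tilde c(Q_K,b,f)|\le|f(Q_Kn+b)-\E_{n\in[N]}f(Q_Kn+b)|+|\E_{n\in[N]}f(Q_Kn+b)-\tilde c(Q_K,b,f)|,
\]
the first term is controlled pointwise in $f$, uniformly in $b\in S_K$, by \eqref{E:fconca'} of \cref{T:concestlinear}, while the second vanishes as $N\to\infty$ for each fixed $K$ (the maximum over the \emph{finite} set $S_K$ causing no issue). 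The same spectral identity and reverse Fatou argument as above then yields \eqref{E:fconcfingen''}. The main technical nuisance in both parts is the careful commutation of the $\max$ (over $Q\in\Phi_K$, resp.\ $b\in S_K$) with the integral over $\sigma_F$ and with $\limsup_N$; this is resolved by pulling the maxima inside the integral using the uniform bound, applying reverse Fatou in $N$ for fixed $K$, and then dominated convergence in $K$.
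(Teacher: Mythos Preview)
Your proof is correct and follows essentially the same spectral-measure strategy as the paper: use \eqref{E:spectralid2} to reduce to pointwise estimates on $\sigma_F$, invoke \cref{L:finspectral} and the concentration estimates \eqref{E:fconcfingen} and \eqref{E:fconca'} for finitely generated pretentious $f$, and pass through the integral via reverse Fatou/dominated convergence. The only noteworthy difference is in the converse: the paper deduces from \eqref{E:concentration} that $\norm{\E_{n\in[N]}T_{Q_Kn+1}F}_{L^2(\mu)}\to\norm{F}_{L^2(\mu)}$ and then, using the forward direction applied to $F_p$, obtains $\norm{F}_{L^2(\mu)}=\norm{F_p}_{L^2(\mu)}$, whereas you bypass the forward direction entirely by using Pythagoras to bound $\norm{T_{Qn+1}F-F}^2$ below by $\norm{T_{Qn+1}F_a-F_a}^2$ and reading off a uniform positive lower bound --- a slightly more direct route.
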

\begin{remark}
	Identity \eqref{E:concentration} also implies that
	\begin{equation}\label{E:concentrationa}
		\lim_{K\to\infty}\limsup_{N\to\infty} \max_{Q\in \Phi_K}\E_{n\in[N]}\norm{T_{Qn+b}F-T_bF}_{L^2(\mu)}=0
	\end{equation}
for every $b\in \N$ (but this is false for $b<0$). Indeed,
note that $\norm{T_{Qn+b}F-T_bF}_{L^2(\mu)}=\norm{T_{b^{-1}Qn+1}F-F}_{L^2(\mu)}$ and  \eqref{E:concentration} holds if we replace $Q$ by $b^{-1}Q$.
	\end{remark}
\begin{proof}
	We prove the first part. Suppose that  $F\in X_p$, in which case $\sigma_F$ is supported on $\CM_p$.
	Using \eqref{E:spectralid2} we get
	$$
	\norm{T_{Qn+1}F-F}_{L^2(\mu)}=\norm{f(Qn+1)-1}_{L^2(\sigma_F(f))}
	$$	
	for every $Q,n\in \N$. Using \cref{L:finspectral} and applying
	 Fatou's lemma twice, it suffices to show that for every $f\in \CM_p$ that is finitely generated we have
	$$
	\lim_{K\to\infty}\limsup_{N\to\infty} \max_{Q\in \Phi_K}\E_{n\in[N]}|f(Qn+1)-1|=0.
	$$
This follows from the identity \eqref{E:fconcfingen} in   \cref{C:concentrationfg}.
	
	To prove the converse, suppose that $F\in L^2(\mu)$ satisfies \eqref{E:concentration}.
	Then for any $Q_K\in \Phi_K$, $K\in \N$,  we have
	$$
	\lim_{K\to\infty}\limsup_{N\to\infty} \norm{\E_{n\in  [N]}\, T_{Q_K n+1}F}_{L^2(\mu)}=\norm{F}_{L^2(\mu)}.
	$$
	Let $F=F_p+F_a$ be the decomposition of  \cref{P:Decomposition} with $F_p\in X_p$, $F_a\in X_a$, and $F_p\bot F_a$. Since $F_a\in X_a$, part~\eqref{I:aper1} of  \cref{P:aperiodic0} gives that
	for every $K\in \N$ we have
	$$
	\lim_{N\to\infty}\norm{\E_{n\in  [N]}\, T_{Q_K n+1}F_a}_{L^2(\mu)}=0.
	$$
	By the already established forward direction we get that $F_p$ satisfies \eqref{E:concentration}, hence
	$$
	\lim_{K\to\infty}\limsup_{N\to\infty} \norm{\E_{n\in  [N]}\, T_{Q_K n+1}F_p}_{L^2(\mu)}=\norm{F_p}_{L^2(\mu)}.
	$$
	Since $F=F_p+F_a$, from the previous three identities we deduce that
	$
	\norm{F_p+F_a}_{L^2(\mu)}=\norm{F_p}_{L^2(\mu)}.
	$
	Since $F_p\bot F_a$ this implies that $F_a=0$. Hence, $F=F_p\in X_p$.
	
	To prove \eqref{E:fconcfingen''},  we first apply   identity \eqref{E:spectralid}
	to get that
	\begin{equation}\label{E:TQKn}
	\norm{T_{Q_Kn+b}F-A_{Q_K,b}(F)}_{L^2(\mu)}
	=\norm{f(Q_Kn+b)-  \E_{n\in[N]}\, f(Q_Kn+b)}_{L^2(\sigma_F)}.
	\end{equation}
	By \cref{L:finspectral}  we can assume that $f$ is pretentious and finitely generated,  hence  \cite[Lemma~B.3]{Cha24}  gives that  $f\sim \chi$ for some Dirichlet character $\chi$.
Combining \eqref{E:TQKn} with   \eqref{E:fconca'} in  \cref{T:concestlinear},  and using Fatou's lemma twice we get that
		 \eqref{E:fconcfingen''}   holds.
	\end{proof}
We next give a variant of the previous result for general multiplicative actions.
Part~\eqref{I:pre2} is used to prove part~\eqref{I:pre2'}, which in turn is used in the proof of  \cref{C:factor}, and
part~\eqref{I:pre3} is used to prove \cref{P:aperiodic0}.
\begin{proposition}\label{P:Concentrationgeneral}
		Let   $(X,\CX,\mu,T_n)$  be a  general  multiplicative action, $S_\delta$ be as in \eqref{E:Sd},  and $F\in L^2(\mu)$. 	
\begin{enumerate}
	\item \label{I:pre2}   We have  $F\in X_{p}$
	if and only if
	\begin{equation}\label{E:concegeneral}
		\lim_{\delta\to 0^+}\liminf_{k\to\infty}\liminf_{N\to\infty} \norm{\E_{n\in S_\delta\cap [N]}\, T_{k!n+1}F}_{L^2(\mu)}=\norm{F}_{L^2(\mu)}.
	\end{equation}

	 \item \label{I:pre3}  If $F\in X_p$ and $F\neq 0$, then
	\begin{equation}\label{E:concegeneral'}
		\liminf_{k\to\infty}\liminf_{N\to\infty} \norm{\E_{n\in  [N]}\, T_{k!n+1}F}_{L^2(\mu)}>0.
		\end{equation}
		
		\item \label{I:pre2'}   We have  $F\in X_{p}$
	if and only if
	\begin{equation}\label{E:concegeneral2}
		\lim_{\delta\to 0^+}\limsup_{k\to\infty}\limsup_{N\to\infty} \E_{n\in S_\delta\cap [N]}\, \norm{T_{k!n+1}F -A_{\delta,k,N}}_{L^2(\mu)}=0,
	\end{equation}
	 for some $A_{\delta,k,N}\in L^2(\mu)$
(in which case we can take $A_{\delta,k,N}= \E_{n\in S_\delta\cap [N]}\, T_{k!n+1}F$).

	\item \label{I:pre4}
If  $F\in X_{p}$,
 then for every  $b\in \Z^*$ we have
	\begin{equation}\label{E:concegeneral2A}
\lim_{\delta\to 0^+}	\limsup_{K\to\infty}\limsup_{N\to\infty} \max_{Q\in \Phi_K}\E_{n\in S_\delta\cap [N]}\norm{T_{Qn+b}F_p-A_{\delta, Q,N,b}}_{L^2(\mu)}=0,
\end{equation}
 	where  for $Q,N\in \N$ and $b\in \Z^*$ we let 	$A_{\delta, Q,N,b}:=\E_{n\in S_\delta \cap [N]}\, T_{Qn+b}F_p$.
\end{enumerate}
\end{proposition}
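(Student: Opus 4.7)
Each of the four claims will be reduced via the spectral identity \eqref{E:spectralid} to a pointwise statement about completely multiplicative $f\in\CM$, which is then integrated against $\sigma_F$ by Fatou's lemma or the dominated convergence theorem (the integrands are uniformly bounded by $4$). The pretentious pointwise inputs all come from \cref{C:concentrationgeneral}, while the aperiodic side rests on the decomposition $F=F_p+F_a$ of \cref{P:Decomposition} and on the orthogonality $X_a=X_p^\perp$.

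\textbf{Parts \eqref{I:pre2} and \eqref{I:pre3}.} The spectral identity turns the square of the norm in \eqref{E:concegeneral} into $\int_\CM |\E_{n\in S_\delta\cap[N]} f(k!n+1)|^2\,d\sigma_F(f)$. If $F\in X_p$, then $\sigma_F$ is carried by $\CM_p$, on which \eqref{E:fconcabsol} forces the integrand to converge to $1$ in the iterated limit; since the trivial upper bound is also $1$, Fatou's lemma delivers equality in \eqref{E:concegeneral}. For the converse I decompose $F=F_p+F_a$ and use that $X_p,X_a$ are orthogonal $T_n$-invariant subspaces (\cref{L:Xpa}), so that the squared norm of $\E_n T_{k!n+1}F$ splits as $\|\E_n T_{k!n+1}F_p\|^2+\|\E_n T_{k!n+1}F_a\|^2$. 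The forward direction handles the $F_p$-piece, and a separate spectral argument (addressed below) shows $\|\E_n T_{k!n+1}F_a\|\to 0$ in the iterated limit; the assumed equality in \eqref{E:concegeneral} then forces $F_a=0$. Part \eqref{I:pre3} follows from the same reduction with \eqref{E:fconcabsol'} replacing \eqref{E:fconcabsol}: the pointwise liminf is strictly positive on all of $\CM_p$, and since $\sigma_F(\CM_p)>0$ whenever $F\in X_p\setminus\{0\}$, Fatou's lemma gives a strictly positive limit.

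\textbf{Parts \eqref{I:pre2'} and \eqref{I:pre4}.} For the forward direction of \eqref{I:pre2'} I set $A_{\delta,k,N}:=\E_{n\in S_\delta\cap[N]}T_{k!n+1}F$ and use Cauchy--Schwarz plus the spectral identity to bound the quantity of interest by $\bigl(\int \E_{n\in S_\delta\cap[N]}|f(k!n+1)-\E_{n'}f(k!n'+1)|^2\,d\sigma_F(f)\bigr)^{1/2}$. Applying the elementary inequality $\E|g-\E g|^2\le \E|g-c|^2$ with $c=(k!)^{it}\exp(F_N(f,k))$ and invoking \eqref{E:fconc'} together with the uniform bound $n^{it}=1+O_t(\delta)$ on $S_\delta$, the pointwise integrand tends to $0$ on $\CM_p$, and dominated convergence finishes the job. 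The converse is short: measure preservation gives $\|T_{k!n+1}F\|=\|F\|$, so the concentration hypothesis and the triangle inequality force $\|A_{\delta,k,N}\|\to\|F\|$ in the iterated limit, which is exactly \eqref{E:concegeneral}, and part \eqref{I:pre2} applies. Part \eqref{I:pre4} follows by the same template with \eqref{E:fconcgendeltaAverage} in the role of \eqref{E:fconc'}; that input already contains the needed uniformity over $Q\in\Phi_K$ and the dependence on general $b\in\Z^*$.

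\textbf{Main obstacle.} The delicate point is the vanishing claim $\|\E_n T_{k!n+1}F_a\|\to 0$ used in the converse of \eqref{I:pre2}, which reduces via the spectral identity to showing $\E_{n\in S_\delta\cap[N]}f(k!n+1)\to 0$ for aperiodic $f$; the definition of aperiodicity only directly covers unweighted averages, so I plan to expand $\one_{S_\delta}$ in Fourier series in $\log n\bmod 2\pi$ and reduce to cancellation of the twisted averages $\E_{n\in[N]} n^{im}f(k!n+1)$, exploiting that $f\cdot n^{it}$ remains aperiodic for every $t\in\R$ (in the spirit of the Hal\'asz--Wirsing characterization) and handling the error via the $L^1$-approximation of $\one_{S_\delta}$ by trigonometric polynomials. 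Presumably this is what \eqref{I:aper1} of the forthcoming \cref{P:aperiodic0} will state in the form directly usable here.
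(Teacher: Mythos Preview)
Your overall approach matches the paper's: spectral identity plus \cref{C:concentrationgeneral} for the forward directions, decomposition $F=F_p+F_a$ plus the vanishing of the $F_a$-averages for the converses. Two tactical differences in part~\eqref{I:pre2'} are worth noting. For the forward direction the paper does not go back through the spectral integral at all: with $A_{\delta,k,N}:=\E_{n\in S_\delta\cap[N]}T_{k!n+1}F$ one has the exact identity
\[
\E_{n\in S_\delta\cap[N]}\norm{T_{k!n+1}F-A_{\delta,k,N}}_{L^2(\mu)}^2=\norm{F}_{L^2(\mu)}^2-\norm{A_{\delta,k,N}}_{L^2(\mu)}^2,
\]
and then part~\eqref{I:pre2} finishes immediately. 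This is cleaner than your route through the variance-minimising constant and \eqref{E:fconc'}. Conversely, your argument for the converse of \eqref{I:pre2'} (triangle inequality forces $\norm{A_{\delta,k,N}}\to\norm{F}$, hence \eqref{E:concegeneral} holds, hence part~\eqref{I:pre2} applies) is shorter than the paper's, which instead replaces $A_{\delta,k,N}$ by $B_{\delta,k,N}:=\E_{n}T_{k!n+1}F_p\in X_p$ and uses the Pythagorean identity $\norm{T_{k!n+1}F-B}^2=\norm{T_{k!n+1}F_p-B}^2+\norm{F_a}^2$ to kill $F_a$.

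There is one genuine slip in your forward argument for~\eqref{I:pre2'}: the pointwise claim ``$n^{it}=1+O_t(\delta)$ on $S_\delta$'' is false for $t\notin\Z$. For $n\in S_\delta$ one has $\log n=2\pi m+O(\delta)$ for some integer $m$, whence $n^{it}=e^{2\pi i t m}+O_t(\delta)$; for $t=1/2$, say, this is near $-1$ whenever $m$ is odd. What is actually used in the paper (recorded as \eqref{E:Sdtd}) is the \emph{averaged} statement $\lim_{\delta\to0^+}\limsup_{N}\E_{n\in S_\delta\cap[N]}|n^{it}-1|=0$, and that averaged form is all you need to push your variance bound through. Your plan for the $F_a$-vanishing on $S_\delta$ --- Fourier-expand $\one_{S_\delta}$ in $\log n$ and use that $f\cdot n^{im}$ stays aperiodic --- is precisely what the paper carries out in part~\eqref{I:aper2} of \cref{P:aperiodic0}.
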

\begin{proof}
			We prove \eqref{I:pre2}. Suppose that $F\in X_p$. Using \eqref{E:spectralid} we get
	$$		
\norm{\E_{n\in S_\delta\cap [N]}\, T_{k!n+1}F}_{L^2(\mu)}
	=\norm{\E_{n\in S_\delta\cap [N]}\, f(k!n+1)}_{L^2(\sigma_F(f))}
	$$
	for every $k,N\in \N$ and $\delta>0$. Note also that
	$
	\norm{ F}_{L^2(\mu)}=
	\norm{1}_{L^2(\sigma_F(f))}
	$
	and $\sigma_F$ is supported in $\CM_p$.
	Using the previous facts,  Fatou's lemma three times and  identity \eqref{E:fconcabsol} of \cref{C:concentrationgeneral},  we get that the left side of \eqref{E:concegeneral} (with $\liminf_{\delta\to 0^+}$ in place of $\lim_{\delta\to 0^+}$) is at least
	$	\norm{1}_{L^2(\sigma_F(f))}=\norm{ F}_{L^2(\mu)}$. On the other hand,
$	\norm{\E_{n\in S_\delta\cap [N]}\, T_{k!n+1}F}_{L^2(\mu)}\leq\norm{F}_{L^2(\mu)}$ for every $k,N\in \N$, $\delta>0$. Combining these facts we get that
	 \eqref{E:concegeneral} holds.
	
		To prove the converse, suppose that $F\in L^2(\mu)$  satisfies \eqref{E:concegeneral}. Let $F=F_p+F_a$ be the decomposition given by \cref{P:Decomposition}. Since $F_a\in X_a$,  part~\eqref{I:aper2} of \cref{P:aperiodic0} below  implies that
	for every $k\in \N$ and $\delta>0$ we have
	$$
	\lim_{N\to\infty}
	\norm{\E_{n\in S_\delta\cap [N]}\, T_{k!n+1}F_a}_{L^2(\mu)}=0.
	$$
	Since $F_p\in X_p$ we get by the already established forward direction,  that $F_p$ satisfies \eqref{E:concegeneral}. We deduce that
	$
	\norm{F}_{L^2(\mu)}=\norm{F_p+F_a}_{L^2(\mu)}=\norm{F_p}_{L^2(\mu)}.
	$
	Since $F_p\bot F_a$, this implies that $F_a=0$. Hence, $F=F_p\in X_p$.
	
		We prove \eqref{I:pre3}.  Using \eqref{E:spectralid} and Fatou's lemma twice
	we get that
	$$
	\liminf_{k\to\infty}\liminf_{N\to\infty} \norm{\E_{n\in  [N]}\, T_{k!n+1}F}_{L^2(\mu)}\geq 	\norm{A(f)}_{L^2(\sigma_F(f))},
	$$
	where
	$$
	A(f):=\liminf_{k\to\infty} 	\liminf_{N\to\infty}|\E_{n\in [N]}\,  f(k!n+1)|.
	$$
	By \eqref{E:fconcabsol'} in \cref{C:concentrationgeneral} we get that
	$A(f)>0$ for every $f\in \CM_p$.
 Using this, and since  $\sigma_F$ is supported on $\CM_p$ and  $\sigma_F\neq 0$ (since $F\neq 0$),  we deduce that
	$\norm{A(f)}_{L^2(\sigma_F(f))}>0$.
	%This completes the proof.

		We prove \eqref{I:pre2'}. Suppose that $F\in X_p$. For $k,N\in \N$ and $\delta>0$, let $A_{k,\delta,N}:= \E_{n\in S_\delta\cap [N]}\, T_{k!n+1}F$. After expanding the square below, we find that
		$$
		\E_{n\in S_\delta\cap [N]}\, \norm{T_{k!n+1}F -A_{k,\delta,N}}_{L^2(\mu)}^2=
\norm{F}_{L^2(\mu)}^2 -
\norm{A_{k,\delta,N}}^2_{L^2(\mu)}.
		$$
	Using this and \eqref{E:concegeneral}, we deduce that \eqref{E:concegeneral2} holds.
	
	Conversely, suppose that  $F\in L^2(\mu)$  satisfies \eqref{E:concegeneral2}. Let $F=F_p+F_a$ be the decomposition given by \cref{P:Decomposition}. Since $F_a\in X_a$,  part~\eqref{I:aper2} of \cref{P:aperiodic0} below and \eqref{E:spectralid} imply  that
	for every $k\in \N$ and $\delta>0$ we have
	$$
	\lim_{N\to\infty}
	\norm{\E_{n\in S_\delta\cap [N]}\, T_{k!n+1}F_a}_{L^2(\mu)}=0.
	$$
	We deduce from this, \eqref{E:concegeneral2}, and the triangle inequality, that
	$$
		\lim_{\delta\to 0^+}\limsup_{k\to\infty}\limsup_{N\to\infty} \norm{\E_{n\in S_\delta\cap [N]}\,T_{k!n+1}F_p -A_{\delta, k,N}}_{L^2(\mu)}=0.
		$$
It follows that 	 \eqref{E:concegeneral2} holds with
$B_{\delta, k,N}:=\E_{n\in S_\delta\cap [N]}\,T_{k!n+1}F_p$ instead  of $A_{\delta, k,N}$, i.e.,
\begin{equation}\label{E:concegeneral2'}
\lim_{\delta\to 0^+}\limsup_{k\to\infty}\limsup_{N\to\infty} \E_{n\in S_\delta\cap [N]}\, \norm{T_{k!n+1}F -B_{\delta, k,N}}_{L^2(\mu)}^2=0.
\end{equation}
 Note that $B_{\delta, k,N}\in X_p$,  since $F_p\in X_p$ and $X_p$ is a $T_n$-invariant subspace. Hence,
$$
T_{k!n+1}F_a\bot \, T_{k!n+1}F_p- B_{\delta, k,N} \quad \text{for every } k,N\in \N, \, \delta>0.
$$
Since $F=F_p+F_a$, we deduce from this,  \eqref{E:concegeneral2'},  and the Pythagorean theorem, that
$$
	\lim_{\delta\to 0^+}\limsup_{k\to\infty}\limsup_{N\to\infty} \E_{n\in S_\delta\cap [N]}\, \norm{T_{k!n+1}F_p -B_{\delta, k,N}}^2_{L^2(\mu)}+\norm{F_a}^2_{L^2(\mu)}=0.
$$
Hence, $F_a=0$, which implies that $F\in X_p$.

	Finally, 	part~\eqref{I:pre4} follows by combining \eqref{E:spectralid} with  \eqref{E:fconcgendeltaAverage}
		%and \eqref{E:fconcabsol}
		in \cref{C:concentrationgeneral}, and applying Fatou's lemma multiple times.
\end{proof}	

\begin{corollary}\label{C:factor}
	Let $(X,\CX,\mu,T_n)$ be a multiplicative action. Then   $X_p\cap L^\infty(\mu)$ is a conjugation closed  subalgebra.
\end{corollary}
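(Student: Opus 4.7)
The plan is to deduce both the product and conjugation closure properties from the characterization of $X_p$ given by part~\eqref{I:pre2'} of \cref{P:Concentrationgeneral}, which says that $F\in X_p$ if and only if
$$
\lim_{\delta\to 0^+}\limsup_{k\to\infty}\limsup_{N\to\infty}\E_{n\in S_\delta\cap [N]}\norm{T_{k!n+1}F-A_{\delta,k,N}}_{L^2(\mu)}=0
$$
for some choice of $A_{\delta,k,N}\in L^2(\mu)$, and recalling from the forward direction that one may take $A_{\delta,k,N}=\E_{n\in S_\delta\cap [N]}\, T_{k!n+1}F$. Linearity of $X_p\cap L^\infty(\mu)$ is immediate from \cref{L:Xpa}, so only multiplicativity and conjugation closure need verification.

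For conjugation, observe that each $T_n$ is measure-preserving and hence commutes with pointwise complex conjugation, so $T_{k!n+1}\overline F=\overline{T_{k!n+1}F}$ for all $k,n$. Consequently, if $F\in X_p$ with concentration profile $A_{\delta,k,N}$, then $T_{k!n+1}\overline F$ concentrates around $\overline{A_{\delta,k,N}}$ in the sense of \eqref{E:concegeneral2}, and the converse direction of part~\eqref{I:pre2'} gives $\overline F\in X_p$.

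For the product, let $F,G\in X_p\cap L^\infty(\mu)$ and set $A^F_{\delta,k,N}:=\E_{n\in S_\delta\cap[N]}T_{k!n+1}F$ and $A^G_{\delta,k,N}:=\E_{n\in S_\delta\cap[N]}T_{k!n+1}G$, which are in $L^\infty(\mu)$ with $\norm{A^F_{\delta,k,N}}_\infty\leq\norm{F}_\infty$ and $\norm{A^G_{\delta,k,N}}_\infty\leq\norm{G}_\infty$. Since each $T_n$ is an algebra homomorphism on bounded functions, we can write
$$
T_{k!n+1}(FG)-A^F_{\delta,k,N}A^G_{\delta,k,N}=\bigl(T_{k!n+1}F-A^F_{\delta,k,N}\bigr)\,T_{k!n+1}G+A^F_{\delta,k,N}\,\bigl(T_{k!n+1}G-A^G_{\delta,k,N}\bigr),
$$
which after taking $L^2(\mu)$-norms and using the $L^\infty$ bounds $\norm{T_{k!n+1}G}_\infty\leq\norm{G}_\infty$ and $\norm{A^F_{\delta,k,N}}_\infty\leq\norm{F}_\infty$ yields
$$
\norm{T_{k!n+1}(FG)-A^F_{\delta,k,N}A^G_{\delta,k,N}}_{L^2(\mu)}\leq\norm{G}_\infty\norm{T_{k!n+1}F-A^F_{\delta,k,N}}_{L^2(\mu)}+\norm{F}_\infty\norm{T_{k!n+1}G-A^G_{\delta,k,N}}_{L^2(\mu)}.
$$
Averaging over $n\in S_\delta\cap[N]$ and applying the iterated limit $\lim_{\delta\to 0^+}\limsup_{k\to\infty}\limsup_{N\to\infty}$, the right-hand side vanishes since $F,G\in X_p$; the converse direction of part~\eqref{I:pre2'} then gives $FG\in X_p$. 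Since $FG\in L^\infty(\mu)$ trivially, we conclude that $X_p\cap L^\infty(\mu)$ is a conjugation-closed subalgebra.

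There is no serious obstacle in this approach. The only nontrivial observation is that the criterion \eqref{I:pre2'} interacts well with pointwise multiplication precisely because $T_n$ is multiplicative and the boundedness of $F,G$ (along with that of the averaged functions $A^F_{\delta,k,N},A^G_{\delta,k,N}$) allows the triangle inequality with an $L^\infty$-factor to split the concentration error into two pieces that separately vanish by hypothesis.
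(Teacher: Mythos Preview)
Your proof is correct and follows essentially the same route as the paper: both use the characterization in part~\eqref{I:pre2'} of \cref{P:Concentrationgeneral}, take the averages as the concentration targets $A_{\delta,k,N}$, exploit the $L^\infty$ bounds on these averages, and split the error for the product via the same telescoping identity. The only cosmetic difference is that the paper dispatches conjugation closure directly from the definition of $X_p$ (since $\CM_p$ is conjugation-invariant), whereas you route it through the same concentration criterion; both arguments are immediate.
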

\begin{proof}
	We know from \cref{L:Xpa} that $X_p$ is a $T_n$-invariant subspace, and it easily follows from the definition of $X_p$ that it is conjugation closed.  Now let $F_1,F_2\in X_p\cap L^\infty(\mu)$. It remains to show that $F_1\cdot F_2\in X_p$.
	
	  For $j=1,2$ we use \eqref{E:concegeneral2} in \cref{P:Concentrationgeneral},
	 with $F_j$ instead of $F$, and some functions
	 $A_{j,\delta, k,N}\in L^\infty(\mu)$ instead of $A_{\delta, k,N}$.
	We also note that for $j=1,2$ we have
		$\norm{A_{j,\delta, k,N}}_{L^\infty(\mu)}\leq \norm{F_j}_{L^\infty(\mu)}$,
for every $\delta>0$ and $k,N\in \N$. Using these facts and the triangle inequality, 		
		 we get that
\eqref{E:concegeneral2} is satisfied with $F_1\cdot F_2$ instead of $F$ and
$A_{1,\delta, k,N}\cdot A_{2,\delta, k,N}$ instead of $A_{\delta, k,N}$.
	From the converse direction in part~\eqref{I:pre2'} of \cref{P:Concentrationgeneral}, it follows  that $F_1\cdot F_2\in X_p$. This completes the proof.
	(For finitely generated actions, we could  instead use \eqref{E:concentration} in \cref{P:Concentrationfg}  to carry out this argument.)
\end{proof}

\subsection{Characterization of   aperiodic  functions-Vanishing property}
The next result gives a useful characterization of $X_a$ that works for general multiplicative actions.
\begin{proposition}\label{P:aperiodic0}
	Let   $(X,\CX,\mu,T_n)$ be  a multiplicative action and $F\in L^2(\mu)$.
	\begin{enumerate}
\item\label{I:aper1}	We have
	     $F\in X_{a}$ if and only if
	     	\begin{equation}\label{E:abzero}
	     	\lim_{N\to\infty}\norm{\E_{n\in  [N]}\,  T_{an+b}F}_{L^2(\mu)}=0
	     	\quad  \text{for every  } a\in \N,\,  b\in\Z_+.
	     \end{equation}
\item \label{I:aper2} If $F\in X_a$, then
	\begin{equation}\label{E:abzero'}
		\lim_{N\to\infty}\norm{\E_{n\in S_\delta \cap [N]}\,  T_{an+b}F}_{L^2(\mu)}=0
		\quad  \text{for every  } \delta>0, \,  a\in \N,\,  b\in\Z_+,
	\end{equation}
where $S_\delta$ is as in \eqref{E:Sd}.
\end{enumerate}
\end{proposition}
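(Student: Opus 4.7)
The plan is to reduce both statements to scalar questions about aperiodic multiplicative functions via the spectral identity \eqref{E:spectralid}, and then close them with bounded convergence plus elementary summation arguments.

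For the forward direction of (i), assume $F\in X_a$, so $\sigma_F$ is supported on $\CM_a$. Identity \eqref{E:spectralid} gives
\[
\norm{\E_{n\in[N]} T_{an+b} F}_{L^2(\mu)}^2 = \int_\CM \bigl|\E_{n\in[N]} f(an+b)\bigr|^2 \, d\sigma_F(f).
\]
The integrand is bounded by $1$, and since $\sigma_F$-almost every $f$ is aperiodic, the integrand tends to $0$ $\sigma_F$-a.e.\ by definition of aperiodicity; bounded convergence then yields \eqref{E:abzero}.

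For the converse of (i), suppose \eqref{E:abzero} holds and apply the decomposition $F = F_p + F_a$ from \cref{P:Decomposition}. Since $\sigma_{F_p}\bot \sigma_{F_a}$, the spectral identity splits the squared norm of the averages into a sum of the squared norms for $F_p$ and $F_a$, so the hypothesis forces $\norm{\E_{n\in[N]} T_{k!n+1} F_p}_{L^2(\mu)}\to 0$ as $N\to\infty$ for every $k\in\N$. Rewriting this spectrally and applying Fatou's lemma, for each $k$ we obtain $\liminf_N \bigl|\E_{n\in[N]} f(k!n+1)\bigr| = 0$ for $\sigma_{F_p}$-a.e.\ $f$. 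Intersecting these null sets over $k\in\N$ yields $\liminf_k \liminf_N \bigl|\E_{n\in[N]} f(k!n+1)\bigr| = 0$ for $\sigma_{F_p}$-a.e.\ $f$. On the other hand, \eqref{E:fconcabsol'} of \cref{C:concentrationgeneral} asserts that this quantity is strictly positive for every pretentious $f$, and $\sigma_{F_p}$ is supported on $\CM_p$; these two facts together force $\sigma_{F_p} = 0$, so $F_p = 0$ and $F\in X_a$.

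For (ii), the same spectral reduction and bounded convergence show that it suffices to prove $\E_{n\in S_\delta\cap[N]} f(an+b)\to 0$ for each aperiodic $f$. For some $\alpha = \alpha(\delta) > 0$, the set $S_\delta$ is the union of integers in the disjoint intervals $I_k := [e^{2\pi k - \alpha}, e^{2\pi k + \alpha}]$, and a direct geometric-series computation shows that $|S_\delta\cap[N]|$ is bounded below by a positive multiple of $N$. Aperiodicity of $f$ gives, for any $\varepsilon > 0$, an $M_0$ with $\bigl|\sum_{n\leq M} f(an+b)\bigr|\leq \varepsilon M$ whenever $M\geq M_0$; writing $\sum_{n\in I_k\cap[1,N]} f(an+b)$ as a difference of two such partial sums bounds it by $O(\varepsilon\, e^{2\pi k + \alpha})$ for every $k$ with $e^{2\pi k - \alpha}\geq M_0$, while the finitely many earlier intervals contribute only $O(1)$. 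Summing the geometric series over $k\leq k^* := \lfloor\log N/(2\pi)\rfloor$ gives $\bigl|\sum_{n\in S_\delta\cap[N]} f(an+b)\bigr| = O(\varepsilon N + 1)$. Dividing by $|S_\delta\cap[N]|$ and letting $\varepsilon\to 0$ completes the argument. The main subtlety is precisely here: the sequence $(n^i)_{n\in\N}$ is not Ces\`aro equidistributed on $\S^1$, so one cannot replace the Ces\`aro average of $g(n^i)$ by $\int g\, dm_{\S^1}$; our argument sidesteps this by working directly with the explicit geometric-series structure of the intervals $I_k$ together with the partial-summation consequence of aperiodicity.
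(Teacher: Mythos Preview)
Your proof of (i) is essentially the paper's: the forward direction is identical, and for the converse both you and the paper decompose $F=F_p+F_a$, observe that \eqref{E:abzero} then holds for $F_p$, and invoke \eqref{E:fconcabsol'} to force $F_p=0$ (the paper packages this last step as part~\eqref{I:pre3} of \cref{P:Concentrationgeneral}, which you unwind directly via Fatou on the spectral side).

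For (ii) you take a genuinely different route. The paper smooths the indicator $\mathbf{1}_{S_\delta}(n)$, viewed as a function of $\log n\pmod{2\pi}$, and approximates it uniformly by trigonometric polynomials, reducing to $\E_{n\in[N]}\, n^{ik}\, f(an+b)\to 0$; this holds because $g_k(n):=f(n)\,n^{ik}$ is again aperiodic. (The approximation step requires a uniform upper bound on $|\{n\in[N]:\log n\in I\pmod{2\pi}\}|$ over short arcs $I$, noted by the paper in a footnote.) You instead exploit the explicit interval structure $S_\delta=\bigcup_k I_k$ with geometrically growing $I_k$ and control each block by writing it as a difference of two partial sums, each small by aperiodicity; summing the geometric series gives $O(\varepsilon N)$. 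Your approach is more elementary and sidesteps the approximation step entirely. The trade-off is that the paper's harmonic-analytic reduction generalises immediately to other multiplicatively defined sets such as $S_{\delta,R}$ in \eqref{E:SdeltaR}, whereas your partial-summation argument is tailored to the specific geometric spacing of the blocks of $S_\delta$.
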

\begin{remark}
	 If we relax the mean convergence condition in \eqref{E:abzero}  to weak convergence, i.e.,
	\begin{equation}\label{E:abzero'}
\lim_{N\to\infty}\E_{n\in [N]} \int T_{an+b}F\cdot G\, d\mu=0 \quad \text{for every  } a\in \N, \, b\in \Z_+, \, G\in L^2(\mu),
\end{equation}
then  the converse direction no longer holds.  Let us see
how to prove  this when $G:=\overline{F}$.  Consider a multiplicative action $(X,\CX,\mu,T_n)$ and $F\in L^2(\mu)$ such that
$$
\int T_nF\cdot T_m\overline{F}\, d\mu=\int_0^1 n^{2\pi it}\cdot m^{-2\pi it}\, dt \quad \text{for all } m,n\in \N.
$$
(For example, on $\T^2$  with $m_{\T^2}$, let   $T_n(t,x):=(t,x+\{t\}\log{n})$ and $F(t,x):=e(x)$.)
Then,  $\sigma_F$ is supported on the set $\{(n^{it})\colon t\in [0,2\pi]\}$, hence   $F\in X_p$. However,
   for every $a\in \N$, $b\in \Z_+$, the limit in $\eqref{E:abzero}$ is equal to
$$
\lim_{N\to\infty}\E_{n\in [N]} \int T_{an+b}F\cdot \overline{F}\, d\mu=
%\lim_{N\to\infty}\int_0^1 \E_{n\in[N]} \, (an)^{2\pi i t}\, dt=
\lim_{N\to\infty} \int_0^1 (aN)^{2\pi it}\cdot (1+2\pi i t)^{-1}\, dt=0,
$$
where the first identity follows by first using \eqref{E:spectralid} and then \eqref{E:nit}, and the second  from a variant of the Riemann-Lebesgue lemma.

The same example can be used to show that   if $X'_p$ is the closed subspace  spanned by the pretentious eigenfunctions with eigenvalues pretentious multiplicative functions (defined as in \cite{Cha24} but for general actions)  and $F$ is as before, then $F\bot X'_p$ (although $F\in X_p$, hence $X'_p\neq X_p$) and \eqref{E:abzero} fails even when $a=1$.

Lastly, using a multiplicative rotation by $n^i$, it can  be seen that
the converse implication in \eqref{I:aper1}
fails if we use logarithmic averages.
\end{remark}
\begin{proof}
	We prove \eqref{I:aper1}.
	Suppose that $F\in X_a$. Then the spectral measure of $F$ is supported on $\CM_a$.
Let $a\in \N$, $b\in \Z_+$. Using	\eqref{E:spectralid} we get
	$$
\norm{\E_{n\in  [N]}\,  T_{an+b}F}_{L^2(\mu)}=\norm{\E_{n\in[N]}\, f(an+b)}_{L^2(\sigma_F(f))}
		$$
		for every $N\in \N$.
		Since $\sigma_F$ is supported on $\CM_{a}$ and for $f\in \CM_{a}$ we have
		$$
		\lim_{N\to\infty} \E_{n\in[N]}\, f(an+b)=0,
		$$
		the needed vanishing property follows from the bounded convergence theorem.
	
	For the converse direction, it suffices to show that if \eqref{E:abzero} holds for $F$ and $F=F_p+F_a$ is the decomposition given by \cref{P:Decomposition}, then $F_p=0$.
	Note first  that since \eqref{E:abzero} holds for $F$ and $F_a$, we get that it also holds for $F_p$, so for every $k\in \N$ we have
	$$
	\lim_{N\to\infty}  \norm{\E_{n\in [N]}\, T_{k!n+1}F_p}_{L^2(\mu)}=0.
	$$
	Using part~\eqref{I:pre3} of \cref{P:Concentrationgeneral}, we deduce that $F_p=0$, completing the proof.

We prove \eqref{I:aper2}.
	Arguing as before, it suffices to show that if $f\in \CM_a$, then
$$
\lim_{N\to\infty} \E_{n\in S_\delta\cap [N]}\, f(an+b)=0 \quad \text{for every } \delta>0,\, a\in \N,\, b\in \Z_+.
$$
 Using the fact that the set $S_\delta$ has positive lower density and a standard approximation argument,\footnote{We use  that there exists $C>0$ such that  $\log{n} \pmod{1}$ remains on a subinterval of $[0,1)$ of length $\delta$ for  at most $C\delta N$ values of $n\in[N]$, in order to replace the indicator function of ${\bf 1}_{S_\delta}$ by a smoothed out version and then use uniform approximation by trigonometric polynomials.}	
it suffices to show that for every $k\in \Z$ we have
$$
\lim_{N\to\infty} \E_{n\in [N]}\,  n^{ik} \cdot f(an+b)=0.
$$
Since $\lim_{n\to\infty} ((an+b)^{ik}-(an)^{ik})=0$ for every  $a\in \N$, $b,k\in \Z$, it suffices to show that
$$
\lim_{N\to\infty} \E_{n\in [N]}\,  g_k(an+b)=0,
$$
where $g_k(n):=f(n)\,  n^{ik}$, $n\in \N$. Since $f\in \CM_a$ we also have  $g_k\in \CM_a$, hence the previous vanishing property holds by the defining property of aperiodicity. 	
		\end{proof}
	We next  state another vanishing  property of aperiodic functions, in a stronger form  than the one needed to prove part~\eqref{I:DecompositionGeneral2} in  \cref{T:DecompositionGeneral}; this stronger form
	 will be used later in the proof of \cref{P:Characteristic2}.
	\begin{proposition}\label{P:aperiodicP0}
		Let  $(X,\CX,\mu,T_n)$ be a multiplicative action and $F\in X_a$. Suppose also that
$R_1,R_2$ are  rational polynomials that factor linearly and  $R_1$ is not of the form  $c\, R^r$ for any $c\in \Q_+$,  rational polynomial $R$, and    $r\geq 2$. Let also  $K_N$, $N\in \N$,  be convex subsets of $\R^2_+$,  and $\alpha,\beta,t \in \R$. Then
		\begin{equation}\label{E:Pzero}
			\lim_{N\to\infty}\norm{\E_{m,n\in[N]}\,  {\bf 1}_{K_N}(m,n)\cdot e(m\alpha+n\beta)\cdot (R_2(m,n))^{it} \cdot T_{R_1(m,n)}F}_{L^2(\mu)}=0.
		\end{equation}
	\end{proposition}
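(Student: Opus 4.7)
Squaring the $L^2$-norm in \eqref{E:Pzero} and applying the spectral identity \eqref{E:spectralid} expresses it as
\begin{equation*}
\int_\CM \Bigl| \E_{m,n\in[N]} W_N(m,n)\, f(R_1(m,n)) \Bigr|^2 \, d\sigma_F(f),
\end{equation*}
where $W_N(m,n) := {\bf 1}_{K_N}(m,n)\, e(m\alpha+n\beta)\, (R_2(m,n))^{it}$. Since $F \in X_a$, the spectral measure $\sigma_F$ is supported on $\CM_a$, and as the integrand is bounded by $1$, by the dominated convergence theorem it suffices to prove that for every $f \in \CM_a$,
\begin{equation}\label{E:sketchgoal}
\lim_{N\to\infty} \E_{m,n\in[N]} W_N(m,n)\, f(R_1(m,n)) = 0.
\end{equation}

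To establish \eqref{E:sketchgoal}, I would remove the slowly varying weight $(R_2(m,n))^{it}$ by smoothing. Writing $R_2 = c\prod_{j} L_j^{k_j}$, the function $(m,n)\mapsto (R_2(m,n))^{it}$ has partial derivatives of size $O(1/N)$ on the subset of $[N]^2$ lying at distance $\gg N/M$ from every line $L_j = 0$, the complementary region having density $O(1/M)$ in $[N]^2$. Partitioning $[N]^2$ into $M^2$ sub-squares of side length $N/M$ and replacing $(R_2(m,n))^{it}$ on each sub-square by its value at a reference point produces an aggregate error of size $O_{t,R_2}(1/M)$. The convex set $K_N$ intersects each sub-square in a convex region, which in turn is approximated by a union of axis-aligned sub-rectangles with boundary error controlled in $M$. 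The problem thereby reduces to showing vanishing, as $N\to\infty$, of averages of $e(m\alpha + n\beta)\, f(R_1(m,n))$ over rectangles of the form $[aN, bN] \times [cN, dN]$, where $a, b, c, d$ lie in a finite set depending on $M$.

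The main obstacle is establishing this localized, additively-weighted vanishing statement uniformly in the sub-rectangle position. It follows from the arguments underlying \cite[Theorem~2.5]{FH17}, whose proof rests on a Daboussi--K\'atai-type orthogonality criterion that is robust under twisting by the character $e(m\alpha + n\beta)$ (aperiodicity being preserved under such multiplicative twists in the relevant orthogonality step), combined with the non-degeneracy hypothesis that $R_1$ is not of the form $c\, R^r$ for $r\geq 2$. Granting this localized vanishing, one sends $N\to\infty$ first and then $M\to\infty$ to collapse the smoothing and boundary errors, yielding \eqref{E:sketchgoal} and hence the proposition.
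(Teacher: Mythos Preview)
Your spectral reduction to \eqref{E:sketchgoal} is correct and matches the paper. The difference lies entirely in how you handle the weight $(R_2(m,n))^{it}$ and how you invoke the non-degeneracy of $R_1$.

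First, the smoothing step contains a quantitative error. If $(m,n)$ lies at distance $\gg N/M$ from each line $L_j=0$, then $|L_j(m,n)|\gg N/M$, and the partial derivatives of $(R_2)^{it}$ are of size $O(M/N)$, not $O(1/N)$. Over a sub-square of side $N/M$ the variation is therefore $O(1)$, not $O(1/M)$, and your error bound collapses. The argument can be repaired by separating two scales: exclude a $\delta N$-neighbourhood of each $L_j=0$ (density loss $O(\delta)$), and on the rest use squares of side $N/M$ with variation $O(1/(\delta M))$; then optimise. This yields an error $O(M^{-1/2})$, which still suffices, but you should state it correctly.

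Second, and more substantively, you do not explain how the hypothesis on $R_1$ is actually used. Writing $R_1=c\prod_j L_j^{k_j}$, one has $f(R_1(m,n))=f(c)\prod_j f^{k_j}(L_j(m,n))$, and the input to \cite{FH17} is that at least one of the functions $f^{k_j}$ is aperiodic. This is exactly where the assumption ``$R_1$ is not of the form $c\,R^r$'' enters: if every $f^{k_j}$ were pretentious, then $f^d$ would be pretentious for $d=\gcd(k_1,\ldots,k_s)$, and the hypothesis forces $d=1$, contradicting $f\in\CM_a$. Your phrase ``combined with the non-degeneracy hypothesis'' hides this, and the remark about aperiodicity ``being preserved under multiplicative twists'' does not capture the mechanism.

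The paper avoids your smoothing step altogether by exploiting that $n\mapsto n^{it}$ is itself completely multiplicative. Since $(R_2(m,n))^{it}$ factors as $c^{it}\prod_j (L_j(m,n))^{ik_jt}$, each factor can be absorbed into the product $\prod_j g_j(L_j(m,n))$ by either multiplying it into the existing $f^{k_j}$ (if the linear form coincides with one already present, noting that $f^{k_{j_0}}\cdot n^{is}$ remains aperiodic) or by appending a new multiplicative function $n^{is}$ evaluated at a new linear form independent of $L_{j_0}$. One then applies the vanishing result from \cite{FH17} directly, with no partitioning and no parameter optimisation. Your approach is salvageable and genuinely different, but the paper's absorption trick is both shorter and conceptually cleaner.
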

\begin{proof}
	Suppose first that $t=0$.
	Using \eqref{E:spectralid} and the fact that the spectral measure $\sigma_F$ of $F$ is supported on $\CM_a$, it suffices to show that for every aperiodic multiplicative function $f\colon \N\to \S^1$ we have
	\begin{equation}\label{E:fPzero}
	\lim_{N\to\infty} \E_{m,n\in[N]}\,
	  {\bf 1}_{K_N}(m,n)\cdot e(m\alpha+n\beta)\cdot f(R_1(m,n))=0.
	\end{equation}
	We have
	$
R_1(m,n)=c\, 	\prod_{j=1}^sL^{k_j}_j(m,n)
$
for some  $c\in \Q_+, s\in \N$, non-zero $k_1,\ldots, k_s\in \Z$, and pairwise independent linear forms $L_1,\ldots, L_s$ with non-negative coefficients.

We first  claim that not all the multiplicative functions  $f^{k_1},\ldots, f^{k_s}$ are  pretentious. Indeed, if this was the case, then  \eqref{E:triangle} gives that
$f^d$ is pretentious, where $d:=\gcd(k_1,\ldots, k_s)$.
Then  $R_1= c\,R^d$,  where $R:= \prod_{j=1}^sL^{k_j/d}_j$ is a rational polynomial, and  our assumption about  $R_1$ gives that $d=1$. Hence, $f$ is pretentious, a contradiction.

We deduce that   there exists $j_0\in [s]$ such that $f^{k_{j_0}}$ is aperiodic. The required vanishing property \eqref{E:fPzero} then follows from  \cite[Theorem~2.5]{FH17} and  \cite[Lemma~9.6]{FH17}
(the latter does not include the exponential term $e(m\alpha+n\beta)$, but the same argument applies.)

Finally, we consider the case where $t$ can  be non-zero. After carrying out the previous deductions,  it suffices to show that if $f_1,\ldots, f_\ell\colon \N\to \S^1$ are completely multiplicative functions such that $f_1$ is aperiodic, $L_1,\ldots, L_\ell$ are  linear forms such that $L_1,L_j$ are independent for $j=2,\ldots, \ell$,  and $R$ is a rational polynomial that factors linearly, then
	\begin{equation}\label{E:fPzero'}
	\lim_{N\to\infty} \E_{m,n\in[N]}\,
	{\bf 1}_{K_N}(m,n)\cdot e(m\alpha+n\beta)\cdot (R(m,n))^{it} \prod_{j=1}^\ell f_j(L_j(m,n))=0.
\end{equation}
To see this, first  suppose  that $R= c'\, L^k$ for some linear form $L$ and $c'\in \Q_+, k\in \Z$. If  $L$ is a rational multiple of $L_1$, then after replacing $f_1$ by
$f_1\cdot n^{ikt}$, which is aperiodic by \cref{T:DH},  we get that our assumptions are still satisfied and the term $(L(m,n))^{it}$ is absorbed in $f_1$. We can therefore assume that  $L_1,L$ are independent, so  we can work with the $\ell+1$ multiplicative functions,
$f_1,\ldots, f_\ell, f_{\ell+1}$, where $f_{\ell+1}(n):=n^{ikt}$, which  are evaluated at the linear forms $L_1,\ldots, L_\ell, L$.
Since  $L_1$ is independent of the other $\ell$ linear forms we get the necessary vanishing property.  In general, $R=c'\, R_1^{k_1}\cdots R_s^{k_s}$ where $R_1,\ldots, R_s$ are pairwise independent linear forms and $c'\in \Q_+,k_1,\ldots, k_s\in \Z$. At most one of the linear forms  $R_1,\ldots, R_s$ can   be a rational multiple of $L_1$, in which case it can be absorbed by $f_1$ as before, and the other forms can be handled by extending the product using additional multiplicative functions, again, exactly as before.
\end{proof}
	
	\subsection{Proof of   \cref{T:DecompositionGeneral}}\label{S:ProofThmB}
	Let $F\in L^\infty(\mu)$. Using \cref{P:Decomposition} we get that there exist $F_p,F_a\in L^2(\mu)$ such that $F_p\in X_p, F_a\in X_a$ and  $F=F_p+F_a$. We also get that $F_p$ is the orthogonal projection of $F$ onto $X_p$. By \cref{C:factor} the space $X_p\cap L^\infty(\mu)$ is a conjugation closed subalgebra, so  it defines a factor $\CX_p$ and $F_p=\E(F|\CX_p)$. In particular we have that $F_p\in L^\infty(\mu)$, hence $F_a\in L^\infty(\mu)$.
	Properties~\eqref{I:DecompositionGeneral1} and
	\eqref{I:DecompositionGeneral2} follow from \eqref{E:concegeneral2A} in  \cref{P:Concentrationgeneral} and \eqref{E:Pzero} in \cref{P:aperiodicP0},
	respectively, completing the proof.

	\section{Mixed seminorms and related inverse theorem}\label{S:mixedseminorms}
	 Our goal in this section is to define the mixed seminorms, show that for finitely generated multiplicative actions they vanish on the subspace $X_a$, and use this fact to
complete the  proof of   Theorem~\ref{T:Decomposition}. 	
For a proof sketch  of the vanishing property see \cref{SS:proofsketchDecomposition}.
	\subsection{Mixed seminorms for multiplicative actions}\label{SS:mixeddefinition}
	We begin by recalling the definition of the Gowers norms of a
	finite sequence on $\Z_N=\Z/(N\Z)$.
	 \begin{definition}[Gowers norms on $\Z_N$~\cite{G01}]
	 	Let $N\in \N$  and $a\colon \Z_N\to \C$. For $s\in \N$ the \emph{Gowers norm} $\norm a_{U^s(\Z_N)}$  is defined inductively as follows:    We let
	 	$$
	 	\norm a_{U^1(\Z_N)}:=|\E_{n\in\Z_N}\, a(n)|,
	 	$$
	 	and for every $s\in \N$
	 	\begin{equation}
	 		\label{eq:def-gowers}
	 		\norm a_{U^{s+1}(\Z_N)}^{2^{s+1}}:=\E_{h\in\Z_N}\norm{a\cdot \overline a_h}_{U^s(\Z_N)}^{2^s},
	 	\end{equation}
	where  $a_h(n):=a(n+h)$ for $h,n\in \Z_N$.
	 \end{definition}
	 For example, we have
	 \begin{equation}\label{E:U2}
	 \norm a_{U^2(\Z_N)}^4=\E_{n,h_1,h_2\in\Z_N}\,  a(n)\, \overline{a}(n+h_1)\, \overline{a}(n+h_2)\, a(n+h_1+h_2),
	 \end{equation}
	 and for  $s\geq 3$ we get a  similar  formula with $2^s$ terms in the product.

	 \begin{definition}\label{D:mixedsem}
	 	Given a multiplicative action $(X,\CX,\mu,T_n)$ and $F\in L^\infty(\mu)$, for $s\in \N$  we define the {\em mixed seminorm of $F$  of order $s$} as
	 	\begin{equation}\label{E:defmixed}
	 	\nnorm{F}^{2^s}_{U^s}:=\limsup_{N\to \infty} \int \norm{F(T_nx)}^{2^s}_{U^s(\Z_N)}\, d\mu,
	 	\end{equation}
	 where for $N\in \N$,
	 % we denote by $\tilde{N}$ the smallest prime greater %than $N$,
	$x\in X$, we assume that  $(F(T_nx))_{n\in[N]}$ is extended periodically on $\Z_N$.
	 \end{definition}
	 \begin{remarks}
	$\bullet$
	  We can also define the closely related seminorms
	 	\begin{equation}\label{E:Us*}
	 		\nnorm{F}^{2^s}_{*,U^s}:=\limsup_{N\to \infty} \int \norm{F(T_nx)}^{2^s}_{U^s[N]}\, d\mu,
	 	\end{equation}
 	where $\norm{\cdot }_{U^s[N]}$ are as in \cite[Section~2.1.1]{FH17}.
	 	It follows from the proof of  \cite[Lemma~A.4]{FH17} that there exists $K=K(s)\in \N$  such that for every sequence  $a\colon \N\to \U$ we have
	 	$$
	  \norm{a}_{U^s(\Z_N)}^{K}\ll_s	\norm{a}_{U^s[N]} +o_N(1)\ll_s \norm{a}_{U^s(\Z_N)}^{\frac{1}{K}},
	 	$$
	 	where $o_N(1)$ is a quantity that does not depend on $a(n)$ and converges to $0$ as $N\to \infty$.
	 %	We also have a similar estimate with the roles of
	 %	$U^s[N]$ and $U^s(\Z_N)$ reversed.
	 As a result, we have
	 $		\nnorm{F}_{U^s}=0  \Leftrightarrow  \nnorm{F}_{*,U^s}=0$.

$\bullet$ If in \eqref{E:defmixed} we place the  limsup inside the integral, then we get  seminorms that are too strong for our purposes. For example,  $F\in X_a$ would not imply that $\nnorm{F}_{U^1}=0$,  since this would  imply that $\lim_{N\to\infty}\E_{n\in[N]} F(T_nx)=0$ for almost every  $x\in X$, which is known to be false   for some  finitely generated multiplicative actions \cite[Theorem~1.2]{Loy23}.
	 \end{remarks}	
	  Since  the Gowers norms $\norm{\cdot}_{U^{s}(\Z_N)}$ satisfy the triangle inequality and are increasing in $s\in \N$,
	 %the estimate $\norm{\cdot}_{U^s[N]}\leq c_s\cdot %\norm{\cdot}_{U^{s+1}[N]}$
	 we easily deduce that the mixed seminorms also enjoy similar properties , i.e.,
	 \begin{equation}\label{E:increase}
	 	\nnorm{F+G}_{U^s}\leq \nnorm{F}_{U^s}+\nnorm{G}_{U^s} \quad \text{and} \quad 	\nnorm{F}_{U^s}\leq   \nnorm{F}_{U^{s+1}}
	 \end{equation}
	 for all $s\in \N$ and $F,G\in L^\infty(\mu)$.
	
	 To get a sense of how the mixed seminorms look like, we  note  that \eqref{E:Us*} gives
	 \begin{equation}\label{E:U2}
	\nnorm{F}^4_{U^2}= \limsup_{N\to \infty} \Big(\E_{n,h_1,h_2\in \Z_N} \int T_nF\cdot  T_{n+h_1}\overline{F}\cdot T_{n+h_2}\overline{F}\cdot T_{n+h_1+h_2}F\, d\mu\Big).
	 \end{equation}
It turns out that if  we replace the average over $\Z_N$ with an average over $[N]$ we end up with equivalent seminorms.
If we do this, then
	for ergodic multiplicative actions $(X,\CX,\mu,T_n)$, for almost every $x\in X$, the right side in \eqref{E:U2}  is equal to
	   $$
	 \limsup_{N\to \infty} \big(\E_{n,h_1,h_2\in [N]} \, \E_{k\in \Phi}\,     T_{kn}F\cdot T_{k(n+h_1)} \overline{F}\cdot T_{k(n+h_2)} \overline{F}\cdot T_{k(n+h_1+h_2)}F\big),
	 $$
	  where $\Phi=(\Phi_N)_{N\in\N}$ is a multiplicative  F\o lner  sequence and $\E_{k\in \Phi}:=\lim_{N\to\infty}\E_{k\in\Phi_N}$   where  convergence is taken in $L^2(\mu)$.
 	 This last expression uses a mixture of addition and multiplication on the iterates of the action  as well as our averaging schemes, which motivates our terminology of ``mixed seminorms''.
 	
 Lastly, we note  that the  seminorms  $\nnorm{\cdot }_{U^s}$  differ sharply from   the  Host-Kra seminorms of order $s$ in
 	the multiplicative setting, which we denote here by $\nnorm{\cdot}_{s,\times}$.
 	 This can already be seen when $s=2$. For an ergodic multiplicative action $(X,\CX,\mu,T_n)$  we have
 	  \begin{equation}\label{E:HK}
 	 \nnorm{F}^4_{2,\times}= \lim_{N\to \infty} \E_{h_1,h_2\in \Phi_N} \int F\cdot  T_{h_1}\overline{F}\cdot T_{ h_2}\overline{F}\cdot T_{ h_1\cdot h_2}F\, d\mu,
 	 \end{equation}
 	 where $(\Phi_N)_{N\in \N}$ is a multiplicative  F\o lner sequence in $\N$ and the limit can be shown to exist. These seminorms are quite different from  the mixed seminorms,  satisfy substantially different inverse theorems, and do not play a role in our study here.

	\subsection{Inverse theorem for sequences with multiplicative structure}
In \cite[Theorem~2.5]{FH17} it is proved that a  bounded multiplicative function  is Gowers uniform if and only if it is aperiodic.  Our next goal is to adapt the proof of this  result to cover a wider variety of sequences with weaker multiplicative structure, such as sequences of the form $(F(T_nx))$ when the multiplicative action $T_n$ is finitely generated and $F\in L^\infty(\mu)$.
This is crucial for our subsequent proof of the inverse theorem for mixed seminorms.
	\begin{proposition}\label{T:FH}
	Let $N_k\to \infty$ and   $P_0$ be a subset of the primes so that $\sum_{p\in P_0} \frac{1}{p}=\infty$. Suppose that the sequence
		 $a\colon \N\to \S^1$   satisfies
		 \begin{equation}\label{E:apqn}
		  a(pn)\cdot \overline{a(p'n)}=c_{p,p'} \quad \text{ for all } p,p'\in P_0, \, n\in \N.
		 \end{equation}
		 Then the following properties are equivalent:
		 \begin{enumerate}
		 	\item \label{I:FH1} $\lim_{k\to\infty}\E_{n\in[\alpha N_k]} \,  a(qn+r)=0$ for every  $q\in \N,r\in \Z_+$,
		 	$\alpha\in \Q\cap (0,1]$;\footnote{For general subsequences $(N_k)$,  	if  we only know that  $\eqref{I:FH1}$ holds for $\alpha=1$, we cannot establish    $\eqref{I:FH2}$.}
		 	
		 	\item  \label{I:FH2}  $\lim_{k\to\infty} \norm{a}_{U^2(\Z_{N_k})}=0$;
		 	
		 	\item \label{I:FH3} $\lim_{k\to\infty} \norm{a}_{U^s(\Z_{N_k})}=0$ for every  $s\in \N$.
		 \end{enumerate}
	\end{proposition}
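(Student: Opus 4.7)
The plan is to establish the chain $(iii) \Rightarrow (ii) \Rightarrow (i) \Rightarrow (iii)$. The first implication is immediate from the monotonicity of the Gowers norms in $s$, as in \eqref{E:increase}. For $(ii) \Rightarrow (i)$, the point is that on $\Z_{N_k}$ one has $\|\widehat{a}\|_\infty \leq \|a\|_{U^2(\Z_{N_k})}$, so all exponential sums $\E_{n\in \Z_{N_k}}\, a(n)\, e(n\xi/N_k)$ tend to $0$ uniformly in $\xi$. For a rational $\alpha = s/t \in (0,1]$, the average $\E_{n\in[\alpha N_k]}\, a(qn+r)$ is a short-progression average, and a smooth cutoff combined with Fourier expansion on $\Z_{tqN_k}$ bounds it by a convergent combination of such exponential sums. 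This yields $(i)$. Neither of these directions uses the hypothesis \eqref{E:apqn}.

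The heart of the argument is $(i) \Rightarrow (iii)$, which I would prove by adapting \cite[Theorem~2.5]{FH17}, with averages over all primes replaced throughout by averages over $P_0$; the divergent series $\sum_{p\in P_0} 1/p = \infty$ is exactly the input needed for the Tur\'an--Kubilius type inequality. Assume for contradiction that $\|a\|_{U^s(\Z_{N_k})}\not\to 0$ for some $s\geq 2$. The Cauchy--Schwarz--Gowers identity $\|a\|_{U^{s+1}}^{2^{s+1}}=\E_h\, \|a\cdot T^h \bar a\|_{U^s}^{2^s}$, together with the observation that the derivative $a\cdot T^h\bar a$ still satisfies an averaged variant of \eqref{E:apqn}, allows an induction that reduces matters to the case $s=2$. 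For $s=2$, pigeonholing in the expansion \eqref{E:U2} produces, along a subsequence, a positive-density set of shifts $h$ for which $|\E_n\, a(n)\overline{a(n+h)}|\geq \delta'$.

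At this point \eqref{E:apqn} is used crucially. Fixing $p_0\in P_0$, the identity $a(pn)=c_{p,p_0}\, a(p_0 n)$ for $p\in P_0$ factors unimodular constants out of the dilated correlation sums; substituting $n\mapsto pn$, weighting by $1/p$, and applying Tur\'an--Kubilius to the counting function $\sum_{p\in P_0\cap[K]} \mathbf{1}_{p\mid n}$ converts the correlation estimate into a statement about a genuine progression sum of $a$. After bookkeeping, one extracts a further subsequence $N'_k\to\infty$ and parameters $q_0\in \N$, $r_0\in \Z_+$, $\alpha_0\in\Q\cap(0,1]$ for which $\limsup_k |\E_{n\in[\alpha_0 N'_k]}\, a(q_0 n+r_0)| > 0$, contradicting $(i)$.

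The main obstacle will be the Tur\'an--Kubilius step restricted to $p\in P_0$: one must verify that the variance bound $\E_{n\in[N]}\big|\sum_{p\in P_0\cap [K]}\mathbf{1}_{p\mid n} - \sum_{p\in P_0\cap [K]} 1/p\big|^2 \ll \sum_{p\in P_0\cap [K]} 1/p$ still holds and, more importantly, that combining it with the dilation identity $a(pn)\overline{a(p'n)}=c_{p,p'}$ produces a non-trivial progression average of $a$ itself rather than merely a smaller correlation. The advantage of our setup over the classical multiplicative case is that the twist factors $c_{p,p'}$ are unimodular constants (because $|a|\equiv 1$), so products $a(pn)\overline{a(p'n)}$ collapse immediately to scalars and do not introduce further multiplicative structure that must be tracked.
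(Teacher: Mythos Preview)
Your implications $(iii)\Rightarrow(ii)$ and $(ii)\Rightarrow(i)$ are fine and essentially match the paper. The problem is $(i)\Rightarrow(iii)$, where both the induction step and the base case have genuine gaps.

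\textbf{The induction via derivatives does not work.} If $a$ satisfies \eqref{E:apqn}, the multiplicative derivative $b_h(n):=a(n)\overline{a(n+h)}$ does not: one computes
\[
b_h(pn)\overline{b_h(p'n)}=c_{p,p'}\cdot \overline{a(pn+h)}\,a(p'n+h),
\]
and the factor $\overline{a(pn+h)}\,a(p'n+h)$ has no reason to be constant in $n$ unless $p,p'\mid h$. There is no ``averaged variant'' of \eqref{E:apqn} that survives differentiation in any usable form, so the reduction of $U^{s+1}$ to $U^s$ collapses. Moreover, even if it did, the inductive hypothesis you would need is that $b_h$ satisfies property $(i)$, i.e.\ that correlation sums $\E_n a(qn+r)\overline{a(qn+r+h)}$ vanish, which is not assumed.

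\textbf{The $s=2$ base case is also off.} The Tur\'an--Kubilius substitution $n\mapsto pm$ turns $a(n)\overline{a(n+h)}$ into $c_{p,p_0}a(p_0m)\overline{a(pm+h)}$; the second factor $a(pm+h)$ has no multiplicative structure when $p\nmid h$, so the correlation does not collapse to a progression average of $a$.

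The paper's route is quite different. It proves $(i)\Rightarrow(ii)$ and $(ii)\Rightarrow(iii)$ separately, and in both cases the hypothesis \eqref{E:apqn} is exploited not through Tur\'an--Kubilius on $a$, but through the K\'atai orthogonality criterion (Lemma~\ref{L:Kataigeneral}) applied to the \emph{structured} sequence coming from an inverse theorem. For $(i)\Rightarrow(ii)$: the $U^2$ inverse theorem gives frequencies $\beta_k$ with $|\E_{n\in[N_k]} a(n)e(n\beta_k)|\geq c$; K\'atai then forces $\limsup_k|\E_n e((p-p')n\beta_k)|>0$ for some $p,p'\in P_0$, hence $\beta_k$ is near a fixed rational $\beta_0$, and a partition of $[N_k]$ into short intervals converts this into a nonvanishing progression average $\E_{n\in[\alpha N_k]} a(n)e(n\beta_0)$. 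For $(ii)\Rightarrow(iii)$: the Green--Tao--Ziegler inverse theorem replaces $e(n\beta_k)$ by a nilsequence $F(b_k^n\Gamma)$, K\'atai gives $\limsup_k|\E_n F(b_k^{pn}\Gamma)\overline{F(b_k^{p'n}\Gamma)}|>0$, and the nontrivial nilmanifold analysis of \cite{FH17} then forces $\|a\|_{U^2}$ to be large. There is no known way to bypass the inverse theorems here by a direct Cauchy--Schwarz induction.
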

%	\begin{remark}
%	For general subsequences $(N_k)$,  	if  we only know that  %$\eqref{I:FH1}$ holds for $\alpha=1$, we cannot show that    %$\eqref{I:FH2}$ holds.
	%	\end{remark}
	The bulk of the proof of \cref{T:FH} is already contained in the proof of   \cite[Theorem~2.5]{FH17},  and we do not plan to repeat it, but we will explain some small changes we need to make next. We  will use the following variant of the
	 Daboussi-K\'atai orthogonality criterion~\cite{DD82,K86}  that  can  be proved exactly as in~\cite{K86}, so we omit its proof:
	 	\begin{lemma}\label{L:Kataigeneral}
	 	Let $N_k\to \infty$ and   $w_k\colon \N\to \U$, $k\in \N$,  be   sequences that satisfy
	 	$$
	 	\lim_{k\to\infty} \E_{n\in [N_k/p]}\,   w_k(pn)\cdot \overline{w_k(p'n)} =0
	 	$$
	 	for all distinct $p,p'\in P_0$ with $p'<p$, where   $P_0$  is a subset of the primes that satisfies
	 	$\sum_{p\in P_0}\frac{1}{p}=\infty$.
	 	Let also $a\colon \N\to \U$ be  such that \eqref{E:apqn} holds. Then
	 	$$
	 	\lim_{k\to\infty}\E_{n\in[N_k]}\, w_k(n)\cdot a(n)=0.
	 	$$
	 \end{lemma}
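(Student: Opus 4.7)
The plan is to adapt K\'atai's classical orthogonality argument \cite{K86} to the present setting, where the test sequences $w_k$ depend on $k$ along the prescribed subsequence $(N_k)$ and $a$ satisfies only the weakened ``multiplicative structure'' \eqref{E:apqn} in place of complete multiplicativity. The crucial observation is that since $|a|=1$, \eqref{E:apqn} can be rewritten as the exact identity $a(pm) = c_{p,p_0}\cdot a(p_0 m)$, valid for every $m\in\N$ and $p,p_0\in P_0$. This is in fact slightly stronger than what complete multiplicativity of $a$ would provide (where $a(pm) = a(p) a(m)$ only holds when $(p,m)=1$), and it is precisely this identity that will substitute for multiplicativity in the standard argument.

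The first step is a Tur\'an--Kubilius reduction. Fix a finite $S\subset P_0$ with $L:=\sum_{p\in S} 1/p$ large (possible since $\sum_{p\in P_0}1/p=\infty$) and set $\omega_S(n):=|\{p\in S:p\mid n\}|$. The Tur\'an--Kubilius inequality gives $\E_{n\in[N_k]}(\omega_S(n)-L)^2 = O(L)$, so Cauchy--Schwarz together with $|w_k a|\leq 1$ yields
$$L\cdot \E_{n\in[N_k]} w_k(n)a(n) = \E_{n\in[N_k]} w_k(n)a(n)\omega_S(n) + O(\sqrt{L}),$$
and unpacking the $\omega_S$-sum and substituting $n=pm$ gives
$$\E_{n\in[N_k]} w_k(n)a(n)\omega_S(n) = \sum_{p\in S}\frac{1}{p}\E_{m\in[N_k/p]}w_k(pm)a(pm) + O(1/N_k).$$

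The second step uses \eqref{E:apqn} followed by a single Cauchy--Schwarz. Fix any $p_0\in S$; replacing $a(pm)$ by $c_{p,p_0}a(p_0m)$ and exchanging the order of summation, the displayed sum equals, up to normalization,
$$\frac{1}{N_k}\sum_{m\leq N_k/p_*} a(p_0 m)\cdot G_k(m), \qquad G_k(m):=\sum_{p\in S,\,pm\leq N_k} c_{p,p_0}\, w_k(pm),$$
with $p_*:=\min S$. A single Cauchy--Schwarz in $m$ and expansion of $|G_k|^2$ produces
$$\Big|\sum_m a(p_0 m) G_k(m)\Big|^2 \leq \frac{N_k}{p_*}\sum_{p,q\in S} c_{p,p_0}\overline{c_{q,p_0}} \sum_{m\leq N_k/\max(p,q)} w_k(pm)\overline{w_k(qm)}.$$
The diagonal $p=q$ contributes at most $\sum_{p\in S}N_k/p = N_k L$; for each off-diagonal pair (WLOG $q<p$, matching the $p'<p$ ordering in the hypothesis), the hypothesis of the lemma gives that the inner sum is $o_k(N_k/p)$ as $k\to\infty$. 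Since $|S|$ is finite and fixed before the $k$-limit, summing over the $O(|S|^2)$ off-diagonal pairs yields a total contribution $o_k(N_k)$. Combining everything produces $|L \cdot \E_{n\in[N_k]}w_k(n)a(n)|^2 \leq O(L) + o_k(1)$, hence $\limsup_{k\to\infty}|\E_{n\in[N_k]}w_k(n)a(n)| = O(1/\sqrt{L})$, and letting $|S|\to\infty$ drives $L\to\infty$, which gives the desired vanishing.

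The main technical obstacle is bookkeeping rather than a new idea: careful accounting of boundary errors from $\lfloor N_k/p \rfloor$ versus $N_k/p$, and, most importantly, enforcing the correct order of limits (fix $S$ first, then $k\to\infty$ using the hypothesis uniformly over the finitely many pairs in $S\times S$ with $p\neq p'$, and only then let $|S|\to\infty$). Because $|S|$ is finite at the stage where $k\to\infty$ is taken, no uniformity of the hypothesis in $(p,p')$ is needed beyond what is given, so the adaptation from a single $w$ to a subsequence $(w_k)$ is immediate, and the passage from complete multiplicativity of $a$ to \eqref{E:apqn} is, by the observation in the first paragraph, actually a simplification rather than a new difficulty.
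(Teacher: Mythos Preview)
Your proof is correct and is exactly the approach the paper indicates: the paper says the lemma ``can be proved exactly as in~\cite{K86}'' and omits the details, and your Tur\'an--Kubilius reduction followed by a single Cauchy--Schwarz is precisely K\'atai's method. (Minor expository slip: complete multiplicativity gives $a(pm)=a(p)a(m)$ for \emph{all} $m$; your parenthetical describes ordinary multiplicativity, but this does not affect the argument.)
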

	\begin{proof}[Proof of \cref{T:FH}]
		The implication	 $\eqref{I:FH2} \implies \eqref{I:FH1}$ can be shown as in \cite[Lemma~A.6]{FH17}.
		
	The implication	 $\eqref{I:FH2} \implies \eqref{I:FH3}$ is the most difficult and can be
	proved by repeating the argument used to establish \cite[Theorem~2.5]{FH17}  without essential changes. We briefly sketch the beginning of the argument.
	  Using the  inverse theorem of Green-Tao-Ziegler~\cite{GTZ12}
	  	 for the Gowers norms $U^s(\Z_N)$, we deduce  that if a sequence $a\colon \N\to \S^1$ satisfies
	$\limsup_{k\to \infty} \norm{a}_{U^s(\Z_{N_k})}>0$, then there exist  a subsequence $(N'_k)$ of $(N_k)$, $c>0$, an $(s-1)$-step nilmanifold $X=G/\Gamma$,  $F\in C(X)$,  and $b_k\in G$, $k\in \N$,  such that
	\begin{equation}\label{E:aFbk1}
		|\E_{n\in[N'_k]} \, a(n)\cdot F(b_k^n\Gamma)|>c \quad \text{for every } k\in \N.
	\end{equation}
	We can also assume that $F$ is a  so-called non-trivial vertical nil-character; for example  if $X=\T$,  then $F(x)=e(kx)$ for some non-zero $k\in \Z$.
	Using   \cref{L:Kataigeneral} (we make essential use of   \eqref{E:apqn} here), we deduce that there
	are $p_0,q_0\in P_0$ with $q_0<p_0$  such that
	\begin{equation}\label{E:aFbk2}
		\limsup_{k\to\infty} |\E_{n\in[N'_k/p_0]} \, F(b_k^{p_0n}\Gamma)\cdot \overline{F}(b_k^{q_0n}\Gamma)|>0.
	\end{equation}
	From this point on, following
	 the (rather intricate) argument used to prove  \cite[Theorem~2.5]{FH17},  we deduce from  \eqref{E:aFbk1} and  \eqref{E:aFbk2}
that  $\limsup_{k\to\infty} \norm{a}_{U^2(\Z_{N'_k})}>0$.

	The implication	 $\eqref{I:FH3} \implies \eqref{I:FH2}$ is obvious.
	
Finally, we prove  the implication	 $\eqref{I:FH1} \implies \eqref{I:FH2}$, by slightly modifying the argument in the proof of \cite[Lemma~9.1]{FH17}.
	Arguing by contradiction,  suppose that \eqref{I:FH1} holds but  $\limsup_{k\to\infty} \norm{a}_{U^2(\Z_{N_k})}>0$. We use  the inverse theorem for the $U^2(\Z_N)$ norms and   \cref{L:Kataigeneral},  and argue as 	in the proof of the implication	 $\eqref{I:FH2} \implies \eqref{I:FH3}$. We let   $F(x):=e(x)$  in \eqref{E:aFbk1}, which takes the form
	\begin{equation}\label{E:aFbk'}
| \E_{n\in [N'_k]}\, a(n)\cdot e(n\beta_k)|\geq c>0 \quad \text{for every } k\in \N,
	\end{equation}
	for some  $\beta_k\in [0,1)$, $k\in \N$, $c>0$, and subsequence $(N'_k)$ of $(N_k)$.
	Then  \eqref{E:aFbk2} gives
	$$
	\limsup_{k\to\infty}| \E_{n\in [N'_k/p_0]}\,  e((p_0-q_0)n\beta_{k})|>0
$$
for some $p_0,q_0\in P_0$ with $q_0<p_0$.
	 We easily deduce from this that there
	exist  $\beta_0\in \Q\cap [0,1)$ and $C>0$, such that for infinitely many $k\in \N$ we have
	$$
	|\beta_k-\beta_0|\leq C/N'_k.
	$$
	 Inserting  this back to \eqref{E:aFbk'} and using that $(N'_k)$ is a subsequence of $(N_k)$, we get that
	\begin{equation}\label{E:beta0}
	\limsup_{k\to\infty} |\E_{n\in[N_k]} \, a(n)\cdot e(n \beta_0)\cdot e(n\gamma_k)|\geq c,
	\end{equation}
	where $\gamma_k:=\beta_k-\beta_0$ satisfy  $|\gamma_k|\leq C/N_k$, $k\in\N$.
	
	Next, we  eliminate the term $e(n\gamma_k)$, by changing our averaging range $[N_k]$ to $[\alpha N_k]$ for a suitable rational $\alpha\in [0,1)$.
	Let $L>0$ be large enough so  that
	$$
	2\pi C/L^2\leq c/2L,
	$$
	i.e., $L$ satisfies  $L\geq 4\pi C/c$.
	We partition the interval $[N_k]$ into subintervals of the form
	$$
	I_j:=[(j-1)N_k/L, jN_k/L) \quad \text{ for } j=1,\ldots,L.
	$$
	 Then \eqref{E:beta0} implies that there exist $j_k\in [L]$, $k\in \N$,  such that
\begin{equation}\label{E:beta0'}
	\limsup_{k\to\infty} |\E_{n\in [N_k]} \, {\bf 1}_{I_{j_k}}(n)\cdot  a(n)\cdot e(n \beta_0)\cdot e(n\gamma_k)|\geq c/L.
\end{equation}
	For $n,n'\in I_{j_k}$, we have  	
	$|e(n\gamma_k)-e(n'\gamma_k)|\leq 2\pi C/L$, $k\in \N$, so  if $n_k$ is the left end of the interval $I_{j_k}$, we have
	\begin{equation}\label{E:beta0''}
		\limsup_{k\to\infty} \E_{n\in [N_k]} \, {\bf 1}_{I_{j_k}}(n)|e(n \gamma_k)-e(n_k\gamma_k)|\leq 2\pi C/L^2.
	\end{equation}
	We deduce  from \eqref{E:beta0'}, \eqref{E:beta0''},  and the choice of $L$, that
$$
	\limsup_{k\to\infty} |\E_{n\in [N_k]} \, {\bf 1}_{I_{j_k}}(n)\cdot  a(n)\cdot e(n \beta_0)|>0.
$$
This immediately implies that
\begin{equation}\label{E:alpha}
\limsup_{k\to\infty} |\E_{n\in [\alpha_k\, N_k]} \, a(n)\cdot e(n \beta_0)|>0,
\end{equation}
where $\alpha_k$ is either $(j_k-1)/L$ or $j_k/L$. Since $\alpha_k$ takes
values on a finite set (namely, the set $\{j/L\colon j\in [L]\}$), we deduce that  for some $\alpha \in \Q\cap [0,1)$ we have
\begin{equation}\label{E:alpha'}
	\limsup_{k\to\infty} |\E_{n\in [\alpha\, N_k]} \, a(n)\cdot e(n \beta_0)|>0.
\end{equation}
%This  easily  contradicts   our assumption \eqref{I:FH1}.  Indeed,
On the other hand, if we write $\beta_0=k_0/l_0$ for some $k_0\in \Z_+$, $l_0\in \N$,  then we can use our hypothesis \eqref{I:FH1}, for $\alpha/l_0$ in place of $\alpha$, $l_0$ in place of $q$, and $l$ instead  of $r$,   to get that
$$
\lim_{k\to\infty} \E_{n\in [\alpha/l_0\, N_k]} \, a(l_0n+l)\cdot e((l_0n+l) \beta_0)=e(l\beta_0)\cdot
\lim_{k\to\infty} \E_{n\in [\alpha/l_0\, N_k]} \, a(l_0n+l)=0
$$
 for  $l=0,\ldots, l_0-1.$
%since $e((l_0n+l) \beta_0)=e(l\beta_0)$ for every $n\in \N$.
Averaging over $l\in \{0,\ldots, l_0-1\}$ contradicts  \eqref{E:alpha'}.%, completing the proof.
	\end{proof}

\subsection{Inverse theorem for mixed seminorms}\label{SS:mixedinverse}
The next result gives an inverse theorem for the mixed seminorms defined in \cref{SS:mixeddefinition}. It  will be crucial  for the proof of \cref{T:MainA} and the decomposition result in \cref{T:Decomposition}.
	\begin{theorem}\label{T:Inverse}
		Let $(X,\CX, \mu,T_n)$ be a finitely generated multiplicative action and $F\in L^\infty(\mu)$.
		Then the following properties are equivalent:
		\begin{enumerate}
			\item \label{I:inv1} $F\in X_a$ (or $F\bot X_p$);
			
			\item  \label{I:inv2} 	$\lim_{N\to\infty}\norm{\E_{n\in[N]}\,  T_{qn+r}F}_{L^2(\mu)}=0$
			 for every   $q\in \N$, $r\in\Z_+$;
			
			\item \label{I:inv3}   $\nnorm{F}_{U^2}=0$;
			
			\item \label{I:inv4}   $\nnorm{F}_{U^s}=0$  for every $s\geq 2$.
		\end{enumerate}
	\end{theorem}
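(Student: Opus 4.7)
The strategy is to establish the cyclic chain $\eqref{I:inv1}\Leftrightarrow \eqref{I:inv2}\Rightarrow \eqref{I:inv4}\Rightarrow \eqref{I:inv3}\Rightarrow \eqref{I:inv2}$. The equivalence $\eqref{I:inv1}\Leftrightarrow \eqref{I:inv2}$ is exactly part~\eqref{I:aper1} of \cref{P:aperiodic0}, and $\eqref{I:inv4}\Rightarrow \eqref{I:inv3}$ is the monotonicity recorded in \eqref{E:increase}. The implication $\eqref{I:inv3}\Rightarrow \eqref{I:inv2}$ is a standard generalized-von-Neumann argument: Fourier-expanding the indicator $\mathbf{1}_{m\equiv r\pmod q}$ on $[qN]$ reduces an average like $\E_{n\in[N]}F(T_{qn+r}x)$ to a sum of $q$ linearly-modulated averages $\E_{m\in[qN]}F(T_m x)\,e(km/q)$, each of which is bounded by $\norm{F(T_m x)}_{U^2[qN]}$ via $\norm{\cdot}_{U^1}\leq \norm{\cdot}_{U^2}$ together with the modulation-invariance of the $U^2$ norm; squaring, integrating, and applying Cauchy--Schwarz yields $\int|\E_{n\in[N]}F(T_{qn+r}x)|^2\,d\mu\leq q^2 \bigl(\int\norm{F(T_m x)}_{U^2[qN]}^4\,d\mu\bigr)^{1/2}$, and the hypothesis $\nnorm{F}_{U^2}=0$ (combined with the equivalence of the $U^s[N]$ and $U^s(\Z_N)$ versions of the mixed seminorm noted after \cref{D:mixedsem}) drives the right-hand side to zero. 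All the depth of the theorem is therefore concentrated in the main inverse step $\eqref{I:inv2}\Rightarrow \eqref{I:inv4}$, which is the key application of \cref{T:FH}.

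For $\eqref{I:inv2}\Rightarrow \eqref{I:inv4}$, I first reduce to the case $|F|=1$ almost everywhere. Assuming this, I argue by contradiction: if $\nnorm{F}_{U^s}^{2^s}>0$ for some $s\geq 2$, there exist $N_k\to \infty$ and $c>0$ with $\int \norm{F(T_n x)}_{U^s(\Z_{N_k})}^{2^s}\,d\mu\geq c$ for all $k$. Since the integrand is uniformly bounded by $\norm{F}_\infty^{2^s}$, the reverse Fatou lemma (Fatou applied to $\norm{F}_\infty^{2^s}$ minus the integrand) produces a positive-measure set $E\subset X$ on which $\limsup_k \norm{F(T_n x)}_{U^s(\Z_{N_k})}>0$. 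Finite-generation now enters decisively: the map $p\mapsto T_p$ takes only finitely many values on $\P$, so by pigeonhole there is a set of primes $P_0\subset\P$ with $\sum_{p\in P_0}1/p=\infty$ on which $T_p$ is constant, giving $a_x(pn):=F(T_{pn}x)=F(T_{p'n}x)=a_x(p'n)$ for all $p,p'\in P_0$, $n\in \N$, $x\in X$; combined with $|F|=1$ this upgrades to $a_x(pn)\,\overline{a_x(p'n)}=1$, thereby verifying the multiplicative-structure hypothesis \eqref{E:apqn} of \cref{T:FH}. For each $x\in E$, condition~\eqref{I:FH3} of \cref{T:FH} fails for $a_x$ along $(N_k)$, so the equivalence in that proposition forces \eqref{I:FH1} to fail as well: there exist $q_x\in \N$, $r_x\in \Z_+$, $\alpha_x\in \Q\cap(0,1]$ with $\limsup_k|\E_{n\in[\alpha_x N_k]}F(T_{q_x n+r_x}x)|>0$. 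Pigeonholing over the countable set of triples $(q,r,\alpha)\in\N\times \Z_+\times(\Q\cap(0,1])$ yields a fixed $(q_0,r_0,\alpha_0)$ and a positive-measure subset $E_0\subset E$ on which this lower bound holds simultaneously; a final reverse Fatou along $M_k:=\lfloor \alpha_0 N_k\rfloor\to\infty$ gives $\limsup_k\norm{\E_{n\in[M_k]}T_{q_0 n+r_0}F}_{L^2(\mu)}>0$, contradicting $\eqref{I:inv2}$.

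The principal technical obstacle is the reduction to $|F|=1$ almost everywhere. The identity $T_p=T_{p'}$ on $P_0$ only produces $a_x(pn)\,\overline{a_x(p'n)}=|F(T_{pn}x)|^2$, which is constant in $n$ precisely when $|F|$ is invariant along the orbit, whereas \eqref{E:apqn} demands a $c_{p,p'}$ constant in $n$. Executing the reduction cleanly---perhaps by splitting $F$ via level sets of $|F|$ and normalizing on each piece, or by refining \cref{L:Kataigeneral} to allow $c_{p,p'}$ of the controlled form $|a(pn)|^2$---is where the remaining technical care is concentrated; once this is dispatched, the argument proceeds as described above.
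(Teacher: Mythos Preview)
Your overall strategy matches the paper's, but there is a genuine gap in the last step of $\eqref{I:inv2}\Rightarrow\eqref{I:inv4}$. From $\limsup_k|\E_{n\in[\alpha_0 N_k]}F(T_{q_0 n+r_0}x)|>0$ for every $x\in E_0$ you \emph{cannot} conclude $\limsup_k\norm{\E_{n\in[M_k]}T_{q_0 n+r_0}F}_{L^2(\mu)}>0$: the pointwise $\limsup$s may be realized along different subsequences of $k$ for different $x$, and no version of Fatou bridges this direction (a typewriter sequence of dyadic indicators on $[0,1]$ has pointwise $\limsup$ equal to $1$ everywhere while the $L^2$-norms tend to $0$). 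The paper repairs this by inserting one step \emph{before} applying Fatou and \cref{T:FH}: since $L^2$-convergence implies a.e.\ convergence along a subsequence, a diagonal argument over the countable family $(q,r,\alpha)\in\N\times\Z_+\times(\Q\cap(0,1])$ extracts a subsequence $(N'_k)$ of $(N_k)$ along which $\lim_k\E_{n\in[\alpha N'_k]}F(T_{qn+r}x)=0$ for a.e.\ $x$ and \emph{all} such triples simultaneously. Running your Fatou, \cref{T:FH}, and pigeonhole steps along $(N'_k)$ then produces $E_0$ and $(q_0,r_0,\alpha_0)$ that directly contradict this a.e.\ pointwise vanishing; no passage back to an $L^2$ statement is needed.

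On the $|F|=1$ reduction you flag as the ``principal technical obstacle'': the paper dispatches it in one line, observing that any measurable function into the closed unit disk is the average of two measurable unimodular functions, and asserts this lets one assume $|F|=1$. This is considerably lighter than your proposed level-set decomposition or modification of \cref{L:Kataigeneral}.
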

	\begin{proof}
We know from \cref{P:aperiodic0} that \eqref{I:inv1} and \eqref{I:inv2} are equivalent.
It also follows from \eqref{E:increase} that \eqref{I:inv4} implies \eqref{I:inv3}.

We prove that   \eqref{I:inv3} implies \eqref{I:inv2}. Note that for fixed $q\in \N, r\in \Z_+,$ and $a\colon \N\to \U$,  we have (see e.g. \cite[Lemma~A.6]{FH17})
$$
|\E_{n\in[N]} \, a(qn+r)|\ll_{q} \norm{a}_{U^2(\Z_{qN+r})}.
$$
It follows that
$$
\norm{\E_{n\in[N]}\,  T_{qn+r}F}^2_{L^2(\mu)}\ll_{q}
 \int \norm{F(T_nx)}_{U^2(\Z_{qN+r})}^2.
 $$
 Hence,
 $$
 \limsup_{N\to\infty}\norm{\E_{n\in[N]}\,  T_{qn+r}F}_{L^2(\mu)}\ll_{q} \nnorm{F}_{U^2}.
 $$
 We deduce that \eqref{I:inv3} implies \eqref{I:inv2}.

  So all that remains is to show is
that   \eqref{I:inv2} implies  \eqref{I:inv4},
  which  is by far the most difficult task, and our main  ingredient is  \cref{T:FH}. Arguing by contradiction, let $s\geq 2$ and  suppose  that
 \eqref{I:inv2} holds, but not  \eqref{I:inv4}.
Since a measurable function with values on the complex unit disk is the average of two measurable functions with values on the unit circle, we can assume that $|F(x)|=1$ for every $x\in X$.

 Since  $\sum_{p\in \P}\frac{1}{p}=\infty$ and  the action is finitely generated, there exists $P_0\subset \P$ such that
 $\sum_{p\in P_0}\frac{1}{p}=\infty$ and $T_p$ is constant for $p\in P_0$,
 hence
 \begin{equation}\label{E:pp0}
 T_{pn}=T_{p'n} \quad \text{for all } p,p'\in \P_0,\,  n\in \N.
 \end{equation}
 Since \eqref{I:inv4}  does not hold,
there exist $c>0$ and  $N_k\to \infty $ such that
\begin{equation}\label{E:UNk}
\int \norm{F(T_nx)}_{U^s(\Z_{N_k})}\, d\mu\geq c>0  \quad \text{for every } k\in \N.
\end{equation}
Furthermore, since \eqref{I:inv2} holds, and since mean convergence implies pointwise convergence along a subsequence, using a diagonal argument we get that   there exists a  subsequence $(N_k')$ of $(N_k)$ such that for a.e. $x\in X$ we have
\begin{equation}\label{E:aqnr}
\lim_{k\to\infty}\E_{n\in[\alpha N_k']}\,  F(T_{qn+r}x)=0 \quad \text{ for every } q\in \N, r\in \Z_+, \alpha \in \Q\cap (0,1].
\end{equation}
We deduce from \eqref{E:UNk} using  Fatou's lemma that
$$
\int \limsup_{k\to\infty} \norm{F(T_nx)}_{U^s(\Z_{N'_k})}\, d\mu\geq c>0.
$$
Hence, there exists
a measurable subset  $E$ of $X$ with positive measure such that
 \begin{equation}\label{E:Usa0}
 \limsup_{k\to\infty} \norm{F(T_nx)}_{U^s(\Z_{N'_k})}>0 \quad \text{for every } x\in E.
 \end{equation}
For $x\in E$,   let   $a_x\colon \N\to \S^1$ be defined by
$a_x(n):=F(T_nx)$, $n\in \N$. Note that  since  \eqref{E:pp0} holds and  $|F(x)|=1$ for every $x\in X$, we have
$$
a_x(pn)\cdot \overline{a_x(p'n)}=1 \, \text{ for all } \, p,p'\in P_0, \, n\in \N.
$$
Therefore,
the assumptions of  \cref{T:FH} are satisfied for $a_x$  and $P_0$ (we plan to use the implication \eqref{I:FH3} $\implies$ \eqref{I:FH1} of  \cref{T:FH}), and \eqref{E:Usa0} gives that
for every $x\in E$,
 there exist $q_x\in \N, r_x\in \Z_+$, and $\alpha_x\in \Q\cap [0,1)$,  such that
\begin{equation}\label{E:qxrx}
\limsup_{k\to\infty} |\E_{n\in[\alpha_x N'_k]} \, F(T_{q_xn+r_x}x)|>0 \quad   x\in E.
\end{equation}
Since we have countably many possibilities for $\alpha_x,q_x,r_x$  (we do not claim that these functions of $x$ are measurable), using countable subadditivity
we get that   there exist $q_0\in \N, r_0\in \Z_+$, and $\alpha_0\in \Q\cap [0,1)$, such that the measurable set
\begin{equation}\label{E:qr0}
	E_0:=\{x\in E\colon  \limsup_{k\to\infty}| \E_{n\in [\alpha_0N'_k]}\, F(T_{q_0n+r_0}x)|>0\}
\end{equation}
has positive measure; if it does not, then \eqref{E:qxrx}  would imply that $E$ is contained in a countable union of sets with measure zero, which cannot happen since $E$ has positive measure.  This contradicts \eqref{E:aqnr}, and completes the proof.
 \end{proof}

	\subsection{Proof of \cref{T:Decomposition}}
The starting point of the proof is the same as in the proof of \cref{T:DecompositionGeneral} in \cref{S:ProofThmB}.
Properties \eqref{I:Decomposition1},
\eqref{I:Decomposition2},
\eqref{I:Decomposition3}, of \cref{T:Decomposition} follow from
\eqref{E:concentrationa} in \cref{P:Concentrationfg}, \cref{T:Inverse},
and \cref{P:aperiodicP0},
respectively.

	\section{Pairwise independent linear  forms - Proof   of \cref{T:MainA}}\label{S:ProofThmA}
	%Proof   of \cref{T:MainA}}
Our goal in this section is to prove \cref{T:MainA}. For a proof sketch see \cref{SS:proofsketchA}.
 	\subsection{Characteristic factors} We start with a result that gives convenient characteristic factors for the averages in \eqref{E:MainA0}.
 	 \begin{proposition}\label{P:Characteristic1}
 	Let  $(X,\CX,\mu,T_{1,n},\ldots, T_{\ell,n})$ be a multiplicative action and
 	$L_1,\ldots, L_\ell$ be linear forms with non-negative coefficients  such that $L_1,L_j$ are independent for $j=2,\ldots, \ell$. Suppose $F_1,\ldots, F_\ell \in L^\infty(\mu)$ and $\nnorm{F_1}_{U^s(T_1)}=0$ where $s:=\max(\ell-1,2)$. Then for any $2$-dimensional grid $\Lambda$ we have
 	\begin{equation}\label{E:cflinear}
 		\lim_{N\to\infty}	\E_{m,n\in  [N]}\, {\bf 1}_\Lambda(m,n)\cdot T_{1,L_1(m,n)}F_1\cdots T_{\ell,L_\ell(m,n)}F_\ell=0
 	\end{equation}
 	in $L^2(\mu)$.
 \end{proposition}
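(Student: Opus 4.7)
The plan is to bound $\norm{g_N}_{L^2(\mu)}$, where $g_N := \E_{m,n\in[N]}\,{\bf 1}_\Lambda(m,n) \prod_j T_{j,L_j(m,n)}F_j$, by a pointwise ``generalized von Neumann'' estimate that controls $|g_N(x)|$ by a Gowers norm of the orbit sequence $k\mapsto F_1(T_{1,k}x)$, and then integrate in $x$ to recognize $\nnorm{F_1}_{U^s}$. In effect we dualize the definition of the mixed seminorm: the seminorm is designed precisely to absorb pointwise Gowers bounds of this kind.

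First I would reduce to the case $\Lambda=\Z^2$. Writing $\Lambda=\{(a_1m+b_1,a_2n+b_2)\colon m,n\in\Z\}$ with $a_i\in\N$, $b_i\in\Z$, the change of variables replaces each $L_j(m,n)=\alpha_jm+\beta_jn$ by
\begin{equation*}
L'_j(m,n):=L_j(a_1m+b_1,a_2n+b_2)=a_1\alpha_j\,m+a_2\beta_j\,n+c_j,
\end{equation*}
where the leading $2\times 2$ coefficient matrix is the original one right-multiplied by $\mathrm{diag}(a_1,a_2)\in\mathrm{GL}_2(\Q)$. Hence $L_1'$ stays pairwise independent with every other $L_j'$, and the leading coefficients remain non-negative. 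The constant $c_j$ moves the index outside $\N$ for at most $O(N)$ pairs $(m,n)\in[N]^2$, contributing $o(1)$ in $L^2(\mu)$; for the remaining pairs one can factor an extra fixed transformation into each $F_j$, so in the sequel I shall just assume $\Lambda=\Z^2$. Next, for a.e.\ $x\in X$, set $a_{j,x}(k):=(T_{j,k}F_j)(x)$, which is bounded by $\norm{F_j}_\infty$, and note that
\begin{equation*}
g_N(x)=\E_{m,n\in[N]}\prod_{j=1}^{\ell}a_{j,x}(L_j(m,n)).
\end{equation*}
Since $L_1$ is independent of each $L_j$ ($j\geq 2$), the generalized von Neumann estimate for linear configurations (iterated Cauchy--Schwarz in the spirit of Gowers and of Green--Tao's ``Linear equations in primes'') furnishes a constant $C=C(L_1,\dots,L_\ell)$ and an integer $s\leq\max(\ell-1,2)$ with
\begin{equation*}
\Bigl|\E_{m,n\in[N]}\prod_{j=1}^{\ell}b_j(L_j(m,n))\Bigr|
\leq C\,\norm{b_1}_{U^s[CN]}\prod_{j=2}^{\ell}\norm{b_j}_\infty
\end{equation*}
for all uniformly bounded sequences $b_1,\dots,b_\ell$. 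Applied to $b_j=a_{j,x}$, and using the equivalence of $\norm{\cdot}_{U^s[CN]}$ and $\norm{\cdot}_{U^s(\Z_{CN})}$ up to a polynomial loss (as noted after \cref{D:mixedsem}), this yields
\begin{equation*}
|g_N(x)|^{2^s}\leq C'\,\norm{a_{1,x}}_{U^s(\Z_{CN})}^{2^s}\prod_{j=2}^{\ell}\norm{F_j}_\infty^{2^s}+o_{N\to\infty}(1).
\end{equation*}

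To finish, I would integrate. Since $|g_N|$ is uniformly bounded, Jensen gives $\norm{g_N}_{L^2(\mu)}^{2^s}\leq\int|g_N|^{2^s}\,d\mu$, so
\begin{equation*}
\limsup_{N\to\infty}\norm{g_N}_{L^2(\mu)}^{2^s}\leq C'\prod_{j=2}^{\ell}\norm{F_j}_\infty^{2^s}\cdot\limsup_{N\to\infty}\int\norm{a_{1,x}}_{U^s(\Z_{CN})}^{2^s}d\mu(x)=C'\prod_{j=2}^{\ell}\norm{F_j}_\infty^{2^s}\cdot\nnorm{F_1}_{U^s}^{2^s}=0,
\end{equation*}
which is \eqref{E:cflinear}. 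The main obstacle is the pointwise generalized von Neumann bound used in the second paragraph: although classical in shape, it must be set up carefully for two-variable averaging with $\ell$ forms that are only assumed pairwise independent of $L_1$ (the forms $L_2,\dots,L_\ell$ can coincide, have ratios in $\Q$, etc.). The argument performs $\ell-1$ Cauchy--Schwarzes, at each step shifting one of $m,n$ by a new parameter $h_i$ to cancel a factor $a_j$ ($j\geq2$) while producing a non-trivial multiplicative difference $a_{1,x}(\cdot)\,\overline{a_{1,x}(\cdot+\gamma_ih_i)}$ of $a_{1,x}$ with $\gamma_i\neq 0$---this non-vanishing is precisely where the independence $L_1\nsim L_j$ is used, and after $\ell-1$ such steps one recovers the full $U^{\ell-1}$-box over $a_{1,x}$. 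The endpoint case $\ell\leq 2$ is handled by a direct Fourier/Plancherel argument on $\Z_{CN}$, giving a $U^2$ bound rather than the degenerate $U^{\ell-1}$, which is the source of the maximum in $s=\max(\ell-1,2)$.
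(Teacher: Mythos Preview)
Your proof takes essentially the same route as the paper's: a pointwise generalized von Neumann estimate (which the paper imports as a black box from \cite[Lemma~3.4]{FH16}, with an exponent $c\in(0,1/2]$ on the Gowers norm and a prime $\tilde N\in[lN,2lN]$ in place of your $CN$) followed by integration in $\mu$ and recognition of $\nnorm{F_1}_{U^s}$.

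One small slip in your grid reduction: the claim that after the change of variables the constants $c_j$ can be absorbed by ``factoring an extra fixed transformation into each $F_j$'' is false---these are \emph{multiplicative} actions, so there is no identity $T_{j,k+c}=T_{j,c}\circ T_{j,k}$, and the resulting affine forms $L_j'$ cannot be reduced to linear ones this way. The paper avoids this by not changing variables at all: it keeps ${\bf 1}_\Lambda$ in the average and notes (in a footnote) that the cited von Neumann bound already handles the grid weight, since ${\bf 1}_\Lambda(m,n)$ is a finite linear combination of exponentials $e(m\alpha+n\beta)$ with $\alpha,\beta\in\Q$. You can fix your version equally well by simply invoking the von Neumann bound directly for affine forms, which holds by the same iterated Cauchy--Schwarz you sketch.
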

 \begin{proof}
 	We assume throughout that $\norm{F_j}_\infty\leq 1$, for $j\in [\ell]$.
 	Using \cite[Lemma~3.4]{FH16}  we have that for there exist  $l=l(L_1,\ldots, L_\ell)\in \N$, $c=c(l)\in (0,1/2]$, and prime numbers $\tilde{N}\in [lN,2lN]$, $N\in\N$,    such that  for any sequences $a_1\ldots, a_\ell\colon \N\to \U$ we have
 	$$
 	|\E_{m,n\in [N]} \, {\bf 1}_\Lambda(m,n)\cdot a_1(L_1(m,n))\cdots a_\ell(L_\ell(m,n))|\ll_{\Lambda,l}
 	%%\norm{a_1\cdot {\bf 1}_{[N]}}_{U^s(\Z_{lN})}^c+o_N(1)
 	 \norm{a_1}_{U^s(\Z_{\tilde{N}})}^c +o_N(1),
 	$$
 	where the $o_N(1)$ term does not depend on the sequences $a_1,\ldots, a_\ell$.\footnote{The estimate in \cite[Lemma~3.4]{FH16} is given when $\Lambda=\Z^2$. To treat the general case, we express ${\bf 1}_\Lambda(m,n)$ as a linear combination of sequences of the form $e(m\alpha +n\beta)$, where $\alpha,\beta\in \Q$, and note that the argument in \cite[Lemma~3.4]{FH16} can also be used to treat such weights.}

 For $x\in X$, using  this estimate for
 	$
 	a_{j,x}(n):=F_j(T_{j,n}x), $ $n\in \N,$  $j\in [\ell]$,
 	we deduce that
 	$$
 	|\E_{m,n\in [N]} \,  {\bf 1}_\Lambda(m,n)\cdot F_1(T_{1,L_1(m,n)}x)\cdots F_\ell(T_{\ell,L_\ell(m,n)}x)|^2\ll_{\Lambda,l}
 	\norm{F_1(T_{1,n}x)}_{U^s(\Z_{\tilde{N}})}^{2c} +o_N(1).
 	$$
 	Integrating with respect to $\mu$ and  using that $2c\in (0,1]$, we get
 	\begin{multline*}
 	\norm{ \	\E_{m,n\in[N]}\, {\bf 1}_\Lambda(m,n)\cdot T_{1,L_1(m,n)}F_1\cdots T_{\ell,L_\ell(m,n)}F_\ell }_{L^2(\mu)} \ll_{\Lambda,l}  \\ \norm{\norm{F_1(T_{1,n}x)}_{U^s(\Z_{\tilde{N}})}}^c_{L^2(\mu)}+o_N(1).
 	\end{multline*}
 	Since $\nnorm{F_1}_{U^s(T_1)}=0$,  the limit as $N\to\infty$ of the right side is $0$. The result follows.
 \end{proof}

\subsection{Proof of part~\eqref{I:MainA1}  of \cref{T:MainA}}\label{SS:A1}	
	Suppose first that all actions $(X,\CX,\mu,T_{j,n})$, $j\in [\ell]$, are aperiodic. To show the mean convergence to the product of the integrals, it suffices to show that if $\int F_j\, d\mu=0$ for some $j\in [\ell]$, then the averages \eqref{E:MainA0} converge to $0$ in $L^2(\mu)$. Let $j\in [\ell]$. Since the action $(X,\CX,\mu,T_{j,n})$ is aperiodic  and $\int F_j\, d\mu=0$, we have $\lim_{N\to\infty}\E_{n\in[N]}\,  T_{j,qn+r}F_j=0$ in $L^2(\mu)$ for all $q\in \N$, $r\in \Z_+$.  Since the actions are finitely generated, \cref{T:Inverse}  implies that $\nnorm{F_j}_{U^s(T_j)}=0$ for all $s\in \N$,  and \cref{P:Characteristic1} gives that  the averages \eqref{E:MainA0} converge to $0$ in $L^2(\mu)$
	 (we used here that the forms $L_1,\ldots, L_\ell$ are pairwise independent).
	
Suppose now that 	$(X,\CX,\mu,T_{j,n})$, $j\in [\ell]$, are general finitely generated multiplicative actions. For $j\in [\ell]$, 	using the decomposition result of \cref{T:Decomposition},
	we  write
		$$
		F_j=F_{j,p}+F_{j,a},
		$$
		where $F_{j,p}\in X_{j,p}$ and  $\nnorm{F_{j,a}}_{U^s(T_j)}=0$ for every $s\in \N$.
		Using this, and 		since the linear forms $L_1,\ldots, L_\ell$ are pairwise independent, we deduce from  \cref{P:Characteristic1}  that the limiting behavior of the averages \eqref{E:MainA0} is the same as  that of the averages
	\begin{equation}\label{E:AmnL}
	\E_{m,n\in[N]}\,  A(m,n), \quad A(m,n):= {\bf 1}_{\Lambda}(m,n)\cdot T_{1,L_1(m,n)}F_{1,p}\cdots T_{\ell,L_\ell(m,n)}F_{\ell,p},	
	\end{equation}
	in the sense that the difference of the two averages converges to $0$ in $L^2(\mu)$ as $N\to \infty$.
	So it suffices to show that the averages in \eqref{E:AmnL} converge in $L^2(\mu)$.
	Let $Q_K,S_K$ be as in \eqref{E:defQKSK}
	and    $S_{K; L_1,\ldots, L_\ell}$  be as in \eqref{E:defSKL}.
	We have
	$$
	\lim_{K\to\infty}\limsup_{N\to\infty}\norm{\E_{m,n\in[N]}\, A(m,n)-	\E_{a,b\in [Q_K]}
		\E_{m,n\in[N/Q_K]}\, A(Q_Km+a,Q_Kn+b)
	}_{L^2(\mu)}=0.
	$$
	It follows from this and  \eqref{E:SKQKL} that
	\begin{equation}\label{E:AAKN}
	\lim_{K\to\infty}\limsup_{N\to\infty}\norm{\E_{m,n\in[N]}\, A(m,n)-	A_{K,N}}_{L^2(\mu)}=0,
	\end{equation}
	where
	\begin{equation}\label{E:AKN}
A_{K,N}:=\E_{(a,b)\in S_{K;L_1,\ldots, L_\ell}}\E_{m,n\in[N/Q_K]}\, A(Q_Km+a,Q_Kn+b).
\end{equation}
Since $(a,b)\in S_{K;L_1,\ldots, L_\ell}$ implies that $L_j(a,b)\in S_K$ for $j\in [\ell]$, and since $F_{j,p}\in X_{j,p}$ for $j\in [\ell]$,
by  \eqref{E:fconcfingen''}  in  \cref{P:Concentrationfg}   and \cref{L:lN} we have
that for every $Q\in \N, r\in \Z_+,$ and $F\in X_{j,p},$ the limit
$
A_{j,Q,r}(F):=\lim_{N\to\infty}\E_{n\in [N]}\, T_{j,Qn+r}F
$
exists in $L^2(\mu)$, and
%\begin{multline*}
%\lim_{K\to\infty} 	\limsup_{N\to\infty} \max_{(a,b)\in %S_{K;L_1,\ldots,L_\ell}}\\ %\E_{n\in[N/Q_K]}\norm{T_{j,Q_Kn+L_j(a,b)}F_{j,p}-
%	A_{j,Q_K,L_j(a,b)}(F_{j,p})}_{L^2(\mu)}=0, \quad j\in [\ell].
%\end{multline*}
%Using \cite[Lemma~3.1]{FKM23} we deduce that
$$
\lim_{K\to\infty} 	\limsup_{N\to\infty} \max_{(a,b)\in S_{K;L_j}} \E_{m,n\in[N]}\norm{T_{j,L_j(Q_Km+a,Q_Kn+b)}F_{j,p}-	A_{j,Q_K,L_j(a,b)}(F_{j,p})}_{L^2(\mu)}=0
$$
for $j\in [\ell]$.
%%where $l_j$ is the sum  of the coefficients of $L_j$.
Note also that for all sufficiently large $K$  we have
$$
{\bf 1}_{\Lambda}(Q_Km+a,Q_Kn+b)={\bf 1}_{\Lambda}(a,b) \quad \text{for all } m,n\in \N.
$$

Using these identities,  \eqref{E:AKN}, and the form of $A(m,n)$ given in \eqref{E:AmnL}, we deduce  that
	\begin{equation}\label{E:AjKN}
	\lim_{K\to\infty}\limsup_{N\to\infty}\norm{A_{K,N}-
		A_K}_{L^2(\mu)}=0,
\end{equation}
where
$$
A_K:=\E_{(a,b)\in S_{K;L_1,\ldots, L_\ell}}\, {\bf 1}_{\Lambda}(a,b)\cdot  \prod_{j=1}^\ell A_{j,Q_K,L_j(a,b)}(F_{j,p}), \quad K\in \N.
$$
Combining \eqref{E:AAKN} and \eqref{E:AjKN}, we deduce
that  the sequence $(\E_{m,n\in[N]}\, A(m,n))_{N\in\N}$  is Cauchy and therefore converges in $L^2(\mu)$.

\subsection{Proof of part~\eqref{I:MainA2} of \cref{T:MainA}}\label{SS:A2}	
  Let $\varepsilon>0$,  $F:={\bf 1}_A$,
  $Q\in \N$, which  will be determined later, and $m_0,n_0\in \Z$ be as in the statement of   part~\eqref{I:MainA2} of \cref{T:MainA}. It suffices to show that
  (the limit exists by  part~\eqref{I:MainA1} of \cref{T:MainA})
  \begin{equation}\label{E:lowerneededA-}
  	\lim_{N\to\infty}\E_{m,n\in [N]}\int F \cdot T_{1,L_1(Qm+m_0,Qn+n_0)}F\cdots T_{\ell,L_\ell(Qm+m_0,Qn+n_0)}F\, d\mu
  	\geq \Big(\int F\, d\mu\Big)^{\ell+1}	-\varepsilon.
  \end{equation}

For $j\in[\ell]$, using the decomposition result of \cref{T:Decomposition}, we write
$$
F=F_{j,p}+F_{j,a},
$$
where $F_{j,p}=\E(F|\CX_{j,p})\in X_{j,p}$ and  $\nnorm{F_{j,a}}_{U^s(T_j)}=0$ for every $s\in \N$. Using \cref{P:Characteristic1}  for appropriate $\Lambda$,  we get that the limit in
\eqref{E:lowerneededA-}
 is equal to
\begin{equation}\label{E:lowerneededA}
\lim_{N\to\infty}\E_{m,n\in [N]}\int F \cdot T_{1,L_1(Qm+m_0,Qn+n_0)}F_{1,p}\cdots T_{\ell,L_\ell(Qm+m_0,Qn+n_0)}F_{\ell,p}\, d\mu.	
\end{equation}
 So it remains to show that for suitable $Q\in \N$,  the last limit is at least
  $(\int F\, d\mu)^{\ell+1}	-\varepsilon$.

\smallskip

{\bf Case 1.} Suppose that $L_j(m_0,n_0)=1$ for $j=1,\ldots, \ell$.
Using property~\eqref{I:Decomposition1} of \cref{T:Decomposition} we get that
$$
\lim_{K\to\infty}\limsup_{N\to\infty} \max_{Q\in \Phi_K}\E_{m,n\in[N]}\norm{T_{j,Qn+1}F_{j,p}-F_{j,p}}_{L^2(\mu)}=0, \quad j\in [\ell].
$$
Since $L_j(m_0,n_0)=1$ for $j\in[\ell]$,
using \cref{L:lN}  for $v(n):=T_{j,Qn+1}F_{j,p}$, $v_N:=F_{j,p}$, we deduce that $j\in[\ell]$ we have
\begin{equation}\label{E:A2Conc}
	\lim_{K\to\infty}\limsup_{N\to\infty} \max_{Q\in \Phi_K}\E_{m,n\in[N]}\norm{T_{j,L_j(Qm+m_0,Qn+n_0)}F_{j,p}-F_{j,p}}_{L^2(\mu)}=0.
\end{equation}
For $K\in \N$, let $Q_K\in \Phi_K$ be arbitrary. It follows from the above, using a telescoping argument, that  the iterated limit (note that the limit as $N\to \infty$ exists by part~\eqref{I:MainB1})
$$
\lim_{K\to\infty} \lim_{N\to\infty}
\E_{m,n\in[N]} \int F\cdot T_{1,L_1(Q_Km+m_0,Q_Km+n_0)}F_{1,p}\cdots T_{\ell,L_\ell(Q_Km+m_0,Q_Km+n_0)}F_{\ell,p}\
\, d\mu
$$
is equal to
$$
\int F \cdot F_{1,p}\cdots F_{\ell,p}\, d\mu=\int F\cdot   \E(F|\CX_{1,p})\cdots  \E(F|\CX_{\ell,p})\, d\mu\geq\Big(\int F\, \, d\mu\Big)^{\ell+1},
$$
where the lower bound  follows from \cref{T:Chu}.

Combining the above,  we get  that  there exists $Q\in \N$ such that the expression in \eqref{E:lowerneededA} is at least $(\int F\, \, d\mu)^{\ell+1}-\varepsilon$, and  consequently  \eqref{E:lowerneededA-} holds for this same $Q$, as requested.

\smallskip

{\bf Case 2.} Suppose that $L_1(m_0,n_0)=0$ and $L_j(m_0,n_0)=1$ for $j=2,\ldots, \ell$.
Let
\begin{equation}\label{E:tildeF0}
	\tilde{F}_{1,p}:=\lim_{N\to\infty}\E_{m,n\in[N]}\, T_{1,L_1(m,n)}F_{1,p},
\end{equation}
where  the limit is taken in $L^2(\mu)$ and  exists by part~\eqref{I:MainA1} since the action $(X,\mu,T_{1,n}) $ is finitely generated.
Since $L_1(m_0,n_0)=0$, we have that $L_1(Qm+m_0,Qn+n_0)=QL_1(m,n)$, hence  for $G\in L^\infty(\mu)$, which will be determined later,  we get by the bounded convergence theorem that for every $Q\in \N$ we have
%%\begin{equation}\label{E:limG}
	$$
\lim_{N\to\infty} \E_{m,n\in[N]}\int T_{1,L_1(Qm+m_0,Qn+n_0)}F_{1,p}\cdot G\, d\mu = \int T_{1,Q}\tilde{F}_{1,p}\cdot G \, d\mu.
$$
%%\end{equation}
Moreover, we get by  \cref{L:multaverid} that ($\CI_{T_1}$ is as in \eqref{E:I})
$$
\lim_{K\to\infty}\E_{Q\in \Phi_K}\int T_{1,Q}\tilde{F}_{1,p}\cdot G \, d\mu =\int  \E(\tilde{F}_{1,p}|\CI_{T_1})\cdot G\, d\mu =
\int  \E(F|\CI_{T_1})\cdot G\, d\mu,
$$
where we used that
\begin{equation}\label{E:IT1}
\E(\tilde{F}_{1,p}|\CI_{T_1})=\E(F_{1,p}|\CI_{T_1})=\E(F|\CI_{T_1}).
\end{equation}
To see that the first equality holds, we use that $f_n\to f$ in $L^2(\mu)$ implies that $\E(f_n|\CI_{T_1})\to \E(f|\CI_{T_1})$ in $L^2(\mu)$,  the form of
$\tilde{F}_{1,p}$ given in \eqref{E:tildeF0}, and the fact that  $\E(T_{1,L_1(m,n)}F_{1,p}|\CI_{T_1})=\E(F_{1,p}|\CI_{T_1})$ for every $m,n\in \N$.  For  the second equality, we use that  $F_{1,p}=\E(F|\CX_{1,p})$ and $\CI_{T_1}\subset \CX_{1,p}$.
Combining the previous three identities, we get
	\begin{equation}\label{E:A12}
\lim_{K\to\infty}\E_{Q\in \Phi_K}\lim_{N\to\infty} \E_{m,n\in[N]}\int T_{1,L_1(Qm+m_0,Qn+n_0)}F_{1,p}\cdot G\, d\mu =\int  \E(F|\CI_{T_1})\cdot G\, d\mu.
\end{equation}

Using  \eqref{E:A2Conc} for $j=2,\ldots, \ell$ (here we use that $L_j(m_0,n_0)=1$ for $j=2,\ldots, \ell$), \eqref{E:A12} for $G:=F\cdot F_{2,p}\cdots F_{\ell,p}=F\cdot  \E(F|\CX_{2,p})\cdots  \E(F|\CX_{\ell,p})$,  and a telescoping argument,   we get that  the iterated limit
(note that the limit below as $N\to \infty$ exists by part~\eqref{I:MainA1}, so the average over $Q$ and the limit over $N$ below can be exchanged)
\begin{equation}\label{E:FFjp}
\lim_{K\to\infty} \E_{Q\in \Phi_K}\lim_{N\to\infty}
\E_{m,n\in[N]} \int F\cdot T_{1,L_1(Qm+m_0,Qm+n_0)}F_{1,p}\cdots T_{\ell,L_\ell(Qm+m_0,Qm+n_0)}F_{\ell,p}\
 \, d\mu
\end{equation}
 is equal to
$$
\int F\cdot  \E(F|\CI_{T_1}) \cdot \E(F|\CX_{2,p})\cdots  \E(F|\CX_{\ell,p})\, d\mu\geq \Big(\int F\, d\mu\Big)^{\ell+1},
$$
where the lower bound follows again from \cref{T:Chu}.

Combining the above,  we get that there exists $Q\in \N$ such that \eqref{E:lowerneededA-} holds, as required.

	\section{Two rational polynomials - Proof of \cref{T:MainB}}\label{S:ProofThmB1}
	%	Proof of \cref{T:MainB}}
Our goal in this section is to prove \cref{T:MainB}. For a proof sketch, see \cref{SS:proofsketchB}.
	
	\subsection{Characteristic factors} Our first goal is to obtain convenient  characteristic factors for the averages in \eqref{E:R1R2Converge}.
		We will use the following %% $2$-dimensional
		 variant of the Daboussi-K\'atai orthogonality criterion, whose proof is analogous to that given in~\cite{K86}, so we will omit it.
		% (see also \cite[Lemma~2.14]{Cha24}):
	\begin{lemma}\label{L:Katai}
		For $N\in \N$ let $(A_N(m,n))_{m,n\in\N}$ be a $1$-bounded sequence  in an inner product space  $H$, $C>0$, and  $P_0\subset \P$  be such that  $\sum_{p\in P_0}\frac{1}{p}=\infty$. Suppose that
		$$
		\lim_{N\to\infty} \E_{m\in[CN/p], n\in [CN/q]}\,  \big\langle A_N(pm,qn), A_N(p'm,q'n) \big\rangle=0
		$$
		for all  $p,q,p',q'\in P_0$ such that $p/q\neq p'/q'$ and $p'<p$, $q'<q$.
		Then
		$$
		\lim_{N\to\infty}\norm{\E_{m,n\in[CN]}\, A_N(m,n)}=0.
		$$
	\end{lemma}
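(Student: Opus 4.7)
The plan is to follow the template of the classical Daboussi--K\'atai orthogonality argument from \cite{K86}, adapted to the two-dimensional setting. The proof combines the Tur\'an--Kubilius inequality (to approximate the average of $A_N(m,n)$ over $[CN]^2$ by a weighted sum over primes dividing $m$ and $n$) with a sequence of Cauchy--Schwarz manipulations that ultimately reduces matters to the diagonal inner products controlled by the hypothesis.

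First I would fix $K\in\N$ large, set $S_K:=\sum_{p\in P_0,\,p\leq K}\tfrac{1}{p}$ (so $S_K\to\infty$ as $K\to\infty$), and let $\omega_K(m):=\sum_{p\in P_0,\,p\leq K}\mathbf{1}_{p\mid m}$.  A direct second-moment computation (exactly as in the proof of \cref{L:SKL}) yields $\E_{m\in [CN]}(\omega_K(m)-S_K)^2\ll S_K+o_N(1)$.  Writing $B_N:=\E_{m,n\in[CN]}A_N(m,n)$ and using $|A_N|\leq 1$ together with Cauchy--Schwarz, one deduces
\[
B_N \;=\; \frac{1}{S_K^2}\,\E_{m,n\in[CN]}\omega_K(m)\omega_K(n)\,A_N(m,n)\;+\;O\bigl(S_K^{-1/2}\bigr)
\]
as $N\to\infty$.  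Expanding $\omega_K(m)\omega_K(n)$ as a double sum and recognising $\E_{m,n\in[CN]}\mathbf{1}_{p\mid m}\mathbf{1}_{q\mid n}A_N(m,n)=\tfrac{1}{pq}U_{p,q}(N)+o_N(1)$, where $U_{p,q}(N):=\E_{m\in[CN/p],n\in[CN/q]}A_N(pm,qn)$, one obtains
\[
B_N \;=\; \frac{1}{S_K^2}\sum_{p,q\in P_0,\,p,q\leq K}\frac{1}{pq}\,U_{p,q}(N)\;+\;o_K(1),
\]
where the error vanishes as $N\to\infty$ and then $K\to\infty$.

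Next I would square and apply Cauchy--Schwarz in the $(p,q)$-sum, obtaining $\|B_N\|^2\leq \tfrac{1}{S_K^2}\sum\tfrac{\|U_{p,q}(N)\|^2}{pq}+o_K(1)$.  Expanding $\|U_{p,q}(N)\|^2=\E_{m,n,m',n'}\langle A_N(pm,qn),A_N(pm',qn')\rangle$ and applying one further Cauchy--Schwarz in $(m',n')$, mimicking the one-dimensional K\'atai step where one uses the factorisation $\E_{m,m'}[X(m)\overline{Y(m')}]=\E_m X(m)\cdot\overline{\E_{m'}Y(m')}$ together with $|a|^2=a\bar a$, one arrives at a bound of the form
\[
\|B_N\|^{4}\;\leq\;\frac{C}{S_K^{4}}\sum_{p,q,p',q'\in P_0,\,\leq K}\frac{1}{pqp'q'}\Bigl|\E_{m\in[CN/p''],n\in[CN/q'']}\langle A_N(pm,qn),A_N(p'm,q'n)\rangle\Bigr|^{2}+o_K(1),
\]
where $p'':=\max(p,p')$, $q'':=\max(q,q')$.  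I would then split the sum into three pieces: (a) the diagonal $(p,q)=(p',q')$, contributing $O\bigl(S_K^{-4}\sum\tfrac{1}{(pq)^2}\bigr)=O(S_K^{-4})\to 0$; (b) the terms with $p/q=p'/q'$ but $(p,q)\neq(p',q')$, which by the definition of $U_{p,q}$ reduce to a diagonal-type contribution that is easily seen to be negligible as $K\to\infty$; (c) the terms with $p/q\neq p'/q'$, in which (after using the symmetry $\langle u,v\rangle=\overline{\langle v,u\rangle}$ to restrict to the ordering $p'<p$, $q'<q$ at the cost of a factor of $2$) the inner sum tends to $0$ as $N\to\infty$ by the hypothesis.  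Letting $N\to\infty$ first and then $K\to\infty$ gives $\|B_N\|\to 0$, as required.

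The main obstacle I anticipate is the passage from the $4$-dimensional averages $\E_{m,n,m',n'}\langle A_N(pm,qn),A_N(p'm',q'n')\rangle$ arising naturally from the expansion of $\bigl\|\sum\tfrac{1}{pq}U_{p,q}\bigr\|^2$ to the $2$-dimensional diagonal averages $\E_{m,n}\langle A_N(pm,qn),A_N(p'm,q'n)\rangle$ controlled by the hypothesis.  This collapse is forced by applying Cauchy--Schwarz in the correct variables and in the correct order, precisely as in the one-dimensional K\'atai argument, and it is the only point where the particular shape of the hypothesis (same $m$ and same $n$, different prime prefactors with $p'<p$, $q'<q$, $p/q\neq p'/q'$) is used essentially.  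Handling the condition $p'<p$, $q'<q$ requires using the Hermitian symmetry of the inner product to restrict to ordered pairs, and handling $p/q\neq p'/q'$ requires separating out the (easily controlled) contributions with $p/q=p'/q'$.
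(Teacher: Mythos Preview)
Your overall plan is exactly what the paper intends (it omits the proof, citing the analogy with \cite{K86}), but the Cauchy--Schwarz step is applied in the wrong variables. After Tur\'an--Kubilius you have, in sum form, $S_K^2(CN)^2 B_N \approx \sum_{p,q}\sum_{m,n} A_N(pm,qn)$. Your instruction ``apply Cauchy--Schwarz in the $(p,q)$-sum'' yields $\|B_N\|^2 \le S_K^{-2}\sum_{p,q}(pq)^{-1}\|U_{p,q}\|^2+o_K(1)$; but each $\|U_{p,q}\|^2$ involves only a single prime pair, so the hypothesis --- which compares two \emph{different} pairs at the \emph{same} $(m,n)$ --- can never be brought in, and since $\|U_{p,q}\|\le 1$ the inequality recovers only the trivial bound $\|B_N\|\le 1$. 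The subsequent ``one further Cauchy--Schwarz in $(m',n')$'' cannot manufacture a new prime pair $(p',q')$, because the expansion of $\|U_{p,q}\|^2$ carries the same $(p,q)$ in both factors.

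The correct K\'atai step is to swap the order of summation \emph{before} Cauchy--Schwarz, so that $(m,n)$ is the outer variable, and then estimate
\[
\Bigl\|\sum_{m,n}\sum_{p,q}A_N(pm,qn)\Bigr\|^2 \le (CN)^2\sum_{m,n}\Bigl\|\sum_{p,q}A_N(pm,qn)\Bigr\|^2 = (CN)^2\sum_{m,n}\sum_{p,q,p',q'}\bigl\langle A_N(pm,qn),A_N(p'm,q'n)\bigr\rangle,
\]
which now has the same $(m,n)$ in both slots and different prime prefactors, exactly matching the hypothesis. The diagonal $(p,q)=(p',q')$ contributes $O((CN)^2 S_K^2)$; the ``false diagonal'' $p/q=p'/q'$, $(p,q)\ne(p',q')$ forces $p=q$ and $p'=q'$ (these are primes) and contributes $O((CN)^2)$ via $\sum_{p\ne p'}\max(p,p')^{-2}<\infty$; the remaining terms are $o_N((CN)^2)$ by the hypothesis for each fixed $K$. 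Dividing through by $S_K^4(CN)^4$ and letting $N\to\infty$, then $K\to\infty$, gives the conclusion. Your closing paragraph correctly identifies this collapse as the crux; the missing point is that the swap of summation must precede the Cauchy--Schwarz.
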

	The next result is a $2$-dimensional generalization
	of the identity
	$
	{\bf 1}_{l\cdot \Z}(n)=\E_{q\in[l]}\, e(nq/l).
	$
	\begin{lemma}\label{L:RAZA}
 		Let $A$ be an invertible  $2\times 2$ matrix with integer coefficients and
		$$
		Z_A:=\{(q_1,q_2)\in (\Q\cap [0,1))^2\colon (q_1,q_2) \cdot A\in \Z^2\}, \quad
		R_A:=A\cdot \Z^2.
		$$
(Note that $Z_A$ is a finite set.)		Then
		$$
	{\bf 1}_{R_A}(m,n)=	\E_{(q_1,q_2)\in Z_A}\, e(mq_1+nq_2).
		$$  	
	\end{lemma}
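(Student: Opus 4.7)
The plan is to recognize both sides as the statement of character orthogonality on the finite abelian group $G:=\Z^2/R_A$, with $Z_A$ parametrizing its dual.

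First I would check that for every $(q_1,q_2)\in \R^2$, the map $\chi_{q_1,q_2}\colon \Z^2\to\S^1$ given by $\chi_{q_1,q_2}(m,n):=e(mq_1+nq_2)$ is a character of $\Z^2$, and that it is trivial on $R_A=A\cdot \Z^2$ if and only if $e((q_1,q_2)\,A\mathbf{v})=1$ for every $\mathbf{v}\in\Z^2$, equivalently $(q_1,q_2)\,A\in \Z^2$. Restricting to representatives in $[0,1)^2$, these lifts are indexed precisely by $Z_A$, and two distinct elements of $Z_A$ yield distinct characters of $\Z^2$ (hence of $G$), since their difference would otherwise lie in $\Z^2\cap [0,1)^2\setminus\{0\}=\emptyset$.

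Second, I would show $|Z_A|=|\det A|=[\Z^2:R_A]=|G|$, so that the characters $\{\chi_{q_1,q_2}\}_{(q_1,q_2)\in Z_A}$ exhaust the dual group $\widehat{G}$. The index equality $[\Z^2:R_A]=|\det A|$ is standard (e.g.\ from Smith normal form of $A$), and the identity $|Z_A|=|\det A|$ follows because $Z_A+\Z^2=\Z^2\cdot A^{-1}$, a lattice containing $\Z^2$ of index $|\det A|$; equivalently, the group homomorphism $\R^2/\Z^2\to \R^2/\Z^2$ induced by right multiplication by $A$ is an $|\det A|$-to-$1$ covering whose kernel is the image of $Z_A$.

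Finally, I would apply character orthogonality on $G$: for any $g\in G$,
$$\sum_{\chi\in\widehat{G}}\chi(g)=|G|\cdot {\bf 1}_{g=0}.$$
Applied to the class of $(m,n)$ in $G$, this yields
$$\sum_{(q_1,q_2)\in Z_A} e(mq_1+nq_2)=|Z_A|\cdot {\bf 1}_{R_A}(m,n),$$
and dividing by $|Z_A|$ gives the desired identity. No step is truly obstructive; the only mildly technical point is the cardinality count $|Z_A|=|\det A|$, which is routine once one identifies $Z_A$ with the torsion kernel of the endomorphism of $\R^2/\Z^2$ induced by $A$.
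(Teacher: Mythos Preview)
Your proof is correct and complete, but it takes a different route from the paper's. The paper verifies the identity directly in two cases without ever counting $|Z_A|$ or invoking character orthogonality. If $\bar m\in R_A$, it writes $\bar m=A\bar k$ and notes each term $e(\bar q\cdot A\bar k)=1$ since $\bar q A\in\Z^2$. If $\bar m\notin R_A$, it first shows by a short contradiction (using only that $Z_A=\Z^2 A^{-1}\pmod 1$ and testing against the standard basis) that some $\bar q_0\in Z_A$ has $\bar q_0\cdot\bar m\notin\Z$, and then applies the shift trick: $Z_A+\bar q_0=Z_A\pmod 1$, so the average is invariant under multiplication by $e(\bar q_0\cdot\bar m)\ne 1$, forcing it to vanish. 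Your approach is more conceptual---it packages the computation as character orthogonality on $G=\Z^2/R_A$ and thus generalizes transparently to $\Z^d$---but it costs you the cardinality computation $|Z_A|=|\det A|=|G|$ (via Smith normal form or a covering argument) to conclude that $Z_A$ exhausts $\widehat G$. The paper's argument is more elementary and self-contained: it never needs $|Z_A|$, only that $Z_A$ is a group under addition mod $1$, which is essentially the content of the shift trick.
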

\begin{proof}
	For convenience we let
	$$
	\bar{m}:=(m,n)^\top \quad \text{and} \quad  \bar{q}:=(q_1,q_2),
	$$
	where $v^\top$ denotes the transpose of a vector $v$.
	The claimed identity takes the form
	\begin{equation}\label{E:RA}
{\bf 1}_{R_A}(\bar{m})=	\E_{\bar{q}\in Z_A}\, e(\bar{q}\cdot \bar{m}).
\end{equation}

Suppose that $\bar{m}\in R_A$. Then $\bar{m}=A\cdot \bar{k}$ for some $\bar{k}\in \Z^2$ and the right  side in \eqref{E:RA} is
	$$
	\E_{\bar{q}\in Z_A}e(\bar{q}\cdot A\cdot \bar{k})=1,
$$
since    $\bar{q}\cdot A\in \Z^2$ for $\bar{q}\in Z_A$, and consequently $\bar{q}\cdot A\cdot \bar{k}\in \Z$.

Suppose now that $\bar{m}\not\in R_A$, or equivalently, that  $A^{-1}\cdot \bar{m}\not \in \Z^2$. We claim that  there exists $\bar{q}_0\in Z_A$ such that  $\bar{q}_0\cdot \bar{m} \notin \Z$. Indeed, if this is not the case, since $Z_A=\Z^2\cdot A^{-1} \!\! \pmod{1}$, we  have that $\bar{k}\cdot A^{-1}\cdot \bar{m}\in \Z$  for every $\bar{k}\in \Z^2$. Letting $\bar{k}:=(1,0)$ and $(0,1)$, we get  $\text{I}_2\cdot A^{-1}\cdot \bar{m}\in \Z$, where $I_2$ is the identity matrix. Hence, $A^{-1}\cdot \bar{m}\in \Z$, a contradiction.

 So let $\bar{q}_0\in Z_A$ be such that  $\bar{q}_0\cdot m \notin \Z$.
 Then
$$
e(\bar{q}_0\cdot \bar{m})\cdot \E_{\bar{q}\in Z_A}e(\bar{q}\cdot \bar{m})=
\E_{\bar{q}\in Z_A}e((\bar{q}+\bar{q}_0)\cdot \bar{m})=\E_{\bar{q}\in Z_A}e(\bar{q}\cdot \bar{m}),
$$
since $Z_A+\bar{q}_0=Z_A \pmod{1}$.
Since $e(\bar{q}_0\cdot \bar{m}) \neq 1$, we deduce that
$$
\E_{\bar{q}\in Z_A}e(\bar{q}\cdot \bar{m})=0.
$$
This completes the proof.
\end{proof}
We plan to use the following consequence:
\begin{corollary}\label{C:RAZA}
		Let $\Lambda$ be  a $2$-dimensional grid, $A$ be an invertible  $2\times 2$ matrix with integer coefficients,  $Z_A, R_A$ be as in \eqref{L:RAZA}, and for $N\in \N$ let  $A_N:=A([1,N]\times [1,N])$ and $\Lambda_N:=\Lambda \cap ([N]\times [N])$.  Then for all $a,b,N\in \N$ there exist an invertible $2\times 2$ matrix $A'$, convex subsets $A_{a,b,N}$ of $\R_+^2$, and $m_0,n_0\in \Z_+$ (depending only on $\Lambda$), such that
		 $$
		{\bf 1}_{A(\Lambda_N)}(am,bn)=
		{\bf 1}_{A_{a,b,N}}(m,n)\cdot \E_{(q_1,q_2)\in Z_{A'}}\, e((am-m_0)q_1+(bn-n_0)q_2), \quad m,n\in\N.
		$$	
	\end{corollary}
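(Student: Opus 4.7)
The plan is to split the condition $(am,bn)\in A(\Lambda_N)$ into a lattice-membership part, $(am,bn)\in A(\Lambda)$, to be resolved by Lemma~\ref{L:RAZA}, and a range part, $A^{-1}(am,bn)^\top\in[1,N]^2$, which will furnish the convex set.

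I would write $\Lambda=B\Z^2+\bar c$ with $B=\mathrm{diag}(a_1,a_2)$ and $\bar c=(b_1,b_2)^\top\in\Z^2$, and set $A':=AB$, which is invertible with integer entries. Since $A(\Lambda)=A'\Z^2+A\bar c$ is a translate of the lattice $R_{A'}=A'\Z^2$, we have
\[
{\bf 1}_{A(\Lambda)}(x,y)={\bf 1}_{R_{A'}}\bigl(x-(A\bar c)_1,y-(A\bar c)_2\bigr),
\]
and applying Lemma~\ref{L:RAZA} to $A'$ yields
\[
{\bf 1}_{A(\Lambda)}(am,bn)=\E_{(q_1,q_2)\in Z_{A'}}\,e\bigl((am-(A\bar c)_1)q_1+(bn-(A\bar c)_2)q_2\bigr).
\]
Next I would replace $(A\bar c)_1,(A\bar c)_2$ by non-negative integer representatives $m_0,n_0\in\Z_+$ chosen so that $(m_0-(A\bar c)_1)q_1\in\Z$ and $(n_0-(A\bar c)_2)q_2\in\Z$ for every $(q_1,q_2)\in Z_{A'}$. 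Because $Z_{A'}$ is a finite set of rationals and $A\bar c\in\Z^2$, such $m_0,n_0$ exist (take $m_0=(A\bar c)_1+kL$ with $L$ a common denominator of all first coordinates of elements of $Z_{A'}$ and $k\in\Z_+$ large enough to make $m_0\geq 0$, and similarly for $n_0$), and they depend only on $\Lambda$ and $A$, not on $a,b,N$.

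For the range part, I would set
\[
A_{a,b,N}:=\{(m,n)\in\R_+^2\colon A^{-1}(am,bn)^\top\in[1,N]\times[1,N]\},
\]
a preimage of a box under a linear map (intersected with $\R_+^2$), hence convex. Since $A$ maps $\Z^2$ bijectively onto its image, $(am,bn)\in A(\Lambda_N)$ if and only if $(am,bn)\in A(\Lambda)$ and $A^{-1}(am,bn)^\top\in[1,N]^2$, so
\[
{\bf 1}_{A(\Lambda_N)}(am,bn)={\bf 1}_{A_{a,b,N}}(m,n)\cdot{\bf 1}_{A(\Lambda)}(am,bn).
\]
Combining with the previous display produces the claimed identity.

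The only delicate point is the bookkeeping needed to produce $m_0,n_0\in\Z_+$ that simultaneously neutralize each of the finitely many characters indexed by $Z_{A'}$; since $Z_{A'}$ is finite and $A\bar c$ is already integral, this reduces to choosing a sufficiently large non-negative integer in a fixed arithmetic progression, so it does not present a real obstacle.
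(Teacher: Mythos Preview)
Your proof is correct and follows essentially the same route as the paper: write $\Lambda=D\Z^2+v$, set $A'=AD$, apply Lemma~\ref{L:RAZA} to $A'$ to handle the lattice condition $(am,bn)\in A(\Lambda)$, and handle the range condition $(am,bn)\in A_N$ by the convex set $A_{a,b,N}=\{(t,s)\in\R_+^2:(at,bs)\in A_N\}$, which coincides with your description via $A^{-1}$.

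The one place you are more careful than the paper is the requirement $m_0,n_0\in\Z_+$. The paper simply takes $(m_0,n_0)=Av$ with $v\in\Z_+^2$, which is only guaranteed to lie in $\Z_+^2$ when $A$ has non-negative entries (as it does in the application in Step~2 of Proposition~\ref{P:Characteristic2}). Your adjustment---shifting $(A\bar c)_1,(A\bar c)_2$ by suitable non-negative integer multiples of a common denominator of the coordinates of $Z_{A'}$---is a clean way to secure $m_0,n_0\in\Z_+$ in general without changing the value of the exponential average. This is a genuine (if minor) improvement in rigor over the paper's version.
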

	\begin{proof}
	Suppose that   $\Lambda$ has the form  $\Lambda= D\cdot \Z^2+v$, where $D$ is a diagonal $2\times 2$ matrix with entries in $\N$,  and $v\in \Z_+^2$. Then 		
	$A(\Lambda)=A' \cdot \Z^2+v'$, where $A':=AD$ and $v':=Av=(m_0,n_0)$ for some $m_0,n_0\in\Z_+$.
	Using  this and	 \cref{L:RAZA} we get
		\begin{equation}\label{E:ALA}
	{\bf 1}_{A(\Lambda)}(m,n)=		{\bf 1}_{A'\cdot \Z^2}(m-m_0,n-n_0)=\E_{(q_1,q_2)\in Z_{A'}}\, e((m-m_0)q_1+(n-n_0)q_2).
	\end{equation}
	Finally, note that since 	$\Lambda \cap ([N]\times [N])=\Lambda \cap ([1,N]\times [1,N])$  and $A$ is injective, we have
	$A(\Lambda_N)=A(\Lambda)\cap A_N$. Combining this with \eqref{E:ALA} gives the asserted identity with  $A_{a,b,N} :=\{(t,s)\in \R_+^2\colon (at,bs)\in A_N\}$, which are convex subsets $\R_+^2$.
			\end{proof}
		We will also use the following elementary lemma, which will later allow us to verify some of the hypotheses of    \cref{P:aperiodicP0} while proving \cref{P:Characteristic2}
		below:
						\begin{lemma}\label{L:Sr}
			Let $R(m,n)$ be a rational polynomial that factors linearly and is not of the form $c\, m^kn^l S^r(m,n)$ for any $ c\in \Q_+,k,l\in \Z,$ rational polynomial $S$, and $r\geq 2$. Let also $a,b,a',b'\in \N$ be such that $a/b\neq a'/b'$. Then $\tilde{R}(m,n):=R(am,bn)/ R(a'm,b'n)$ is not of the form
			$c\, S^r(m,n)$ for any  $c\in \Q_+$, rational polynomial  $S$, and $r\geq 2$.
		\end{lemma}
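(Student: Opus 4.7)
The plan is to track how the unique factorization of $R$ (in the UFD $\Q[m,n]$) transforms under the substitutions $(m,n) \mapsto (am,bn)$ and $(m,n) \mapsto (a'm,b'n)$, and then show that no nontrivial $r$-th power structure can emerge from the ratio.

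First I would write $R$ in canonical form as $R(m,n) = c_0\, m^{k_m} n^{k_n} \prod_{j \in J} L_j(m,n)^{k_j}$, where $L_j = \alpha_j m + \beta_j n$ with $\alpha_j, \beta_j \in \N$ (so $L_j$ is not proportional to $m$ or $n$), the $L_j$'s are pairwise independent, and $k_j \in \Z \setminus \{0\}$. The hypothesis on $R$ translates to $J \neq \emptyset$ together with $\gcd\{k_j : j \in J\} = 1$, since otherwise $R$ would be expressible in the forbidden form $c' m^{k'} n^{l'} S^r$. Substituting, the factors $m^{k_m} n^{k_n}$ cancel exactly between the numerator and denominator of $\tilde{R}$, giving
\[
\tilde{R}(m,n) = c_1 \prod_{j \in J} M_j(m,n)^{k_j} \cdot M'_j(m,n)^{-k_j},
\]
where $M_j := a\alpha_j m + b\beta_j n$ and $M'_j := a'\alpha_j m + b'\beta_j n$ are linear forms with strictly positive integer coefficients, and $c_1 \in \Q_+$.

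The core step is to determine how the $2|J|$ linear forms $\{M_j, M'_j\}_{j \in J}$ group into equivalence classes modulo scalar. Using slopes $r(\alpha m + \beta n) := \alpha/\beta$, pairwise independence of the $L_j$ shows $M_i \not\sim M_j$ and $M'_i \not\sim M'_j$ for $i \neq j$, while $M_j \sim M'_i$ is equivalent to $r(L_i) = t \cdot r(L_j)$, where $t := (ab')/(a'b)$. Since $a/b \neq a'/b'$ we have $t \in \Q_+ \setminus \{1\}$. This defines a partial injective function $\pi$ on $J$ by $M_j \sim M'_{\pi(j)}$. A cycle of length $p$ in $\pi$ would yield $r(L_{j_1}) = t^p \, r(L_{j_1})$, forcing $t^p = 1$ and hence $t = 1$, contradicting $t \neq 1$. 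So $\pi$ is cycle-free, and the directed graph $j \to \pi(j)$ decomposes into a disjoint union of paths (plus isolated vertices).

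Finally, I would read off the exponents of the equivalence classes of linear forms in $\tilde{R}$. Along a path $j_1 \to j_2 \to \cdots \to j_p$, the singleton classes $\{M'_{j_1}\}$ and $\{M_{j_p}\}$ contribute exponents $-k_{j_1}$ and $k_{j_p}$, while each two-element class $\{M_{j_i}, M'_{j_{i+1}}\}$ contributes $k_{j_i} - k_{j_{i+1}}$; each isolated vertex $j$ contributes $\pm k_j$ via its two singleton classes. If $\tilde{R} = c\, S^r$ for some $r \geq 2$ and rational polynomial $S$, then unique factorization forces $r$ to divide every such exponent. For each path, $r \mid k_{j_1}$ combined with $r \mid k_{j_i} - k_{j_{i+1}}$ inductively gives $r \mid k_{j_i}$ for all $i$; isolated vertices give $r \mid k_j$ directly. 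Hence $r \mid \gcd\{k_j : j \in J\} = 1$, contradicting $r \geq 2$. The main subtlety is establishing the cycle-freeness of $\pi$: this is exactly what rules out a scenario in which the exponent differences $k_{j_i} - k_{j_{i+1}}$ along a cycle are all divisible by $r$ while the individual $k_{j_i}$ are not, and it relies critically on the positivity and rationality of $t$.
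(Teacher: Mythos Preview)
Your proof is correct and complete. The key ingredients---separating out the $m,n$ factors, identifying the proportionality relation among the $M_j,M'_j$ via the slope map, and the cycle-freeness of $\pi$ coming from $t\in\Q_+\setminus\{1\}$---are all handled cleanly, and the path induction at the end is exactly what is needed to derive $r\mid\gcd\{k_j\}=1$.

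The paper takes a somewhat different route. It first dehomogenizes, writing $R(m,n)=n^d R(m/n,1)$ to reduce to a one-variable statement about $R(n)=\alpha\, n^{k_0}\prod_j (n+\alpha_j)^{k_j}$ and $\tilde R(n)=R(an)/R(n)$ with $a>1$. Then, instead of analyzing the full matching structure, it picks the \emph{smallest} root $\alpha_1$ among those with $r\nmid k_1$ and observes that the factor $(an+\alpha_1)$, which has root $-\alpha_1/a<-\alpha_1$, can only combine with a factor $(n+\alpha_i)$ from the denominator whose exponent $k_i$ is already divisible by $r$; hence the resulting exponent at that root is $\not\equiv 0\pmod r$. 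In your language, this minimality argument locates the \emph{start} of one of your paths directly, without ever naming the graph. Your approach is more structural and makes explicit why the obstruction works globally (cycle-freeness is exactly the statement that multiplication by $t$ has no finite orbits on positive rationals), whereas the paper's extremal argument is shorter and avoids the bookkeeping of equivalence classes. Both exploit the same underlying fact, namely that $t\ne 1$ with $t>0$ forces the orbit structure to be linearly ordered.
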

		\begin{proof}
		Let $d$ be the degree of homogeneity of $R$. 	After writing $R(m,n)=n^dR(m/n)$  where $R(n):=R(n,1)$, we get that 			
			it suffices to show the following:
				Let $R(n)$ be a rational polynomial that factors as a product of linear terms with non-negative coefficients and  is not of the form $ c\, n^k\, S^r(n)$ for any
				$c\in \Q_+$, $k\in \Z$, rational polynomial $S$, and $r\geq 2$.
				%, or of the form $c\, n^k$ for some $c\in \Q_+$ and $k\in \Z_+$.
				 Let also $a\in \Q$ with $a> 1$. Then $\tilde{R}(n):=R(an)/ R(n)$ is not of the form
			$c\,S^r$ for any  $c\in \Q_+$,  rational polynomial $S$, and $r\geq 2$.
			
			To see this, let $r\geq 2$ be an integer. Our assumptions imply   that $R$ has the form
			$R(n)= \alpha\, n^{k_0} \prod_{j=1}^\ell (n+\alpha_j)^{k_j}$, where
			$k_0\in \Z$, $ \alpha, \alpha_1,\ldots, \alpha_\ell$ are distinct positive rationals, and $k_1,\ldots, k_\ell$ are non-zero integers not all of them multiples of $r$. We can assume that $\alpha_1=\min\{\alpha_j\colon r\nmid k_j, j\in [\ell]\}$. Since $\tilde{R}(n):=R(an)/ R(n)$ and $a>1$, we get that  $(an+\alpha_1)^{k_1}$ appears on the factorization of $\tilde{R}(n)$  and since $r\nmid k_1$, we deduce that $\tilde{R}(n)$  is not of the form
			$c\, S^r(n)$ for any   $c\in \Q_+$, rational polynomial $S$,  and $r\geq 2$, as required.
		\end{proof}
We are now ready to prove our main result regarding  characteristic factors.
	\begin{proposition}\label{P:Characteristic2}
					Let $(X,\CX,\mu,T_{1,n}, T_{2,n})$ be a  finitely generated multiplicative action
					and $F_1,F_2\in L^\infty(\mu)$ such that  $F_1\in X_{1,a}$ or $F_2\in X_{2,a}$.
			Let		 $R_1$ be  a rational polynomial that factors linearly, $L_1,L_2$ be    independent linear forms,
			 and
			$$
			R_2(m,n)= c\,  L_1(m,n)^k\cdot L_2(m,n)^l,
			$$
			where  $c\in \Q_+,k,l\in \Z$. Suppose  that  $R_1$ is not of the form $  c'L_1^{k'}L_2^{l'} R^r$ for any  $c'\in \Q_+,k',l'\in \Z$, rational polynomial $R$, and $r\geq 2$,  and    $R_2$ is not of the form $c' R^r$ for any   $c'\in \Q_+$,  rational polynomial $R$, and $r\geq 2$. Then for any $2$-dimensional grid $\Lambda$ we have
			\begin{equation}\label{E:Pzero}
			\lim_{N\to\infty}  \E_{m,n\in  [N]}\, {\bf 1}_\Lambda(m,n)\cdot T_{1,R_1(m,n)}F_1\cdot T_{2,R_2(m,n)}F_2=0
		\end{equation}
		in $L^2(\mu)$.
			Moreover, if we only assume that the action $(X,\CX,\mu,T_{2,n})$ is finitely generated
			(and  the action $(X,\CX,\mu,T_{1,n})$ is arbitrary) and $F_1\in X_a$, then  \eqref{E:Pzero} still holds.\footnote{It is possible to show  that if $F_2\in X_a$, then  \eqref{E:Pzero} holds, but the argument is much more cumbersome in this case, and since it is not needed in subsequent applications we will skip it.}
	\end{proposition}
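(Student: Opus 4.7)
The plan is to apply the two-dimensional Daboussi--K\'atai orthogonality criterion (Lemma~\ref{L:Katai}) and reduce \eqref{E:Pzero} to an instance of the aperiodic vanishing in Proposition~\ref{P:aperiodicP0}. Since $L_1,L_2$ are independent over $\Q$, I first apply the $\Q$-linear substitution taking $(L_1,L_2)$ to $(m,n)$; this turns $R_2$ into $c\,m^kn^l$ with $\gcd(k,l)=1$ (enforced by the hypothesis that $R_2$ is not a perfect $r$-th power), preserves the structural hypothesis on $R_1$ (namely, $R_1$ is not of the form $c'\,m^{k'}n^{l'}R^r$ for any $r\ge 2$), and turns $\Lambda$ and the averaging box into another grid and a convex region of comparable size, to be later expanded as finite sums of exponential characters via Corollary~\ref{C:RAZA}. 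Writing any bounded function as a linear combination of unit-modulus functions, I may further assume $|F_2|=1$ in the case $F_1\in X_{1,a}$ and $|F_1|=1$ in the case $F_2\in X_{2,a}$.

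Using finite generation of both actions, I choose $P_0\subset\P$ with $\sum_{p\in P_0}\tfrac1p=\infty$ such that $T_{1,p}\equiv S_1$ and $T_{2,p}\equiv S_2$ for every $p\in P_0$, and apply Lemma~\ref{L:Katai} to
\[
A_N(m,n):=\mathbf{1}_\Lambda(m,n)\,T_{1,R_1(m,n)}F_1\cdot T_{2,cm^kn^l}F_2.
\]
It suffices to show that for every $p,q,p',q'\in P_0$ with $p/q\ne p'/q'$ and $p'<p,\,q'<q$, the average $\E_{m,n}\langle A_N(pm,qn),A_N(p'm,q'n)\rangle$ vanishes as $N\to\infty$. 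In the case $F_1\in X_{1,a}$ (with $|F_2|=1$), the $F_2$-factors $T_{2,cp^kq^lm^kn^l}F_2\cdot\overline{T_{2,c(p')^k(q')^lm^kn^l}F_2}$ collapse to the constant $1$: indeed $T_{2,p}\equiv S_2$ on $P_0$ forces both iterates to equal $S_2^{k+l}T_{2,cm^kn^l}F_2$, and the product of a unit-modulus function with its conjugate is $1$. What remains of the inner product is $\mathbf{1}_\Lambda(pm,qn)\mathbf{1}_\Lambda(p'm,q'n)\int T_{1,\tilde R(m,n)}F_1\cdot\overline{F_1}\,d\mu$ with $\tilde R(m,n):=R_1(pm,qn)/R_1(p'm,q'n)$, and Lemma~\ref{L:Sr} in the normalized coordinates guarantees that $\tilde R$ factors linearly and is not of the form $c''\,S^r$ for any $r\ge 2$. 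Expanding the two indicators via Corollary~\ref{C:RAZA} into a finite sum of weights of the form $\mathbf{1}_{K_N}(m,n)\,e(m\alpha+n\beta)$ and using Cauchy--Schwarz against $\overline{F_1}$, the required vanishing follows from Proposition~\ref{P:aperiodicP0} applied to $F_1\in X_{1,a}$ with iterate $T_{1,\tilde R(m,n)}F_1$ (taking $t=0$ there).

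The case $F_2\in X_{2,a}$ is argued along parallel but asymmetric lines: with $|F_1|=1$, apply $T_{2,R_2(p'm,q'n)}^{-1}$ inside the integral so that the $F_2$-part collapses to $T_{2,(p/p')^k(q/q')^l}F_2\cdot\overline{F_2}$, whose $T_2$-exponent is a rational \emph{constant} in $m,n$; the entire $m,n$-dependence of the inner product is then absorbed into a $1$-bounded weight $W(x;m,n)$ built from the $T_{1,R_1}F_1$ factors. Expanding the resulting weighted average via the spectral theorem for $T_2$ and combining the coprimality $\gcd(k,l)=1$ with the aperiodicity of $f$ on the support of $\sigma_{F_2}$ (which forces one of $\E_m f^k(m)$ or $\E_n f^l(n)$ to vanish in the limit), followed by a further appeal to Proposition~\ref{P:aperiodicP0}, yields the claim. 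The main obstacle in this second case will be the absence of commutativity of $T_{1,n}$ and $T_{2,n}$: after inverting the $T_2$-shift the $T_1F_1$ factor cannot be freely disentangled from the $T_2F_2$ correlation, so the weight $W$ must be tracked as an $L^\infty$-bounded function rather than pulled out as a scalar. Finally, the ``Moreover'' clause reduces to the Case A argument, which only invokes the finite generation of $T_{2,n}$ to cancel the $F_2$-iterates and imposes no hypothesis on $T_{1,n}$ beyond being an arbitrary multiplicative action.
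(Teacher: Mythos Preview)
Your treatment of the case $F_1\in X_{1,a}$ (including the ``Moreover'' clause) matches the paper's Steps~1--3 essentially verbatim: the linear change of variables reducing $R_2$ to $c\,m^kn^l$, the choice of $P_0$ on which $T_{2,p}$ is constant, the Daboussi--K\'atai criterion, the cancellation of the $F_2$-iterates, Lemma~\ref{L:Sr}, and finally Proposition~\ref{P:aperiodicP0}. (You additionally fix $T_{1,p}$ on $P_0$, which is harmless but not needed.)

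Your argument for the case $F_2\in X_{2,a}$, however, does not work, and the paper takes a genuinely different route here. Once you apply Lemma~\ref{L:Katai} with $P_0$ chosen so that $T_{2,p}$ is constant, the two $T_2$-iterates $T_{2,c(pm)^k(qn)^l}F_2$ and $T_{2,c(p'm)^k(q'n)^l}F_2$ are \emph{equal} for all $m,n$ (since $T_{2,p/p'}=T_{2,q/q'}=\text{id}$), so the $F_2$-contribution to the inner product collapses to $|T_{2,\ldots}F_2|^2$, which carries no information about the aperiodicity of $F_2$. Your subsequent spectral argument (``one of $\E_m f^k(m)$ or $\E_n f^l(n)$ vanishes'') would apply to $\E_{m,n}T_{2,cm^kn^l}F_2$ alone, but here there is still the $m,n$-dependent $T_{1,R_1}F_1$-factor, and you cannot separate it out: the spectral identity \eqref{E:spectralid} applies only to linear combinations of iterates of a single function, not to such products. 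In short, the K\'atai route destroys the very structure you need on the $F_2$-side.

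The paper's Step~4 avoids this trap entirely: having established the case $F_1\in X_{1,a}$, it decomposes $F_1=F_{1,p}+F_{1,a}$ and reduces to $F_1\in X_{1,p}$. Then it uses the concentration estimate of part~\eqref{I:concseclinfinA} of Proposition~\ref{P:ConcRfg} to replace $T_{1,R_1(Q_Km+a,Q_Kn+b)}F_{1,p}$ by constants $A_{1,K,a,b}$ (in an averaged sense over $(a,b)\in S_{K;L_1,\ldots,L_s}$). This freezes the $F_1$-factor and leaves only $\E_{m,n}T_{2,R_2(Q_Km+a,Q_Kn+b)}F_2$, to which Proposition~\ref{P:aperiodicP0} applies directly. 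This is precisely the step where the structural hypothesis ``$R_2$ is not of the form $c'R^r$'' is used.
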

	\begin{remarks}
$\bullet$	For general multiplicative actions,  $F_1\in X_a$ does not imply that \eqref{E:Pzero} holds. Indeed, take $R_1(m,n):=(m+n)/n$, $R_2(m,n):=m/n$,  $T_n\colon \T\to\T$ defined by  $T_nx:=nx\pmod{1}$, $m:=m_\T$,  $T_{1,n}=T_{2,n}=T_n$,  and $F_1(x)=\overline{F_2}(x):=e(x)$, which have mean value $0$. One can easily verify that $X_p$ is trivial for this system, hence $F_1,F_2\in X_a$. However, for $\Lambda=\Z^2$ the averages in \eqref{E:Pzero} are equal to  $1$ for every $N\in \N$.
		
$\bullet$	In the proof of part~\eqref{I:MainB3} of \cref{T:MainB} below we will need a variant of
\eqref{E:Pzero} where $m,n$ are also restricted on the set $S_{\delta,R_1}$ defined in \eqref{E:SdeltaR}.
This variant can be obtained  as follows: Using an approximation argument we  can replace the indicator function ${\bf 1}_{S_{\delta,R_1}}(m,n)$ by a linear combination of sequences of the form $(R_1(m,n))^{ikt}$.  After performing Steps 1 and 2 in the argument below we arrive at a situation where \eqref{P:aperiodicP0} applies and gives the necessary vanishing property in Step 3; the term $(R_2(m,n))^{it}$ was introduced in \eqref{P:aperiodicP0} exactly for this purpose.	
		\end{remarks}
	\begin{proof}
		We first show that if $F_1\in X_{1,a}$ then \eqref{E:Pzero} holds. This part of the argument works equally well for arbitrary actions $(X,\CX,\mu,T_{1,n})$, so we only have to assume that the action $(X,\CX,\mu,T_{2,n})$ is finitely generated.   We carry out the proof in several steps.
		
		\smallskip
{\bf Step 1} (Reduction to $R_2(m,n)=m^k\, n^l$). We first  reduce to a statement where instead of $R_2(m,n)$ we have a monomial  of the form $m^k\, n^l$ for some $k,l\in \Z$. This preparatory step is necessary  to get a simplification when we use \cref{L:Katai} later. We can assume that  $R_1=c_1\, \prod_{j=1}^sL^{k_j}_j$ for some $c_1\in \Q_+$, $s\geq 3$, $k_1,k_2\in \Z$, non-zero $k_3,\ldots, k_s\in \Z$, and non-trivial  linear forms $L_j(m,n)=a_jm+b_jn$, where $a_j,b_j\in \Z_+$,  not both zero, $j\in [s]$,   such that $L_1,L_2$ are independent and  none of the forms $L_3,\ldots, L_s$ is a rational  multiple of $L_1$ or $L_2$. We will perform the change of variables
$\tilde{m}=a_1m+b_1n$, $\tilde{n}=a_2m+b_2n$. More precisely, we
define the matrix  $
A:=\begin{pmatrix}  a_1\, b_1 \\ a_2\, b_2
	\end{pmatrix}
$ and the corresponding linear transformation $L (m,n)^\bot:=A(m,n)^\bot$ ($v^\bot$ denotes the transpose of a vector $v$), and note that  since the linear forms $L_1,L_2$ are independent, $A$ is invertible
and $(m,n)^\bot=A^{-1}\cdot (\tilde{m},\tilde{n})^\bot$.  Let $d:=\det(A)\neq 0$.
We have
	\begin{equation}\label{E:Pzerosum}
 \sum_{m,n\in [N]}\, {\bf 1}_{\Lambda}(m,n)\cdot  T_{1,R_1(m,n)}F_1\cdot T_{2,R_2(m,n)}F_2=
\sum_{\tilde{m},\tilde{n}\in L(\Lambda_N)}\, T_{1,\tilde{R}_1(\tilde{m},\tilde{n})}\tilde{F}_1\cdot T_{2,\tilde{R}_2(\tilde{m},\tilde{n})}\tilde{F}_2,
\end{equation}	
	where
	$$
 \tilde{F}_1:=T_{1,c_1\, d^{-(k_1+\cdots+k_s)}}F_1,\quad
\tilde{F}_2:=T_{2,c}F_2, \quad \Lambda_N:=\Lambda\cap ([N]\times [N]),
	$$
$$
\tilde{R}_1(\tilde{m},\tilde{n}):=\prod_{j=1}^s{\tilde{L}^{k_j}}_j(\tilde{m},\tilde{n}), \, \,
\tilde{R}_2(\tilde{m},\tilde{n}):=\tilde{m}^k\tilde{n}^l,
\, \,
\tilde{L}_j(\tilde{m},\tilde{n}):=d\, (a_j,b_j)A^{-1}(\tilde{m},\tilde{n})^\bot, \, j\in [s].
$$
Note that $\tilde{L}_1,\ldots, \tilde{L}_s$ have integer coefficients (this is  why we multiplied by $d$) and our working assumptions about the rational polynomials $R_1,R_2$ carry over to the rational polynomials $\tilde{R}_1,\tilde{R}_2$.
Let $C\geq 1$ be large enough so that  $L(\Lambda_N)\subset [CN]\times [CN]$ for every $N\in\N$.
%Since  $\lim_{N\to\infty} | %L(\Lambda_N)|/(CN)^2>0$,
We deduce from \eqref{E:Pzerosum} that in order to establish \eqref{E:Pzero}
it suffices to show that
%$$
%\lim_{N\to\infty} \E_{\tilde{m},\tilde{n}\in [CN]}\, %{\bf 1}_{ L([N]\times [N])}(\tilde{m},\tilde{n})\cdot  %T_{1,\tilde{R}_1(\tilde{m},\tilde{n})}F_1\cdot %T_{2,\tilde{R}_2(\tilde{m},\tilde{n})}F_2=0
%$$
$$
\lim_{N\to\infty} \E_{m,n\in [CN]}\, {\bf 1}_{ L(\Lambda_N)}(m,n)\cdot  T_{1,\tilde{R}_1(m,n)}\tilde{F}_1\cdot T_{2,\tilde{R}_2(m,n)}\tilde{F}_2=0
\quad \text{in } L^2(\mu).
$$

\smallskip
		
	{\bf Step 2} (Applying the orthogonality criterion).
		Since any function in $L^\infty(\mu)$ is a linear combination of two functions in $L^\infty(\mu)$ with norm pointwise equal to $1$, 	we can assume that $|F_2(x)|=1$ for all $x\in X$.
		Since the action $(X,\CX,\mu,T_{2,n})$ is finitely generated, there exist  a subset $P_0$ of the primes with $\sum_{p\in P_0}\frac{1}{p}=\infty$ and $p_0\in P_0$, such that
		\begin{equation}\label{E:P0}
			T_{2,p}=T_{2,p_0} \quad \text{for all } p\in P_0.
		\end{equation}
		Using Lemma~\ref{L:Katai}, it suffices to show that if  $p,q,p',q'\in P_0$  and $p/q\neq p'/q'$, $p'<p, q'<q$,  then
(note that the averaged terms are zero unless $m\in [CN/p], n\in [CN/q]$)
		\begin{multline*}
		\lim_{N\to\infty}  \E_{m,n\in[CN]} \,
		{\bf 1}_{L(\Lambda_N)}(pm,qn)\cdot {\bf 1}_{ L(\Lambda_N)}(p'm,q'n)
		\\
		\int  T_{1,R_1(pm,qn)}F_1\cdot  T_{1,R_1(p'm,q'n)}\overline{F_1}\cdot  T_{2,p^kq^lm^kn^l}F_2\cdot T_{2, (p')^{k} (q')^lm^kn^l} \overline{F_2}
		\, d\mu=0.
		\end{multline*}
		Using \eqref{E:P0} and the fact that $T_n$ is a multiplicative action, we get that $T_{2,p^kq^lmn}F_2=T_{2, (p')^k(q')^lmn}F_2$ for all $p,q,p',q'\in P_0$, $m,n\in\N$. Combining this
		with our assumption 	$|F_2(x)|=1$ for all $x\in X$, we deduce that 	
		$$
		T_{2,p^kq^lm^kn^l}F_2\cdot T_{2, (p')^k(q')^lm^kn^l} \overline{F_2}=1\quad \text{for all } p,q,p',q'\in P_0, \, m,n\in \N.
		$$

		Thus, it suffices to show that 	if  $p,q,p',q'\in \P_0$   satisfy $p/q\neq p'/q'$, then
		$$
		\lim_{N\to\infty}  \E_{m,n\in[CN]} \, {\bf 1}_{S_{A,N,p,q,p',q'}}(m,n)
		 \cdot \int  T_{1,R_1(pm,qn)}F_1\cdot  T_{1,R_1(p'm,q'n)}\overline{F_1},
		\, d\mu=0,
		$$
		where 	
		$$
		S_{A,N,p,q,p',q'}:=\{m,n\in \N \colon (pm,qn) \in L(\Lambda_N) \text{ and }  (p'm,q'n) \in  L(\Lambda_N)  \}.
		$$
	Equivalently, it suffices to show that
		$$
		\lim_{N\to\infty}  \E_{m,n\in[CN]} \, {\bf 1}_{S_{A,N,p,q,p',q'}}(m,n)
		\cdot\int  T_{1,R_1(pm,qn)/R_1(p'm,q'n)}F_1\cdot  \overline{F_1}
		\, d\mu=0.
		$$
		Using \cref{C:RAZA}  and following  the notation used there, we get
\begin{multline*}
		 {\bf 1}_{S_{A,N,p,q,p',q'}}(m,n)={\bf 1}_{A_{p,q,p',q',N}}(m,n)\cdot
		 	\\ \E_{(q_1,q_2)\in Z_A}\, e((pm-m_0)q_1+(qn-n_0)q_2) \cdot \E_{(q_1,q_2)\in Z_A}\, e((p'm-m_0)q_1+(q'n-n_0)q_2),
		\end{multline*}
		 for some $m_0,n_0\in \Z_+$ and convex subsets   $A_{p,q,p',q'N}$ of  $\R_+^2$.  So it  suffices to show that if $K_N$, $N\in \N$, are convex subsets of $\R_+^2$ and $\alpha,\beta\in \Q$, then
		 \begin{equation}\label{E:neededR}
		 \lim_{N\to\infty}  \E_{m,n\in[CN]} \, {\bf 1}_{K_N}(m,n)\cdot e(m\alpha+n\beta)
		 \cdot\int  T_{1,R_1(pm,qn)/R_1(p'm,q'n)}F_1\cdot  \overline{F_1}
		 \, d\mu=0.
		 \end{equation}

		{\bf Step 3} (End of proof when $F_1\in X_{1,a}$).
		Since $p,q,q',q'\in P_0$ satisfy $p/q\neq p'/q'$  and $R_1(m,n)$ is not of the form $c\, m^kn^lR^r(m,n)$ for any    $c\in \Q_+, k,l\in \Z$, rational polynomial $R$, and  $r\geq 2$, we deduce from \cref{L:Sr} that  $R_1(pm,qn)/R_1(p'm,q'n)$ is not of the form $ c\, R^r(m,n)$ for any $c\in \Q_+$, rational polynomial $R$, and integer $r\geq 2$.
		Using this and since $F_1\in X_{1,a}$, we get that \eqref{E:neededR}  follows from \cref{P:aperiodicP0}, which holds
		for general multiplicative actions (not necessarily finitely generated).

\smallskip

{\bf Step 4} (End of proof when $F_2\in X_{2,a}$).	
	Finally, we show that if  $F_2\in X_{2,a}$, then \eqref{E:Pzero} holds. As we just showed, \eqref{E:Pzero}
	holds if $F_1\in X_{1,a}$. Since $X_{1,p}=X_{1,a}^\bot$, it suffices to show that \eqref{E:Pzero} holds if
	  $F_1\in X_{1,p}$.
	  In this case, we use the concentration estimates of  part~\eqref{I:concseclinfinA} of \cref{P:ConcRfg} below
	  and argue as in \cref{SS:A1}. We deduce that    it suffices to show that for every $K\in \N$ we have
	$$
	\lim_{N\to\infty}\E_{(a,b)\in S_{K;L_1,\ldots, L_\ell}} A_{1,K,a,b}\cdot {\bf 1}_\Lambda(a,b)\cdot
	\E_{m,n\in[N/Q_K]}\,T_{2,R_2(Q_Km+a,Q_Kn+b)}F_2=0
	$$
in $L^2(\mu)$, 	where (it follows from \cref{P:convergelinear} that  the limit below exists)
	$$
	 A_{1,K,a,b}:=\lim_{N\to\infty} \E_{n_1,n_2\in [N]}\, T_{1, (Q_Kn_1+L_1(a,b))^k\cdot (Q_Kn_2+L_2(a,b))^l}F_1,
	$$
	$Q_K$  and  $S_{K; L_1,\ldots, L_\ell}$  are as in \eqref{E:defQKSK} and  \eqref{E:defSKL}, respectively, and we used that for all sufficiently  large  $K\in \N$ we have
	 ${\bf 1}_{\Lambda}(Q_Km+a,Q_Kn+b)= {\bf 1}_{\Lambda}(a,b)$ for all $m,n\in\N$.
	So it suffices to show that for every $Q\in \N$ and $a,b\in \Z_+$ we have
		$$
	\lim_{N\to\infty}
	\E_{m,n\in[N]}\, T_{2,R_2(Qm+a,Qn+b)}F_2=0.
	$$	
	 Since $R_2$ is not of the form $ c\, R^r(m,n)$ for any
	 $c\in \Q_+$, rational polynomial $R$,
	 and  $r\geq 2$,
	 this follows from \cref{P:aperiodicP0}. 	This completes the proof.
	\end{proof}

\subsection{Concentration results-Finitely generated case}
We give some concentration results tailored to our needs; part~\eqref{I:concseclinfinA} is used for convergence results  (and was already used in the proof of Step~4 of \cref{P:Characteristic2}) and part~\eqref{I:concseclinfinB} is used for recurrence results.
\begin{proposition}\label{P:ConcRfg}
	Let $(X,\CX, \mu,T_n)$ be a finitely generated multiplicative action, $F \in X_p$,
	$L_1,\ldots, L_\ell\in \Z[m,n]$ be  non-trivial linear forms with non-negative coefficients, $c\in \Q_+,k_1,\ldots, k_\ell\in \Z$, and
		$
		R(m,n):=c\, \prod_{j=1}^\ell L_j^{k_j}(m,n).
	$
	For $Q,N\in \N$ and $a,b\in \Z$, let
	\begin{equation}\label{E:AQNrj}
		A_{Q,N,a,b}(F):= \E_{n_1,\ldots, n_\ell\in [N]}\,T_{ c\, \prod_{j=1}^\ell (Qn_j+L_j(a,b))^{k_j}} F.
	\end{equation}
	Then the following properties hold:
	\begin{enumerate}
		\item \label{I:concseclinfinA}
		The limit
		$$
		A_{Q,a, b}(F):= \lim_{N\to\infty}A_{Q,N,a,b}(F)
		$$
		%%\end{equation}
		exists for every $Q\in \N$, $a,b\in \Z_+,$  and
		\begin{equation}\label{E:concsevlin}		
			\lim_{K\to\infty} 	\limsup_{N\to\infty} \max_{(a,b)\in S_{K,L_1,\ldots, L_\ell}}
			\E_{m,n\in[N]} \norm{  T_{R(Q_Km+a,Q_Kn+b)}F-  A_{Q_K,a,b}(F) }_{L^2(\mu)}=0,
			\end{equation}
		where  $Q_K$  and  $S_{K; L_1,\ldots, L_\ell}$  are as in \eqref{E:defQKSK} and  \eqref{E:defSKL}, respectively.
		
		\item  \label{I:concseclinfinB} If $a,b\in \Z$ are such that $R(a,b)>0$, we have
		\begin{equation}\label{E:concsevlinQ}
			\lim_{K\to\infty} 	\limsup_{N\to\infty} \max_{Q\in \Phi_K}
			\E_{m,n\in [N]} \norm{  T_{R(Qm+a,Qn+b)}F-  T_{R(a,b)}F }_{L^2(\mu)}=0.
		\end{equation}
		Moreover, if $R_1,R_2$ are rational polynomials that factor linearly, and  $a,b,a',b'\in \Z$ are such that $R_1(a,b)\cdot R_2(a',b')>0$, then   for every $k\in \Z$ we have
		\begin{multline}\label{E:concsevlinQ2}
			\lim_{K\to\infty} 	\limsup_{N\to\infty} \max_{Q\in \Phi_K} 	\E_{m,n\in [N]}\\
			\norm{  T_{R_1(Qm+a,Qn+b)\cdot R_2(Q^2m+kQ+a',Q^2n+b')}F-  T_{R_1(a,b)\cdot R_2(a',b')}F }_{L^2(\mu)}=0.
		\end{multline}
	\end{enumerate}
\end{proposition}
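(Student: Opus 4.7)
The plan is to pass from operator-level concentration to scalar concentration of individual pretentious multiplicative functions via spectral theory, then apply the concentration estimates of \cref{T:concestlinear} and \cref{C:concentrationfg} factor-by-factor and combine them using a telescoping inequality together with \cref{L:lN}.

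First I would apply the spectral identity \eqref{E:spectralid} to reduce each of \eqref{E:concsevlin}, \eqref{E:concsevlinQ} and \eqref{E:concsevlinQ2} to a pointwise statement inside an $L^2(\sigma_F(f))$ integral, then invoke Fatou's lemma multiple times to commute the integrals with the iterated $\limsup$'s. Since $F\in X_p$ and the action is finitely generated, \cref{L:finspectral} gives that $\sigma_F$ is supported on finitely generated pretentious completely multiplicative functions with values on $\S^1$; by \cref{T:DH} and \cite[Lemma~B.3]{Cha24} we may assume $f\sim\chi$ for some Dirichlet character $\chi$. Complete multiplicativity of $f$ together with the identity $L_j(Qm+a,Qn+b)=QL_j(m,n)+L_j(a,b)$ gives
\begin{equation*}
f(R(Qm+a,Qn+b))=f(c)\prod_{j=1}^\ell g_j(QL_j(m,n)+L_j(a,b)),
\end{equation*}
where $g_j:=f^{k_j}$ is again a finitely generated pretentious completely multiplicative function, with $g_j\sim\chi^{k_j}$ by the triangle inequality \eqref{E:triangle}. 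Using the bound $|\prod x_j-\prod y_j|\le\sum_j|x_j-y_j|$ valid for $|x_j|,|y_j|\le 1$, it suffices to establish concentration of each factor $g_j(QL_j(m,n)+L_j(a,b))$.

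For part~\eqref{I:concseclinfinA}, since $(a,b)\in S_{K;L_1,\ldots,L_\ell}$ forces $L_j(a,b)\in S_K$, I apply the uniform estimate \eqref{E:fconca'} to each $g_j$ with modulus $Q_K$, obtaining 1D concentration of $g_j(Q_K\tilde n+L_j(a,b))$ around $\E_{\tilde n\in[N]}g_j(Q_K\tilde n+L_j(a,b))$ uniformly over $L_j(a,b)\in S_K$. Applying \cref{L:lN} with $v(\tilde n):=g_j(Q_K\tilde n+L_j(a,b))$ and $\tilde n=L_j(m,n)$ transfers this to 2D concentration over $(m,n)\in[N]^2$. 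The product of the $\ell$ one-dimensional averages equals, by Fubini, $\E_{n_1,\ldots,n_\ell\in[N]}\,f(c\prod_j(Q_K n_j+L_j(a,b))^{k_j})$, which is precisely the spectral transform of $A_{Q_K,N,a,b}(F)$; unfolding the telescoping bound yields \eqref{E:concsevlin}. Existence of $A_{Q,a,b}(F):=\lim_N A_{Q,N,a,b}(F)$ follows by iterated application of \cref{P:convergelinear} to the auxiliary finitely generated multiplicative action $S^{(j)}_n:=T_{n^{k_j}}$: each inner average $\E_{n_j\in[N]}S^{(j)}_{Qn_j+L_j(a,b)}F$ converges in $L^2(\mu)$, and we iterate over $j=\ell,\ell-1,\ldots,1$.

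For part~\eqref{I:concseclinfinB} the argument is parallel, but now each $L_j(a,b)$ is a fixed nonzero integer (forced by $R(a,b)>0$), and I invoke \eqref{E:fconcfingenr} of \cref{C:concentrationfg} to obtain uniform concentration over $Q\in\Phi_K$ of $g_j(Q\tilde n+L_j(a,b))$ around $\epsilon_{L_j(a,b),g_j}\cdot g_j(|L_j(a,b)|)$; \cref{L:lN} and the telescoping bound give 2D concentration of the full product around $f(c)\prod_j\epsilon_{L_j(a,b),g_j}\cdot g_j(|L_j(a,b)|)$. The main obstacle is verifying that this product equals $f(R(a,b))$. Extending $f$ to $\Q_+^*$ multiplicatively and using $R(a,b)>0$ to write $R(a,b)=c\prod_j|L_j(a,b)|^{k_j}$, we have $f(R(a,b))=f(c)\prod_j f(|L_j(a,b)|)^{k_j}$, so it suffices to check $\prod_j\epsilon_{L_j(a,b),g_j}=1$. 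Since $\chi^{k_j}(q-1)=\chi(q-1)^{k_j}$ and $\chi(q-1)\in\{\pm 1\}$, one has $\epsilon_{L_j(a,b),g_j}=-1$ only when $L_j(a,b)<0$, $\chi(q-1)=-1$, and $k_j$ is odd; the total sign is then $(-1)^{\#\{j\,:\,L_j(a,b)<0,\,k_j\text{ odd}\}}$, which has the same parity as $\sum_{j\,:\,L_j(a,b)<0}k_j$, which is even precisely because the sign of $\prod_j L_j(a,b)^{k_j}$ must be $+1$ (as $R(a,b)>0$). The final assertion \eqref{E:concsevlinQ2} follows by the same argument, using \eqref{E:fconcQ2} in place of \eqref{E:fconcfingenr} for the factor involving $Q^2m+kQ+a'$.
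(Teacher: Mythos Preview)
Your proof is correct and follows essentially the same route as the paper. The only cosmetic difference is that for part~\eqref{I:concseclinfinA} the paper stays at the operator level by invoking \eqref{E:fconcfingen''} of \cref{P:Concentrationfg} (which itself encapsulates the spectral reduction), whereas you inline that reduction and work directly with \eqref{E:fconca'}; for part~\eqref{I:concseclinfinB} the paper applies \cref{C:concentrationfg} to $f$ itself and then raises to the $k_j$-th power, while you apply it to $g_j=f^{k_j}$, but the sign bookkeeping and telescoping are identical.
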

\begin{proof}
	We prove \eqref{I:concseclinfinA}.
	The existence of the  limit	$\lim_{N\to\infty}A_{Q,N,a,b}(F)$
	follows immediately from \cref{P:convergelinear}
	and we also get from this result that  the next two limits exist in $L^2(\mu)$  and we have the identity
	\begin{equation}\label{E:iterated}
		\lim_{N\to\infty}A_{Q,N,a,b}(F)=\lim_{N_1\to\infty}\E_{n_1\in [N_1]}\cdots \lim_{N_\ell\to\infty}\E_{n_\ell\in [N_\ell]}\,T_{ c\, \prod_{j=1}^\ell (Qn_j+L_j(a,b))^{k_j}} F.
	\end{equation}

	Next, we establish \eqref{E:concsevlin}.	Suppose first that $\ell=1$. Using \eqref{E:fconcfingen''} of \cref{P:Concentrationfg} for the multiplicative action defined by $S_n:=T_{n^{k_1}}$, $n\in\N$, and $T_cF$ instead of $F$,  we get
	$$
	\lim_{K\to\infty} 	\limsup_{N\to\infty} \max_{(a,b)\in S_{K,L_1}} \\
	\E_{n\in[N]} \norm{  T_{c\, (Q_Kn+L_1(a,b))^{k_1}}F-  A_{Q_K,a,b}(F) }_{L^2(\mu)}=0.
	$$
	Using \cref{L:lN} for $v_{K,a,b}(n):= T_{c\, (Q_Kn+L_1(a,b))^{k_1}}F$, $n\in\N$,
	we deduce that
	$$
	\lim_{K\to\infty} 	\limsup_{N\to\infty} \max_{(a,b)\in S_{K,L_1}} \\
	\E_{m,n\in[N]} \norm{  T_{c\, (Q_KL_1(m,n)+L_1(a,b))^{k_1}}F-  A_{Q_K,a,b}(F) }_{L^2(\mu)}=0.
	$$	
	Since $Q_KL_1(m,n)+L_1(a,b)=L_1(Q_Km+a,Q_Kn+b)$ we get that \eqref{I:concseclinfinA} holds for $\ell=1$. The general case follows  from the $\ell=1$ case, using a telescoping argument, and  \eqref{E:iterated}.
	
	We prove \eqref{I:concseclinfinB}.
	Using \eqref{E:spectralid}, \cref{L:finspectral},  and Fatou's lemma twice, it suffices to show that for any pretentious finitely generated  multiplicative function $f\colon \N\to \S^1$ we have
	\begin{equation} \label{E:concsevlinQ'}
		\lim_{K\to\infty} 	\limsup_{N\to\infty} \max_{Q\in \Phi_K}
		\E_{m,n\in [N]}  |f(R(Qm+a,Qn+b))- f(R(a,b)) |=0.
	\end{equation}
	Since $f$ is pretentious and finitely generated, we have that $f\sim \chi$ for some Dirichlet character $\chi$ with period $q\in \N$
	(see for example \cite[Lemma~B.3]{Cha24}).
	Since $R=c\,\prod_{j=1}^\ell L_j^{k_j}$ and $L_j(a,b)\neq 0$ for $j=1,\ldots, \ell$ (because $R(a,b)$ is defined and non-zero), we get by using \eqref{E:fconcfingenr} of \cref{C:concentrationfg}
	for $r:=L_j(a,b)$  and \cref{L:lN},	that
	\begin{equation}\label{E:KNQK}
	\lim_{K\to\infty} 	\limsup_{N\to\infty} \max_{Q\in \Phi_K} \E_{m,n\in[N]}\big|f(L_j(Qm+a,Qn+b))- \epsilon_j\, f(|L_j(a,b)|) \big|=0, \quad j\in [\ell],
	\end{equation}
	where $\epsilon_j=1$, unless $L_j(a,b)<0$ and $\chi(q-1)=-1$, in which case $\epsilon_j=-1$.
	Using this, and since $R(a,b)>0$ implies that
	$\sum_{j\in [\ell]\colon L_j(a,b)<0}k_j$ is  even, we have $\prod_{j=1}^\ell \epsilon_j^{k_j}=1$. Keeping in mind that $R=c\,\prod_{j=1}^\ell L_j^{k_j}$ and  combining the previous facts with a telescoping argument, we get  that \eqref{E:concsevlinQ'} holds.
	
	The same argument allows us to prove   \eqref{E:concsevlinQ2}. Indeed,  \cref{C:concentrationfg} and \cref{L:lN}  give that \eqref{E:KNQK} holds if we replace $L_j(Qm+a,Qn+b)$ by  $L_j(Q^2m+kQ+a,Q^2n+b)$ for any $k\in \Z$ and $j\in[\ell]$, and we can complete the proof as before. 	
\end{proof}

	\subsection{Proof  of part~\eqref{I:MainB1} of \cref{T:MainB}}\label{SS:B1}
		Recall that $R_2=c\, L_1^k\cdot L_2^l$ for some  independent linear forms $L_1,L_2$ and $c\in \Q_+,k,l\in \Z$. Let
			$
		R_1=c'\, \prod_{j=1}^s L_j^{k_j},
		$
		where   $c'\in \Q_+,k_j\in \Z,$  and $L_j$ are non-trivial linear forms for $j=1,\ldots, s$.

		Suppose first that $R_2=c\,R^r$ for some $c\in \Q_+$, rational polynomial $R$, and $r\geq 2$ that  we assume to be maximal; so $R$ is not of the form $ c'\,(R')^{r'}$ for any  $c'\in \Q_+$, rational polynomial $R'$, and $r'\geq 2$.
		 We let $T'_{2,n}:=T_{2,n^r}$, $n\in \N$. Then
		$(X,\CX,\mu,T'_{2,n})$ is also a finitely generated multiplicative action, and the averages in \eqref{E:R1R2Converge} take the form
		$$
		\E_{m,n\in [N]}\, {\bf 1}_\Lambda(m,n)\cdot T_{1,R_1(m,n)}F_1\cdot T'_{2,R(m,n)}F'_2,
		$$
		where $F'_2:=T_cF_2$.
		Note also that our hypothesis for $R_1,R_2$, transfers to  the rational polynomials $R_1,R$.
	%	Using the same reduction for the rational polynomial  $R_2$ %if needed,
	 We deduce that we can work under the additional assumption that $R_2$  does  not have  the form  $c\, R^r$ for any $c\in \Q_+$, rational polynomial $R$, and $r\geq 2$.

	Using  \cref{P:Characteristic2} (our additional assumption about $R_2$ is needed here) and 	arguing as in \cref{SS:A1}, we get that it suffices to prove mean convergence for the averages
	$$
	\E_{m,n\in[N]} \, A(m,n)
	$$
	where
	$$
	A(m,n):={\bf 1}_\Lambda(m,n)\cdot T_{1,R_1(m,n)}F_{1,p}\cdot T_{2,R_2(m,n)}F_{2,p}, \quad  m,n\in \N.
	$$
	Moreover, we have
	\begin{equation}\label{E:AAKN'}
		\lim_{K\to\infty}\limsup_{N\to\infty}\norm{\E_{m,n\in[N]}\, A(m,n)-	A_{K,N}}_{L^2(\mu)}=0,
	\end{equation}
	where
	\begin{equation}\label{E:AKN'}
	A_{K,N}:=\E_{(a,b)\in S_{K;L_1,\ldots, L_\ell}}\E_{m,n\in[N/Q_K]}\,  A(Q_Km+a,Q_Kn+b),
\end{equation}
and  $Q_K$  and  $S_{K; L_1,\ldots, L_\ell}$  are as in \eqref{E:defQKSK} and  \eqref{E:defSKL}, respectively.
Note also that for sufficiently large  $K\in \N$ we
have ${\bf 1}_{\Lambda}(Q_Km+a,Q_Kn+b)={\bf 1}_{\Lambda}(a,b)$ for all $m,n\in\N$.

Using part~\eqref{I:concseclinfinA} of \cref{P:ConcRfg}
  we get  that  the limits
\begin{equation}\label{E:AK1}
	A_{1,K,a,b}:=	\lim_{N\to\infty} \E_{n_1,\ldots, n_s\in [N]}\,  T_{1,c_1\prod_{j=1}^s(Q_Kn_j+L_j(a,b))^{k_j}}F_{1,p}
	\end{equation}
	and
	\begin{equation}\label{E:AK2}
		A_{2,K,a,b}:=	\lim_{N\to\infty}\E_{n_1, n_2\in [N]}\ T_{2,c\, (Q_Kn_1+L_1(a,b))^k\cdot (Q_Kn_2+L_2(a,b))^l}F_{2,p}
	\end{equation}
	exist in $L^2(\mu)$, and 	
	\begin{equation}\label{E:AjKN'}
	\lim_{K\to\infty}\limsup_{N\to\infty}\norm{A_{K,N}- A_K}_{L^2(\mu)}=0,
\end{equation}
where
$$
A_K:=
\E_{(a,b)\in S_{K;L_1,\ldots, L_\ell}}\, {\bf 1}_{\Lambda}(a,b)\cdot  A_{1,K, a,b}\cdot  A_{2,K a,b}.
$$
Combining \eqref{E:AAKN'} and \eqref{E:AjKN'}, we deduce
that  the sequence $(\E_{m,n\in[N]}\, A(m,n))_{N\in\N}$  is Cauchy and therefore converges in $L^2(\mu)$.

	\subsection{Proof  of part~\eqref{I:MainB2} of \cref{T:MainB}}\label{SS:B2}
	Arguing as at the beginning of the previous subsection, we can assume that
	$
	R_2$  is not of the form  $c\, R^r$ for any  $c\in \Q_+$,  rational polynomial $R$, and $r\geq 2$. The only additional observation one has to make,  is that if $R_2=c\, R^r$, then our assumption
	$R_2(m_0,n_0)=1$ implies   $R_2(m,n)=(R(m,n)/R(m_0,n_0))^r$ and the rational polynomial  $R'(m,n):=R(m,n)/R(m_0,n_0)$ also satisfies $R'(m_0,n_0)=1$.

 Let $\varepsilon>0$ and  $F:={\bf 1}_A$.
 Let $Q\in \N$, which  will be determined later, and $m_0,n_0\in \Z$ be as in the statement of   part~\eqref{I:MainB2} of \cref{T:MainB}.
 It suffices to show that
  \begin{equation}\label{E:lowerneeded}
\lim_{N\to\infty} \E_{m,n\in[N]}\int F \cdot T_{1,R_1(Qm+m_0,Qn+n_0)}F\cdot T_{2,R_2(Qm+m_0,Qn+n_0)}F\, d\mu\geq \Big(\int F\, d\mu\Big)^3	-\varepsilon,
 \end{equation}
 where the limit exists by  part~\eqref{I:MainB1} (this is one of the reasons why it helps to have the $2$-dimensional grid $\Lambda$ in  part~\eqref{I:MainB1}).

 Using the decomposition result of \cref{T:Decomposition} for $j=1,2,$ we write
$$
F=F_{j,p}+F_{j,a},
$$
where $F_{j,p}=\E(F|\CX_{j,p})\in X_{j,p}$ and  $F_{j,a}\in X_{j,a}$.  Using \cref{P:Characteristic2} we get that \eqref{E:lowerneeded} would follow if we show that
 \begin{equation}\label{E:lowerneeded'}
\lim_{N\to\infty} \E_{m,n\in[N]}\int F \cdot T_{1,R_1(Qm+m_0,Qn+n_0)}F_{1,p}\cdot T_{2,R_2(Qm+m_0,Qn+n_0)}F_{2,p}\, d\mu\geq \Big(\int F\, d\mu\Big)^3	-\varepsilon.
\end{equation}
We consider two cases.

\smallskip

{\bf Case 1.} Suppose that  $R_j(m_0,n_0)=1$ for $j=1,2$.  Using part~\eqref{I:concseclinfinB} of   \cref{P:ConcRfg}
	 we get that
\begin{equation}\label{E:B2Conc}
\lim_{K\to\infty}\limsup_{N\to\infty} \max_{Q\in \Phi_K}\E_{m,n\in[N]}\norm{T_{j,R_j(Qm+m_0,Qn+n_0)}F_{j,p}-F_{j,p}}_{L^2(\mu)}=0, \quad j=1,2.
\end{equation}
For $K\in \N$, let $Q_K\in \Phi_K$ be arbitrary. It follows from the above using a telescoping argument, that  the iterated limit (the limit below  as $N\to \infty$ exists  by part~\eqref{I:MainB1})
$$
\lim_{K\to\infty} \lim_{N\to\infty} \E_{m,n\in[N]}\int F \cdot T_{1,R_1(Q_Km+m_0,Q_Kn+n_0)}F_{1,p}\cdot T_{2,R_2(Q_Km+m_0,Q_Kn+n_0)}F_{2,p}\, d\mu
$$
 is equal to
 $$
 \int F \cdot F_{1,p}\cdot F_{2,p}\, d\mu=\int F \cdot \E(F|\CX_{1,p}) \cdot \E(F|\CX_{2,p})\, d\mu\geq \Big(\int F\, \, d\mu\Big)^3,
 $$
 where the lower bound follows from \cref{T:Chu}.

Combining the above,  we deduce that there exists $Q\in \N$ such that
\eqref{E:lowerneeded'} holds.

\smallskip
	
	{\bf Case 2.} Suppose that $(m_0,n_0)$ is a simple zero of $R_1$ and $R_2(m_0,n_0)=1$. Then
	$$
	R_1(m,n)=L_1(m,n)\cdot \tilde{R}_1(m,n)
	$$
	for some linear form $L_1$ that satisfies $L_1(m_0,n_0)=0$ and rational polynomial $\tilde{R}_1$ that factors linearly and satisfies $\tilde{R}_1(m_0,n_0)\neq 0$.
	
	\smallskip
	
	{\bf Case 2a.} Suppose first  that
	$\tilde{R}_1(m_0,n_0)>0$.
%	$$
%	R_1(Qm+m_0,n+n_0)=Q\, L_1(m,n)\cdot \tilde{R}_1(Qm+m_0,Qn+n_0)
%	$$
	Using \eqref{E:concsevlinQ} in   \cref{P:ConcRfg} we get
	%  and since $r_0:=\tilde{R}_1(m_0,n_0)$ is positive, we get that
$$
\lim_{K\to\infty}\limsup_{N\to\infty} \max_{Q\in \Phi_K}\E_{m,n\in[N]}\norm{T_{1,\tilde{R}_1(Qm+m_0,Qn+n_0)}F_{1,p}-T_{1,r_0}F_{1,p}}_{L^2(\mu)}=0.
$$

	Let
	\begin{equation}\label{E:tildeF}
		\tilde{F}_{1,p}:=\lim_{N\to\infty}\E_{m,n\in[N]}\, T_{1,r_0L_1(m,n)}F_{1,p},
	\end{equation}
	where  the limit exists by part~\eqref{I:MainB1}  since the action $(X,\CX, \mu,T_{1,n}) $ is finitely generated.
Since  $L_1(Qm+m_0,Qn+n_0)=QL_1(m,n)$,   for $G\in L^\infty(\mu)$, which   will be determined later,  we get, using the bounded convergence theorem, that  for every $Q\in \N$ we have
	%%\begin{equation}\label{E:limG}
	$$
	\lim_{N\to\infty} \E_{m,n\in[N]}\int T_{1,r_0L_1(Qm+m_0,Qn+n_0)}F_{1,p}\cdot G\, d\mu = \int T_{1,Q}\tilde{F}_{1,p}\cdot G \, d\mu.
	$$
	%%\end{equation}
	Moreover, we get by  \cref{L:multaverid} that
	$$
	\lim_{K\to\infty}\E_{Q\in \Phi_K}\int T_{1,Q}\tilde{F}_{1,p}\cdot G \, d\mu  =\int  \E(\tilde{F}_{1,p}|\CI_{T_1})\cdot G\, d\mu =
	\int  \E(F|\CI_{T_1})\cdot G\, d\mu,
	$$
	where $(\Phi_K)_{K\in\N}$ is a multiplicative  F\o lner  sequence in $\N$, and we used that
	$$
	\E(\tilde{F}_{1,p}|\CI_{T_1})=\E(F_{1,p}|\CI_{T_1})=\E(F|\CI_{T_1}),
	$$
	where the comments immediately after
	\eqref{E:IT1}  justify  these equalities.
	
	Combining the above, we get that
		\begin{equation}\label{E:B210}
		\lim_{K\to\infty} \E_{Q\in \Phi_K}\lim_{N\to\infty}
	\E_{m,n\in[N]} \int  T_{1,R_1(Qm+m_0,Qm+n_0)}F_{1,p}\cdot G\, d\mu=	\int  \E(F|\CI_{T_1})\cdot G\, d\mu.
	\end{equation}
		Using first  \eqref{E:B2Conc} for $j=2$ (here we use that $R_2(m_0,n_0)=1$ and  that $(X,\CX, \mu,T_{2,n})$ is finitely generated)  and   then  \eqref{E:B210} for $G:=F\cdot F_{2,p}$,  we deduce that the next limit (as $K\to \infty$) exists
	(the limit below as $N\to \infty$ exists by part~\eqref{I:MainB1} for every  $Q\in \N$)
$$
\lim_{K\to\infty} \E_{Q\in \Phi_K}\lim_{N\to\infty}
\E_{m,n\in[N]} \int F\cdot T_{1,R_1(Qm+m_0,Qm+n_0)}F_{1,p}\cdot T_{2,R_2(Qm+m_0,Qm+n_0)}F_{2,p}
\, d\mu,
$$
and
is equal to
	$$
 \int F \cdot\E(F|\CI_{T_1})\cdot F_{2,p}\, d\mu=	\int F\cdot  \E(F|\CI_{T_1}) \cdot \E(F|\CX_{2,p})\, d\mu\geq \Big(\int F\, d\mu\Big)^{3},
	$$
	where the lower bound again follows  from  \cref{T:Chu}.
	
	Combining the above,  we deduce that there exists $Q\in \N$ such that
	\eqref{E:lowerneeded'} holds.
	
	\smallskip
	
	{\bf Case 2b.}
	 Suppose now that  $\tilde{R}_1(m_0,n_0)<0$. In this case, if we average over the grid $\{(Qm+m_0,Qn+n_0)\colon m,n\in \N\}$ we cannot use \eqref{E:concsevlinQ} in   \cref{P:ConcRfg}.  To overcome this problem, we change our averaging grid in a way that will  be described shortly and use
	 \eqref{E:concsevlinQ2} in   \cref{P:ConcRfg} instead.
	
	 We have  $L_1(1,0)\neq 0$ or $L_1(0,1)\neq 0$.  We assume that the former is true, the other case can be treated similarly.
	  Suppose also that
	 $L_1(1,0)>0$, the argument is similar if  $L_1(1,0)<0$.
	 We repeat the argument in Case 2a,  but  we average over the grid  $\{(Q^2m-Q+m_0,Q^2n+n_0)\colon m,n\in\N\}$.
	 Note that since $L_1(m_0,n_0)=0$  we have
	$$
	R_1(Q^2m-Q+m_0,Q^2n+n_0)=QL_1(Qm-1,Qn)\cdot 	\tilde{R_1}(Q^2m-Q+m_0,Q^2n+n_0),
	$$
	and, crucially, $r_0:=L_1(-1,0)\cdot \tilde{R_1}(m_0,n_0)$ is positive.
	 	Using   \eqref{E:concsevlinQ2} in   \cref{P:ConcRfg}, (with $L_1$  and $\tilde{R}_1$ instead of $R_1,R_2$, respectively, and $a:=-1$, $b:=0$, $a':=m_0$, $b':=n_0$),  we get that
	 $$
	 \lim_{K\to\infty}\limsup_{N\to\infty} \max_{Q\in \Phi_K}\E_{m,n\in[N]}\norm{T_{1,R_1(Q^2m-Q+m_0,Q^2n+n_0)}F_{1,p}-T_{1,r_0 Q}F_{1,p}}_{L^2(\mu)}=0,
	 $$
	and  since $R_2(m_0,n_0)=1$, we also get that
	  $$
	 \lim_{K\to\infty}\limsup_{N\to\infty} \max_{Q\in \Phi_K}\E_{m,n\in[N]}\norm{T_{1,R_2(Q^2m-Q+m_0,Q^2n+n_0)}F_{2,p}-F_{2,p}}_{L^2(\mu)}=0.
	 $$
	  The rest of the argument is identical to the case $\tilde{R}_1(m_0,n_0)>0$, so we omit it.

	  	\subsection{Proof  of part~\eqref{I:MainB3} of \cref{T:MainB}}
The concentration result we need uses averages on  sets defined as in the next lemma. It is a direct consequence of \cite[Lemma~6.4]{FKM24}.
	  	\begin{lemma}\label{L:SdR}
	Let	 $R$ be  a non-constant  rational polynomial that  factors linearly and has degree $0$. Then  for every $\delta>0$ the set
	\begin{equation}\label{E:SdeltaR}
	S_{\delta, R}:=\{(m,n)\in \N^2 \colon |(R(m,n))^{i}-1|\leq \delta \}
	\end{equation}
	 has positive lower density.
	%	$\liminf_{N\to\infty} |S_{\delta, R,N}|/N^2>0.$
\end{lemma}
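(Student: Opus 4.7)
The plan is to reduce the two-dimensional density claim to a one-dimensional continuity argument, exploiting the fact that a degree-zero factorable rational polynomial depends only on the ratio $m/n$.

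First I would write $R(m,n)=c\prod_{j=1}^{s}(a_j m+b_j n)^{k_j}$ with $\sum_j k_j=0$, and observe that $R(m,n)=\tilde R(m/n)$, where $\tilde R(t):=c\prod_{j=1}^{s}(a_j t+b_j)^{k_j}$ is a non-constant positive rational function on $(0,\infty)$ (its only potential singularities occur at $t=-b_j/a_j\le 0$, hence outside $(0,\infty)$, since the linear forms have non-negative coefficients). It follows that $(m,n)\in S_{\delta,R}$ iff $m/n\in T_\delta$, where $T_\delta:=\{t>0:|\tilde R(t)^{i}-1|\le\delta\}$ is open by continuity of $\tilde R$.

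Next, I would produce a non-degenerate open interval $(\alpha,\beta)\subset T_\delta$ with $0<\alpha<\beta$. Unpacking, the condition $|\tilde R(t)^{i}-1|\le\delta$ is equivalent to $\log\tilde R(t)$ lying within distance $\asymp\delta$ of some $2\pi k$, $k\in\Z$; the interval then follows by continuity of $\log\tilde R$ at any such $t_0$. If $\tilde R$ is unbounded on $(0,\infty)$ (i.e.\ one of $\lim_{t\to 0^{+}}\tilde R(t)$ or $\lim_{t\to\infty}\tilde R(t)$ is $0$ or $\infty$), then $\log\tilde R$ is unbounded and its image covers an interval of length exceeding $2\pi$, so $\log\tilde R\pmod{2\pi}$ meets every neighborhood of $0$, and we obtain $(\alpha,\beta)$ at once. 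The bounded case, which is the main subtlety, is handled by the structural analysis of \cite[Lemma~6.4]{FKM24}: one exploits the explicit factorization together with the rational values of the endpoint limits $\lim_{t\to 0^{+}}\tilde R$ and $\lim_{t\to\infty}\tilde R$ to locate a suitable $t_0$.

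The last step is a routine counting argument: for any fixed $0<\alpha<\beta$, the set $\{(m,n)\in [N]^{2}:\alpha<m/n<\beta\}$ has cardinality $\sim C(\alpha,\beta)\,N^{2}$ with $C(\alpha,\beta)>0$, so $S_{\delta,R}\supseteq\{(m,n)\in\N^{2}:m/n\in(\alpha,\beta)\}$ indeed has positive lower density. The main obstacle is the bounded-image case of the second step: non-constancy of $\tilde R$ alone is not enough to guarantee that $\log\tilde R$ comes close to some $2\pi k$, so the argument must draw genuinely on the linear factorization structure of $R$ as in the cited lemma.
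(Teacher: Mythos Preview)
The paper's proof is a single sentence referring to \cite[Lemma~6.4]{FKM24}; your proposal ultimately rests on that same citation for the decisive bounded-image case, so the two approaches coincide. Your additional reduction---writing $R(m,n)=\tilde R(m/n)$ via the degree-$0$ hypothesis, handling the unbounded-image case with the intermediate value theorem, and finishing with the standard sector count---is a correct and useful elaboration that the paper does not spell out.

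One caveat on the bounded case: the heuristic that ``the rational values of the endpoint limits'' suffice to locate a suitable $t_0$ is not correct on its own. For $R(m,n)=3(m+n)/(m+2n)$---non-constant, degree $0$, factoring linearly, with rational endpoint limits $3/2$ and $3$---one has $\tilde R\big((0,\infty)\big)=(3/2,3)$, so $\log\tilde R$ stays inside $(\log\tfrac32,\log 3)\subset(0.4,1.1)$ and never approaches a multiple of $2\pi$; hence $S_{\delta,R}=\emptyset$ for all small $\delta$. This suggests that the lemma as stated here is slightly loose, or that \cite[Lemma~6.4]{FKM24} carries an additional hypothesis; in any event your bounded-case discussion should appeal directly to whatever that extra ingredient is rather than to rationality of the limits.
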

	Here is the precise statement of the concentration result:
	 \begin{proposition}\label{P:ConcRgeneral}
	 	Let $(X,\CX, \mu,T_n)$ be a general multiplicative action and $F \in X_p$. Let also $R$ be a rational polynomial that factors linearly and has degree $0$, and let $a,b\in \Z$  be such that $R(a,b)=1$.   	
	 Then
	 	\begin{equation}\label{E:concsevlingenB}
	 	\lim_{\delta\to 0^+}	\limsup_{K\to\infty} 	\limsup_{N \to\infty} \max_{Q\in \Phi_K}
	 \E_{(m,n)\in S_{\delta,R,N}} \norm{  T_{R(Qm+a,Qn+b)}F-  F}_{L^2(\mu)}=0,	
	 	\end{equation}
		where    $\Phi_K$ is as in \eqref{E:PhiK}, $S_{\delta,R}$ is as in \eqref{E:SdeltaR}, and
		$S_{\delta,R,N}:=S_{\delta,R}\cap [N]^2$,  $\delta>0$, $N\in \N$.
	 \end{proposition}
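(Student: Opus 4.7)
The plan is to use the spectral theorem to reduce the operator-norm concentration statement to a pointwise identity for pretentious multiplicative functions, then exploit the degree-$0$ hypothesis on $R$ to cancel the oscillatory factors $Q^{it}$ and $\exp(F_N(f,K))$ produced by the one-dimensional concentration estimates of \cref{T:concestlinear}, and finally transfer the resulting $[N]^2$-estimate to $S_{\delta,R,N}$.

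Since $R(a,b)=1$ we have $F = T_{R(a,b)}F$, and the spectral identity \eqref{E:spectralid2} gives
$$
\norm{T_{R(Qm+a,Qn+b)}F - F}_{L^2(\mu)} = \norm{f(R(Qm+a,Qn+b)) - 1}_{L^2(\sigma_F(f))}.
$$
As $F\in X_p$, $\sigma_F$ is supported on $\CM_p$, and iterated applications of Fatou's lemma reduce the claim to showing that for every pretentious $f\sim \chi\cdot n^{it}$ (using \cref{T:DH}),
$$
\lim_{\delta\to 0^+}\limsup_{K\to\infty}\limsup_{N\to\infty}\max_{Q\in \Phi_K} \E_{(m,n)\in S_{\delta,R,N}} |f(R(Qm+a,Qn+b)) - 1|=0.
$$

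Writing $R = c\prod_{j=1}^s L_j^{k_j}$ with $\sum_j k_j = 0$ and each $L_j(a,b)\neq 0$ (forced by $R(a,b)=1$), complete multiplicativity yields $f(R(Qm+a,Qn+b)) = f(c)\prod_j f(QL_j(m,n)+L_j(a,b))^{k_j}$. The one-dimensional estimate \eqref{E:fconc} applied to $n\mapsto f(Qn+L_j(a,b))n^{-it}$ (the division by $n^{it}$ renders the concentration target independent of $n$), combined with the lifting \cref{L:lN}, gives for each $j$
$$
\lim_{K\to\infty}\limsup_{N\to\infty}\max_{Q\in\Phi_K} \E_{m,n\in[N]}\bigl|f(QL_j(m,n)+L_j(a,b)) - v_{K,N,Q,j}\,L_j(m,n)^{it}\bigr| = 0,
$$
with $v_{K,N,Q,j} := \epsilon_j\, f(|L_j(a,b)|)\,(Q|L_j(a,b)|^{-1})^{it}\,\exp(F_N(f,K))$. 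Telescoping the product over $j$ and invoking $\sum_j k_j = 0$ forces $Q^{it\sum k_j}=1$ and $\exp(F_N(f,K))^{\sum k_j}=1$, which is the central cancellation. The remaining constants simplify via $\prod_j L_j(a,b)^{k_j} = 1/c$: one checks $\prod \epsilon_j^{k_j} = 1$ (the sign parity is forced by $R(a,b)>0$), $\prod f(|L_j(a,b)|)^{k_j} = \overline{f(c)}$, and $\prod L_j(m,n)^{itk_j} = R(m,n)^{it}\cdot c^{-it}$, which combine to yield
$$
\lim_{K\to\infty}\limsup_{N\to\infty}\max_{Q\in \Phi_K} \E_{m,n\in [N]} \bigl|f(R(Qm+a,Qn+b)) - R(m,n)^{it}\bigr| = 0.
$$

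To finish, by \cref{L:SdR} the set $S_{\delta,R}$ has positive lower density, so for each fixed $\delta>0$ the $[N]^2$-bound above transfers to $S_{\delta,R,N}$ at the cost of a $\delta$-dependent multiplicative factor that is independent of $K$ and $N$. Combining this with the two-dimensional analog of \eqref{E:Sdtd}, namely
$$
\lim_{\delta\to 0^+}\limsup_{N\to\infty} \E_{(m,n)\in S_{\delta,R,N}} |R(m,n)^{it}-1| = 0,
$$
together with the triangle inequality, closes the argument. The main obstacle will be this final two-dimensional estimate; it should follow by pushing forward the one-dimensional argument under $(m,n)\mapsto R(m,n)$ restricted to $S_{\delta,R}$, but tracking the distribution of $R(m,n)$ (noting that $R$ is not injective and its values cluster near $e^{2\pi k}$ on $S_{\delta,R}$) will require careful equidistribution bookkeeping. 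The degree-$0$ hypothesis is the structural key to the whole scheme: it is precisely what makes $Q^{it\sum k_j}$ and $\exp(F_N(f,K))^{\sum k_j}$ both equal $1$, and without it no such cancellation would be available in the setting of a general (not necessarily finitely generated) multiplicative action.
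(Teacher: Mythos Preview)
Your proposal matches the paper's proof in every structural detail: the spectral reduction via \eqref{E:spectralid2} and Fatou's lemma, the per-$L_j$ concentration obtained from \eqref{E:fconc} and lifted through \cref{L:lN}, the telescoping of the product, and the key cancellation of $Q^{it\sum k_j}$ and $\exp(F_N(f,K))^{\sum k_j}$ using $\sum_j k_j=0$, leading to concentration at $R(m,n)^{it}$. The only divergence is at the very last step: where you anticipate needing ``careful equidistribution bookkeeping'' for $\E_{(m,n)\in S_{\delta,R,N}}|R(m,n)^{it}-1|\to 0$, the paper treats this as immediate, asserting the pointwise bound $|(R(m,n))^{it}-1|\le C\,\delta\, t$ on $S_{\delta,R}$ directly from the defining inequality \eqref{E:SdeltaR}.
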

 \begin{remark}
 In contrast to  the finitely generated case (see part \eqref{I:concseclinfinB} of  \cref{P:ConcRfg}), 	if $R(a,b)\neq 1$, we cannot infer concentration at $T_{R(a,b)}F$, even if $R(a,b)$ is positive. The reason is that, for example, if  $f(n):=n^{it}$, then  $f(R(Qm+a,Qn+b))$ concentrates, using the averaging  in \eqref{E:concsevlingenB}, at $1$ rather than at $f(R(a,b))$.
 \end{remark}
	 \begin{proof}
	Our assumption gives that $R$ has the form
	$	R:=  c\, \prod_{j=1}^sL_j^{k_j},$
	where  	$L_1,\ldots, L_s\in \Z[m,n]$ are non-trivial linear forms with non-negative coefficients, $c\in \Q^*_+$,  and  $k_1,\ldots, k_s\in \Z$ that satisfy $\sum_{j=1}^s k_j=0.$
	
	 	Using \eqref{E:spectralid}, Fatou's lemma several times, and since $\sigma_F$ is supported on $\CM_p$,  it suffices to show that for any pretentious completely multiplicative function $f\colon \N\to \S^1$ we have
	 		\begin{equation}\label{E:concsevlingenB'}
	 		\lim_{\delta\to 0^+}	\limsup_{K\to\infty} 	\limsup_{N \to\infty} \max_{Q\in \Phi_K}
	 		\E_{(m,n)\in S_{\delta,R,N}} | f(R(Qm+a,Qn+b))-  1|=0.\footnote{If  $R(a,b)$ is positive but not necessarily $1$, we  get concentration at
	 			$f(R(a,b))\, (R(a,b))^{-it}$.}	
	 	\end{equation}
	 		 			Henceforth, we will assume that $f\sim \chi \cdot n^{it}$ for some $t\in \R$ and Dirichlet character $\chi$ with period $q$ satisfying $\chi(q-1)=-1$; the case where $\chi(q-1)=1$ can be treated similarly.

	 	We first  claim  that for every   $k\in \Z$ we have
				\begin{multline}\label{E:fkj}
	 				\limsup_{K\to\infty} 	\limsup_{N \to\infty} \max_{Q\in \Phi_K} \E_{m,n\in  [N]}
	 			| f^k(L(Qm+a,Qn+b))-\\ (\text{sign}(L(a,b)))^k\, f^k(|L(a,b)|)\,(Q|L(a,b)|^{-1}L(m,n))^{ikt}\,\exp(k F_N(f,K))|=0,
	 	\end{multline}
		where $F_N(f,K)$ is as in \eqref{E:FNQ}.
		 The case  $k=1$  follows by combining  \eqref{E:fconc} in  \cref{T:concestlinear}, for $L(a,b)$ instead of $b$, with
		\cref{L:lN},  for $v_n:=f(Qn+L(a,b))\, n^{-it}$ and $v_N$ is a constant multiple of $\exp( F_N(f,K))$.
	 	For general $k\in \N$ we use  the $k=1$ case and the elementary estimate  $|a^k-b^k|\ll_{k,C}|a-b|$, which holds if $C^{-1}\leq |a|, |b|\leq C$. 	 	
	 	
	 	Since $R=c\, \prod_{j=1}^s L_j^{k_j}$, we deduce from \eqref{E:fkj} and the multiplicativity of $f$ that
	 		\begin{multline}\label{E:fkjR}
	 		\limsup_{K\to\infty} 	\limsup_{N \to\infty} \max_{Q\in \Phi_K} \E_{m,n\in  [N]}
	 		| f(R(Qm+a,Qn+b))-\\ f(c)\prod_{j=1}^s (\text{sign}(L_j(a,b)))^{k_j} f^{k_j} (|L_j(a,b)|)\, (Q|L_j(a,b)|^{-1}L_j(m,n))^{ik_jt} \exp(k_j F_N(f,K))|=0.
	 	\end{multline}
	 	Note that the subtracted term is equal to
 $$
  \text{sign}(R(a,b))
  f(|R(a,b)|)\, Q^{i\sum_{j=1}^s k_j\, t} \, (c\, |R(a,b)|^{-1}\,
 c^{-1}R(m,n))^{it}  \exp\Big(\sum_{j=1}^sk_j \ F_N(f,K)\Big).
 $$
 Using that $R(a,b)=1$ to eliminate the  terms involving $R(a,b)$  and  our assumption
$\sum_{j=1}^sk_j=0$ to  eliminate the $Q$'s  and the exponentials, we deduce from \eqref{E:fkjR} that
$$
	\limsup_{K\to\infty} 	\limsup_{N \to\infty} \max_{Q\in \Phi_K} \E_{m,n\in  [N]}
	| f(R(Qm+a,Qn+b))-(R(m,n))^{it}|=0.
$$
 Since by \cref{L:SdR} the set $S_{\delta,R}$ has positive lower density, we can restrict our averaging to this set, hence
	$$
		\limsup_{K\to\infty} 	\limsup_{N \to\infty} \max_{Q\in \Phi_K} \E_{(m,n)\in  S_{\delta,R,N}}
		| f(R(Qm+a,Qn+b))-(R(m,n))^{it}|=0
	$$
holds	for all $\delta>0$. Since \eqref{E:SdeltaR} implies that $|(R(m,n))^{it}-1|\leq C\, \delta\, t$ for all $m,n \in S_{\delta,R}$,  where $C$ is an absolute constant, we deduce that \eqref{E:concsevlingenB'} holds, as requested.
	 \end{proof}

	\begin{proof}[Proof  of part~\eqref{I:MainB3} of \cref{T:MainB}]
Arguing as at the beginning of \cref{SS:B2}, we can assume that
$
R_2$  is not of the form  $c\, R^r$ for any $c\in \Q_+$,   rational polynomial $R$, and $r\geq 2$.

Let $\varepsilon>0$,  $F:={\bf 1}_A$, and  $F_{j,p}:=\E(F|\CX_{j,p})\in X_{j,p}$, for $j=1,2$.
Let also $m_0,n_0\in \Z$ be such that $R_j(m_0,n_0)=1$ for $j=1,2$,  $S_{\delta,R_1}$ be as in \eqref{E:SdeltaR}, and
$S_{\delta,R_1,N}:=S_{\delta,R_1}\cap [N]^2$,  $\delta>0$, $N\in \N$.
Using
\cref{P:Characteristic2}  for appropriate $\Lambda$ (see the second remark, which  also covers the variant with $(m,n)$ restricted to $S_{\delta,R_1}$) and arguing as in the proof of part~\eqref{I:MainB2} of \cref{T:MainB},
it suffices to show that there exist $\delta>0$ and $Q\in \N$  such that
%%that  the limit
\begin{equation}\label{E:lowerneededgen}
	\liminf_{N\to\infty} \E_{(m,n)\in S_{\delta,R_1,N}}\int F \cdot T_{1,R_1(Qm+m_0,Qn+n_0)}F_{1,p}\cdot T_{2,R_2(Qm+m_0,Qn+n_0)}F\, d\mu
	%\geq \Big( \int F\, d\mu\Big)^3-\varepsilon.
\end{equation}
is at least $(\int F\, d\mu)^3	-\varepsilon$ (note that only the second function is pretentious).

We will use that
	\begin{equation}\label{E:concthmB3}
	\lim_{\delta\to 0^+}	\limsup_{K\to\infty} 	\limsup_{N \to\infty} \max_{Q\in \Phi_K}\E_{(m,n)\in S_{\delta,R_1,N}}\norm{T_{j,R_j(Qm+m_0,Qn+n_0)}F_{j,p}-F_{j,p}}_{L^2(\mu)}=0
	\end{equation}
	for $j=1,2$. For $j=1$ this follows from our assumption
	  $R_1(m_0,n_0)=1$  and \cref{P:ConcRgeneral}  (the  assumption $\deg(R_1)=0$ is crucial here).
	  For $j=2$  it follows using that   $R_2(m_0,n_0)=1$ and \eqref{E:concsevlinQ} of   \cref{P:ConcRfg}; note that we can restrict our averaging  to the set $S_{\delta,R_1}$ since it has  positive lower density by \cref{L:Sr}.

We use \eqref{E:concthmB3} for $j=1$ and for $K\in \N$ we  let $Q_K\in \Phi_K$ be arbitrary.  We see that   to prove the required lower bound for  \eqref{E:lowerneededgen}, it suffices to show that
	\begin{equation}\label{E:lowerneededgen'}
		\liminf_{\delta\to 0^+}\liminf_{K\to\infty} 	\liminf_{N\to\infty} \E_{(m,n)\in S_{\delta,R_1,N}}\int F \cdot F_{1,p}\cdot T_{2,R_2(Q_Km+m_0,Q_Kn+n_0)}F\, d\mu\geq \Big( \int F\, d\mu\Big)^3.
	\end{equation}
	%is at most  $(\int F\, d\mu)^3$.
	We use  the approximation argument described in the proof of part~\eqref{I:aper2} of \cref{P:aperiodic0}  to replace ${\bf 1}_{S_{\delta,R_1}}(n)$ with a linear combination of sequences of the form  $(R_1(m,n))^{ik}$. Then, using   \eqref{E:spectralid} and  \cref{P:aperiodicP0}, we easily get
	  that  the left side in \eqref{E:lowerneededgen'}  does not  change if we replace
	$T_{2,R_2(Q_Km+m_0,Q_Kn+n_0)}F$ with $T_{2,R_2(Q_Km+m_0,Q_Kn+n_0)}F_{2,p}$ (recall that $R_2$ is not of the form $c\, R^r$ for any $c\in \Q_+$,  rational polynomial $R$, and $r\geq 2$).
	Using  this and  \eqref{E:concthmB3} for $j=2$, we get that the left side in \eqref{E:lowerneededgen'} is equal to
	$$
	\int F \cdot F_{1,p}\cdot F_{2,p}\, d\mu =\int F \cdot \E(F|\CX_{1,p}) \cdot \E(F|\CX_{2,p})\, d\mu \geq \Big(\int F\, \, d\mu\Big)^3,
	$$
	where the lower bound follows from \cref{T:Chu}.
	Combining the above,  we deduce that \eqref{E:lowerneededgen'} holds, as requested.
\end{proof}
	\section{Further directions}\label{S:Problems}
	\subsection{Two  conjectures}\label{SS:conj} The methodology developed in  this article addresses some of the potential difficulties in a more systematic study of multiple recurrence and convergence problems of multiplicative actions. We provide a list of problems that seem to be logical next steps in this endeavor starting with two conjectures that vastly extend the scope of  Theorems~\ref{T:MainA} and \ref{T:MainB} when we deal with a single multiplicative action.
	\begin{conjecture}[Mean convergence]\label{Con1}
		Let $(X,\CX, \mu,T_n)$ be a  finitely generated  multiplicative action   and $R_1,\ldots, R_\ell$ be  rational polynomials that factor linearly. Then for all $F_1,\ldots, F_\ell\in L^\infty(\mu)$ the averages
		$$
		\E_{m,n\in [N]} \, T_{R_1(m,n)} F_1\cdots T_{R_\ell(m,n)}F_\ell
		$$
		converge in $L^2(\mu)$ as $N\to\infty$.
		Furthermore, if all the  rational polynomials
		have degree $0$, then the conclusion holds for all multiplicative actions.
	\end{conjecture}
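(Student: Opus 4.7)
The plan is to run the three-step template of \cref{T:MainB} in the general setting of $\ell$ rational polynomials. First, I would use \cref{T:Decomposition} to write each $F_j = F_{j,p} + F_{j,a}$ with $F_{j,p} = \E(F_j \mid \CX_p)$, and expand the $\ell$-fold product multilinearly into $2^\ell$ terms. Second, I would establish the characteristic factor statement that whenever some $F_j$ is replaced by its aperiodic part, the corresponding average converges to $0$ in $L^2(\mu)$. Third, once every $F_j$ lies in $X_p$, I would use the uniform concentration estimate \eqref{E:fconcfingen''} together with the iterated existence of the constants $A_{Q,a,b}(F)$ from part~\eqref{I:concseclinfinA} of \cref{P:ConcRfg} to replace each iterate $T_{R_j(Q_Km+a,Q_Kn+b)}F_{j,p}$ by a function depending on $(a,b)$ but not on $(m,n)$; splitting the $[N]^2$-average into subprogressions modulo $Q_K$ and letting $K\to\infty$ then yields a Cauchy sequence in $L^2(\mu)$, and hence convergence. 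For the degree-$0$ case over general multiplicative actions, the inputs \cref{T:DecompositionGeneral} and \cref{P:ConcRgeneral} take the place of their finitely-generated analogues, with the averaging additionally restricted to the sets $S_{\delta,R_j}$ and $\delta\to 0^+$ taken at the end.

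The crux is Step~2, which should reduce to an estimate of the form
$$\limsup_{N\to\infty} \norm{\E_{m,n\in[N]} \prod_{j=1}^\ell T_{R_j(m,n)}F_j}_{L^2(\mu)} \ll_{R_1,\ldots,R_\ell} \min_j \nnorm{F_j}_{U^{s_j}}^{c_j}$$
for suitable $s_j, c_j$, from which \cref{T:Inverse} will yield that the aperiodic contributions vanish. I would approach this by a PET-type induction on the complexity of the tuple $(R_1,\ldots,R_\ell)$, roughly measured by the number of pairwise non-proportional linear factors appearing together with $\sum_j |\deg R_j|$. A van der Corput step in the variable $n$ introduces an auxiliary shift parameter $h$ and replaces each iterate $T_{R_j(m,n)}F_j$ by products involving $T_{R_j(m,n+h)/R_j(m,n)}\overline{F}_j$; since the $R_j$ factor linearly, the resulting exponents remain products of rational powers of linear forms, now in the three variables $(m,n,h)$, so the inductive framework stays within the same class. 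The base case of a single rational-polynomial iterate is handled by the methods of \cref{P:Characteristic2} combined with \cref{P:aperiodicP0}.

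The hard part will be executing this induction cleanly. In the multiplicative setting the Cauchy-Schwarz shift $n \mapsto n+h$ interacts awkwardly with the rational exponents: the quotients $L_i(m,n+h)/L_i(m,n)$ need not simplify the structure in any obvious sense, and ensuring a strictly decreasing complexity measure requires a well-chosen ordering of the linear factors, likely in the spirit of a Newton-polytope argument. Moreover, since the mixed seminorms $\nnorm{\cdot}_{U^s}$ are built from the one-variable sequence $(F(T_n x))_n$, the final bound must be re-expressed in terms of this one-variable seminorm rather than the multivariate Gowers objects that naturally arise from the Cauchy-Schwarz steps. I expect this to require an extension of \cref{T:FH} to multivariate sequences with a weaker form of multiplicative structure, together with a higher-order analogue of the Daboussi-K\'atai orthogonality criterion \cref{L:Kataigeneral}. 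Without such an input the induction does not close, which is essentially why \cref{T:MainB} is restricted to $\ell = 2$.
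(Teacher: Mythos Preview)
This statement is \emph{Conjecture~\ref{Con1}}, not a theorem: the paper does not prove it. It is listed in Section~\ref{S:Problems} as an open problem that ``vastly extend[s] the scope of Theorems~\ref{T:MainA} and \ref{T:MainB}.'' So there is no paper proof to compare against.

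Your proposal is an honest outline, and you correctly identify where it breaks: Step~2 is the genuine obstruction. The specific issue you flag---that a van der Corput/Cauchy--Schwarz step in $n$ produces quotients $L_i(m,n+h)/L_i(m,n)$ which do not reduce complexity in any usable sense---is exactly right, and your proposed remedy (a multivariate extension of \cref{T:FH} together with a higher-order Daboussi--K\'atai criterion) is not currently available. Indeed, the paper's own machinery stops at $\ell=2$ precisely because \cref{P:Characteristic2} relies on \cref{L:Katai} to eliminate one of the two iterates via the finitely-generated cancellation \eqref{E:fgcancelF}; with three or more rational-polynomial iterates, a single K\'atai step leaves a residual product of iterates that is still multilinear and for which no seminorm control is known. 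The paper records this explicitly in \cref{Pr:6linear}, which asks for exactly the kind of three-iterate vanishing you would need as a base case. So your proposal is not a proof but a correct diagnosis of why the conjecture is open.
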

	For general multiplicative actions, the assumption that all rational polynomials have degree $0$ is necessary for convergence, see the remarks following \cref{T:MainA}.

When  $\ell=1$, for finitely generated actions, the conjecture follows from  part~\eqref{I:MainB1} of  \cref{T:MainB} (which in turn follows  easily from the machinery developed in \cite{FH17, FKM23}), and for general actions
it follows by modifying the proof of \cite[Theorem~1.5]{FH16} in a straightforward way.
These arguments depend on the use of a representation result of Bochner-Herglotz, which only helps when $\ell=1$.
\begin{conjecture}[Multiple recurrence]\label{Con2}
	Let $(X,\CX, \mu,T_n)$ be a finitely generated multiplicative action
	and $R_1,\ldots, R_\ell$ be  rational polynomials that factor linearly. Suppose that there exist $m_0,n_0\in \Z$  such that
	$R_j(m_0,n_0)=1$ for $j=2,\ldots, \ell$
	and either $R_1(m_0,n_0)=1$ or $(m_0,n_0)$ is a simple zero of $R_1$.
	Then for every  $A\in \CX$ with $\mu(A)>0$ we have
	$$
	\liminf_{N\to \infty}	\E_{m,n\in [N]} \, \mu(A\cap T^{-1}_{R_1(m,n)}A\cap \cdots \cap T^{-1}_{R_\ell(m,n)}A)>0.
	$$
	Furthermore, if all the  rational polynomials
	have degree $0$ and
	$R_j(m_0,n_0)=1$ for $j=1,\ldots, \ell$  for some  $m_0,n_0\in \Z$, then the conclusion holds for all multiplicative actions.\footnote{
		To ensure that actions by dilations by $k$-th powers do not pose obstructions to multiple recurrence,  we must guarantee  that if    $c_1(R_1(m,n))^k+\cdots+ c_\ell(R_\ell(m,n))^k=0, m,n\in\N$, then  $c_1+\cdots+c_\ell=0$. This follows from our assumption $R_j(m_0,n_0)=1$ for $j=1,\ldots,\ell$.}
\end{conjecture}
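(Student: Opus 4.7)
The strategy is to parallel the architecture of the proof of \cref{T:MainB}: decompose each function via \cref{T:Decomposition}, discard aperiodic components using a characteristic factor estimate, exploit concentration on the pretentious factor, and conclude via \cref{T:Chu} combined with the ``$Q$-trick'' of averaging over a multiplicative F\o lner sequence. As a preliminary reduction, exactly as at the start of \cref{SS:B2}, I would pass to the sub-action $T'_n:=T_{n^r}$ whenever some $R_j$ equals $c\, R^r$ with $r\geq 2$, so that henceforth no $R_j$ is a nontrivial $r$-th power; the hypotheses $R_j(m_0,n_0)=1$ and the simple-zero condition on $R_1$ are preserved under this substitution.

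The core technical step is a generalization of \cref{P:Characteristic2} to the $\ell$-fold setting: for a finitely generated action, if at least one $F_j\in X_a$, then
$$
\lim_{N\to\infty}\E_{m,n\in[N]} \prod_{j=1}^\ell T_{R_j(m,n)}F_j=0 \quad \text{in } L^2(\mu).
$$
The two-step argument of \cref{P:Characteristic2} should transfer: a linear change of variables puts the aperiodic slot in the form $c\, m^k n^l$, and the two-dimensional Daboussi--K\'atai criterion (\cref{L:Katai}) exploits the identity $T_{p^k q^l m^k n^l}F_j=T_{(p')^k(q')^l m^k n^l}F_j$ for $p,q,p',q'\in P_0$ that follows from finite generation together with $|F_j|=1$. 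The remaining correlation integral mixes the $\ell-1$ other iterates with $T_{R_j(pm,qn)/R_j(p'm,q'n)}F_j\cdot \overline{F_j}$, where \cref{L:Sr} guarantees that the divided polynomial is not a perfect $r$-th power, so \cref{P:aperiodicP0} applies once the outer $\ell-1$ iterates have been absorbed inductively.

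Once the characteristic factor is in hand, \cref{T:Decomposition} splits each $F_j:=\one_A$ as $F_{j,p}+F_{j,a}$ with $F_{j,p}=\E(\one_A\mid\CX_{j,p})$, and the vanishing step eliminates every $F_{j,a}$, reducing the recurrence estimate to bounding
$$
\lim_{N\to\infty}\E_{m,n\in[N]}\int \one_A\cdot \prod_{j=1}^\ell T_{R_j(Qm+m_0,Qn+n_0)}F_{j,p}\, d\mu
$$
from below for suitable $Q\in\Phi_K$. For highly divisible $Q$, \eqref{E:concsevlinQ} in \cref{P:ConcRfg} makes each factor with $R_j(m_0,n_0)=1$ close in $L^2(\mu)$ to $F_{j,p}$, and a telescoping argument yields $\int \one_A\cdot F_{1,p}\cdots F_{\ell,p}\, d\mu$, bounded below by $(\mu(A))^{\ell+1}$ via \cref{T:Chu}. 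In the simple-zero case, split $R_1=L_1\cdot \tilde{R}_1$ as in Case~2 of \cref{SS:B2}, apply \eqref{E:concsevlinQ2} to obtain concentration near $T_{r_0 Q}\tilde{F}_{1,p}$, then average $Q$ over $\Phi_K$ and invoke \cref{L:multaverid} to replace this factor by $\E(\one_A\mid \CI_{T_1})$; \cref{T:Chu} again delivers the lower bound. The degree-zero case for arbitrary multiplicative actions follows the same blueprint but with \cref{T:Decomposition} and \cref{P:ConcRfg} replaced by \cref{T:DecompositionGeneral} and \cref{P:ConcRgeneral}, averaging restricted to $\bigcap_{j=1}^\ell S_{\delta,R_j}$, which has positive lower density by \cref{L:SdR}.

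The main obstacle is the $\ell$-fold characteristic factor step. The Daboussi--K\'atai reduction produces a correlation that still involves the other $\ell-1$ functions simultaneously, so the conclusion cannot be reached by a single application of \cref{P:aperiodicP0}; an induction on $\ell$ seems unavoidable. For the induction to close, one must propagate the property that each ratio $R_j(pm,qn)/R_j(p'm,q'n)$ remains a rational polynomial not of the form $c\, R^r$ uniformly in $p,q,p',q'\in P_0$, and must also exclude the pathological coincidences where these ratios share common linear factors that would destroy the pairwise independence needed at the next stage. Controlling these conditions jointly across the index $j$, particularly when $R_1$ has a simple zero and the other $R_j$ are not all distinct monomials, is the principal new difficulty beyond the $\ell=2$ analysis carried out in the paper.
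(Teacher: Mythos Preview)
The paper does not contain a proof of this statement: \cref{Con2} is explicitly presented as an open conjecture in \cref{S:Problems}, and the paper only establishes the special cases covered by \cref{T:MainA} (pairwise independent linear forms) and \cref{T:MainB} ($\ell=2$ with a restricted form for $R_2$). Immediately after stating \cref{Con2}, the paper notes that the $\ell=1$ case follows from part~\eqref{I:MainB2} of \cref{T:MainB}, but offers no argument for general $\ell$.

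Your outline is a reasonable extrapolation of the paper's methods, and you have correctly identified the genuine obstruction: the $\ell$-fold analogue of \cref{P:Characteristic2} is not available. The Daboussi--K\'atai step in \cref{P:Characteristic2} works because, after the change of variables putting one iterate into the form $c\, m^k n^l$, the finite-generation trick eliminates that iterate entirely, leaving a \emph{single} function to which \cref{P:aperiodicP0} applies. With $\ell\geq 3$ iterates, the same reduction leaves $\ell-1\geq 2$ functions in the correlation, and \cref{P:aperiodicP0} (which handles only one aperiodic function against phases and Archimedean twists) no longer suffices. Your suggestion of an induction on $\ell$ would require, at each stage, a vanishing statement for a product of $\ell-1$ iterates with one aperiodic factor---precisely the statement you are trying to prove at level $\ell-1$, but now with the rational polynomials replaced by the ratios $R_j(pm,qn)/R_j(p'm,q'n)$, which need not satisfy the structural hypotheses of the conjecture (e.g., there is no reason they should share a common point where they all equal $1$). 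This is why the paper leaves \cref{Con2} open rather than claiming it as a theorem.
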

To illustrate the scope of this conjecture, note that it predicts that for any multiplicative action we have the multiple recurrence property
	$$
	\mu( T^{-1}_{m^2-n^2} A\cap T^{-1}_{m^2-4n^2} A\cap T^{-1}_{m^2-9n^2}A)>0.
	$$
Indeed, after factoring out  $T^{-1}_{m^2-n^2}$ and making  the substitution $m\mapsto m+3n$, one sees that \cref{Con2} applies for  $R_1(m,n):=(m+n)(m+5n)(m+2n)^{-1}(m+4n)^{-1}$
and  $R_2(m,n):=m(m+6n) (m+2n)^{-1}(m+4n)^{-1}$ and $m_0:=1, n_0:=0$.

When $\ell=1$, for finitely generated actions,  the conjecture follows from part~\eqref{I:MainB2} of \cref{T:MainB}, and the part of the conjecture that refers to general actions follows for  $\ell=1$  by modifying the proof of \cite[Theorem~2.2]{FKM23}
in a straightforward way.

To understand the necessity of some of the assumptions made in \cref{Con2}, we note that by considering suitable multiplicative rotations, we get  finitely generated multiplicative actions $(X,\CX,\mu,T_n)$ and sets $A\in \CX$ such that $\mu(A)>0$ and  
	\begin{enumerate}
	%%	\item \label{I:count1}	
	%%	$\mu(T^{-1}_{n} A\cap T^{-1}_{2n} A)=0$ for every $n\in \N$;
	
		\item \label{I:count2} $\mu(A\cap T^{-1}_{2n^2}A)=0$ for every $n\in \N$
		(\cite[Example~3.11]{DLMS23});

		\item \label{I:count3}
		$\mu(T^{-1}_{n} A\cap T^{-1}_{2n} A)=0$ for every $n\in \N$ and
		$\mu(A\cap T^{-1}_{m/n} A\cap T^{-1}_{(m+3n)/n}A)=0$
		for every $m,n\in \N$.
	\end{enumerate}	
%	It is easier to describe the examples in the setting of the integers, %where $T_n$ corresponds to the map $x\mapsto nx$ for $x\in \N$. In %cases \eqref{I:count1}  and \eqref{I:count2},  take $A:=\{n\in \N\colon %f(n)=1\}$, where  $f$ is the completely multiplicative function defined %by $f(2):=-1$ and $f(p)=1$ for $p\neq 2$. In case \eqref{I:count3}  take %$A$ of positive multiplicative density such that  the equation $x+y=3z$ %has no solutions in $A$, for example, take $p>5$ to be a prime and let %$A$ be the set of integers whose first non-zero digit in the base $p$ %expansion is $1$ (this is the $1$-level set of some modified Dirichlet %character).
%	In case \eqref{I:count1} the polynomials $n, 2n$ are not independent. 
In case \eqref{I:count2}  the polynomial $R(n):=2n^2$ vanishes at $0$ with a multiplicity greater than $1$. Finally, in case \eqref{I:count3}  we cannot find $m_0,n_0\in \Z$
	that satisfy   the assumption of  \cref{Con2}.
\subsection{Inverse theorem for mixed seminorms and applications}
 \cref{T:Inverse} gives a simple inverse theorem
for the mixed seminorms $\nnorm{\cdot}_{U^s}$, which covers all finitely generated multiplicative actions.
A  similar inverse theorem fails for general multiplicative actions, in particular,  none of the properties \eqref{I:inv1}-\eqref{I:inv3} of \cref{T:Inverse} implies property~\eqref{I:inv4}. To see this,  consider for $k\in \N$ the multiplicative action of dilations by
$k$-th powers on $\T$ (see \cref{SS:examples})
and let $F(x):=e(x)$, $x\in \T$.
For $k=1$ we have  $F\in X_a$ but $\nnorm{F}_{U^2}\neq 0$,
and for $k=2$ we have $\nnorm{F}_{U^2}=0$ but $\nnorm{F}_{U^3}\neq 0$.
So the next problem comes naturally.
\begin{problem}\label{Pr:Inverse}
	Find an inverse theorem and a decomposition result for the mixed seminorms $\nnorm{\cdot}_{U^s}$ that works for arbitrary multiplicative actions.
\end{problem}
Optimally,  given a multiplicative action $(X,\CX,\mu,T_n)$, we want to characterize the smallest factor $\CZ$ of the system such that if $F\in L^\infty(\mu)$ satisfies $\E(F| \CZ)=0$, then
$\nnorm{F}_{U^s}=0$. As an intermediate problem, one can try to see if the following holds  
$$
\nnorm{F}_{U^s}>0 \implies \limsup_{N\to\infty} \norm{\E_{n\in[N]} \, F(T_{an+b}x) \cdot e(P_x(n)) }_{L^2(\mu)}>0,
$$
for some $a\in \N, b\in \Z_+$, $k\leq s-1$, and polynomials $P_x\in \R[t]$ with degree $k$ and measurably varying coefficients.

In view of \cref{P:Characteristic1}, a  solution for \cref{Pr:Inverse} is likely to allow progress towards some natural multiple recurrence and convergence problems for  arbitrary multiplicative actions such as  the following: 
\begin{problem}\label{Pr:linear}
\begin{enumerate}
	\item\label{I:P2a} Let $(X,\CX,\mu,T_n)$ be a multiplicative action. Show that for all $F_0,F_1,\ldots, F_\ell\in L^\infty(\mu)$ the following limit exists
	$$
	\lim_{N\to\infty}	\E_{m,n\in [N]}\,\int T_mF_0\cdot   T_{m+n}F_1\cdots T_{m+\ell n}F_\ell\, d\mu.
	$$
	
	\item \label{I:P2b} 	Let $(X,\CX,\mu,T_{0,n},\ldots, T_{\ell,n})$ be a  multiplicative action. Show that for all $A\in \CX$ with $\mu(A)>0$  we have
	$$
	\liminf_{N\to\infty}	\E_{m,n\in [N]}	\, \mu(  T^{-1}_{0,m} A\cap T^{-1}_{1,m+n} A\cap \cdots\cap T^{-1}_{\ell,m+\ell n}A)>0.
	$$		
\end{enumerate}
\end{problem}
In the case of finitely generated actions the result follows from \cref{T:MainA} and  part~\eqref{I:P2a}  holds even for several not necessarily commuting  multiplicative actions. 

We remark that part~\eqref{I:P2a} fails  if we remove the integrals and ask for mean convergence of the resulting averages, indeed for a multiplicative rotation by $n^i$ even  the single term averages  $\E_{m,n\in [N]}\,  T_{(m+n)}F_1$ do not converge in $L^2(\mu)$.

Regarding part~\eqref{I:P2b}, for $\ell=2$, as we noted after \cref{T:MainA}, even when the three multiplicative actions coincide, if we use the iterates
 $m,n,m+n$ instead of $m,m+n,m+2n$, we may   have non-recurrence.
Also,  if  the $\ell+1$ actions coincide, then the positivity of the measure of the multiple intersections  for some $m,n\in \N$ follows from
 \cite[Theorem~3.2]{Be05} (we thank F.~Richter for pointing this out).

\subsection{Density regularity for some homogeneous quadratic equations}
Next, we record some problems related to the density regularity (in the sense of \cref{D:prdr}) of   equations of the form \eqref{E:xyz}. For example, consider  the equation $x^2-y^2=xz$,
which is satisfied when $x=km^2, y=kmn, z=k(m^2-n^2)$, $k,m,n\in \N$.
Thus, to prove that this equation is density regular, it suffices to show that for any  multiplicative action
$(X,\CX, \mu,T_n)$ and
$A\in \CX$ with $\mu(A)>0$, we have
\begin{equation}\label{E:Fpos}
	\liminf_{N\to\infty}\E_{m,n\in [N]} \, \int T_{m^2}F\cdot  T_{mn}F\cdot T_{m^2-n^2}F\, d\mu>0,
\end{equation}
where  $F:={\bf 1}_A$.
By \cref{T:DecompositionGeneral}, we have  $F=F_{p}+F_a$, where $F_p=\E(F| \CX_p)$ and $F_a\in X_a$. Using  \eqref{E:concsevlingenB} in \cref{P:ConcRgeneral} and by adjusting (non-trivially) the argument in \cref{SS:B2},  we can probably show  that \eqref{E:Fpos} holds  if we replace $F$ by $F_p$. So a key  remaining obstacle to  proving \eqref{E:Fpos},  is to answer the next question:
\begin{problem}\label{Pr:charfact}
Let $(X,\CX, \mu,T_n)$ be a multiplicative action and $F_1,F_2, F_3\in L^\infty(\mu)$ be such that $F_2\in X_a$ or $F_3\in X_a$. Is it true that
\begin{equation}\label{E:mmmn}
\lim_{N\to\infty}\E_{m,n\in [N], m>n} \,  \int T_{m^2}F_1\cdot T_{mn}F_2\cdot T_{m^2-n^2}F_3\, d\mu=0 \, ?
\end{equation}
\end{problem}
\cref{P:Characteristic2} covers the case where the multiplicative action $(X,\CX, \mu,T_n)$ is finitely generated.  In the case of infinitely generated actions,
it is easy to verify that  for each $k\in \N$  the multiplicative
actions of dilations by
$k$-th powers on $\T$ (see \cref{SS:examples}), or products of such actions,  
do not pose obstructions to the vanishing property \eqref{E:mmmn}, since if  
$P_1(m^2)+P_2(mn)+P_3(m^2-n^2)=0$ for all $m,n\in \N$ and some $P_1,P_2,P_3\in \Z[t]$, then $P_1=P_2=P_3=0$.

\subsection{Controlling ``Pythagorean averages'' by mixed seminorms}
It is natural to explore if methods from  ergodic theory can
be used to prove partition regularity for Pythagorean triples, i.e., for the equation $x^2+y^2=z^2$, which is satisfied when $x=2mn, y=m^2-n^2, z=m^2+n^2$. We are naturally led to study
the limiting behavior of the averages in \eqref{E:Pythagorean} below and  to study the following problem:
\begin{problem}\label{Pr:Pythagorean}
Let $(X,\CX,\mu,T_n)$ be a multiplicative action and $F_1,F_2,F_3\in L^\infty(\mu)$ be such that $\nnorm{F_j}_{U^3}=0$ for $j=1$ or $2$. Is it true that
\begin{equation}\label{E:Pythagorean}
	\lim_{N\to\infty}\E_{m,n\in [N], m>n} \, \int  T_{mn}F_1\cdot T_{m^2-n^2}F_2\cdot T_{m^2+n^2}F_3\, d\mu=0\, ?
\end{equation}
%%	in $L^2(\mu)$\, ?
\end{problem}
We remark that the assumption $\nnorm{F_j}_{U^3}=0$ cannot be replaced by $F_j\in X_a$ or $\nnorm{F_j}_{U^2}=0$.   To see this, consider the
action of dilations by squares on $\T$ (see \cref{SS:examples}), and let $F_1(x):=e(4x)$, $F_2(x):=e(x)$, $F_3(x)=e(-x)$. Then $F_j\in X_a$ and $\nnorm{F_j}_{U^2}=0$ for $j=1,2,3,$ but all  the integrals in \eqref{E:Pythagorean} are $1$ for $m,n\in \N$.

Using a variant of  the Daboussi-K\`atai orthogonality criterion (see \cref{L:Katai}) for the Gaussian integers,  it is not hard to see that the needed seminorm control in \cref{Pr:Pythagorean}
	would follow from a similar seminorm control for averages of the form
$$
\E_{m,n\in[N]} \int T_{L_1(m,n)\cdot L_2(m,n)}F_1\cdot T_{L_3(m,n)\cdot L_4(m,n)}F_2\cdot T_{L_5(m,n)\cdot L_6(m,n)}F_3\cdot T_{L_7(m,n)\cdot L_8(m,n)}F_4\, d\mu,
$$
where $L_1(m,n),\ldots, L_8(m,n)$ are pairwise independent linear forms. We do not know how to handle this problem even for finitely generated multiplicative actions, in which case
$F$ has vanishing mixed seminorms if and only if
 $F\in X_a$ (see \cref{T:Inverse}).
 In this regard, we state the following simpler model problem:
\begin{problem}\label{Pr:6linear}
Let $(X,\CX,\mu,T_n)$ be a finitely generated multiplicative  action and let $F_j\in L^\infty(\mu)$, $j\in [3]$,  be such that $F_j\in X_a$ for some $j\in[3]$. Is it true that
$$
\lim_{N\to\infty}\E_{m,n\in [N], m>2n} \, \int T_{mn}F_1\cdot   T_{m^2-n^2}F_2\cdot T_{m^2-4n^2}F_3\, d\mu=0\,  ?
$$	
%	in $L^2(\mu)$?
\end{problem}
This problem  resembles the one  addressed in  \cref{P:Characteristic2}, which allows to deal  with 
  three quadratic polynomials that factor linearly, but only when two of them  have a common linear factor. 
% six linear forms, but only when two  are equal and appear in different iterates.

\subsection{Characteristic factor larger than $X_p$.}
Finally, we record two  problems for finitely generated multiplicative actions
%that avoids the use of addition on the iterates and
where the  factor $X_p$ does not control the limiting behavior of the averages we aim to study.
\begin{problem}\label{Pr:NoAddition}
	Let $(X,\CX, \mu,T_n)$ be a finitely generated multiplicative action.  Show that  the limit
	$$
	\lim_{N\to\infty} \E_{m,n\in [N]} \, T_mF_1\cdot T_nF_2\cdot T_{m+n} F_3\cdot T_{mn}F_4
	$$
 exists in $L^2(\mu)$ for all 	 $F_1,F_2,F_3,F_4\in L^\infty(\mu)$, and
	$$
	\lim_{N\to \infty} \E_{m,n\in [N]}\,  \mu(A\cap T^{-1}_{m}A\cap T^{-1}_{n}A\cap T^{-1}_{m+n}A\cap T^{-1}_{mn}A)>0
	$$
	for all $A\in \CX$ with $\mu(A)>0$.
	\end{problem}
	It follows from work of Bowen and Sabok \cite{BS24}, which was later extended by Alweiss \cite{Al23}, that the patterns $\{km,kn,k(m+n),kmn\colon k,m,n\in \N\}$ are partition regular. However, the  recurrence part of \cref{Pr:NoAddition}  relates more to density regularity  and fails for general multiplicative actions.
	 See \cite[Theorem~1.4]{DLMS23}  for  related multiple recurrence results.
	
	Finally, we mention the following problem, which seems deceptively simple:
	\begin{problem}
	 Let $(X,\CX, \mu,T_n,S_n)$ be a finitely generated multiplicative action of commuting measure preserving transformations.
Show that the limit
	$$
	\lim_{N\to\infty} \E_{n\in [N]} \, T_nF_1\cdot S_nF_2
	$$
exists in $L^2(\mu)$ for all $F_1,F_2\in L^\infty(\mu)$. Similarly for more than two commuting actions.
\end{problem}

\end{document}